\newtheorem{theorem}{Theorem}[section]
\newtheorem{lemma}[theorem]{Lemma}
\newtheorem{remark}[theorem]{Remark}
\newcommand{\eps}{\epsilon}
\newcommand{\rif}{\varrho_{\infty}}
\newcommand{\cA}{\mathcal A}
\newcommand{\cL}{\mathcal L}
\newcommand{\bbI}{\mathbb I}
\newcommand{\cO}{\mathcal O}
\newcommand{\cR}{\mathcal R}
\newcommand{\cS}{\mathcal S}
\newcommand{\cD}{\mathcal D}
\newcommand{\bbN}{\mathbb N}
\definecolor{darkred}{rgb}{0.6,0.1,0.1}
\definecolor{darkgreen}{rgb}{0.1,0.6,0.1}
\definecolor{darkblue}{rgb}{0.1,0.1,0.6}
\newcommand{\pd}{\partial}
\newcommand{\pdv}{{\partial}_{v_j}}
\newcommand{\pdr}{{\partial}_{r_j}}
\newcommand{\R}{\mathbb{R}}
\newcommand{\E}{\mathbb{E}}
\newcommand{\bbR}{\mathbb{R}}
\newcommand{\I}{\mathbb{I}}
\newcommand{\dd}{\,\mathrm{d}}
\newcommand{\be}{\begin{equation}}
\newcommand{\en}{\end{equation}}
\newcommand{\hrho}{\widehat\varrho}
\newcommand{\orho}{\bar{\rho}}
\newcommand{\hrhok}{\hrho^{(k)}}
\newcommand{\uk}{u^{(k)}}
\newcommand{\KK}{\mathbb{K}}
\def\Xint#1{\mathchoice
{\XXint\displaystyle\textstyle{#1}}%
{\XXint\textstyle\scriptstyle{#1}}%
{\XXint\scriptstyle\scriptscriptstyle{#1}}%
{\XXint\scriptscriptstyle\scriptscriptstyle{#1}}%
\!\int}
\def\XXint#1#2#3{{\setbox0=\hbox{$#1{#2#3}{\int}$ }
\vcenter{\hbox{$#2#3$ }}\kern-.6\wd0}}
\def\dashint{\Xint-}
\begin{document}

\numberwithin{equation}{section}

\title[Bead-spring-chain models for dilute polymeric fluids]{McKean--Vlasov diffusion and the well-posedness of the Hookean bead-spring-chain model for dilute polymeric fluids:\\
small-mass limit and equilibration in momentum space}
\author[Endre S\"uli and Ghozlane Yahiaoui]
{Endre S\"uli and Ghozlane Yahiaoui}
\address{Mathematical Institute, University of Oxford, Woodstock Road, Oxford OX2 6GG, United Kingdom;
{\tt Endre.Suli@maths.ox.ac.uk}, {\tt Ghozlane.Yahiaoui@maths.ox.ac.uk}}

\begin{abstract}
We reformulate a general class of classical bead-spring-chain models for dilute polymeric fluids, with
Hookean spring potentials, as McKean--Vlasov diffusion. This results in a coupled system of partial
differential equations involving the unsteady incompressible linearized Navier--Stokes equations,
referred to as the Oseen system, for the velocity and the pressure of the fluid, with a
source term which is a nonlinear function of the probability density function, and a second-order
degenerate parabolic Fokker--Planck equation, whose transport terms depend on the velocity field,
for the probability density function. We show that this coupled Oseen--Fokker--Planck
system has a large-data global weak solution.
We then perform a rigorous passage to the limit as the masses of the beads in the bead-spring-chain converge to zero,
which is shown in particular to result in equilibration in momentum space. The limiting problem is then used
to perform a rigorous derivation of the Hookean bead-spring-chain model for dilute polymeric fluids, 
which has the interesting feature that, if the flow domain is bounded, then so is the associated 
configuration space domain and the associated Kramers stress tensor is defined by integration over this bounded configuration domain.

\vspace{0.13pc}
\noindent{\it Keywords}:  McKean--Vlasov diffusion, global weak solution, Oseen equation, Fokker--Planck equation, Hookean 
bead-spring-chain model, small-mass limit, equilibration in momentum space

\vspace{0.13pc}
\noindent{2010 {\it{Mathematics Subject Classification}}. Primary 35Q30, 35Q84; Secondary 82D60}

\end{abstract}
%

\maketitle

\vspace{-10mm}

\section{Introduction}
\label{sec:intro}
This paper is concerned with the mathematical analysis of a set of partial differential equations arising in a
class of bead-spring-chain models for dilute polymeric fluids, where long polymer molecules immersed in a viscous incompressible
Newtonian fluid are idealized as linear chains of $J+1$ beads $\mathfrak{B}_1, \dots,\mathfrak{B}_{J+1}$, each with small mass $\epsilon$,  which are
considered to be points positioned at
$r_1, \dots, r_{J+1}$, respectively, in the flow domain $\Omega \subset \R^d$, $d \in \{2,3\}$; $\mathfrak{B}_{j+1}$ and $\mathfrak{B}_j$ are assumed to be
connected with an elastic spring with spring force $F = F(q_j)$, where $q_j = r_{j+1}-r_j$, $j=1,\dots,J$.
Models of this type involve the coupling of the incompressible Navier--Stokes equations with a Fokker--Planck equation.
For the derivation of polymeric flow models of this kind we refer to the monographs \cite{BCAH} and \cite{Larson1}.
The mathematical analysis of coupled Navier--Stokes--Fokker--Planck systems that model dilute polymeric fluids has been a subject of active research in recent years; for a survey of recent developments in this field
the reader may wish to consult, for example,
\cite{LM2, CFTZ, Masmoudi, M10, ChenL13},
or the papers
\cite{BSS, BS, BS:M3AS, BS2011-fene, BS2010-hookean, BS-2012-density-JDE, BS-2015-compressible-M3AS, BS2016, BS2018, BMS}, and the references therein.

Here we pursue an alternative line of investigation, which has to the best of our knowledge not, so far, been considered in connection with models of dilute polymeric fluids: we shall recast the model in terms of McKean--Vlasov diffusion, in the sense that the stochastic differential equation appearing in the model will have coefficients that depend on the distribution of the solution itself. As our objective here is to understand the impact of the McKean--Vlasov diffusion on the model rather than dealing with the usual technical difficulties
associated with the presence of the nonlinear convection term in the Navier--Stokes equation, we shall consider instead
a linearization of the Navier--Stokes equation about a bounded divergence-free velocity field $b$, resulting
in a linearized Navier--Stokes equation, usually referred to as the Oseen equation, whose right-hand side contains the divergence of
an elastic extra stress tensor $\KK$, representing the contribution of the polymeric stress to the Cauchy stress.

More precisely, we shall consider the following unsteady Oseen system on the space-time domain $\overline\Omega \times [0,T]$, where $\Omega$ is a
bounded open convex domain in $\R^d$, $d \in \{2,3\}$, with a $\mathcal{C}^2$ boundary,
and $T>0$:
\begin{subequations}\label{eq:NSe}
\begin{alignat}{2}
\pd_t u + (b\cdot \nabla) u - \mu \triangle u + \nabla \pi &= \nabla \cdot \KK &&\qquad \mbox{for
$(x, t) \in {\Omega} \times (0,T]$}, \\
\nabla \cdot u &= 0 &&\qquad \mbox{for $(x, t) \in {\Omega} \times (0, T]$},\\
u(x,t) &=0 &&\qquad \mbox{for $(x,t) \in \partial\Omega \times (0,T]$},\\
u(x,0) & = u_0(x)&&\qquad \mbox{for $x \in {\Omega}$},
\end{alignat}
\end{subequations}
with
\begin{alignat}{2}\label{eq:K}
\KK(x,t) &:= \E^x \left(\sum_{j=1}^J F(q_j) \otimes q_j\right) &&\qquad\mbox{for
$(x, t) \in \Omega \times (0, T]$, $J \geq 1$},
\end{alignat}
%

\noindent
where $F$ is a spring force vector and $\E^x$ denotes conditional expectation in a sense to be made precise below. 
We shall assume without loss of generality that $0 \in \mathbb{R}^d$ is the centroid, $\frac{1}{|\Omega|}\int_\Omega x \dd x$, of $\Omega$.

In the equations \eqref{eq:NSe}, $u: \overline{\Omega} \times [0,T] \rightarrow \mathbb{R}^d$ denotes the velocity field,
and $\pi: {\Omega} \times (0,T] \rightarrow \mathbb{R}$ is the pressure; $b$ is a divergence-free (in the sense of distributions on ${\Omega}$)
vector field, $b \in L^\infty(0,T;L^{\infty}({\Omega})^d)$ (see, however, Remark \ref{rem:relax} concerning the weakening
of this assumption); $u_0 \in W^{1-2/z,z}_0({\Omega})^d$, with $z=d+\vartheta$ for some $\vartheta \in (0,1)$, is a divergence-free
(in the sense of distributions on ${\Omega}$) initial velocity field; $\mu>0$ is the viscosity coefficient; $\KK: {\Omega} \times (0,T] \rightarrow \mathbb{R}^{d \times d}_{\rm symm}$ is the elastic extra stress tensor, involving the conditional expectation $\E^x$, which we now define.
To this end we introduce the following notations:
\begin{alignat*}{2}
r&:=(r_1^{\mathrm T}, \ldots, r_{J+1}^{\mathrm T})^{\mathrm T}, \quad && \mbox{where $r_j \in {\Omega}$ for $j=1,\dots,J+1$},\\
v&:=(v_1^{\mathrm T}, \ldots, v_{J+1}^{\mathrm T})^{\mathrm T}, \quad && \mbox{where $v_j \in \R^d$ for $j=1,\dots,J+1$},\\
q=q(r)&:=(q_1^{\mathrm T}, \ldots, q_{J}^{\mathrm T})^{\mathrm T}, \quad && \mbox{where $q_j = q_j(r):= r_{j+1} -  r_j$ for $j=1,\ldots, J$}.
\end{alignat*}
We note here that
\[q_j \in D:={\Omega}-{\Omega}=\{\omega_1 - \omega_2\,:\, \omega_1, \omega_2 \in {\Omega}\},\qquad \mbox{for $j=1,\dots,J$};\]
by definition, $D$ is a bounded, balanced, convex neighbourhood of $0 \in \R^d$, and $D \subset [-L,L]^d$ for some $L>0$. Furthermore, we let
\[
x :=  \frac{1}{J+1}\sum_{j=1}^{J+1} r_j.
\]
Thanks to the assumed convexity of $\Omega$, $x \in \Omega$ for any $r_1, \dots, r_{J+1} \in \Omega$.

Let
\[ \varrho\,:\, (r,v,t)\in \Omega^{J+1}\times \R^{(J+1)d} \times [0,T] \mapsto \varrho(r,v,t) \in \R_{\geq 0}\]
be the probability density function associated with the law of a diffusion process
for $(r,v)$, which we shall define below; the law depends on $\varrho$ itself through the function
$u$ and is therefore a McKean--Vlasov diffusion process.

Now, given $F \in L^\infty(D;\R^{d})$, we define $\E \big(\sum_{j=1}^J F(q_j)\otimes q_j\big) \,:\, (0,T] \rightarrow \R^{d \times d}_{\rm symm}$ by
\[
\bigg(\E\bigg(\sum_{j=1}^J F(q_j)\otimes q_j\bigg)\bigg)(t):=\int_{\Omega^{J+1}\times \R^{(J+1)d}} \bigg(\sum_{j=1}^J F(q_j(r))\otimes q_j(r)\bigg)\,\varrho(r,v,t)\dd r \dd v, \qquad t \in (0,T],
\]
and we perform a change of variables in this integral, replacing integration over
$r \in \Omega^{J+1}$ by integration over $(q,x) \in D^{J} \times {\Omega}$. To this end, we note that the mapping
$r \in \Omega^{J+1} \mapsto (q,x) \in D^{J} \times \Omega$ is one-to-one and onto.
Denoting by $B$ the linear transformation
from $(q,x)\in D^{J} \times \Omega$ to $r\in \Omega^{J+1}$, so that $r=B(q,x)$, and letting $\mathrm{D} B$ denote the Jacobian matrix of the transformation, we have that
\[
\bigg(\E\bigg(\sum_{j=1}^J F(q_j)\otimes q_j\bigg)\bigg)(t)=\int_{D^{J} \times {\Omega} \times \R^{(J+1)d}} \bigg(\sum_{j=1}^J F(q_j)\otimes q_j\bigg)\,\varrho\bigl(B(q,x),v,t\bigr)\,|{\rm det\,} \mathrm{D} B|\dd q \dd x \dd v,
\qquad t \in (0,T].
\]
Henceforth $|\cdot|$ will signify the absolute value of a real number, the Euclidean norm of a vector,
or the Frobenius norm of a square matrix, depending on the context.

We note that the Cartesian product of $K\geq 1$ bounded open convex sets in $\mathbb{R}^d$ is a bounded open convex set in
$\mathbb{R}^{Kd}$ (cf. \cite{HUL}, p.23), and that by Corollary 1.2.2.3 in \cite{Gris}, a bounded open convex set in a Euclidean space
has Lipschitz boundary, so $\Omega^{J+1}$ and $D^J$ are (convex) Lipschitz domains in $\mathbb{R}^{(J+1)d}$ and $\mathbb{R}^{Jd}$, respectively.

The class of spring forces under consideration here are of the form
\[ F(q_j) = H\,U'(|q_j|)\,q_j,\qquad \mbox{for $q_j \in D$, $\quad j = 1,\dots, J$,}\]
where
$H>0$ is a \textit{spring constant}, characteristic of the stiffness of the spring, and $U$ is a given
spring potential, $U \in \mathcal{C}^{0,1}([0,b];\R)$, with $b:=\sup_{p \in D}|p|$. For example, $U(s)= s$
corresponds to a model with Hookean springs, which we shall hereafter focus on in the rest of the paper.
Clearly, since $\Omega$ is bounded, the same is true of $D$ and therefore $0<b<\infty$.

The conditional expectation $\E^x$, which is the expectation under $\E$ conditional
on
\[
\frac{1}{J+1}\sum_{j=1}^{J+1} r_j=x,
\]
is then defined as follows: for $(x,t) \in {\Omega} \times (0,T]$,
\begin{align*}
\left(\E^x\left(\sum_{j=1}^J F(q_j) \otimes q_j\right)\right)(x,t):=\frac{\int_{D^{J}\times \R^{(J+1)d}} \big(\sum_{j=1}^J F(q_j)\otimes q_j\big)\,\varrho\bigl(B(q,x),v,t\bigr)\,|{\rm  det\,} \mathrm{D} B|\dd q  \dd v}
{\int_{D^{J}\times \R^{(J+1)d)}} \varrho\,\bigl(B(q,x),v,t\bigr)\,|{\rm det\,} \mathrm{D} B|\dd q  \dd v}.
\end{align*}
Since $\mathrm{D} B$ is independent of $q$ and $v$, the factor $|{\rm det\,} \mathrm{D} B|$ cancels in the numerator, which is a $d \times d$ symmetric positive semidefinite matrix function, and in the denominator, and the expression for the above
conditional expectation is thereby simplified to
\begin{align*}
\left(\E^x\left(\sum_{j=1}^J F(q_j) \otimes q_j\right)\right)(x,t)=\frac{\int_{D^{J}\times \R^{(J+1)d}} \big(\sum_{j=1}^J F(q_j)\otimes q_j\big)\,\varrho\bigl(B(q,x),v,t\bigr) \dd q  \dd v}
{\int_{D^{J}\times \R^{(J+1)d}} \varrho\,\bigl(B(q,x),v,t\bigr) \dd q  \dd v},\qquad\!\!
(x,t) \in {\Omega} \times (0,T].
\end{align*}
We note that if the denominator vanishes at a point $(x_0,t_0) \in \Omega \times (0,T]$, then, since $\varrho$ is a
nonnegative function, necessarily $\varrho\,\bigl(B(q,x_0),v,t_0\bigr) = 0$ for a.e. $(q,v) \in D^J \times \R^{(J+1)d}$, and therefore the numerator also vanishes at $(x_0,t_0)$. We shall adopt the convention that the ratio $0/0$ is, by definition, equal to $0$.

Hence, now with $|\cdot|$ signifying the Frobenius matrix norm on $\R^{d \times d}$,
\[ \left|\left(\E^x\left(\sum_{j=1}^J F(q_j) \otimes q_j\right)\right)(x,t)\right| \leq \mbox{ess.sup}_{q \in D^{J}} \sum_{j=1}^J |F(q_j)\otimes q_j| \qquad \forall\,(x,t) \in {\Omega} \times (0,T] ,\]
whereby, recalling \eqref{eq:K},
\begin{equation}\label{eq:ce}
 \|\KK\|_{L^\infty(0,T;L^\infty({\Omega}))} \leq \sum_{j=1}^J\|F(q_j) \otimes q_j\|_{L^\infty(D^{J})} < \infty.
\end{equation}
We note that, given $\varrho$, we may write $u(x,t)=(\cA \varrho)(x,t)$, where the nonlinear operator $\cA$ involves composition of the ratio of two integral operators (as in the definition of the conditional expectation $\E^x$ above), the divergence operator $\nabla \cdot$,
and  the solution operator for the time-dependent Oseen problem. As the velocity field $u=\cA \varrho$ appears as a coefficient in the Fokker--Planck equation for the probability density function $\varrho$, it follows that it is, in fact, in the present context, a \textit{nonlinear} partial differential equation for $\varrho$.

The aim of the paper is two-fold: we will show that this coupled Oseen--Fokker--Planck system has a large-data global
weak solution; having done so, we shall perform a rigorous analysis of the small-mass limit, $\epsilon \rightarrow 0_+$,
corresponding to passage to the limit as the masses of the beads $\mathfrak{B}_1,\dots, \mathfrak{B}_{J+1}$
in the bead-spring-chain converge to zero, leading to a rigorous derivation of the Hookean bead-spring-chain model.

We proceed to define the McKean--Vlasov diffusion. Let
\begin{align*}
\mathcal{U}(r,t;\varrho)&:=\Bigl( u(r_1,t)^{\rm T}, \cdots, u(r_{J+1},t)^{\rm T}\Bigr)^{\rm T}\\
&~=\Bigl( (\cA \varrho)(r_1,t)^{\rm T}, \cdots, (\cA \varrho)(r_{J+1},t)^{\rm T}\Bigr)^{\rm T},
\end{align*}
with $\mathcal{A}$ as indicated above, and consider the SDE
$$\eps^2 \ddot{r}={\mathcal L}r+\zeta\big(\mathcal{U}(r,t;\varrho)-\dot{r}\big)+\sqrt{2\beta}\,\dot{W}.$$
Here $\eps^2>0$ signifies the mass of an individual bead in the chain, $\beta=k\mathtt{T}\zeta>0$,
where $k$ is the Boltzmann constant, $\mathtt{T}$ is the absolute temperature and $\zeta$ is the drag coefficient.
Furthermore, $\cL$ is the following $(J+1)\times(J+1)$ block-matrix (analogous to a discrete Laplacian, corresponding to a homogeneous
Neumann boundary condition) with $d \times d$ matrices as its entries, associated with a Hookean bead-spring-chain:
\begin{eqnarray*}
\lambda\left(
\begin{array}{ccccc}
    -\mathbb{I} & \mathbb{I}  & \mathbb{O} & \dots  &\mathbb{O} \\
    \mathbb{I} & -2\mathbb{I} & \mathbb{I} & \ddots &\mathbb{O} \\
    \mathbb{O} & \mathbb{I} & -2\mathbb{I} & \mathbb{I} & \mathbb{O}\\
    \vdots & \ddots & \ddots & \ddots & \ddots \\
    \mathbb{O} & \dots & \mathbb{I} & -2\mathbb{I}&\mathbb{I}\\
    \mathbb{O} & \dots & \mathbb{O} & \mathbb{I}&\mathbb{-I}
\end{array}
\right),
\end{eqnarray*}
where $\lambda>0$ is a constant factor characteristic of the stiffness of the springs,
the block $\mathbb{I} \in \mathbb{R}^{d \times d}$ is the $d \times d$ identity matrix, and the block $\mathbb{O} \in \mathbb{R}^{d \times d}$ is the $d \times d$ zero matrix. Thus, $\cL$ is a $(J+1)d \times (J+1)d$ matrix, in fact.

As the parameter $\lambda$ plays no role in the discussion that will follow, we set $\lambda=1$; similarly, we set $\zeta=1$.
The SDE may then be rewritten as the first-order system
\begin{equation}
\begin{aligned}
\label{eq:sde}
\eps \dot{r}  &=  v,\\
\eps \dot{v}  &=  {\mathcal L}r+\mathcal{U}(r,t;\varrho)-\eps^{-1} v+\sqrt{2\beta}\,\dot{W}.
\end{aligned}
\end{equation}
Then, $\varrho(r,v,t)$ solves the Fokker--Planck equation, stated in the next section, associated
with this system. For \eqref{eq:sde} to be meaningful, it is clearly necessary that the function
$(r,t) \in \Omega \times [0,T] \mapsto u(r,t) \in \mathbb{R}^d$ satisfies the Carath\'eodory condition:
i.e., it is \textit{continuous} with respect to $r$ for a.e. $t \in [0,T]$ and \textit{measurable} with respect to $t$
for every $r \in \Omega$. This requirement is consistent with the underlying modelling assumption
that the background fluid (i.e. the solvent), in which the polymer molecules are immersed, represents
a `continuum' relative to the scale of the polymer molecules.

The paper is structured as follows: In Section \ref{sec:use} we formulate the Fokker--Planck equation.
In Section \ref{sec:FP} we show, for a fixed velocity field $u$, the existence of a global weak solution to the
Fokker--Planck equation, subject to a specular boundary condition. The argument is based on a parabolic
regularization of the (hypoelliptic) Fokker--Planck equation, and passage to the limit with the parabolic
regularization parameter. In Section \ref{sec:coupled} we then return to the original coupled
Oseen--Fokker--Planck system and use an iterative process between the Oseen equation and the Fokker--Planck equation
to show the existence of large-data global weak solutions to the coupled problem for any nonnegative $L^1$ initial datum
with finite initial relative entropy for the Fokker--Planck equation, and any (distributionally) divergence-free initial datum $u_0 \in W^{1-2/z,z}_0(\Omega)^d$,
with  $z=d+\vartheta$ for some $\vartheta \in (0,1)$, for the Oseen equation. 
The latter regularity hypothesis on $u_0$ will then ensure the continuity of the velocity field $u$ with respect to
its spatial variable, alluded to in the last sentence
of the previous paragraph, via maximal regularity theory for the unsteady Stokes system. Indeed, the fact
that in the case of the Oseen system we are able to guarantee, through the above regularity hypothesis on $u_0$,
that the velocity field $u$ belongs to the function space $u \in L^2(0,T;L^\infty(\Omega)^d)$ plays a crucial role
in our proofs; it is unclear to us, in particular, how replacement of the Oseen system by the full Navier--Stokes 
system would impact on the arguments presented herein.

The proofs use a variety of compactness arguments for infinite sequences of approximate solutions. Passage to the limit in the
extra stress tensor $\mathbb{K}$, whose divergence appears on the right-hand side of the Oseen equation, is nontrivial
as $\mathbb{K}$ depends nonlinearly on the probability density function; to this end, we shall show the strong convergence
of the sequence of approximating probability density functions using techniques developed by DiPerna \& Lions
for the Fokker--Planck--Boltzmann system and related hypoelliptic PDEs (see, in particular, the Appendix in \cite{DL}).
In Section \ref{sec:trace} we show, by using the existence of a trace on the boundary of our domain, that the solution to the Fokker--Planck equation attains 
the weakly imposed specular boundary condition in a strong sense.
In Section \ref{sec:SM} we then focus on the second objective of the paper: we rigorously identify the small-mass limit of the system, as $\epsilon \rightarrow 0_+$. Once again, passage to the limit in the extra stress tensor $\mathbb{K}$,
whose divergence appears on the right-hand side of the Oseen equation, is the main source of technical difficulties, as we require
strong convergence of the approximating sequence of probability density functions, as $\epsilon \rightarrow 0_+$.
Motivated by an argument in the work of Carrillo \& Goudon \cite{CG}, which first appeared in the context of diffusion
asymptotics for hyperbolic problems in the work of Marcati and Milani \cite{MM1990}, and was then applied in the
framework of kinetic equations by Lions \& Toscani \cite{LT1997} and Goudon \& Poupaud \cite{GP2001}, we shall use a compensated compactness
argument based on the Div-Curl lemma to prove weak convergence, which we then strengthen to the desired strong convergence
result, enabling us to identify the small-mass limit, as $\epsilon \rightarrow 0_+$. We prove in particular that passage to the small-mass limit 
results in equilibration in momentum space, in a sense to be made precise in Remark \ref{rem:equilib}.
This enables us to make mathematically rigorous various formal asymptotic calculations from the polymer physics literature asserting that passage 
to the small-mass limit implies equilibration in momentum space. In the final section we relate the resulting small-mass-limit
model to the classical Hookean bead-spring-chain model for dilute solutions of polymeric fluids.

\section{Statement of the Fokker--Planck equation}
\label{sec:use}

To define the Fokker--Planck equation we mimic the procedure in \cite{PS08} and introduce the following
differential operators, noting that those with suffix $0$ are
independent of $u$ (which is considered to be fixed for the moment), whilst those with suffix $1$ are not:
\begin{align*}
\cL_{0,j}\varphi &:=-v_j \cdot \pdv\varphi+\beta\, \pdv^2\varphi,\qquad j=1,\dots, J+1,\\
\cL_{1,j}(u)\varphi &:= v_j \cdot \pdr\varphi+(({\mathcal L}r)_j+u(r_j,t))\cdot \pdv \varphi,\qquad j=1,\dots, J+1,\\
\cL_{0,j}^*\varphi &:=\pdv\cdot (v_j \varphi)+\beta\, \pdv^2\varphi,\qquad j=1,\dots, J+1,\\
\cL_{1,j}^*(u)\varphi &:= -v_j \cdot \pdr\varphi-(({\mathcal L}r)_j+u(r_j,t))\cdot \pdv \varphi,\qquad j=1,\dots, J+1,\\
\cL_0 \varphi&:=\sum_{j=1}^{J+1}\cL_{0,j}\varphi,\\
\cL_1(u)\varphi&:=\sum_{j=1}^{J+1}\cL_{1,j}(u)(\varphi).
\end{align*}
In these expressions $\pdv$ denotes the ($d$-component) gradient operator with respect to $v_j \in \R^d$, $\pdv\cdot$
denotes the divergence operator with respect to $v_j$, and $\pdv^2 = \pdv \cdot \pdv$ is the Laplace operator
with respect to $v_j$.
We further note that $\cL_{0,j}$ has a one-dimensional null-space spanned
by the real-valued constant function that is identically equal to $1$ with respect to $v_j$, denoted by $\bbI(v_j)$, and its adjoint $\cL_{0,j}^*$ has null-space spanned by the function
$$g(v_j):=(2\pi \beta)^{-\frac12}\exp(-|v_j|^2/2\beta).$$
Observe also that, for $g(s)=(2\pi \beta)^{-\frac12}\exp(-s^2/2\beta)$, $s \in \R$, and with $'$ denoting differentiation with respect to the variable $s$, we have that
$$(sg(s))'+\beta g''(s)=0,$$
implying that
\begin{equation}
\label{eq:need}
(sg'(s))'+\beta(g'(s))''=-g'(s).
\end{equation}
Finally, we note that $\cL_{0}$ has a one-dimensional null-space spanned by the constant function with
respect to $v=(v_1^{\rm T}, \dots, v_{J+1}^{\rm T})^{\rm T}$, denoted by $\bbI(v)=\prod_{j=1}^{J+1} \bbI(v_j)$, and its adjoint $\cL_{0}^*$ has null-space spanned by the function
\begin{align*}
\rif(v)=\prod_{j=1}^{J+1} g(v_j).
\end{align*}

\section{Existence of solutions to the Fokker--Planck equation}
\label{sec:FP}

The probability density function associated with \eqref{eq:sde} is denoted by $\varrho=\varrho(r,v,t)$; formally it solves the \textit{nonlinear} partial differential equation
\begin{equation}
\label{eq:FP}
\pd_t \varrho=\frac{\beta}{\eps^2}\cL_{0}^* \varrho+\frac{1}{\eps}\cL_{1}(u)^* \varrho.
\end{equation}
In case it is not apparent, we emphasize that the nonlinearity enters into the equation through the
dependence of the velocity field $u$ on the probability density function $\varrho$, since $u$ is the solution
of the Oseen equation whose right-hand side depends on $\varrho$ through the presence of the
conditional expectation there.
Substituting the defining expressions for $\cL_{0}^*$ and $\cL_{1}(u)^*$ into \eqref{eq:FP} yields
\begin{alignat}{2}
\label{eq:FP-eq}
\pd_t \varrho - \frac{\beta}{\eps^2}\left(\sum_{j=1}^{J+1} \pdv \cdot (v_j \,\varrho)+\beta\, \pdv^2 \varrho \right) + \frac{1}{\eps}
\left(\sum_{j=1}^{J+1} v_j \cdot \pdr \varrho + (({\mathcal L}r)_j+u(r_j,t)) \cdot \pdv \varrho \right) = 0,\\
~\hfill \mbox{for all $(r,v,t) \in \Omega^{J+1} \times \R^{(J+1)d} \times (0,T]$},\nonumber\\
\varrho(r,v,0)=\varrho_0(r,v)~\hfill \qquad \mbox{for all $(r,v) \in \Omega^{J+1} \times \R^{(J+1)d}$}.
\label{eq:FP-ini}
\end{alignat}

The equation \eqref{eq:FP-eq} should be supplemented with a boundary condition; here,
for the sake of simplicity of the exposition, we shall consider a specular boundary condition with respect to
the independent variable $r$,  which we shall state below. More complicated boundary conditions can of course
be used to model the interaction between the wall $\partial\Omega$ and the beads in the
bead-spring-chain; for example, a Maxwell-type boundary condition (proposed by Maxwell \cite{Maxwell1879} in 1879
as a phenomenological law by splitting the reflection operator into a local
reflection operator and a diffuse reflection operator) may be
considered, as in \cite{Mischler2010}: it involves a boundary trace operator that
is a convex linear combination of a specular boundary trace operator, describing local
reflection by the wall, and a diffuse reflection operator.

Before formulating the specular boundary condition considered here, we require
some additional notation. We let
\[\partial\Omega^{(j)}:=\Omega \times
\cdots \times \Omega \times \partial \Omega \times \Omega \times \cdots \times \Omega,\qquad j=1,\dots, J+1,\]
with $\partial \Omega$
appearing at the $j$-th position in this $(J+1)$-fold Cartesian product. Clearly, $\bigcup_{j=1}^{J+1}
\overline{\partial\Omega^{(j)}} = \partial\Omega^{J+1}$.
Let, further,
\[\nu^{(j)}(r) := (0^{\rm T},\dots,0^{\rm T}, (\nu(r_j))^{\rm T},0^{\rm T},\dots,0^{\rm T})^{\rm T}\in \R^{(J+1)d},\]
where, for $r=(r_1,\dots,r_{J+1}) \in \partial\Omega^{(j)}$, the nonzero entry $\nu(r_j) \in \R^d$
appearing at the $j$-th position is the unit outward normal (column-)vector to $\partial \Omega$ at
$r_j \in \partial\Omega$, for $j=1,\dots, J+1$, and $0$ is a $d$-component zero (column-)vector. With this notation, we then impose the following \textit{specular boundary
condition} for $\varrho$ on $\partial\Omega^{(j)}$, $j=1,\dots, J+1$:
\begin{align}\label{eq:rho-spec}
\varrho(r,v,t) = \varrho(r,v_*^{(j)},t)\qquad \mbox{for all $(r,v,t) \in \partial\Omega^{(j)} \times \R^{(J+1)d} \times
 (0,T]$, with $v \cdot \nu^{(j)}(r)<0$},
\end{align}
where
\[ v_*^{(j)} =v_*^{(j)}(r,v) := v - 2(v\cdot \nu^{(j)}(r))\,\nu^{(j)}(r), \qquad j=1,\dots, J+1,\]
is the \textit{specular velocity}; clearly, $v_*^{(j)} \cdot \nu^{(j)}(r) = - v \cdot \nu^{(j)}(r)$. This boundary condition on $\varrho$
means that if the $j$-th bead in the chain $(r_1, \dots, r_{J+1})$ hits the boundary with velocity vector $v_j \in \R^d$ it is reflected with
velocity vector $v_j - 2(v_j \cdot \nu(r_j))\,\nu(r_j) \in \R^d$.  With respect to the independent variable $v=(v_1^{\rm T},\dots,v_{J+1}^{\rm T})^{\rm T}$
the domain of definition of $\varrho$ is $\R^{(J+1)d}$. The behaviour of $\varrho$ as a function of $v$ in
the limit of $|v| \rightarrow \infty$ is
dictated by the requirement that $\varrho(\cdot,\cdot,t) \in L^1(\Omega^{J+1}\times\bbR^{(J+1)d};\R_{\geq 0})$,
for each fixed $t \in (0,T]$.

In order to state the weak formulation of this problem we consider the Maxwellian $M(v):= \varrho_\infty(v)$ and define
\[ \hrho : = \frac{\varrho}{M}\quad \mbox{and}\quad \hrho_0 := \frac{\varrho_0}{M}.\]
Further, we define $\mathcal{F} \in \mathcal{C}(\R_{\geq 0};\R_{\geq 0})$, by
\[ \mathcal{F}(s):=s(\log s - 1) + 1, \quad s \in \R_{>0}, \quad\mbox{with $\mathcal{F}(0):=1$}.\]
The function $\mathcal{F}$ is nonnegative, strictly convex, and has superlinear growth as $s \rightarrow +\infty$, i.e.
$$\lim_{s \rightarrow + \infty}
\frac{\mathcal{F}(s)}{s}=+\infty.$$

\smallskip

We shall assume that the initial datum $\varrho_0$ (cf. \eqref{eq:FP-ini}) satisfies
\begin{align}\label{eq:ini-cond}
\begin{aligned}
\varrho_0 \in L^1(\Omega^{J+1} \times \R^{(J+1)d};\R_{\geq 0}), &\qquad \int_{\Omega^{J+1} \times \R^{(J+1)d}}\varrho_0(r,v) \dd r \dd v  = 1,\\
M\mathcal{F}(\hrho_0) \in L^1&(\Omega^{J+1} \times \R^{(J+1)d});
\end{aligned}
\end{align}
in other words, the initial probability density function $\varrho_0$ is assumed to have finite relative entropy with respect to the Maxwellian $M$.

We shall also \textit{assume} throughout this section that
$u \in L^2(0,T;W^{1,\sigma}_0({\Omega})^d)$ for some $\sigma>d$, and $\nabla \cdot u =0$  a.e. in $\Omega \times (0,T)$,
and that $u$ is \textit{given and held fixed}. We shall show later on that, under the assumptions on $u_0$ (cf. the paragraph
following eq. \eqref{eq:K}), the
function $u$ does indeed possess this regularity; in fact, we will see that $\sigma = \min(\hat\sigma,z)$, where $\hat\sigma:=2+\frac{4}{d}>d$ for $d=2,3$,
and $z=d+\vartheta$ for some $\vartheta \in (0,1)$, whereby $\sigma > d$ for $d=2,3$, as is being assumed here.
As a consequence of the assumed regularity of $u$, by Sobolev embedding,
$u \in L^2(0,T;L^\infty(\Omega)^d)$.


We (formally) multiply the equation \eqref{eq:FP-eq} by a function $\varphi \in W^{1,1}(0,T; \mathcal{C}^\infty(\overline{\Omega}^{J+1}; \mathcal{C}^\infty_0(\R^{(J+1)d})))$ and, assuming for the moment that $\varrho$ is sufficiently smooth and satisfies the specular
boundary condition \eqref{eq:rho-spec}, we  integrate the resulting equality over $\Omega^{J+1} \times \R^{(J+1)} \times [0,T]$, and then integrate by parts with respect to each of the independent variables. Hence,
\begin{align}\label{eq:prep1}
&\int_{\Omega^{J+1}} \int_{\R^{(J+1)d}} M(v)\,\hrho(r,v,T)\,\varphi(r,v,T)\dd v \dd r\nonumber\\
&\qquad - \int_0^T \int_{\Omega^{J+1}} \int_{\R^{(J+1)d}} M(v)\,\hrho(r,v,\tau)\,\pd_\tau \varphi(r,v,\tau)\dd v \dd r \dd \tau \nonumber\\
&\qquad+ \frac{\beta^2}{\eps^2}\left(\sum_{j=1}^{J+1} \int_0^T \int_{\Omega^{J+1}} \int_{\R^{(J+1)d}}
M(v)\,\pdv \hrho \cdot\pdv \varphi \dd v \dd r \dd \tau \right) \nonumber\\
&\qquad- \frac{1}{\eps} \left(\sum_{j=1}^{J+1} \int_0^T \int_{\Omega^{J+1}} \int_{\R^{(J+1)d}} M(v)\, v_j \hrho\cdot \pdr \varphi \dd v \dd r \dd \tau \right) \nonumber\\
&\qquad+ \frac{1}{\eps} \left(\sum_{j=1}^{J+1} \int_0^T \int_{\partial\Omega^{(j)}} \int_{\R^{(J+1)d}} M(v)\, (v_j
\cdot \nu(r_j))\, \hrho\,\varphi \dd v \dd s(r) \dd \tau \right) \nonumber\\
&\qquad- \frac{1}{\eps} \left(\sum_{j=1}^{J+1} \int_0^T \int_{\Omega^{J+1}} \int_{\R^{(J+1)d}}M(v)\, (({\mathcal L}r)_j+u(r_j,\tau))\,\hrho\cdot \pdv \varphi \dd v \dd r \dd \tau \right) \nonumber\\
&\qquad\quad  = \int_{\Omega^{J+1}} \int_{\R^{(J+1)d}} M(v)\,\hrho_0(r,v)\,\varphi(r,v,0)\dd v \dd r
\qquad \forall\, \varphi \in W^{1,1}(0,T; \mathcal{C}^\infty(\overline{\Omega}^{J+1}; \mathcal{C}^\infty_0(\R^{(J+1)d}))).
\end{align}

We focus our attention on the fifth integral on the left-hand side:
%
\begin{align*}
&\int_{\partial\Omega^{(j)}} \int_{\R^{(J+1)d}} M(v)\, (v_j
\cdot \nu(r_j))\, \hrho\,\varphi \dd v \dd s(r)  = \int_{\R^{(J+1)d}} \int_{\partial\Omega^{(j)}}  M(v)\, (v_j
\cdot \nu(r_j))\, \hrho\,\varphi \dd s(r) \dd v  \\
& \quad = \int_{\R^{(J+1)d}} \int_{\partial\Omega^{(j)}\,:\,v_j \cdot \nu(r_j)>0}\!\! M(v)\, (v_j
\cdot \nu(r_j))\, \hrho\,\varphi \dd s(r) \dd v \\
&\qquad + \int_{\R^{(J+1)d}} \int_{\partial\Omega^{(j)}\,:\,v_j \cdot \nu(r_j)<0}
 \!\!M(v)\, (v_j \cdot \nu(r_j))\, \hrho\,\varphi  \dd s(r) \dd v.
\end{align*}
Now, since $|v_*^{(j)}|^2=|v|^2$ and $v_*^{(j)} \cdot \nu^{(j)}(r) = -v \cdot \nu^{(j)}(r) = - v_j \cdot \nu(r_j)$, and using the specular boundary
condition satisfied by $\hrho$, we have for the second integral
on the right-hand side of the last equality that
\begin{align*}
&\int_{\R^{(J+1)d}}\int_{\partial\Omega^{(j)}\,:\,v_j \cdot \nu(r_j)<0}  M(v)\, (v_j
\cdot \nu(r_j))\, \hrho(r,v,t)\,\varphi(r,v,t) \dd s(r) \dd v \\
&= \int_{\R^{(J+1)d}} \int_{\partial\Omega^{(j)}\,:\,v_j \cdot \nu(r_j)<0}  M(v)\, (v_j
\cdot \nu(r_j))\, \hrho(r,v_*^{(j)},t)\,\varphi(r,v,t) \dd s(r) \dd v\\
&= - \int_{\R^{(J+1)d}} \int_{\partial\Omega^{(j)}\,:\,-v_j \cdot \nu(r_j)>0} M(v)\, (-v_j
\cdot \nu(r_j))\, \hrho(r,v_*^{(j)},t)\,\varphi(r,v,t) \dd s(r) \dd v\\
&= - \int_{\R^{(J+1)d}}\int_{\partial\Omega^{(j)}\,:\, v_*^{(j)} \cdot \nu^{(j)}(r) >0}
 M(v_*^{(j)})\, (v_*^{(j)} \cdot \nu^{(j)}(r))\, \hrho(r,v_*^{(j)},t)\,\varphi(r,v,t)
\dd s(r) \dd v.
\end{align*}
Assuming that the test function $\varphi$ satisfies the specular boundary condition:
\begin{equation}\label{eq:phi-spec}
\varphi(r,v,t) = \varphi(r,v_*^{(j)},t)\qquad \begin{array}{l} \forall\,(r,v,t) \in \partial\Omega^{(j)} \times \R^{(J+1)d} \times
 (0,T],\\
\mbox{such that } v \cdot \nu^{(j)}(r)<0,\quad j=1,\dots, J+1,
\end{array}
\end{equation}
we then have, for all $j=1, \dots, J+1$,  that
\begin{align*}
&\int_{\R^{(J+1)d}}  \int_{\partial\Omega^{(j)}\,:\,v_j \cdot \nu(r_j)<0} M(v)\, (v_j
\cdot \nu(r_j))\, \hrho(r,v,t)\,\varphi(r,v,t) \dd s(r) \dd v \\
&= -\int_{\R^{(J+1)d}}\int_{\partial\Omega^{(j)}\,:\, v_*^{(j)} \cdot \nu^{(j)}(r) >0}
M(v_*^{(j)})\, (v_*^{(j)} \cdot \nu^{(j)}(r))\, \hrho(r,v_*^{(j)},t)\,\varphi(r,v_*^{(j)},t)
\dd s(r) \dd v\\
&= -\int_{\R^{(J+1)d}}\int_{\partial\Omega^{(j)}}
M(v_*^{(j)})\, (v_*^{(j)} \cdot \nu^{(j)}(r))_{+}\, \hrho(r,v_*^{(j)},t)\,\varphi(r,v_*^{(j)},t)
\dd s(r) \dd v\\
&= -\int_{\partial\Omega^{(j)}}\int_{\R^{(J+1)d}}
M(v_*^{(j)})\, (v_*^{(j)} \cdot \nu^{(j)}(r))_{+}\, \hrho(r,v_*^{(j)},t)\,\varphi(r,v_*^{(j)},t)
 \dd v \dd s(r).
\end{align*}
Since, for $r \in \partial\Omega^{(j)}$ fixed,
the absolute value of the Jacobian $\mathrm{D}\Phi$ of the (bijective) mapping $$\Phi\,:\,v\in \R^{(J+1)d} \mapsto
v_*^{(j)}(r,v) \in \R^{(J+1)d}$$
is equal to $1$, whereby, for $r \in \partial\Omega^{(j)}$ fixed, $\dd v_*^{(j)} = |\mathrm{D}\Phi| \dd v = \dd v$, by treating $v_*^{(j)}$ as
a dummy variable in the last integral and renaming it into $v$, and noting again that $v \cdot \nu^{(j)}(r) =  v_j \cdot \nu(r_j)$, it follows that, for all $j=1, \dots, J+1$,
\begin{align*}
& \int_{\R^{(J+1)d}}\int_{\partial\Omega^{(j)}\,:\,v_j \cdot \nu(r_j)<0} M(v)\, (v_j
\cdot \nu(r_j))\, \hrho(r,v,t)\,\varphi(r,v,t)  \dd s(r) \dd v \\
&= -\int_{\partial\Omega^{(j)}} \int_{\R^{(J+1)d}} M(v)\, (v_j \cdot \nu(r_j))_+\, \hrho(r,v,t)\,\varphi(r,v,t) \dd v \dd s(r)\\
&= -\int_{\R^{(J+1)d}} \int_{\partial\Omega^{(j)}}  M(v)\, (v_j \cdot \nu(r_j))_+\, \hrho(r,v,t)\,\varphi(r,v,t) \dd s(r) \dd v \\
&= - \int_{\R^{(J+1)d}}\int_{\partial\Omega^{(j)}\,:\,v_j \cdot \nu(r_j)>0} M(v)\, (v_j \cdot \nu(r_j))\, \hrho(r,v,t)\,\varphi(r,v,t) \dd s(r)  \dd v.
\end{align*}
Hence, provided that the test function $\varphi \in W^{1,1}(0,T; \mathcal{C}^\infty(\overline{\Omega}^{J+1}; \mathcal{C}^\infty_0(\R^{(J+1)d})))$
appearing in \eqref{eq:prep1} satisfies the specular boundary condition \eqref{eq:phi-spec}, the fifth integral in
\eqref{eq:prep1} will vanish. We shall therefore assume that this is indeed the case and will work with such test functions
$\varphi$, whereby the absence of the fifth integral from \eqref{eq:prep1} can be seen as a weak imposition of the
specular boundary condition \eqref{eq:rho-spec} for $\hrho$ (and, equivalently, for $\varrho$). The imposition of the
specular boundary condition on all functions that belong to a certain function space will be indicated by the subscript
$_*$ in our notation for the particular function space. For example,
\begin{align*}
\mathcal{C}^\infty_*(\overline{\Omega}^{J+1}; \mathcal{C}^\infty_0(\R^{(J+1)d})) &= \bigg\{ \varphi \in \mathcal{C}^\infty(\overline{\Omega}^{J+1}; \mathcal{C}^\infty_0(\R^{(J+1)d}))\,:\, \varphi(r,v) = \varphi(r,v_*^{(j)})\\
&\qquad \mbox{for all $(r,v) \in \partial\Omega^{(j)} \times \R^{(J+1)d}$, with $v \cdot \nu^{(j)}(r)<0$},\quad j=1,\dots, J+1\bigg\}.
\end{align*}
Thus, by eliminating the fifth integral from \eqref{eq:prep1}, we are led to the following problem:
for a fixed divergence-free function $u \in L^2(0,T;W^{1,\sigma}_0({\Omega})^d)$ with $\sigma>d$, we seek a function $\hrho \geq 0$ such that
\[ M \hrho \in \mathcal{C}_w([0,T]; L^1(\Omega^{J+1} \times \R^{(J+1)d})),\]
\[ M\mathcal{F}(\hrho) \in L^\infty(0,T; L^1(\Omega^{J+1} \times \R^{(J+1)d})),\qquad \sqrt{\hrho} \in  L^2(0,T; L^2(\Omega^{J+1}; W^{1,2}_M(\R^{(J+1)d}))),\]
with $\mathcal{C}_w([0,T]; L^1(\Omega^{J+1} \times \R^{(J+1)d}))$ being the linear space of weakly continuous mappings from $[0,T]$ into $L^1(\Omega^{J+1} \times \R^{(J+1)d})$,  and $W^{1,2}_M(\R^{(J+1)d})$ signifying the Maxwellian-weighted Sobolev space on $\R^{(J+1)d}$:
\[W^{1,2}_M(\R^{(J+1)d}) := \bigg\{ \varphi \in L^1_{\rm loc}(\R^{(J+1)d})\,:\, \|\varphi\|^2_{W^{1,2}_M(\R^{(J+1)d})}:=
\int_{\R^{(J+1)d}} M(v) \bigg(|\varphi(v)|^2 +
\sum_{j=1}^{J+1}|\partial_{v_j} \varphi(v)|^2\bigg) \dd v < \infty\bigg\}\]
(with analogous notation for all other Maxwellian-weighted Lebesgue and Sobolev spaces), such that
{\small
\begin{align}\label{eq:weak}
&\int_{\Omega^{J+1}} \int_{\R^{(J+1)d}} M(v)\,\hrho(r,v,T)\,\varphi(r,v,T)\dd v \dd r- \int_0^T \int_{\Omega^{J+1}} \int_{\R^{(J+1)d}} M(v)\,\hrho(r,v,\tau)\,\pd_\tau \varphi(r,v,\tau)\dd v \dd r \dd \tau \nonumber\\
&\qquad+ \frac{\beta^2}{\eps^2}\left(\sum_{j=1}^{J+1} \int_0^T \int_{\Omega^{J+1}} \int_{\R^{(J+1)d}}
M(v)\,\pdv \hrho \cdot\pdv \varphi \dd v \dd r \dd \tau \right)\nonumber\\
&\qquad- \frac{1}{\eps} \left(\sum_{j=1}^{J+1} \int_0^T \int_{\Omega^{J+1}} \int_{\R^{(J+1)d}} M(v)\, v_j \hrho\cdot \pdr \varphi \dd v \dd r \dd \tau \right)\nonumber\\
&\qquad- \frac{1}{\eps} \left(\sum_{j=1}^{J+1} \int_0^T \int_{\Omega^{J+1}} \int_{\R^{(J+1)d}}M(v)\, (({\mathcal L}r)_j+u(r_j,\tau))\,\hrho\cdot \pdv \varphi \dd v \dd r \dd \tau \right)\nonumber\\
&\qquad\qquad  = \int_{\Omega^{J+1}} \int_{\R^{(J+1)d}} M(v)\,\hrho_0(r,v)\,\varphi(r,v,0)\dd v \dd r
\qquad \forall\, \varphi \in W^{1,1}(0,T; W^{s,2}_*(\Omega^{J+1} \times \R^{(J+1)d})),
\end{align}
}

\noindent
where $s>(J+1)d+1$. We note that for $s>(J+1)d+1$, by Sobolev embedding,
\[W^{s,2}_*(\Omega^{J+1} \times \R^{(J+1)d}) \hookrightarrow W^{1,\infty}_*(\Omega^{J+1} \times \R^{(J+1)d}).\]
We emphasize here again that the specular boundary condition is imposed weakly, through the \textit{omission} of the fifth integral from \eqref{eq:prep1}
(and, thereby, through the \textit{absence} of the corresponding term from \eqref{eq:weak}) and the choice of the test functions $\varphi$
in $ W^{1,1}(0,T; W^{s,2}_*(\Omega^{J+1} \times \R^{(J+1)d}))$.
This helps us to circumvent at this point the question whether $\hrho$ is
regular enough to satisfy \eqref{eq:rho-spec} in the (stronger) sense of a trace theorem on $\partial\Omega$. The existence of a trace in a stronger sense
will be shown later, in Section \ref{sec:trace}.

\subsection{Existence of solutions to a parabolic regularization of \eqref{eq:weak}}
We begin by considering a parabolic regularization of the weak formulation \eqref{eq:weak}: for a fixed divergence-free
function $u \in L^2(0,T;W^{1,\sigma}_0({\Omega})^d)$ with $\sigma>d$, 
and with $\alpha \in (0,1]$ a regularization parameter that will be eventually sent to $0$, we shall seek a function
\[ \varrho_\alpha \in \mathcal{C}([0,T]; L^2_M(\Omega^{J+1} \times \R^{(J+1)d}))\cap L^2(0,T; W^{1,2}_{*,M}(\Omega^{J+1} \times \R^{(J+1)d}))\]
such that
{\small
\begin{align}\label{eq:weak-a}
&\int_{\Omega^{J+1}} \int_{\R^{(J+1)d}} M(v)\,\hrho_\alpha(r,v,T)\,\varphi(r,v,T)\dd v \dd r- \int_0^T \int_{\Omega^{J+1}} \int_{\R^{(J+1)d}} M(v)\,\hrho_\alpha(r,v,\tau)\,\pd_\tau \varphi(r,v,\tau)\dd v \dd r \dd \tau\nonumber\\
&\qquad+ \frac{\beta^2}{\eps^2}\left(\sum_{j=1}^{J+1} \int_0^T \int_{\Omega^{J+1}} \int_{\R^{(J+1)d}} M(v)\,\pdv \hrho_\alpha \cdot \pdv \varphi \dd v \dd r \dd \tau \right)\nonumber\\
&\qquad- \frac{1}{\eps} \left(\sum_{j=1}^{J+1} \int_0^T \int_{\Omega^{J+1}} \int_{\R^{(J+1)d}} M(v)\, v_j \hrho_\alpha\cdot \pdr \varphi \dd v \dd r \dd \tau \right)\nonumber\\
&\qquad + \alpha \sum_{j=1}^{J+1} \int_0^T \int_{\Omega^{J+1}} \int_{\R^{(J+1)d}} M(v)\,\pdr \hrho_\alpha \cdot \pdr \varphi \dd v \dd r \dd \tau \nonumber\\
&\qquad- \frac{1}{\eps} \left(\sum_{j=1}^{J+1} \int_0^T \int_{\Omega^{J+1}} \int_{\R^{(J+1)d}} M(v)\,(({\mathcal L}r)_j+u(r_j,\tau))\,\hrho_\alpha\cdot \pdv \varphi \dd v \dd r \dd \tau \right)\nonumber\\
&\qquad\qquad  = \int_{\Omega^{J+1}} \int_{\R^{(J+1)d}} M(v)\,\hrho_{0}(r,v)\,\varphi(r,v,0)\dd v \dd r
\qquad \forall\, \varphi \in W^{1,2}(0,T; W^{1,2}_{*,M}(\Omega^{J+1} \times \R^{(J+1)d})),
\end{align}
}

\noindent
where, \textit{in addition} to our earlier assumption \eqref{eq:ini-cond} on the initial datum, we shall \textit{temporarily assume} that
$$\hrho_0 \in L^2_M(\Omega^{J+1} \times \R^{(J+1)d}).$$
This additional assumption will be required in order to enable passage to the limit $\alpha \rightarrow 0_+$.
In the final step of the existence proof, discussed in Section \ref{sec:coupled}, this additional assumption
on $\hrho_0$ will be removed,
and the final global existence result for the coupled Oseen--Fokker--Planck system will be shown to hold assuming  \eqref{eq:ini-cond} only.

To show the existence of a solution to \eqref{eq:weak-a}, note that $W^{1,2}_{*,M}(\Omega^{J+1} \times \R^{(J+1)d})$, the normed linear space of all functions contained in the Maxwellian-weighted Sobolev space $W^{1,2}_{M}(\Omega^{J+1} \times \R^{(J+1)d})$
satisfying the specular boundary condition on $\partial\Omega$ in the sense of the trace theorem, is a separable Hilbert space, as it is a closed linear subspace of $W^{1,2}_{M}(\Omega^{J+1} \times \R^{(J+1)d})$, which is a separable Hilbert space
(cf. Theorem 8.10.2 on p.418 in the monograph of Kufner, John \& Fu\v{c}ik \cite{KJF}).
Furthermore, since $W^{1,2}_{M}(\Omega^{J+1} \times \R^{(J+1)d})$ is compactly embedded into
the space $L^2_M(\Omega^{J+1} \times \R^{(J+1)d})$ (cf. Appendix D in \cite{BS2010-hookean}), $W^{1,2}_{*,M}(\Omega^{J+1} \times \R^{(J+1)d})$ is also compactly embedded into $L^2_M(\Omega^{J+1} \times \R^{(J+1)d})$.
Thus, by a variant of the Hilbert--Schmidt theorem (cf. Lemma 5.1 in \cite{FS}), there exists a complete
orthogonal basis $(\psi_k)_{k \geq 1}$ in
$W^{1,2}_{*,M}(\Omega^{J+1} \times \R^{(J+1)d})$, which is complete and orthonormal in $L^2_M(\Omega^{J+1} \times \R^{(J+1)d})$; the function $\psi_k \in W^{1,2}_{*,M}(\Omega^{J+1} \times \R^{(J+1)d})$ solves the following eigenvalue problem:
\begin{align*}
(\psi_k, \eta)_{W^{1,2}_M(\Omega^{J+1} \times \R^{(J+1)d})} = \lambda_k(\psi_k,\eta)_{L^2_M(\Omega^{J+1} \times \R^{(J+1)d})} \qquad &\forall\, \eta \in W^{1,2}_{*,M}(\Omega^{J+1} \times \R^{(J+1)d}),\quad k=1,2,\dots;\\
\|\psi_k\|_{L^2_M(\Omega^{J+1} \times \R^{(J+1)d})} &= 1.
\end{align*}

Let $\mathcal{X}_N := \mbox{span}\{\psi_1,\ldots, \psi_N\}$ and denote by $P_N$ the orthogonal projector in $L^2_M(\Omega^{J+1} \times \R^{(J+1)d})$ onto $\mathcal{X}_N$. Suppose further that $w \in W^{1,2}_{*,M}(\Omega^{J+1} \times \R^{(J+1)d})$, with
\[ w = \sum_{k=1}^\infty \alpha_k \psi_k.\]
As $(w- P_Nw , \psi_j)_{L^2_M(\Omega^{J+1} \times \R^{(J+1)d})}=0$ for all $j=1,\dots, N$, thanks to the orthonormality of the functions $\psi_k$, $k\geq 1$, in $L^2_M(\Omega^{J+1} \times \R^{(J+1)d})$, it follows
that
\[ P_N w = \sum_{k=1}^N \alpha_k \psi_k.\]
Thus, by the orthogonality of the $\psi_k$ in $W^{1,2}_{*,M}(\Omega^{J+1} \times \R^{(J+1)d})$, Parseval's identity implies that
\begin{align}\label{ortho}
\|P_N w\|^2_{W^{1,2}_M(\Omega^{J+1} \times \R^{(J+1)d})} &= \sum_{k=1}^N \alpha_k^2 \|\psi_k\|^2_{W^{1,2}_M(\Omega^{J+1} \times \R^{(J+1)d})} \leq \sum_{k=1}^\infty \alpha_k^2\|\psi_k\|^2_{W^{1,2}_M(\Omega^{J+1} \times \R^{(J+1)d})} \nonumber\\
&= \|w\|^2_{W^{1,2}_M(\Omega^{J+1} \times \R^{(J+1)d})} \qquad \forall\, w \in W^{1,2}_{*,M}(\Omega^{J+1} \times \R^{(J+1)d}).
\end{align}

We shall seek a function
\begin{equation}\label{eq:expand}
\hrho_{\alpha,N}(r,v,t) = \sum_{k=1}^N A_{k,N}(t)\, \psi_k(r,v)
\end{equation}
such that
{\small
\begin{align}
&\left(\int_{\Omega^{J+1}} \int_{\R^{(J+1)d}} M(v)\,\hrho_{\alpha,N}(r,v,T)\, \psi_\ell(r,v)\dd v \dd r \right)\phi(T)\nonumber\\
&\qquad
- \int_0^T \int_{\Omega^{J+1}} \int_{\R^{(J+1)d}} M(v)\,\hrho_{\alpha,N}(r,v,\tau)\, \psi_\ell(r,v)\, \pd_\tau \phi(\tau)\dd v \dd r \dd \tau\nonumber\\
&\qquad+ \frac{\beta^2}{\eps^2}\left(\sum_{j=1}^{J+1} \int_0^T \int_{\Omega^{J+1}} \int_{\R^{(J+1)d}} M(v)\,\pdv \hrho_{\alpha,N} \cdot \pdv \psi_\ell(r,v)\, \phi(\tau) \dd v \dd r \dd \tau \right)\nonumber\\
&\qquad- \frac{1}{\eps} \left(\sum_{j=1}^{J+1} \int_0^T \int_{\Omega^{J+1}} \int_{\R^{(J+1)d}} M(v)\, v_j \hrho_{\alpha,N}\cdot \pdr \psi_\ell(r,v)\, \phi(\tau) \dd v \dd r \dd \tau \right)\nonumber
\end{align}
\begin{align}\label{eq:weak-a-gal}
&\qquad + \alpha \sum_{j=1}^{J+1} \int_0^T \int_{\Omega^{J+1}} \int_{\R^{(J+1)d}} M(v)\,\pdr \hrho_{\alpha,N} \cdot \pdr \psi_\ell(r,v)\, \phi(\tau)\dd v \dd r \dd \tau \nonumber\\
&\qquad- \frac{1}{\eps} \left(\sum_{j=1}^{J+1} \int_0^T \int_{\Omega^{J+1}} \int_{\R^{(J+1)d}} M(v)\,(({\mathcal L}r)_j+u(r_j,\tau))\,\hrho_{\alpha,N}\cdot \pdv \psi_\ell(r,v)\, \phi(\tau) \dd v \dd r \dd \tau \right)\nonumber\\
&\qquad\qquad  = \left(\int_{\Omega^{J+1}} \int_{\R^{(J+1)d}} M(v)\,\hrho_{0}(r,v)\, \psi_\ell(r,v)\dd v \dd r\right) \phi(0) \quad \forall\, \ell \in \{1,\dots,N\} \mbox{ and $\forall\,\phi \in W^{1,2}(0,T)$.}
\end{align}
}

Substitution of \eqref{eq:expand} into \eqref{eq:weak-a-gal} yields
{\small
\begin{align*}
&A_{\ell,N}(T)\,\phi(T) - \int_0^T A_{\ell,N}(\tau)\, \pd_\tau \phi(\tau) \dd \tau\nonumber\\
&\quad+ \int_0^T \sum_{k=1}^N A_{k,N}(\tau) \left(\frac{\beta^2}{\eps^2} \sum_{j=1}^{J+1} \int_{\Omega^{J+1}} \int_{\R^{(J+1)d}} M(v)\,\pdv \psi_k(r,v) \cdot \pdv \psi_\ell(r,v)\dd v \dd r \right)\phi(\tau) \dd \tau \nonumber\\
&\quad+ \int_0^T \sum_{k=1}^N A_{k,N}(\tau) \left(-\frac{1}{\eps} \sum_{j=1}^{J+1}  \int_{\Omega^{J+1}} \int_{\R^{(J+1)d}} M(v)\, v_j \psi_k(r,v) \cdot \pdr \psi_\ell(r,v)\dd v \dd r \right) \phi(\tau) \dd \tau\nonumber\\
&\quad + \int_0^T \sum_{k=1}^N A_{k,N}(\tau) \left(\alpha \sum_{j=1}^{J+1}  \int_{\Omega^{J+1}} \int_{\R^{(J+1)d}} M(v)\,\pdr \psi_k(r,v) \cdot \pdr \psi_\ell(r,v) \dd v \dd r\right) \phi(\tau) \dd \tau \nonumber\\
&\quad+ \int_0^T\sum_{k=1}^N A_{k,N}(\tau) \left(-\frac{1}{\eps}\sum_{j=1}^{J+1}  \int_{\Omega^{J+1}} \int_{\R^{(J+1)d}} M(v)\,(({\mathcal L}r)_j+u(r_j,\tau))\,\psi_k(r,v) \cdot \pdv \psi_\ell(r,v) \dd v \dd r\right) \phi(\tau) \dd \tau\nonumber\\
&\qquad  = \left(\int_{\Omega^{J+1}} \int_{\R^{(J+1)d}} M(v)\,\hrho_{0}(r,v)\,\psi_\ell(r,v)\dd v \dd r\right) \phi(0)\qquad \forall\, \ell \in \{1,\dots,N\} \mbox{ and $\forall\,\phi \in W^{1,2}(0,T)$}.
\end{align*}
}

Denoting the sum of the terms in the brackets in the second, third and fourth line by $G_{\ell,k}$,
and the term in the outer pair of brackets in the fifth line by $H_{\ell,k}(\tau)$, we have that
\begin{align}\label{eq:weak-a-gal2}
A_{\ell,N}(T)\,\phi(T) - \int_0^T A_{\ell,N}(\tau)\, \pd_\tau \phi(\tau) \dd \tau &+ \int_0^T \sum_{k=1}^N (G_{\ell,k} + H_{\ell,k}(\tau))\,A_{k,N}(\tau)\,  \phi(\tau) \dd \tau\nonumber\\
&= \int_{\Omega^{J+1}} \int_{\R^{(J+1)d}} M(v)\,\hrho_{0}(r,v)\,\psi_\ell(r,v)\dd v \dd r \, \phi(0). 
\end{align}
As it will transpire from the discussion that follows, $|G_{\ell,k}|<\infty$ and $|H_{\ell,k}(\tau)|<\infty$ for a.e.
$\tau \in (0,T]$, and for all $\ell, k = 1,\dots, N$.

The above is the weak form of the following initial-value problem for a system of linear ODEs:
\begin{equation}\label{eq:odesystem}
\begin{aligned}
\frac{\dd}{\dd t} A_{\ell,N}(t) +  \sum_{k=1}^N (G_{\ell,k} + H_{\ell,k}(t))\,A_{k,N}(t) = 0, \qquad t \in (0,T],\\
A_{\ell,N}(0) = \int_{\Omega^{J+1}} \int_{\R^{(J+1)d}} M(v)\,\hrho_{0}(r,v)\,\psi_\ell(r,v)\dd v \dd r, \quad \ell=1,\dots,N.
\end{aligned}
\end{equation}
As $(G_{\ell,k})_{\ell,k=1}^N$ is a constant matrix, the existence of a solution to this system of linear ODEs will follow from Carath\'eodory's theorem once we have shown that $t \in (0,T) \mapsto H_{\ell,k}(t) \in \mathbb{R}$ is measurable and
a (matrix) norm of the matrix $(H_{\ell,k}(t))_{\ell,k=1}^N$ is dominated by $h(t)$, where $h \in L^1(0,T)$. As a matter of fact, once this has been shown, the uniqueness of the solution to this system of ODEs will also follow, by Gronwall's lemma,
thanks to the linearity of the system.

To this end, it suffices to note that, since by hypothesis $u \in L^2(0,T;W^{1,\sigma}_0({\Omega})^d)$ for some $\sigma>d$, Fubini's theorem implies that all entries of the matrix $(H_{\ell,k}(t))_{\ell,k=1}^N$ are measurable functions of $t \in (0,T]$; furthermore,
there exists a positive constant $C_0=C_0(J,N)$ such that
%
\begin{align*}
&\int_0^T \left|\sum_{j=1}^{J+1} \int_{\Omega^{J+1}} \int_{\R^{(J+1)d}} M(v)\, u(r_j,\tau)\,\psi_k(r,v) \cdot \pdv \psi_\ell(r,v) \dd v \dd r\right| \dd \tau\\
& \leq \sum_{j=1}^{J+1}\int_0^T \left(\int_{\Omega^{J+1}} \int_{\R^{(J+1)d}} M(v)\, |u(r_j,\tau)\,\psi_k(r,v)|^2 \dd v \dd r \right)^{\frac{1}{2}} \left(\int_{\Omega^{J+1}} \int_{\R^{(J+1)d}} M(v)\, |\pdv \psi_\ell(r,v)|^2 \dd v \dd r \right)^{\frac{1}{2}} \dd \tau\\
& \leq \|u\|_{L^1(0,T;L^\infty(\Omega))} \sum_{j=1}^{J+1} \max_{1 \leq \ell \leq N} \left(\int_{\Omega^{J+1}} \int_{\R^{(J+1)d}} M(v)\, |\pdv \psi_\ell(r,v)|^2 \dd v \dd r \right)^{\frac{1}{2}}\\
& =   C_0 \|u\|_{L^1(0,T;L^\infty(\Omega))}.
\end{align*}
%


\noindent
This then implies the existence of a measurable function $h \in L^1(0,T)$ such that the (matrix) norm of the matrix
$(H_{\ell,k}(t))_{\ell,k=1}^N$ is dominated by $h(t)$, where $h \in L^1(0,T)$; take, for example,
$h(t):=\frac{C}{\eps}(1+\|u(t)\|_{L^{\infty}({\Omega})})$, where $C$ is a sufficiently large constant.
Hence,
Carath\'eodory's theorem implies the existence of a solution $A_{\ell,N} \in W^{1,1}(0,T)$ (and, consequently, absolutely continuous on $[0,T]$), $\ell=1,\dots, N$, to \eqref{eq:weak-a-gal2}, and by Gronwall's lemma the solution to \eqref{eq:weak-a-gal2} is unique. In fact, since $H_{\ell,k} \in L^\infty(0,T)$, $\ell,k = 1,\dots, N$, it follows that $A_{\ell,N} \in W^{1,\infty}(0,T)$, $\ell=1,\dots, N$; cf. \eqref{eq:odesystem}. Thus, by noting \eqref{eq:expand}, we deduce that the finite-dimensional problem \eqref{eq:weak-a-gal} has a unique solution
\[ \hrho_{\alpha,N} \in W^{1,\infty}(0,T;W^{1,2}_{*,M}(\Omega^{J+1} \times \R^{(J+1)d})).\]

Next, for any $t \in (0,T)$ fixed, and $h \in (0,T-t)$, consider the function
\[
 \chi_{t,h}(\tau) := \min\left\{1, \left(\frac{1}{h}(t-\tau) + 1\right)_{\!+}\right\}, \qquad \tau \in
 [0,T].
\]
Clearly, $\tau \mapsto \chi_{t,h}(\tau)$ is a continuous piecewise linear function defined on $[0,T]$, which is identically $1$ on $[0,t]$, identically $0$ on $[t+h,T]$, and has slope $-1/h$ on $[t,t+h]$. Taking $\phi =
\chi_{t,h}\, A_{\ell,N} \in W^{1,\infty}(0,T)$ in \eqref{eq:weak-a-gal} with $t \in (0,T)$ fixed and passing to the limit $h \rightarrow 0_+$, we have that
{\small
\begin{align}\label{eq:weak-a-gal2a}
&\int_{\Omega^{J+1}} \int_{\R^{(J+1)d}} M(v)\,\hrho_{\alpha,N}(r,v,t)\, \psi_\ell(r,v)\,A_{\ell,N}(t)\dd v \dd r\nonumber\\
&\qquad
- \int_0^t \int_{\Omega^{J+1}} \int_{\R^{(J+1)d}} M(v)\,\hrho_{\alpha,N}(r,v,\tau)\, \psi_\ell(r,v)\, \pd_\tau A_{\ell,N}(\tau)\dd v \dd r \dd \tau\nonumber\\
&\qquad+ \frac{\beta^2}{\eps^2}\left(\sum_{j=1}^{J+1} \int_0^t \int_{\Omega^{J+1}} \int_{\R^{(J+1)d}} M(v)\,\pdv \hrho_{\alpha,N} \cdot \pdv \psi_\ell(r,v)\, A_{\ell,N}(\tau) \dd v \dd r \dd \tau \right)\nonumber\\
&\qquad- \frac{1}{\eps} \left(\sum_{j=1}^{J+1} \int_0^t \int_{\Omega^{J+1}} \int_{\R^{(J+1)d}} M(v)\, v_j \hrho_{\alpha,N}\cdot \pdr \psi_\ell(r,v)\, A_{\ell,N}(\tau) \dd v \dd r \dd \tau \right)\nonumber\\
&\qquad + \alpha \sum_{j=1}^{J+1} \int_0^t \int_{\Omega^{J+1}} \int_{\R^{(J+1)d}} M(v)\,\pdr \hrho_{\alpha,N} \cdot \pdr \psi_\ell(r,v)\, A_{\ell,N}(\tau)\dd v \dd r \dd \tau \nonumber\\
&\qquad- \frac{1}{\eps} \left(\sum_{j=1}^{J+1} \int_0^t \int_{\Omega^{J+1}} \int_{\R^{(J+1)d}} M(v)\,(({\mathcal L}r)_j+u(r_j,\tau))\,\hrho_{\alpha,N}\cdot \pdv \psi_\ell(r,v)\, A_{\ell,N}(\tau) \dd v \dd r \dd \tau \right)\nonumber\\
&\qquad\qquad  = \left(\int_{\Omega^{J+1}} \int_{\R^{(J+1)d}} M(v)\,\hrho_{0}(r,v)\,\psi_\ell(r,v)\dd v \dd r\right) A_{\ell,N}(0) \quad \forall\, \ell \in \{1,\dots,N\}.
\end{align}
}

\noindent
Summing \eqref{eq:weak-a-gal2a} through $\ell=1,\dots,N$ and recalling \eqref{eq:expand} then yields
{\small
\begin{align}\label{eq:weak-a-gal3}
&\frac{1}{2}\int_{\Omega^{J+1}} \int_{\R^{(J+1)d}} M(v)\,\hrho^2_{\alpha,N}(r,v,t)\dd v \dd r\nonumber\\
&\qquad+ \frac{\beta^2}{\eps^2}\left(\sum_{j=1}^{J+1} \int_0^t \int_{\Omega^{J+1}} \int_{\R^{(J+1)d}} M(v)\,|\pdv \hrho_{\alpha,N}|^2  \dd v \dd r \dd \tau \right)\nonumber\\
&\qquad- \frac{1}{\eps} \left(\sum_{j=1}^{J+1} \int_0^t \int_{\Omega^{J+1}} \int_{\R^{(J+1)d}} M(v)\, v_j \hrho_{\alpha,N}\cdot \pdr \hrho_{\alpha,N} \dd v \dd r \dd \tau \right)\nonumber\\
&\qquad + \alpha \sum_{j=1}^{J+1} \int_0^t \int_{\Omega^{J+1}} \int_{\R^{(J+1)d}} M(v)\,|\pdr \hrho_{\alpha,N}|^2 \dd v \dd r \dd \tau \nonumber\\
&\qquad- \frac{1}{\eps} \left(\sum_{j=1}^{J+1} \int_0^t \int_{\Omega^{J+1}} \int_{\R^{(J+1)d}} M(v)\,(({\mathcal L}r)_j+u(r_j,\tau))\,\hrho_{\alpha,N}\cdot \pdv \hrho_{\alpha,N} \dd v \dd r \dd \tau \right)\nonumber\\
&\qquad\qquad  = \frac{1}{2}\left(\int_{\Omega^{J+1}} \int_{\R^{(J+1)d}} M(v)\,|\hrho_{0}(r,v)|^2 \dd v \dd r\right)\qquad \forall\, t \in (0,T).
\end{align}
}

Let us denote by ${\rm T}_1$ and ${\rm T}_2$ the terms in the third and fifth line of \eqref{eq:weak-a-gal3}, respectively; our objective is to bound these by quantities that can be absorbed into the remaining terms
on the left-hand side. That will then result in uniform-in-$N$ bounds on various norms of $\hrho_{\alpha,N}$,
which will allow us to pass to the limit $N \rightarrow \infty$ in the Galerkin approximation.

We shall show below that $M(v)\, v_j \hrho_{\alpha,N}\cdot \pdr \hrho_{\alpha,N} \in L^1(\Omega^{J+1} \times \R^{(J+1)d} \times (0,T))$. Taking this for granted for the moment, we have that
\begin{align*}
{\rm T}_1 &:= - \frac{1}{\eps} \sum_{j=1}^{J+1} \int_0^t \int_{\Omega^{J+1}}\int_{\R^{(J+1)d}} M(v)\, v_j\,   \hrho_{\alpha,N}\cdot \pdr \hrho_{\alpha,N} \dd v \dd r \dd \tau\\
&= - \frac{1}{2\eps} \sum_{j=1}^{J+1} \int_0^t \int_{\Omega^{J+1}} \int_{\R^{(J+1)d}} M(v)\, v_j\cdot \pdr (|\hrho_{\alpha,N}|^2) \dd v \dd r \dd \tau\\
&= - \frac{1}{2\eps} \sum_{j=1}^{J+1} \int_0^t \int_{\partial\Omega^{(j)}} \int_{\R^{(J+1)d}} M(v)\, (v_j\cdot \nu(r_j))\,  |\hrho_{\alpha,N}|^2 \dd v \dd s(r) \dd \tau  = 0,
\end{align*}
because $\hrho_{\alpha,N} \in W^{1,\infty}(0,T;W^{1,2}_{*,M}(\Omega^{J+1} \times \R^{(J+1)d}))$.
It therefore remains to show that $M(v)\, v_j \hrho_{\alpha,N}\cdot \pdr \hrho_{\alpha,N}$ belongs to  $L^1(\Omega^{J+1} \times \R^{(J+1)d}\times (0,T))$.
Since the function $\sqrt{M(v)}\,\pdr \hrho_{\alpha,N} \in L^2(\Omega^{J+1} \times \R^{(J+1)d}\times (0,T))$, it suffices to show
that $\sqrt{M(v)}\, v_j\, \hrho_{\alpha,N}$ belongs to $L^2(\Omega^{J+1} \times \R^{(J+1)d}\times (0,T))$.

To this end, we first recall the logarithmic Young's inequality
\[ ab \leq {\rm e}^a + b(\log b - 1) \qquad \forall\, a, b \in \mathbb{R}_{\geq 0}.\]
This follows from the following Fenchel--Young inequality:
\[ ab \leq g(a) + g^*(b) \qquad \forall\, a,b \in \mathbb{R}_{\geq 0}, \]
involving the convex function $g\,:\,a \in \R \mapsto g(a) \in (-\infty,+\infty]$ and its convex conjugate $g^*$, defined by $g^*(b):=\sup_{a \in \mathbb{R}}(ab-g(a))$, with $g(a) = {\rm e}^a$ and
\[ g^*(b)= \left\{\begin{array}{cl}
+ \infty & \mbox{if $b<0$};\\
0        & \mbox{if $b=0$};\\
b(\log b-1) & \mbox{if $b>0$},
                 \end{array}    \right.
\]
with the resulting inequality then restricted to $\R_{\geq 0}$. Consequently, recalling that $\mathcal{F}(s)=s(\log s - 1) +1$ for
$s>0$ and $\mathcal{F}(0):=0$, we have that
\begin{equation}\label{eq:ab}
ab \leq  {\rm e}^a - 1 + \mathcal{F}(b)\qquad \forall\, a, b \in \R_{\geq 0}.
\end{equation}

Hence, with $a=\frac{1}{4\beta}\, |v_j|^2$ and $b = \|\hrho_{\alpha,N}\|^2_{L^2(\Omega^{J+1}\times(0,T))}$, we have that
\[ \frac{1}{4\beta}\, |v_j|^2\, \|\hrho_{\alpha,N}\|^2_{L^2(\Omega^{J+1}\times(0,T))} \leq \mathcal{F}\left(\|\hrho_{\alpha,N}\|^2_{L^2(\Omega^{J+1}\times(0,T))}\right) +
{\rm e}^{ \frac{1}{4\beta} |v_j|^2}-1,\]
and therefore, upon multiplication by $M(v)$ and omitting the final, negative term from the right-hand side,
\begin{align*}
&\frac{1}{4\beta}\, M(v)\, |v_j|^2 \, \|\hrho_{\alpha,N}\|^2_{L^2(\Omega^{J+1}\times(0,T))} \leq M(v)\,\mathcal{F}\left(\|\hrho_{\alpha,N}\|^2_{L^2(\Omega^{J+1}\times(0,T))}\right) + (2\pi\beta)^{-\frac{(J+1)d}{2}}\,{\rm e}^{ -\frac{1}{4\beta} |v_j|^2} \prod_{{\stackrel{k=1}{k \neq j}}}^{J+1} {\rm e}^{-\frac{1}{2\beta}|v_k|^2}\nonumber\\
& = M(v)\, \left[\|\hrho_{\alpha,N}\|^2_{L^2(\Omega^{J+1}\times(0,T))} (\log \|\hrho_{\alpha,N}\|^2_{L^2(\Omega^{J+1}\times(0,T))} - 1) + 1\right] + (2\pi\beta)^{-\frac{(J+1)d}{2}}\,{\rm e}^{ -\frac{1}{4\beta} |v_j|^2} \prod_{{\stackrel{k=1}{k \neq j}}}^{J+1} {\rm e}^{-\frac{1}{2\beta}|v_k|^2}\nonumber\\
& \leq M(v)\, \|\hrho_{\alpha,N}\|^2_{L^2(\Omega^{J+1}\times(0,T))} \, \log \|\hrho_{\alpha,N}\|^2_{L^2(\Omega^{J+1}\times(0,T))}  + \bigg[M(v) + (2\pi\beta)^{-\frac{(J+1)d}{2}}\,{\rm e}^{ -\frac{1}{4\beta} |v_j|^2} \prod_{{\stackrel{k=1}{k \neq j}}}^{J+1} {\rm e}^{-\frac{1}{2\beta}|v_k|^2}\bigg].
\end{align*}
Integrating this over $\R^{(J+1)d}$ and applying Gross' logarithmic Sobolev inequality  to the first term on the right-hand side yields (c.f. \cite{Gross1975}, particularly (1.2) there multiplied by $2$, and (1.1) with $n=(J+1)d$):
\begin{align}\label{eq:log-estimate}
&\frac{1}{4\beta}\,\int_{\R^{(J+1)d}} M(v)\, |v_j|^2 \, \|\hrho_{\alpha,N}\|^2_{L^2(\Omega^{J+1}\times(0,T))} \dd v\nonumber\\
&\quad\leq \int_{\R^{(J+1)d}} M(v)\, \|\hrho_{\alpha,N}\|^2_{L^2(\Omega^{J+1}\times(0,T))} \, \log \|\hrho_{\alpha,N}\|^2_{L^2(\Omega^{J+1}\times(0,T))} \dd v \nonumber\\
&\qquad + \int_{\R^{(J+1)d}}\bigg[M(v) + (2\pi\beta)^{-\frac{(J+1)d}{2}}\,{\rm e}^{ -\frac{1}{4\beta} |v_j|^2} \prod_{{\stackrel{k=1}{k \neq j}}}^{J+1} {\rm e}^{-\frac{1}{2\beta}|v_k|^2}\bigg] \dd v
\nonumber
\\
&\quad\leq 2\sum_{j=1}^{J+1} \int_{\R^{(J+1)d}} M(v)\, |\pdv \|\hrho_{\alpha,N}\|_{L^2(\Omega^{J+1}\times(0,T))}|^2 \dd v\nonumber\\
&\qquad + \|\hrho_{\alpha,N}\|^2_{L^2_M(\R^{(J+1)d}; L^2(\Omega^{J+1}\times (0,T)))} \log \|\hrho_{\alpha,N}\|^2_{L^2_M(\R^{(J+1)d}; L^2(\Omega^{J+1}\times (0,T)))}\nonumber\\
&\qquad + \int_{\R^{(J+1)d}}\bigg[M(v) + (2\pi\beta)^{-\frac{(J+1)d}{2}}\,{\rm e}^{ -\frac{1}{4\beta} |v_j|^2} \prod_{{\stackrel{k=1}{k \neq j}}}^{J+1} {\rm e}^{-\frac{1}{2\beta}|v_k|^2}\bigg] \dd v
\nonumber
\\
&\quad\leq 2\sum_{j=1}^{J+1} \int_{\R^{(J+1)d}} M(v) \left(\int_{\Omega^{J+1}\times(0,T)} |\pdv \hrho_{\alpha,N}|^2 \dd r \dd \tau\right) \dd v \nonumber\\
&\qquad + \|\hrho_{\alpha,N}\|^2_{L^2(0,T; L^2_M(\Omega^{J+1}\times \R^{(J+1)d}))} \log \|\hrho_{\alpha,N}\|^2_{L^2(0,T; L^2_M(\Omega^{J+1}\times \R^{(J+1)d}))}\nonumber\\
&\qquad + \int_{\R^{(J+1)d}}\bigg[M(v) + (2\pi\beta)^{-\frac{(J+1)d}{2}}\,{\rm e}^{ -\frac{1}{4\beta} |v_j|^2} \prod_{{\stackrel{k=1}{k \neq j}}}^{J+1} {\rm e}^{-\frac{1}{2\beta}|v_k|^2}\bigg] \dd v.
\end{align}
The term in the square brackets on the right-hand side is trivially in $L^1(\R^{(J+1)d})$.
Furthermore, both $\sqrt{M(v)}\, \hrho_{\alpha,N}$ and $\sqrt{M(v)}\,\pdv \hrho_{\alpha,N}$ belong to
$L^2(\Omega^{J+1} \times \R^{(J+1)d}\times (0,T))$ for all $j=1,\dots, J+1$.
Thus we have shown that $M(v)\, |v_j|^2\, |\hrho_{\alpha,N}|^2  \in L^1(\Omega^{J+1} \times \R^{(J+1)d}\times (0,T))$; hence, $\sqrt{M(v)}\, v_j \, \hrho_{\alpha,N}$ belongs to $L^2(\Omega^{J+1} \times \R^{(J+1)d} \times (0,T))$, as required. This completes the proof of the assertion that ${\rm T}_1 = 0$.

Let us now turn our attention to the term
\[ {\rm T}_2:= - \frac{1}{\eps} \left(\sum_{j=1}^{J+1} \int_0^t \int_{\Omega^{J+1}} \int_{\R^{(J+1)d}} M(v)\,(({\mathcal L}r)_j+u(r_j,\tau))\,\hrho_{\alpha,N}\cdot \pdv \hrho_{\alpha,N} \dd v \dd r \dd \tau \right).\]
We have, by the Cauchy--Schwarz inequality, the triangle inequality, and noting that
$|({\mathcal L}r)_j| \leq 4\sqrt{d}\,L$, that
{\small
\begin{align*}
&{\rm T}_2 \leq \frac{1}{\eps} \left(\sum_{j=1}^{J+1} \int_0^t  \|\sqrt{M}\, (|({\mathcal L}r)_j|+|u(r_j,\tau)|)\,\hrho_{\alpha,N}\|_{L^2(\Omega^{J+1}\times \R^{(J+1)d})}\, \|\sqrt{M}\, \pdv \hrho_{\alpha,N}\|_{L^2(\Omega^{J+1}\times \R^{(J+1)d})} \dd \tau \right)\\
&\leq \frac{1}{\eps} \left(\sum_{j=1}^{J+1} \int_0^t  \|\sqrt{M}\, (|({\mathcal L}r)_j|+|u(r_j,\tau)|)\,\hrho_{\alpha,N}\|^2_{L^2(\Omega^{J+1}\times \R^{(J+1)d})}
\dd \tau\right)^{\frac{1}{2}} \\
&\hspace{2in} \times
\left(\sum_{j=1}^{J+1} \int_0^t \|\sqrt{M}\, \pdv \hrho_{\alpha,N}\|^2_{L^2(\Omega^{J+1}\times \R^{(J+1)d})} \dd \tau \right)^{\frac{1}{2}}\\
&\leq \frac{1}{\eps} \left[\left(\sum_{j=1}^{J+1} \int_0^t  \|\sqrt{M} \,|({\mathcal L}r)_j|\,\hrho_{\alpha,N}\|^2_{L^2(\Omega^{J+1}\times \R^{(J+1)d})}
\dd \tau\right)^{\frac{1}{2}} + \left(\sum_{j=1}^{J+1} \int_0^t  \|\sqrt{M} \,|u(r_j,\tau)|\,\hrho_{\alpha,N}\|^2_{L^2(\Omega^{J+1}\times \R^{(J+1)d})}\dd \tau
\right)^{\frac{1}{2}}\right]
\\
&\hspace{2in} \times
\left(\sum_{j=1}^{J+1} \int_0^t \|\sqrt{M}\, \pdv \hrho_{\alpha,N}\|^2_{L^2(\Omega^{J+1}\times \R^{(J+1)d})} \dd \tau \right)^{\frac{1}{2}}\\
&\leq \frac{1}{\eps} \left[4\sqrt{(J+1)d}\,L \left(\int_0^t  \|\sqrt{M} \,\hrho_{\alpha,N}\|^2_{L^2(\Omega^{J+1}\times \R^{(J+1)d})}
\dd \tau\!\right)^{\!\frac{1}{2}}
+ \left(\sum_{j=1}^{J+1} \int_0^t  \|\sqrt{M} \,|u(r_j,\tau)|\,\hrho_{\alpha,N}\|^2_{L^2(\Omega^{J+1}\times \R^{(J+1)d})} \dd \tau
\!\right)^{\!\frac{1}{2}}\right] \\
&\hspace{2in} \times
\left(\sum_{j=1}^{J+1} \int_0^t \|\sqrt{M}\, \pdv \hrho_{\alpha,N}\|^2_{L^2(\Omega^{J+1}\times \R^{(J+1)d})} \dd \tau \right)^{\frac{1}{2}}.
\end{align*}
}

\noindent
We shall focus our attention on the second term in the square brackets on the right-hand side:
\begin{align*}
& \int_0^t \|\sqrt{M} \,|u(r_j,\tau)|\,\hrho_{\alpha,N}\|^2_{L^2(\Omega^{J+1}\times \R^{(J+1)d})}\dd \tau
= \int_0^t \int_{\Omega^{J+1}\times \R^{(J+1)d}} M(v)\, |u(r_j,\tau)|^2\,\hrho_{\alpha,N}^2(r,v,\tau) \dd r \dd v \dd \tau\\
&\quad = \int_0^t \int_{\Omega^{J+1}} |u(r_j,\tau)|^2 \left(\int_{\R^{(J+1)d}} M(v)\,\hrho_{\alpha,N}^2(r,v,\tau) \dd v \right) \dd r \dd \tau\\
&\quad \leq \int_0^t \|u(\cdot,\tau)\|^2_{L^\infty({\Omega})} \left(\int_{\Omega^{J+1}} \int_{\R^{(J+1)d}} M(v)\,\hrho_{\alpha,N}^2(r,v,\tau) \dd v \right) \dd r \dd \tau.
\end{align*}

\noindent
Thus, we have the following bound:
\begin{align}\label{eq:u-bound}
&\left(\sum_{j=1}^{J+1} \int_0^t  \|\sqrt{M} \,|u(r_j,\tau)|\,\hrho_{\alpha,N}\|^2_{L^2(\Omega^{J+1}\times \R^{(J+1)d})}
\dd \tau\right)^{\frac{1}{2}}\nonumber\\
&\quad \leq \sqrt{J+1}\, \left(\int_0^t \|u(\cdot,\tau)\|^2_{L^\infty({\Omega})}\|\sqrt{M}\,\hrho_{\alpha,N}(\cdot,\cdot,\tau)\|^2_{L^2(\Omega^{J+1}\times \R^{(J+1)d})}\dd \tau\right)^{\frac{1}{2}}.
\end{align}
Consequently,
\begin{align*}
{\rm T}_2 &\leq \frac{1}{\eps} C(L,J)\, \left(\int_0^t (1+ \|u(\cdot,\tau)\|^2_{L^\infty({\Omega})})\,\|\sqrt{M}\, \hrho_{\alpha,N}\|^2_{L^2(\Omega^{J+1}\times
\R^{(J+1)d})} \dd \tau\right)^{\frac{1}{2}}\\
&\qquad \times  \left(\sum_{j=1}^{J+1}\int_0^t \|\sqrt{M}\, \pdv \hrho_{\alpha,N}\|^2_{L^2(\Omega^{J+1}
\times \R^{(J+1)d})} \dd \tau\right)^{\frac{1}{2}}.
\end{align*}
Returning with this bound to \eqref{eq:weak-a-gal3}, we have that
\begin{align}\label{eq:weak-a-gal4}
&\frac{1}{2}\|\sqrt{M}\, \hrho_{\alpha,N}(\cdot,\cdot,t)\|^2_{L^2(\Omega^{J+1}\times \R^{(J+1)d})}\nonumber\\
&\quad+ \frac{\beta^2}{2\eps^2} \sum_{j=1}^{J+1} \int_0^t \|\sqrt{M}\, \pdv \hrho_{\alpha,N}\|^2_{L^2(\Omega^{J+1}\times \R^{(J+1)d})} \dd \tau +
\alpha  \sum_{j=1}^{J+1} \int_0^t \|\sqrt{M}\, \pdr \hrho_{\alpha,N}\|^2_{L^2(\Omega^{J+1}\times \R^{(J+1)d})} \dd \tau \nonumber\\
&\quad\quad  \leq \frac{1}{2} \|\sqrt{M}\, \hrho_{0}\|^2_{L^2(\Omega^{J+1}\times \R^{(J+1)d})}\nonumber\\
&\quad\qquad + \frac{1}{2\beta^2}\,C(L,J)^2\,\left(\int_0^t (1+ \|u(\cdot,\tau)\|^2_{L^\infty({\Omega})}) \|\sqrt{M}\, \hrho_{\alpha,N}\|^2_{L^2(\Omega^{J+1}\times \R^{(J+1)d})} \dd \tau\right)
\qquad \forall\, t \in (0,T].
\end{align}

\noindent
Hence, by Gronwall's lemma,
\begin{align}\label{eq:energy-aN}
&\|\sqrt{M}\, \hrho_{\alpha,N}(\cdot,\cdot,t)\|^2_{L^2(\Omega^{J+1}\times \R^{(J+1)d})}\nonumber\\
&\quad+ \frac{\beta^2}{\eps^2} \sum_{j=1}^{J+1} \int_0^t \|\sqrt{M}\, \pdv \hrho_{\alpha,N}\|^2_{L^2(\Omega^{J+1}\times \R^{(J+1)d})} \dd \tau
+ 2\alpha \sum_{j=1}^{J+1} \int_0^t \|\sqrt{M}\, \pdr \hrho_{\alpha,N}\|^2_{L^2(\Omega^{J+1}\times \R^{(J+1)d})} \dd \tau \nonumber\\
&\quad\quad  \leq \|\sqrt{M}\, \hrho_{0}\|^2_{L^2(\Omega^{J+1}\times \R^{(J+1)d})} \,
\mbox{exp}\left(\frac{1}{\beta^2}\,C(L,J)^2\,(T+ \|u\|^2_{L^2(0,T;L^{\infty}({\Omega}))})  \right)\qquad \forall\, t \in (0,T].
\end{align}
Thus, for $\alpha \in (0,1]$ fixed, we deduce the following uniform bounds with respect to $N$:
\begin{equation}
\begin{aligned}\label{eq:uniform-space}
\|\hrho_{\alpha,N}\|_{L^\infty(0,T;L^2_M(\Omega^{J+1}\times \R^{(J+1)d}))} &\leq
C(L,J,T,\eps,\|u\|_{L^2(0,T;L^{\infty}({\Omega}))})\,\|\hrho_{0}\|_{L^2_M(\Omega^{J+1}\times \R^{(J+1)d})},\\
\|\partial_{v_j}\hrho_{\alpha,N}\|_{L^2(0,T;L^2_M(\Omega^{J+1}\times \R^{(J+1)d}))} &\leq C(L,J,T,\eps,\|u\|_{L^2(0,T;L^{\infty}({\Omega}))})\,\|\hrho_{0}\|_{L^2_M(\Omega^{J+1}\times \R^{(J+1)d})},\\
\sqrt{\alpha}\,\|\partial_{r_j}\hrho_{\alpha,N}\|_{L^2(0,T;L^2_M(\Omega^{J+1}\times \R^{(J+1)d}))} &\leq C(L,J,T,\eps,\|u\|_{L^2(0,T;L^{\infty}({\Omega}))})\,\|\hrho_{0}\|_{L^2_M(\Omega^{J+1}\times \R^{(J+1)d})},
\end{aligned}
\end{equation}
for all $j \in \{1,\dots, J+1\}$. Furthermore, by \eqref{eq:log-estimate},
\begin{equation}\label{eq:uniform-space-1}
\||v_j|\,\hrho_{\alpha,N}\|_{L^2(0,T;L^2_M(\Omega^{J+1}\times \R^{(J+1)d}))} \leq
C(L,J,T,\eps,\|u\|_{L^2(0,T;L^{\infty}({\Omega}))}, \|\hrho_{0}\|_{L^2_M(\Omega^{J+1}\times \R^{(J+1)d})}),
\end{equation}
for all $j \in \{1,\dots, J+1\}$; as $\beta>0$ is considered to be fixed throughout, the dependence of the constants on $\beta$ has not been (and will not be) indicated.

\smallskip

Next, we shall exploit the bounds stated in \eqref{eq:uniform-space} and \eqref{eq:uniform-space-1} to derive a uniform-in-$N$ bound on $\partial_t\hrho_{\alpha,N}$ in the function space $L^2(0,T;(W^{1,2}_{*, M}(\Omega^{J+1}\times \R^{(J+1)d}))')$. Let us first note that
\begin{align*}
\|\partial_t \hrho_{\alpha,N}(t)\|_{(W^{1,2}_{*,M}(\Omega^{J+1} \times \R^{(J+1)d}))'} &= \sup_{w \in W^{1,2}_{*,M}(\Omega^{J+1} \times \R^{(J+1)d}),\; \|w\|_{W^{1,2}_M(\Omega^{J+1} \times \R^{(J+1)d})}\leq 1} (M \partial_t \hrho_{\alpha,N}(t) , w)\\
&=\sup_{w \in W^{1,2}_{*,M}(\Omega^{J+1} \times \R^{(J+1)d}),\; \|w\|_{W^{1,2}_M(\Omega^{J+1} \times \R^{(J+1)d})}\leq 1} (M \partial_t \hrho_{\alpha,N}(t) , P_N w),
\end{align*}
where $(\cdot,\cdot)$ denotes the inner product of $L^2(\Omega^{J+1} \times \R^{(J+1)d})$. By
reversing the partial integration with respect to $\tau$ in  \eqref{eq:weak-a-gal2}, we deduce,
for all $t \in (0,T]$, that
%
\begin{align}\label{eq:galerkin-1}
\int_0^t (M \,\pd_\tau \hrho_{\alpha,N}(\cdot,\cdot,\tau), \psi_\ell(\cdot,\cdot))\, \phi(\tau) \dd \tau + \frac{\beta^2}{\eps^2}\sum_{j=1}^{J+1} \int_0^t (M\,\pdv\hrho_{\alpha,N}(\cdot,\cdot,\tau), \pdv \psi_\ell(\cdot,\cdot))\, \phi(\tau)  \dd \tau\qquad\qquad\nonumber\\
- \frac{1}{\eps} \sum_{j=1}^{J+1} \int_0^t (M\, v_j \hrho_{\alpha,N}(\cdot,\cdot,\tau), \pdr \psi_\ell(\cdot,\cdot))\, \phi(\tau)\dd \tau + \alpha \sum_{j=1}^{J+1} \int_0^t (M\,\pdr \hrho_{\alpha,N}(\cdot,\cdot,\tau), \pdr \psi_\ell(\cdot,\cdot))\, \phi(\tau) \dd \tau \nonumber\\
- \frac{1}{\eps} \sum_{j=1}^{J+1} \int_0^t (M\,(({\mathcal L}r)_j+u(r_j,\tau))\,\hrho_{\alpha,N}(\cdot,\cdot,\tau), \pdv \psi_\ell(\cdot,\cdot))\, \phi(\tau) \dd \tau = 0
\nonumber\\
\forall\, \ell \in \{1,\dots,N\} \mbox{ and $\forall\,\phi \in W^{1,2}(0,T)$.}
\end{align}
%

\noindent
Hence, thanks to the density of $W^{1,2}(0,T)$ in $L^p(0,T)$ for all $p \in [1,\infty)$, and recalling the fundamental lemma of the calculus of variations (du Bois-Reymond's lemma), we have that
\begin{align*}
&(M \,\pd_t \hrho_{\alpha,N}(\cdot,\cdot, t), \psi_\ell(\cdot,\cdot)) + \frac{\beta^2}{\eps^2}\sum_{j=1}^{J+1}(M\,\pdv\hrho_{\alpha,N}(\cdot,\cdot,t), \pdv \psi_\ell(\cdot,\cdot))\, \nonumber\\
&\qquad- \frac{1}{\eps} \sum_{j=1}^{J+1} (M\, v_j \hrho_{\alpha,N}(\cdot,\cdot,t), \pdr \psi_\ell(\cdot,\cdot)) + \alpha \sum_{j=1}^{J+1} (M\,\pdr \hrho_{\alpha,N}(\cdot,\cdot,t), \pdr \psi_\ell(\cdot,\cdot)) \nonumber\\
&\qquad- \frac{1}{\eps} \sum_{j=1}^{J+1} (M\,(({\mathcal L}r)_j+u(r_j,t))\,\hrho_{\alpha,N}(\cdot,\cdot,t), \pdv \psi_\ell(\cdot,\cdot)) \nonumber = 0
\qquad \forall\, \ell \in \{1,\dots,N\} \mbox{ and a.e. $t \in (0,T]$.}
\end{align*}
This then implies that
\begin{align*}
&(M \,\pd_t \hrho_{\alpha,N}(t), P_N w) = -\frac{\beta^2}{\eps^2}\sum_{j=1}^{J+1}(M\,\pdv\hrho_{\alpha,N}(t), \pdv P_N w) \nonumber\\
&\qquad+ \frac{1}{\eps} \sum_{j=1}^{J+1} (M\, v_j \hrho_{\alpha,N}(t), \pdr P_N w) - \alpha \sum_{j=1}^{J+1} (M\,\pdr \hrho_{\alpha,N}(t), \pdr P_N w) \nonumber\\
&\qquad+ \frac{1}{\eps} \sum_{j=1}^{J+1} (M\,(({\mathcal L}r)_j+u(r_j,t))\,\hrho_{\alpha,N}(t), \pdv P_N w)\nonumber\\
&\quad =: {\rm S}_1(t) + {\rm S}_2(t) + {\rm S}_3(t) + {\rm S}_4(t)
\qquad \forall\, \ell \in \{1,\dots,N\} \mbox{ and a.e. $t \in (0,T]$.}
\end{align*}

The terms ${\rm S}_1(t)$ and ${\rm S}_3(t)$ are easy to bound: for a.e. $t \in (0,T]$,
\[ |{\rm S}_1(t)| \leq \frac{\beta^2}{\eps^2}\left(\sum_{j=1}^{J+1}\|\sqrt{M}\,\pdv\hrho_{\alpha,N}(t)\|^2_{L^2(\Omega^{J+1}\times \R^{(J+1)d})} \right)^{\frac{1}{2}}
\left(\sum_{j=1}^{J+1}\|\sqrt{M}\,\pdv P_N w\|^2_{L^2(\Omega^{J+1}\times \R^{(J+1)d})} \right)^{\frac{1}{2}}\]
and
\[ |{\rm S}_3(t)| \leq \alpha\left(\sum_{j=1}^{J+1}\|\sqrt{M}\,\pdr\hrho_{\alpha,N}(t)\|^2_{L^2(\Omega^{J+1}\times \R^{(J+1)d})} \right)^{\frac{1}{2}}
\left(\sum_{j=1}^{J+1}\|\sqrt{M}\,\pdr P_N w\|^2_{L^2(\Omega^{J+1}\times \R^{(J+1)d})} \right)^{\frac{1}{2}}.\]
Thus, by $\mbox{\eqref{eq:uniform-space}}_2$ we have that,
\[ \int_0^T |{\rm S}_1(t)|^2\dd t \leq C \left(\sum_{j=1}^{J+1}\|\sqrt{M}\,\pdv P_N w\|^2_{L^2(\Omega^{J+1}\times \R^{(J+1)d})} \right)^{\frac{1}{2}}\]
and, by $\mbox{\eqref{eq:uniform-space}}_3$,
\[ \int_0^T |{\rm S}_3(t)|^2 \dd t \leq C \sqrt{\alpha} \left(\sum_{j=1}^{J+1}\|\sqrt{M}\,\pdr P_N w\|^2_{L^2(\Omega^{J+1}\times \R^{(J+1)d})} \right)^{\frac{1}{2}},\]
where $C$ is a positive constant, independent of $N$ and $\alpha$.

For the term ${\rm S}_2(t)$, we have, for a.e. $t \in (0,T]$, that
\begin{align*}
|{\rm S}_2(t)| \leq \alpha\left(\sum_{j=1}^{J+1}\|\sqrt{M}\, |v_j|\,\hrho_{\alpha,N}(t)\|^2_{L^2(\Omega^{J+1}\times \R^{(J+1)d})} \right)^{\frac{1}{2}}
\left(\sum_{j=1}^{J+1}\|\sqrt{M}\,\pdr P_N w\|^2_{L^2(\Omega^{J+1}\times \R^{(J+1)d})} \right)^{\frac{1}{2}}.
\end{align*}
Now, \eqref{eq:uniform-space-1} implies that
\begin{align}\label{eq:vbound}
\frac{1}{4\beta}\, \int_0^T \int_{\Omega^{J+1} \times \R^{(J+1)d}}
M(v)\, |v_j|^2\, |\hrho_{\alpha,N}|^2 \dd r \dd v \dd \tau
& \leq C,
\end{align}
where the constant $C$ is independent of $N$ and $\alpha$, and therefore
\[ \int_0^T |{\rm S}_2(t)|^2 \dd t \leq C \alpha \left(\sum_{j=1}^{J+1}\|\sqrt{M}\,\pdr P_N w\|^2_{L^2(\Omega^{J+1}\times \R^{(J+1)d})} \right)^{\frac{1}{2}},\]
where $C$ is independent of $N$ and $\alpha$.

Finally, thanks to \eqref{eq:u-bound} and \eqref{eq:uniform-space}, we have that
\[ \int_0^T |{\rm S}_4(t)|^2 \dd t \leq C \left(\sum_{j=1}^{J+1}\|\sqrt{M}\,\pdv P_N w\|^2_{L^2(\Omega^{J+1}\times \R^{(J+1)d})} \right)^{\frac{1}{2}},\]
where, again, $C$ is independent of $N$ and $\alpha$.

By collecting the bounds on ${\rm S}_1, \dots, {\rm S}_4$, noting \eqref{ortho}, and recalling that $\alpha \in (0,1]$,
we deduce that
\begin{align*}
\int_0^T \|\partial_t \hrho_{\alpha,N}(t)\|^2_{(W^{1,2}_{*,M}(\Omega^{J+1} \times \R^{(J+1)d}))'} \dd t &\leq C(L,J,T,\eps,\|u\|_{L^2(0,T;L^\infty({\Omega}))}).
\end{align*}
Hence,
\begin{align}\label{eq:uniform-time}
\|\partial_t \hrho_{\alpha,N}\|_{L^2(0,T;(W^{1,2}_{*,M}(\Omega^{J+1} \times \R^{(J+1)d}))')} &\leq C(L,J,T,\eps,\|u\|_{L^2(0,T;L^\infty({\Omega}))}),
\end{align}
as required.

The bounds \eqref{eq:uniform-space}, \eqref{eq:uniform-space-1}, \eqref{eq:uniform-time} in conjunction with the compact embedding of $W^{1,2}_{*,M}(\Omega^{J+1} \times \R^{(J+1)d})$ into  the function space $L^2_M(\Omega^{J+1} \times \R^{(J+1)d})$
and the Aubin--Lions lemma (cf. \cite{Simon})
imply the existence of a subsequence (not indicated) of $(\hrho_{\alpha,N})_{N \geq 1}$ and of an element
{\small
\[ \hrho_\alpha \in L^\infty(0,T;L^2_M(\Omega^{J+1} \times \R^{(J+1)d})) \cap L^2(0,T;W^{1,2}_{*,M}(\Omega^{J+1} \times \R^{(J+1)d})) \cap W^{1,2}(0,T;(W^{1,2}_{*,M}(\Omega^{J+1}\times \R^{(J+1)d}))')\]
}

\vspace{-4mm}
\noindent
such that, as $N \rightarrow \infty$,
\begin{alignat}{2}\label{eq:compactness-N}
\begin{aligned}
\hrho_{\alpha,N} &\rightharpoonup \hrho_\alpha &&\qquad \mbox{weakly* in $L^\infty(0,T;L^2_M(\Omega^{J+1} \times \R^{(J+1)d}))$},\\
\hrho_{\alpha,N} &\rightarrow \hrho_\alpha &&\qquad \mbox{strongly in $L^2(0,T;L^2_M(\Omega^{J+1} \times \R^{(J+1)d}))$},\\
\hrho_{\alpha,N} &\rightharpoonup \hrho_\alpha &&\qquad \mbox{weakly in $L^2(0,T;W^{1,2}_{*,M}(\Omega^{J+1} \times \R^{(J+1)d}))$},\\
|v_j|\,\hrho_{\alpha,N} &\rightharpoonup |v_j|\,\hrho_\alpha &&\qquad \mbox{weakly in $L^2(0,T;L^2_M(\Omega^{J+1} \times \R^{(J+1)d}))$},\\
\partial_t \hrho_{\alpha,N} &\rightharpoonup\partial_t\hrho_\alpha &&\qquad \mbox{weakly in
$L^2(0,T;(W^{1,2}_{*,M}(\Omega^{J+1} \times \R^{(J+1)d}))')$}.
\end{aligned}
\end{alignat}
Thanks to the density of $W^{1,2}_{0,M}(\Omega^{J+1} \times \R^{(J+1)d})$ in $L^2_M(\Omega^{J+1} \times \R^{(J+1)d})$
(cf. Appendix A in \cite{BS2010-hookean}) and noting that $W^{1,2}_{0,M}(\Omega^{J+1} \times \R^{(J+1)d}) \subset W^{1,2}_{*,M}(\Omega^{J+1} \times \R^{(J+1)d})$, it follows that $W^{1,2}_{*,M}(\Omega^{J+1} \times \R^{(J+1)d})$
is dense in the space $L^2_M(\Omega^{J+1} \times \R^{(J+1)d})$. Thus, the Hilbert space $V:=W^{1,2}_{*,M}(\Omega^{J+1} \times \R^{(J+1)d})$ is continuously and densely embedded into the Hilbert space $H:=L^2_M(\Omega^{J+1} \times \R^{(J+1)d})$. Hence, according to the function space interpolation result (2.41) in Lions \& Magenes \cite{LM1972},  $[V,V']_{1/2} = H$, and therefore Theorem 3.1 in \cite{LM1972}
yields the continuous embedding
\[ L^2(0,T;W^{1,2}_{*,M}(\Omega^{J+1} \times \R^{(J+1)d})) \cap W^{1,2}(0,T;(W^{1,2}_{*,M}(\Omega^{J+1} \times \R^{(J+1)d})'))
\hookrightarrow \mathcal{C}([0,T];L^2_M(\Omega^{J+1} \times \R^{(J+1)d}))\]
which then implies that
\begin{align}\label{eq:compactness-N1}
\begin{aligned}
\hrho_\alpha \in \mathcal{C}([0,T];L^2_M(\Omega^{J+1} \times \R^{(J+1)d}))&,\\
\lim_{N \rightarrow \infty}(\hrho_{\alpha,N}(\cdot,\cdot,t)-\hrho_\alpha(\cdot,\cdot,t), \eta)_{L^2_M(\Omega^{J+1} \times \R^{(J+1)d})} \rightarrow 0 &\qquad \forall\, \eta \in L^2_M(\Omega^{J+1} \times \R^{(J+1)d})\quad
\forall\, t \in [0,T].
\end{aligned}
\end{align}

By passing to the limit $N \rightarrow \infty$ in \eqref{eq:energy-aN}, using the weak convergence
results \eqref{eq:compactness-N} in conjunction with the weak lower-semicontinuity of the norm function, we deduce
that $\hrho_\alpha$ satisfies the following energy inequality:
\begin{align}\label{eq:energy-a}
&\|\sqrt{M}\, \hrho_{\alpha}(\cdot,\cdot,t)\|^2_{L^2(\Omega^{J+1}\times \R^{(J+1)d})}\nonumber\\
&\quad+ \frac{\beta^2}{\eps^2} \sum_{j=1}^{J+1} \int_0^t \|\sqrt{M}\, \pdv \hrho_{\alpha}\|^2_{L^2(\Omega^{J+1}\times \R^{(J+1)d})} \dd \tau +
2\alpha\, \sum_{j=1}^{J+1} \int_0^t \|\sqrt{M}\, \pdr \hrho_{\alpha}\|^2_{L^2(\Omega^{J+1}\times \R^{(J+1)d})} \dd \tau \nonumber\\
&\quad\quad  \leq \|\sqrt{M}\, \hrho_{0}\|^2_{L^2(\Omega^{J+1}\times \R^{(J+1)d})} \,
\mbox{exp}\left(\frac{1}{\beta^2}\,C(L,J)^2\,(T+ \|u\|^2_{L^2(0,T;L^{\infty}({\Omega}))})  \right)
\qquad \forall\, t \in (0,T].
\end{align}
Furthermore, by replacing $\varphi(r,v,\tau)$ with $\varphi(r,v,\tau)\,\chi_{t,h}(\tau)$ in \eqref{eq:weak-a}, for $t \in (0,T]$ fixed, and passing to the limit $h \rightarrow 0_+$, we deduce that
{\small
\begin{align}\label{eq:weak-aa}
&\int_{\Omega^{J+1}} \int_{\R^{(J+1)d}} M(v)\,\hrho_\alpha(r,v,t)\,\varphi(r,v,t)\dd v \dd r- \int_0^t \int_{\Omega^{J+1}} \int_{\R^{(J+1)d}} M(v)\,\hrho_\alpha(r,v,\tau)\,\pd_\tau \varphi(r,v,\tau)\dd v \dd r \dd \tau\nonumber\\
&\qquad+ \frac{\beta^2}{\eps^2}\left(\sum_{j=1}^{J+1} \int_0^t \int_{\Omega^{J+1}} \int_{\R^{(J+1)d}} M(v)\,\pdv \hrho_\alpha \cdot \pdv \varphi \dd v \dd r \dd \tau \right)\nonumber\\
&\qquad- \frac{1}{\eps} \left(\sum_{j=1}^{J+1} \int_0^t \int_{\Omega^{J+1}} \int_{\R^{(J+1)d}} M(v)\, v_j \hrho_\alpha\cdot \pdr \varphi \dd v \dd r \dd \tau \right)\nonumber\\
&\qquad + \alpha \sum_{j=1}^{J+1} \int_0^t \int_{\Omega^{J+1}} \int_{\R^{(J+1)d}} M(v)\,\pdr \hrho_\alpha \cdot \pdr \varphi \dd v \dd r \dd \tau \nonumber\\
&\qquad- \frac{1}{\eps} \left(\sum_{j=1}^{J+1} \int_0^t \int_{\Omega^{J+1}} \int_{\R^{(J+1)d}} M(v)\,(({\mathcal L}r)_j+u(r_j,\tau))\,\hrho_\alpha\cdot \pdv \varphi \dd v \dd r \dd \tau \right)\nonumber\\
&\qquad\qquad  = \int_{\Omega^{J+1}} \int_{\R^{(J+1)d}} M(v)\,\hrho_{0}(r,v)\,\varphi(r,v,0)\dd v \dd r
\qquad \forall\, \varphi \in W^{1,2}(0,T; W^{1,2}_{*,M}(\Omega^{J+1} \times \R^{(J+1)d})).
\end{align}
}

By letting $t \rightarrow 0_+$ in the weak formulation \eqref{eq:weak-aa},
recalling that $\hrho_\alpha \in \mathcal{C}([0,T];
L^2_M(\Omega^{J+1}\times \R^{(J+1)d}))$ and noting that $W^{1,2}(0,T; W^{1,2}_{*,M}(\Omega^{J+1} \times \R^{(J+1)d})) \hookrightarrow \mathcal{C}([0,T]; W^{1,2}_{*,M}(\Omega^{J+1} \times \R^{(J+1)d}))$, it follows that
\begin{align*}
&\lim_{t \rightarrow 0_+} \int_{\Omega^{J+1}} \int_{\R^{(J+1)d}} M(v)\,\hrho_\alpha(r,v,t)\,\varphi(r,v,t)\dd v \dd r  = \int_{\Omega^{J+1}} \int_{\R^{(J+1)d}} M(v)\,\hrho_\alpha(r,v,0)\,\varphi(r,v,0)\dd v \dd r\\
&\qquad = \int_{\Omega^{J+1}} \int_{\R^{(J+1)d}} M(v)\,\hrho_{0}(r,v)\,\varphi(r,v,0)\dd v \dd r
\qquad \forall\, \varphi \in W^{1,2}(0,T; W^{1,2}_{*,M}(\Omega^{J+1} \times \R^{(J+1)d})).
\end{align*}
As was noted in the paragraph preceding \eqref{eq:compactness-N1},
$W^{1,2}_{*,M}(\Omega^{J+1} \times \R^{(J+1)d})$ is continuously and densely embedded into $L^2_M(\Omega^{J+1} \times \R^{(J+1)d})$, so we deduce from the above, with $\varphi(\cdot,\cdot,t) \equiv \eta(\cdot,\cdot) \in L^2_M(\Omega^{J+1} \times \R^{(J+1)d})$, that
\begin{align}\label{eq:weak-ini}
\lim_{t \rightarrow 0_+} (\hrho_\alpha(t) - \hrho_{0},\eta)_{L^2_M(\Omega^{J+1} \times \R^{(J+1)d})}= 0
\qquad \forall\, \eta \in L^2_M(\Omega^{J+1} \times \R^{(J+1)d}).
\end{align}
This weak attainment of the initial datum $\hrho_{0}$ by the solution $\hrho_{\alpha}$ can be strengthened, in fact. By letting $t \rightarrow 0_+$ in \eqref{eq:energy-a}, it follows that
\[ \lim_{t \rightarrow 0_+} \|\hrho_{\alpha}(t)\|^2_{L^2_M(\Omega^{J+1} \times \R^{(J+1)d})}
\leq \|\hrho_{0}\|^2_{L^2_M(\Omega^{J+1} \times \R^{(J+1)d})}.
\]
Hence, and noting \eqref{eq:weak-ini},
\begin{align}
&\lim_{t \rightarrow 0_+} \|\hrho_{\alpha}(t) - \hrho_{0}\|^2_{L^2_M(\Omega^{J+1} \times \R^{(J+1)d})} = \lim_{t \rightarrow 0_+} (\hrho_{\alpha}(t) - \hrho_{0},\hrho_{\alpha}(t) - \hrho_{0})_{L^2_M(\Omega^{J+1} \times \R^{(J+1)d})} \nonumber\\
&= \lim_{t \rightarrow 0_+} (\hrho_{\alpha}(t),\hrho_{\alpha}(t) - \hrho_{0})_{L^2_M(\Omega^{J+1} \times \R^{(J+1)d})} \nonumber\\
&= \lim_{t \rightarrow 0_+} \|\hrho_{\alpha}(t)\|^2_{L^2_M(\Omega^{J+1} \times \R^{(J+1)d})} - \lim_{t \rightarrow 0_+} (\hrho_{\alpha}(t), \hrho_{0})_{L^2_M(\Omega^{J+1} \times \R^{(J+1)d})}
\nonumber\\
& = \lim_{t \rightarrow 0_+} \|\hrho_{\alpha}(t)\|^2_{L^2_M(\Omega^{J+1} \times \R^{(J+1)d})} - (\hrho_{0}, \hrho_{0})_{L^2_M(\Omega^{J+1} \times \R^{(J+1)d})} \leq 0,\nonumber
\end{align}
which, by the nonnegativity of the norm, then implies the following strong attainment of the initial datum:
\begin{align}\label{eq:strong-ini}
\lim_{t \rightarrow 0_+} \|\hrho_{\alpha}(t) - \hrho_{0}\|^2_{L^2_M(\Omega^{J+1} \times \R^{(J+1)d})} = 0.
\end{align}

Having thus shown that $\hrho_\alpha$ satisfies the given initial condition, we shall now pass to the limit $N \rightarrow \infty$ in the Galerkin approximation, in order to show that $\hrho_\alpha$ is in fact a weak solution to the parabolic regularization \eqref{eq:weak-a} of \eqref{eq:weak}.

Given any (fixed) $\varphi \in W^{1,2}(0,T; W^{1,2}_{*,M}(\Omega^{J+1} \times \R^{(J+1)d}))$, we consider the
function $\varphi_N \in W^{1,2}(0,T;\mathcal{X}_N)$, defined by
\[ \varphi_N(r,v,t):= \sum_{k=1}^N \beta_{k,N}(t)\, \psi_k(r,v),\]
where $\beta_{k,N} \in W^{1,2}(0,T)$ is defined by
\[\beta_{k,N}(t) = (\varphi(\cdot,\cdot,t), \psi_k(\cdot,\cdot))_{L^2_M(\Omega^{J+1} \times \R^{(J+1)d})}, \qquad k=1,\dots, N; \quad N \geq 1.\]
Hence,
\begin{equation}\label{eq:philimit}
\lim_{N\rightarrow \infty} \|\varphi - \varphi_N\|_{W^{1,2}(0,T;W^{1,2}_M(\Omega^{J+1} \times \R^{(J+1)d}))} = 0.
\end{equation}

Next, for $\varphi \in W^{1,2}(0,T;W^{1,2}_{*,M}(\Omega^{J+1} \times \R^{(J+1)d}))$ fixed and $\varphi_N
\in W^{1,2}(0,T;\mathcal{X}_N)$ as defined above, we rewrite \eqref{eq:weak-a-gal} in the following equivalent form:
\begin{align}
&\int_{\Omega^{J+1}} \int_{\R^{(J+1)d}} M(v)\,\hrho_{\alpha,N}(r,v,T)\,\varphi(r,v,T)\dd v \dd r \nonumber\\
&\qquad- \int_0^T \int_{\Omega^{J+1}} \int_{\R^{(J+1)d}} M(v)\,\hrho_{\alpha,N}(r,v,\tau)\,\pd_\tau \varphi(r,v,\tau)\dd v \dd r \dd \tau\nonumber\\
&\qquad+ \frac{\beta^2}{\eps^2}\left(\sum_{j=1}^{J+1} \int_0^T \int_{\Omega^{J+1}} \int_{\R^{(J+1)d}} M(v)\,\pdv \hrho_{\alpha,N} \cdot \pdv \varphi \dd v \dd r \dd \tau \right)\nonumber\\
&\qquad- \frac{1}{\eps} \left(\sum_{j=1}^{J+1} \int_0^T \int_{\Omega^{J+1}} \int_{\R^{(J+1)d}} M(v)\, v_j \hrho_{\alpha,N}\cdot \pdr \varphi \dd v \dd r \dd \tau \right)\nonumber\\
&\qquad + \alpha \sum_{j=1}^{J+1} \int_0^T \int_{\Omega^{J+1}} \int_{\R^{(J+1)d}} M(v)\,\pdr \hrho_{\alpha,N} \cdot \pdr \varphi \dd v \dd r \dd \tau \nonumber\\
&\qquad- \frac{1}{\eps} \left(\sum_{j=1}^{J+1} \int_0^T \int_{\Omega^{J+1}} \int_{\R^{(J+1)d}} M(v)\,(({\mathcal L}r)_j+u(r_j,\tau))\,\hrho_{\alpha,N}\cdot \pdv \varphi \dd v \dd r \dd \tau \right)\nonumber\\
&= \int_{\Omega^{J+1}} \int_{\R^{(J+1)d}} M(v)\,\hrho_{0}(r,v)\,\varphi(r,v,0)\dd v \dd r\nonumber
\\
&\qquad - \int_{\Omega^{J+1}} \int_{\R^{(J+1)d}} M(v)\,\hrho_{0}(r,v)\,(\varphi-\varphi_N)(r,v,0)\dd v \dd r \nonumber\\
&\qquad+ \int_{\Omega^{J+1}} \int_{\R^{(J+1)d}} M(v)\,\hrho_{\alpha,N}(r,v,T)\,(\varphi-\varphi_N)(r,v,T)\dd v \dd r \nonumber
\\
&\qquad- \int_0^T \int_{\Omega^{J+1}} \int_{\R^{(J+1)d}} M(v)\,\hrho_{\alpha,N}(r,v,\tau)\,\pd_\tau (\varphi-\varphi_N)(r,v,\tau)\dd v \dd r \dd \tau\nonumber
\end{align}
\begin{align}
\label{eq:phi-phiN}
&\qquad+ \frac{\beta^2}{\eps^2}\left(\sum_{j=1}^{J+1} \int_0^T \int_{\Omega^{J+1}} \int_{\R^{(J+1)d}} M(v)\,\pdv \hrho_{\alpha,N} \cdot \pdv (\varphi-\varphi_N) \dd v \dd r \dd \tau \right)\nonumber\\
&\qquad- \frac{1}{\eps} \left(\sum_{j=1}^{J+1} \int_0^T \int_{\Omega^{J+1}} \int_{\R^{(J+1)d}} M(v)\, v_j \hrho_{\alpha,N}\cdot \pdr (\varphi-\varphi_N) \dd v \dd r \dd \tau \right)\nonumber\\
&\qquad + \alpha \sum_{j=1}^{J+1} \int_0^T \int_{\Omega^{J+1}} \int_{\R^{(J+1)d}} M(v)\,\pdr \hrho_{\alpha,N} \cdot \pdr (\varphi-\varphi_N) \dd v \dd r \dd \tau \nonumber\\
&\qquad- \frac{1}{\eps} \left(\sum_{j=1}^{J+1} \int_0^T \int_{\Omega^{J+1}} \int_{\R^{(J+1)d}} M(v)\,(({\mathcal L}r)_j+u(r_j,\tau))\,\hrho_{\alpha,N}\cdot \pdv (\varphi-\varphi_N) \dd v \dd r \dd \tau \right).
\end{align}

\noindent
Now, using the convergence results \eqref{eq:compactness-N}, \eqref{eq:compactness-N1}, the uniform bounds \eqref{eq:u-bound}, \eqref{eq:uniform-space}, \eqref{eq:vbound},
together with the strong convergence \eqref{eq:philimit}, passage to the limit $N \rightarrow \infty$ in \eqref{eq:phi-phiN} yields that the function
{\small
\[ \hrho_\alpha \in \mathcal{C}([0,T];L^2_M(\Omega^{J+1} \times \R^{(J+1)d})) \cap L^2(0,T;W^{1,2}_{*,M}(\Omega^{J+1} \times \R^{(J+1)d})) \cap W^{1,2}(0,T;(W^{1,2}_{*,M}(\Omega^{J+1}\times \R^{(J+1)d}))')\]
}

\vspace{-4mm}
\noindent
satisfies \eqref{eq:weak-a}. Indeed, as $N \rightarrow \infty$, all terms on the right-hand side of \eqref{eq:phi-phiN}, except the first,
converge to zero, while each of the terms on the left-hand side converges to its counterpart with $\hrho_{N,\alpha}$ replaced by $\hrho_\alpha$.
Thus we have shown that, for any $\alpha \in (0,1]$, $\hrho_\alpha$ is a solution of \eqref{eq:weak-a}, and the energy inequality \eqref{eq:energy-a} holds. It is important to note for the purpose of the discussion in the next section that
the right-hand side of the inequality \eqref{eq:energy-a} is independent of $\alpha$; therefore, \eqref{eq:energy-a} implies that
\begin{alignat}{2}
\hrho_\alpha &\qquad &&\mbox{is bounded in~ $L^\infty(0,T;L^2_M(\Omega^{J+1} \times \R^{(J+1)d}))$} \label{eq:alphabound-1}\\
\partial_{v_j} \hrho_\alpha &\qquad &&\mbox{is bounded in~ $L^2(0,T;L^2_M(\Omega^{J+1} \times \R^{(J+1)d}))\quad$ for all $j=1,\dots,J+1$,} \label{eq:alphabound-2}\\
\sqrt{\alpha}\, \partial_{r_j} \hrho_\alpha &\qquad &&\mbox{is bounded in~ $L^2(0,T;L^2_M(\Omega^{J+1} \times \R^{(J+1)d}))\quad$ for all $j=1,\dots, J+1$}, \label{eq:alphabound-3}
\end{alignat}
provided that $\hrho_0 \in L^2_M(\Omega^{J+1} \times \R^{(J+1)d})$ and $u \in L^2(0,T;L^\infty(\Omega)^d)$. Furthermore,
weak lower semicontinuity of the norm function, \eqref{eq:uniform-space-1} and \eqref{eq:compactness-N}$_4$ imply that
\begin{equation}
~\hspace{0.5cm} |v_j|\,\hrho_\alpha \qquad \mbox{is bounded in~ $L^2(0,T;L^2_M(\Omega^{J+1} \times \R^{(J+1)d}))\quad $ for all $j=1,\dots,J+1$}. \label{eq:alphabound-4}
\end{equation}
Similarly, \eqref{eq:uniform-time} implies that
\begin{equation}\label{eq:uniform-time-bd}
\partial_t \hrho_{\alpha} \qquad \mbox{is bounded in~ $L^2(0,T;(W^{1,2}_{*,M}(\Omega^{J+1} \times \R^{(J+1)d}))')$}.
\end{equation}
We are now ready to pass to the limit $\alpha \rightarrow 0_+$.

\subsection{Passage to the limit with the parabolic regularization parameter}
\label{subsec_2:FP}
The next step in our argument is passage to the limit $\alpha \rightarrow 0_+$ in \eqref{eq:weak-a}. We begin by noting that
\eqref{eq:alphabound-1}--\eqref{eq:alphabound-4} imply that
\begin{subequations}
\begin{alignat}{2}
\hrho_\alpha &\rightharpoonup \hrho \qquad &&\mbox{weak$^*$ in~ $L^\infty(0,T;L^2_M(\Omega^{J+1} \times \R^{(J+1)d}))$} \label{eq:alphabound-1a}\\
\partial_{v_j} \hrho_\alpha &\rightharpoonup \partial_{v_j} \hrho \qquad &&\mbox{weakly in~ $L^2(0,T;L^2_M(\Omega^{J+1} \times \R^{(J+1)d}))\qquad$ for all $j=1,\dots,J+1$,} \label{eq:alphabound-2a}\\
\alpha\, \partial_{r_j} \hrho_\alpha &\rightarrow 0\qquad &&\mbox{strongly in~ $L^2(0,T;L^2_M(\Omega^{J+1} \times \R^{(J+1)d}))\quad\;\,$ for all $j=1,\dots, J+1$}, \label{eq:alphabound-3a}\\
|v_j|\,\hrho_\alpha &\rightharpoonup |v_j|\,\hrho \qquad &&\mbox{weakly in~ $L^2(0,T;L^2_M(\Omega^{J+1} \times \R^{(J+1)d}))\qquad$ for all $j=1,\dots,J+1$}, \label{eq:alphabound-4a}
\end{alignat}
\end{subequations}
provided that $\hrho_0 \in L^2_M(\Omega^{J+1} \times \R^{(J+1)d})$ and $u \in L^2(0,T;L^\infty(\Omega)^d)$.

Next, we shall prove that $\hrho_\alpha \geq 0$ a.e. on  $\Omega^{J+1} \times \R^{(J+1)d} \times [0,T]$ for all $\alpha \in (0,1]$, and that
\begin{equation}\label{eq:conservation=1}
 \int_{\Omega^{J+1} \times \R^{(J+1)d}} M(v)\, \hrho_\alpha(r,v,t) \dd r \dd v = 1\qquad \forall\, t \in [0,T].
\end{equation}
The proof of the latter assertion is straightforward: for $t=0$ it follows from \eqref{eq:ini-cond}; for $t \in
(0,T]$ (fixed), we take $\varphi(r,v,\tau) \equiv 1$ in \eqref{eq:weak-a} and note \eqref{eq:ini-cond}
to  deduce that
\begin{align*}
\int_{\Omega^{J+1}} \int_{\R^{(J+1)d}} M(v)\,\hrho_\alpha(r,v,t)\dd v \dd r
=  \int_{\Omega^{J+1}} \int_{\R^{(J+1)d}} M(v)\,\hrho_{0}(r,v) \dd v \dd r = 1\qquad \forall\,
t \in (0,T].
\end{align*}

Before embarking on the proof of the nonnegativity of $\hrho_\alpha$ we shall first
extend the set of test functions
$$W^{1,2}(0,T;W^{1,2}_{*,M}(\Omega^{J+1} \times \R^{(J+1)d}))$$
appearing in \eqref{eq:weak-aa} to $$L^2(0,T;W^{1,2}_{*,M}(\Omega^{J+1}\times \R^{(J+1)d}))$$
by using a density argument. We begin by rewriting \eqref{eq:weak-aa} as follows:
\begin{align}\label{eq:weak-duality}
&\int_0^t \big\langle M\,\pd_\tau\hrho_\alpha(\cdot,\cdot,\tau),\varphi(\cdot,\cdot,\tau)\big\rangle \dd \tau
+ \frac{\beta^2}{\eps^2}\left(\sum_{j=1}^{J+1} \int_0^t \int_{\Omega^{J+1}} \int_{\R^{(J+1)d}} M(v)\,\pdv \hrho_\alpha \cdot \pdv \varphi \dd v \dd r \dd \tau \right)\nonumber\\
&\qquad- \frac{1}{\eps} \left(\sum_{j=1}^{J+1} \int_0^t \int_{\Omega^{J+1}} \int_{\R^{(J+1)d}} M(v)\, v_j \hrho_\alpha\cdot \pdr \varphi \dd v \dd r \dd \tau \right)\nonumber\\
&\qquad + \alpha \sum_{j=1}^{J+1} \int_0^t \int_{\Omega^{J+1}} \int_{\R^{(J+1)d}} M(v)\,\pdr \hrho_\alpha \cdot \pdr \varphi \dd v \dd r \dd \tau \nonumber\\
&\qquad- \frac{1}{\eps} \left(\sum_{j=1}^{J+1} \int_0^t \int_{\Omega^{J+1}} \int_{\R^{(J+1)d}} M(v)\,(({\mathcal L}r)_j+u(r_j,\tau))\,\hrho_\alpha\cdot \pdv \varphi \dd v \dd r \dd \tau \right)
= 0 \nonumber \\
&\hspace{2.8in} \forall\, \varphi \in W^{1,2}(0,T; W^{1,2}_{*,M}(\Omega^{J+1} \times \R^{(J+1)d}))\quad \forall\,t \in (0,T],
\end{align}

\noindent
where $\langle \cdot , \cdot \rangle$ denotes the duality pairing between
$(W^{1,2}_{*,M}(\Omega^{J+1} \times \R^{(J+1)d}))'$ and $W^{1,2}_{*,M}(\Omega^{J+1} \times \R^{(J+1)d})$ with respect to
$L^2_M(\Omega^{J+1} \times \R^{(J+1)d}))$ as pivot space, into which $W^{1,2}_{*,M}(\Omega^{J+1} \times \R^{(J+1)d})$ is
continuously and densely embedded; hence $\langle \eta , \phi \rangle$ and $(\eta,\phi)_{L^2_M(\Omega^{J+1} \times \R^{(J+1)d})}$ are identified when $\eta \in L^2_M(\Omega^{J+1} \times \R^{(J+1)d}))$ and $\phi \in W^{1,2}_{*,M}(\Omega^{J+1} \times \R^{(J+1)d})$. We note that for
{\small
\[ \hrho_\alpha \in \mathcal{C}([0,T];L^2_M(\Omega^{J+1} \times \R^{(J+1)d})) \cap L^2(0,T;W^{1,2}_{*,M}(\Omega^{J+1} \times \R^{(J+1)d})) \cap W^{1,2}(0,T;(W^{1,2}_{*,M}(\Omega^{J+1}\times \R^{(J+1)d}))')\]
}

\vspace{-4mm}
\noindent
fixed, each of the terms in \eqref{eq:weak-duality} is a bounded linear functional of
$\varphi \in L^2(0,T;W^{1,2}_{*,M}(\Omega^{J+1} \times \R^{(J+1)d}))$.
As the Hilbert space $W^{1,2}(0,T;W^{1,2}_{*,M}(\Omega^{J+1} \times \R^{(J+1)d}))$ is continuously and densely embedded into the Hilbert space $L^2(0,T;W^{1,2}_{*,M}(\Omega^{J+1} \times \R^{(J+1)d}))$, we deduce that
{\small
\[ \hrho_\alpha \in \mathcal{C}([0,T];L^2_M(\Omega^{J+1} \times \R^{(J+1)d})) \cap L^2(0,T;W^{1,2}_{*,M}(\Omega^{J+1} \times \R^{(J+1)d})) \cap W^{1,2}(0,T;(W^{1,2}_{*,M}(\Omega^{J+1}\times \R^{(J+1)d}))')\]
}

\vspace{-4mm}
\noindent
satisfies the following weak formulation:

\begin{align}\label{eq:weak-duality-b}
&\int_0^t \big\langle M\,\pd_\tau\hrho_\alpha(\cdot,\cdot,\tau),\varphi(\cdot,\cdot,\tau)\big\rangle \dd \tau
+ \frac{\beta^2}{\eps^2}\left(\sum_{j=1}^{J+1} \int_0^t \int_{\Omega^{J+1}} \int_{\R^{(J+1)d}} M(v)\,\pdv \hrho_\alpha \cdot \pdv \varphi \dd v \dd r \dd \tau \right)\nonumber\\
&\qquad- \frac{1}{\eps} \left(\sum_{j=1}^{J+1} \int_0^t \int_{\Omega^{J+1}} \int_{\R^{(J+1)d}} M(v)\, v_j \hrho_\alpha\cdot \pdr \varphi \dd v \dd r \dd \tau \right)\nonumber\\
&\qquad + \alpha \sum_{j=1}^{J+1} \int_0^t \int_{\Omega^{J+1}} \int_{\R^{(J+1)d}} M(v)\,\pdr \hrho_\alpha \cdot \pdr \varphi \dd v \dd r \dd \tau \nonumber\\
&\qquad- \frac{1}{\eps} \left(\sum_{j=1}^{J+1} \int_0^t \int_{\Omega^{J+1}} \int_{\R^{(J+1)d}} M(v)\,(({\mathcal L}r)_j+u(r_j,\tau))\,\hrho_\alpha\cdot \pdv \varphi \dd v \dd r \dd \tau \right)
= 0 \nonumber \\
&\hspace{2.8in} \forall\, \varphi \in L^2(0,T; W^{1,2}_{*,M}(\Omega^{J+1} \times \R^{(J+1)d})) \quad \forall\,t \in (0,T].
\end{align}

We prove the nonnegativity of $\hrho_\alpha$ by using Stampacchia's truncation method. Let $[x]_{\pm}$ denote the nonnegative and nonpositive parts of $x$, i.e., $[x]_{\pm}:= \frac{1}{2}(x \pm |x|)$. Thus, $x = [x]_{+} + [x]_{-}$ and $x [x]_{-} = ([x]_{-})^2$.

By taking $\varphi = [\hrho_\alpha]_{-}$ in \eqref{eq:weak-duality-b} (which belongs to the function space
$L^2(0,T; W^{1,2}_{*,M}(\Omega^{J+1} \times \R^{(J+1)d}))$, because $\hrho_\alpha$ belongs to this space), we
have that
\begin{align*}
&\int_0^t \frac{1}{2} \frac{\dd}{\dd t} \|\sqrt{M}\,[\hrho_\alpha(\tau)]_{-}\|^2 \dd \tau
+ \frac{\beta^2}{\eps^2}\sum_{j=1}^{J+1} \int_0^t  \|\sqrt{M}\,\pdv [\hrho_\alpha]_{-}\|^2 \dd \tau
+ \alpha \sum_{j=1}^{J+1} \int_0^t \|\sqrt{M}\,\pdr [\hrho_\alpha]_{-}\|^2\dd \tau \nonumber\\
&\qquad = \frac{1}{\eps} \sum_{j=1}^{J+1} \int_0^t (M\, v_j [\hrho_\alpha]_{-} \,,\, \pdr [\hrho_\alpha]_{-}) \dd \tau\\
&\qquad\qquad + \frac{1}{\eps} \sum_{j=1}^{J+1} \int_0^t (M\,(({\mathcal L}r)_j+u(r_j,\tau))\,[\hrho_\alpha]_{-} \,,\, \pdv [\hrho_{\alpha}]_{-}) \dd \tau
\quad \forall\,t \in (0,T],
\end{align*}
subject to the initial condition $[\hrho_\alpha(0)]_{-} = [\hrho_0]_{-}=0$. Therefore, for all $t \in (0,T]$,
\begin{align}\label{eq:energy-ccc}
&\frac{1}{2} \|\sqrt{M}\,[\hrho_\alpha(t)]_{-}\|^2
+ \frac{\beta^2}{\eps^2}\sum_{j=1}^{J+1} \int_0^t  \|\sqrt{M}\,\pdv [\hrho_\alpha]_{-}\|^2 \dd \tau
+ \alpha \sum_{j=1}^{J+1} \int_0^t \|\sqrt{M}\,\pdr [\hrho_\alpha]_{-}\|^2\dd \tau \nonumber\\
&\qquad = \frac{1}{\eps} \sum_{j=1}^{J+1} \int_0^t (M\, v_j [\hrho_\alpha]_{-}\, ,\, \pdr [\hrho_\alpha]_{-}) \dd \tau \nonumber\\
&\qquad\qquad + \frac{1}{\eps} \sum_{j=1}^{J+1} \int_0^t (M\,(({\mathcal L}r)_j+u(r_j,\tau))\,[\hrho_\alpha]_{-}\, ,\, \pdv [\hrho_{\alpha}]_{-}) \dd \tau.
\end{align}

Next we apply the Cauchy--Schwarz inequality to each of the two terms on the right-hand side of \eqref{eq:energy-ccc}.
We then repeat the calculations that resulted in the bounds \eqref{eq:log-estimate} and \eqref{eq:u-bound}, but now
with $\hrho_{\alpha,N}$ replaced by $[\hrho_\alpha]_{-}$ in those bounds, insert the resulting bounds into the right-hand side
of \eqref{eq:energy-ccc}, absorb the terms containing norms of derivatives of $[\hrho_\alpha]_{-}$ into the left-hand side,
and apply Gronwall's lemma to deduce that
$ \|\sqrt{M}\,[\hrho_\alpha(t)]_{-}\|^2 = 0$ $ \mbox{for all $t \in [0,T]$}$.
Consequently $\hrho_\alpha \geq 0$ a.e. on $\Omega^{J+1} \times \R^{(J+1)d} \times [0,T]$, as has been asserted.
Finally we note that an identical procedure can be used to deduce that $\hrho_\alpha$ is the unique weak solution of \eqref{eq:weak-duality-b}
satisfying the initial condition $\hrho_\alpha(0)=\hrho_{0}$.

\smallskip

%
%

The expression appearing on the right-hand side of \eqref{eq:energy-a}
involves the $L^2_M(\Omega^{J+1} \times \R^{(J+1)d})$ norm of $\hrho_0$, whereas, ultimately,
we would like to make use of the weaker hypotheses, stated in \eqref{eq:ini-cond}, only.
As a matter of fact, in the next section we will require an analogous inequality
whose right-hand side involves the $L^1_M(\Omega^{J+1} \times \R^{(J+1)d})$ norm of $\mathcal{F}(\hrho_0)$
rather than the $L^2_M(\Omega^{J+1} \times \R^{(J+1)d})$ norm of $\hrho_0$.
Thus, before passing to the
limit $\alpha \rightarrow 0_+$ by using the weak convergence results stated in \eqref{eq:alphabound-1a}--\eqref{eq:alphabound-4a}, we shall derive additional
bounds
on $\hrho_{\alpha}$, which involve the $L^1_M(\Omega^{J+1} \times \R^{(J+1)d})$ norm of $\mathcal{F}(\hrho_0)$ rather than the
$L^2_M(\Omega^{J+1} \times \R^{(J+1)d})$ norm of $\hrho_0$. The resulting bounds will also play an important role in the
next section, where we focus on the coupled Oseen--Fokker--Planck system.
The argument is based on the relative entropy method.
Briefly, the procedure involves choosing $\mathcal{F}'(\hrho_\alpha + \gamma)$ as test function in \eqref{eq:weak-duality-b},
with $\gamma>0$, and passing to the limit $\gamma \rightarrow 0_+$; ideally, we would like to choose $\mathcal{F}'(\hrho_\alpha)$
as test function in \eqref{eq:weak-duality-b}, however since $\hrho_\alpha \geq 0$ a.e. on $\Omega^{J+1} \times \R^{(J+1)d} \times [0,T]$,
and $\hrho_\alpha$ is potentially equal to $0$ on a subset of positive measure, there is no guarantee that $\mathcal{F}'(\hrho_\alpha)
= \log \hrho_\alpha$ is a.e. finite. Thus we shall, instead, test with $\mathcal{F}'(\hrho_\alpha + \gamma) = \log (\hrho_\alpha + \gamma)$, and once we have obtained the
necessary bounds we shall pass to the limit $\gamma \rightarrow 0_+$, which will then be followed by passage to the limit with $\alpha \rightarrow 0_+$.

We begin by noting that since $\hrho_\alpha \in L^2(0,T;W^{1,2}_{*,M}(\Omega^{J+1} \times
\mathbb{R}^{(J+1)d}))$ and $\hrho_\alpha \geq 0$, also
\[\mathcal{F}'(\hrho_\alpha+\gamma) = \log(\hrho_\alpha + \gamma)
\in L^2(0,T;W^{1,2}_{*,M}(\Omega^{J+1} \times \mathbb{R}^{(J+1)d})).\]
Hence, for all $t \in (0,T]$, \eqref{eq:weak-duality-b} yields
\begin{align*}
&\int_0^t \big\langle M\,\pd_\tau\hrho_\alpha(\cdot,\cdot,\tau), \mathcal{F}'(\hrho_\alpha(\cdot,\cdot,\tau)+\gamma)\big\rangle \dd \tau
\nonumber\\
&\qquad + \frac{\beta^2}{\eps^2}\left(\sum_{j=1}^{J+1} \int_0^t \int_{\Omega^{J+1}} \int_{\R^{(J+1)d}} M(v)\,\pdv \hrho_\alpha \cdot \pdv \mathcal{F}'(\hrho_\alpha(r,v,\tau)+\gamma) \dd v \dd r \dd \tau \right)\nonumber\\
&\qquad- \frac{1}{\eps} \left(\sum_{j=1}^{J+1} \int_0^t \int_{\Omega^{J+1}} \int_{\R^{(J+1)d}} M(v)\, v_j \hrho_\alpha\cdot \pdr \mathcal{F}'(\hrho_\alpha(r,v,\tau)+\gamma) \dd v \dd r \dd \tau \right)\nonumber\\
&\qquad + \alpha \sum_{j=1}^{J+1} \int_0^t \int_{\Omega^{J+1}} \int_{\R^{(J+1)d}} M(v)\,\pdr \hrho_\alpha \cdot \pdr \mathcal{F}'(\hrho_\alpha(r,v,\tau)+\gamma) \dd v \dd r \dd \tau \nonumber\\
&\qquad- \frac{1}{\eps} \left(\sum_{j=1}^{J+1} \int_0^t \int_{\Omega^{J+1}} \int_{\R^{(J+1)d}} M(v)\,(({\mathcal L}r)_j+u(r_j,\tau))\,\hrho_\alpha\cdot \pdv \mathcal{F}'(\hrho_\alpha(r,v,\tau)+\gamma) \dd v \dd r \dd \tau \right)
= 0.
\end{align*}

Thus, we have that
\begin{align*}
&\big\langle M\,\mathcal{F}(\hrho_\alpha(\cdot,\cdot,t)+\gamma), 1 \big \rangle - \big\langle M\,\mathcal{F}(\hrho_{0}(\cdot,\cdot)+\gamma), 1 \big\rangle
\nonumber\\
&\qquad + \frac{\beta^2}{\eps^2} \sum_{j=1}^{J+1} \int_0^t \int_{\Omega^{J+1}} \int_{\R^{(J+1)d}} M(v)\,\frac{|\pdv \hrho_\alpha|^2}{\hrho_\alpha(r,v,\tau)+\gamma}\, \dd v \dd r \dd \tau\nonumber\\
&\qquad + \alpha \sum_{j=1}^{J+1} \int_0^t \int_{\Omega^{J+1}} \int_{\R^{(J+1)d}} M(v)\,\frac{|\pdr \hrho_\alpha|^2}{\hrho_\alpha(r,v,\tau)+\gamma}\, \dd v \dd r \dd \tau 
\end{align*}
\begin{align*}
&= \frac{1}{\eps} \sum_{j=1}^{J+1} \int_0^t \int_{\Omega^{J+1}} \int_{\R^{(J+1)d}} M(v)\, v_j \frac{\hrho_\alpha}{\hrho_\alpha(r,v,\tau)+\gamma}\cdot \pdr \hrho_\alpha \dd v \dd r \dd \tau \nonumber\\
&\qquad + \frac{1}{\eps} \sum_{j=1}^{J+1} \int_0^t \int_{\Omega^{J+1}} \int_{\R^{(J+1)d}} M(v)\,(({\mathcal L}r)_j+u(r_j,\tau))\,\frac{\hrho_\alpha}{\hrho_\alpha(r,v,\tau)+\gamma}\cdot \pdv \hrho_\alpha(r,v,\tau)\,  \dd v \dd r \dd \tau .
\end{align*}
Similarly to the term ${\rm T}_1$ encountered earlier in the argument following \eqref{eq:weak-a-gal3}, the first term on the right-hand side
is equal to zero. This can be seen by interchanging the order of the integrals over $\Omega^{J+1}$ and $\R^{(J+1)d}$, observing that
\[ \frac{\hrho_\alpha}{\hrho_\alpha(r,v,\tau)+\gamma}\,\, \pdr \hrho_\alpha  = \pdr [\hrho_\alpha - \gamma \log (\hrho_\alpha + \gamma)]\quad\mbox{and}\quad \hrho_\alpha - \gamma \log (\hrho_\alpha + \gamma) \in L^2(0,T;W^{1,2}_{*,M}(\Omega^{J+1} \times \mathbb{R}^{(J+1)d})),\]
and performing integration by parts with respect to $r_j$. The resulting equality can be rewritten as follows:
\begin{align*}
&\big\langle M\,\mathcal{F}(\hrho_\alpha(\cdot,\cdot,t)+\gamma), 1 \big \rangle + \frac{\beta^2}{\eps^2} \sum_{j=1}^{J+1} \int_0^t \int_{\Omega^{J+1}} \int_{\R^{(J+1)d}} M(v)\,\frac{|\pdv \hrho_\alpha|^2}{\hrho_\alpha +\gamma}\, \dd v \dd r \dd \tau \nonumber\\
&\qquad + \alpha \sum_{j=1}^{J+1} \int_0^t \int_{\Omega^{J+1}} \int_{\R^{(J+1)d}} M(v)\,\frac{|\pdr \hrho_\alpha|^2}{\hrho_\alpha +\gamma}\, \dd v \dd r \dd \tau \nonumber\\
&= \big\langle M\,\mathcal{F}(\hrho_{0}(\cdot,\cdot)+\gamma), 1 \big\rangle \\
&\qquad + \frac{1}{\eps} \sum_{j=1}^{J+1} \int_0^t \int_{\Omega^{J+1}} \int_{\R^{(J+1)d}} M(v)\,(({\mathcal L}r)_j+u(r_j,\tau))\,\frac{\hrho_\alpha}{\hrho_\alpha +\gamma}\cdot \pdv \hrho_\alpha  \dd v \dd r \dd \tau
=: {\rm R}_1 + {\rm R}_2.
\end{align*}
We begin by considering ${\rm R}_2$. As
\[ \left|\frac{\hrho_\alpha}{\hrho_\alpha +\gamma} \cdot \pdv \hrho_\alpha\right| \leq \frac{\hrho_\alpha}{(\hrho_\alpha +\gamma)^{\frac{1}{2}}}\,
\frac{|\pdv \hrho_\alpha|}{(\hrho_\alpha +\gamma)^{\frac{1}{2}}} \leq (\hrho_\alpha)^{\frac{1}{2}}\,\frac{|\pdv \hrho_\alpha|}{(\hrho_\alpha +\gamma)^{\frac{1}{2}}},
\]
it follows by the Cauchy--Schwarz inequality that
\begin{align*}
{\rm R}_2 \leq \left(\frac{1}{\beta^2}\sum_{j=1}^{J+1} \int_0^t \int_{\Omega^{J+1}} \int_{\R^{(J+1)d}} M(v)\,|({\mathcal L}r)_j+u(r_j,\tau)|^2
\,\hrho_\alpha \dd v \dd r \dd \tau\right)^{\frac{1}{2}}
\\
\hspace{0.5in} \times \left(\frac{\beta^2}{\eps^2} \sum_{j=1}^{J+1} \int_0^t \int_{\Omega^{J+1}} \int_{\R^{(J+1)d}} M(v)\,\frac{|\pdv \hrho_\alpha|^2}{\hrho_\alpha +\gamma}
\dd v \dd r \dd \tau\right)^{\frac{1}{2}}.
\end{align*}
Therefore,
\begin{align}\label{eq:pre-lim}
&\big\langle M\,\mathcal{F}(\hrho_\alpha(\cdot,\cdot,t)+\gamma), 1 \big \rangle + \frac{\beta^2}{2\eps^2} \sum_{j=1}^{J+1} \int_0^t \int_{\Omega^{J+1}} \int_{\R^{(J+1)d}} M(v)\,\frac{|\pdv \hrho_\alpha|^2}{\hrho_\alpha +\gamma}\, \dd v \dd r \dd \tau \nonumber\\
&\qquad + \alpha \sum_{j=1}^{J+1} \int_0^t \int_{\Omega^{J+1}} \int_{\R^{(J+1)d}} M(v)\,\frac{|\pdr \hrho_\alpha|^2}{\hrho_\alpha +\gamma}\, \dd v \dd r \dd \tau \nonumber\\
&\leq {\rm R}_1 + \frac{1}{2\beta^2}\left(\sum_{j=1}^{J+1} \int_0^t \int_{\Omega^{J+1}} \int_{\R^{(J+1)d}} M(v)\,|({\mathcal L}r)_j+u(r_j,\tau)|^2
\,\hrho_\alpha \dd v \dd r \dd \tau\right).
\end{align}
The second term on the right-hand side of \eqref{eq:pre-lim} is, thanks to \eqref{eq:conservation=1}, bounded as follows:
\begin{align}\label{eq:pre-lim-2}
&\frac{1}{2\beta^2}\left(\sum_{j=1}^{J+1} \int_0^t \int_{\Omega^{J+1}} \int_{\R^{(J+1)d}} M(v)\,|({\mathcal L}r)_j+u(r_j,\tau)|^2
\,\hrho_\alpha \dd v \dd r \dd \tau\right) \nonumber\\
&\qquad \leq \frac{1}{2\beta^2} \sum_{j=1}^{J+1}\int_0^T \left[{\rm{ess.sup}}_{r \in \Omega^{J+1}}
|({\mathcal L}r)_j+u(r_j,\tau)|^2  \int_{\Omega^{J+1}} \int_{\R^{(J+1)d}} M(v)\,\hrho_\alpha \dd v \dd r\right] \dd \tau
\nonumber\\
&\qquad = \frac{1}{2\beta^2} \sum_{j=1}^{J+1}\int_0^T \left[{\rm{ess.sup}}_{r \in \Omega^{J+1}}
|({\mathcal L}r)_j+u(r_j,\tau)|^2\right] \dd \tau \nonumber\\
&\qquad \leq C(J,T) (1 + \|u\|^2_{L^2(0,T;L^\infty(\Omega))}),
\end{align}
where, again, the dependence of the constant $C(J,T)$ on $\beta$ has been suppressed. Substituting \eqref{eq:pre-lim-2} into \eqref{eq:pre-lim} we deduce that
\begin{align}\label{eq:pre-lim-1}
&\big\langle M\,\mathcal{F}(\hrho_\alpha(\cdot,\cdot,t)+\gamma), 1 \big \rangle + \frac{\beta^2}{2\eps^2} \sum_{j=1}^{J+1} \int_0^t \int_{\Omega^{J+1}} \int_{\R^{(J+1)d}} M(v)\,\frac{|\pdv \hrho_\alpha|^2}{\hrho_\alpha +\gamma}\, \dd v \dd r \dd \tau \nonumber\\
&\qquad + \alpha \sum_{j=1}^{J+1} \int_0^t \int_{\Omega^{J+1}} \int_{\R^{(J+1)d}} M(v)\,\frac{|\pdr \hrho_\alpha|^2}{\hrho_\alpha +\gamma}\, \dd v \dd r \dd \tau \nonumber\\
&\leq {\rm R}_1 + C(J,T) (1 + \|u\|^2_{L^2(0,T;L^\infty(\Omega))}).
\end{align}

Let us now focus on the term ${\rm R}_1$. As
\begin{align*}
{\rm R}_1 &= \int_{\Omega^{J+1}} \int_{\R^{(J+1)d}} M(v)\,\mathcal{F}(\hrho_{0} + \gamma) \dd v \dd r\\
&= \int_{\Omega^{J+1}} \int_{\R^{(J+1)d}} M(v)\, [\hrho_{0} (\log(\hrho_{0} + \gamma) - 1) + 1] \dd v \dd r
\\
&\qquad
+ \gamma \int_{\Omega^{J+1}} \int_{\R^{(J+1)d}} M(v)\, (\log(\hrho_{0} + \gamma) - 1) \dd v \dd r,
\end{align*}
the dominated convergence theorem implies that the second summand on the right-hand side converges to 0 as $\gamma \rightarrow 0_+$,
while the first summand, again by the dominated convergence theorem, converges to
\[ \int_{\Omega^{J+1}} \int_{\R^{(J+1)d}} M(v)\, [\hrho_{0} (\log \hrho_{0} - 1) + 1] \dd v \dd r =
\int_{\Omega^{J+1}} \int_{\R^{(J+1)d}} M(v)\,\mathcal{F}(\hrho_{0}) \dd v \dd r.\]
Returning with this information to \eqref{eq:pre-lim} we can now pass to the limit $\gamma \rightarrow 0_+$ there
using, in the first term on the left-hand side, Fatou's lemma, and in the second and third term on the left-hand side the monotone convergence theorem.
Hence,
%
\begin{align}\label{eq:energy-alpha1}
&\int_{\Omega^{J+1}} \int_{\R^{(J+1)d}} M(v)\,\mathcal{F}(\hrho_{\alpha}(t)) \dd v \dd r  + \frac{\beta^2}{2\eps^2} \sum_{j=1}^{J+1} \int_0^t \int_{\Omega^{J+1}} \int_{\R^{(J+1)d}} M(v)\,\frac{|\pdv \hrho_\alpha|^2}{\hrho_\alpha}\, \dd v \dd r \dd \tau \nonumber\\
&\qquad + \alpha \sum_{j=1}^{J+1} \int_0^t \int_{\Omega^{J+1}} \int_{\R^{(J+1)d}} M(v)\,\frac{|\pdr \hrho_\alpha|^2}{\hrho_\alpha}\, \dd v \dd r \dd \tau \nonumber\\
&\leq  \int_{\Omega^{J+1}} \int_{\R^{(J+1)d}} M(v)\,\mathcal{F}(\hrho_{0}) \dd v \dd r +
C(\beta,J,T) (1 + \|u\|^2_{L^2(0,T;L^\infty(\Omega))}).
\end{align}

\noindent
and therefore \eqref{eq:energy-alpha1} is the desired bound on $\hrho_\alpha$ that is uniform in $\alpha$.

We shall also require a bound on the time derivative of $\hrho_\alpha$ that is uniform in $\alpha$,
which we shall now derive, using \eqref{eq:energy-alpha1}. Thanks to \eqref{eq:weak-duality}, we have that
\begin{align*}
&\left|\int_0^T \big\langle M\,\pd_\tau\hrho_\alpha(\cdot,\cdot,\tau),\varphi(\cdot,\cdot,\tau)\big\rangle \dd \tau\right|
\leq \frac{\beta^2}{\eps^2} \sum_{j=1}^{J+1} \int_0^T \int_{\Omega^{J+1}} \int_{\R^{(J+1)d}} M(v)\,|\pdv \hrho_\alpha| \; |\pdv \varphi| \dd v \dd r \dd \tau \nonumber\\
&\qquad + \frac{1}{\eps} \sum_{j=1}^{J+1} \int_0^T \int_{\Omega^{J+1}} \int_{\R^{(J+1)d}} M(v)\, |v_j|\, \hrho_\alpha\, |\pdr \varphi| \dd v \dd r \dd \tau \nonumber\\
&\qquad + \alpha \sum_{j=1}^{J+1} \int_0^T \int_{\Omega^{J+1}} \int_{\R^{(J+1)d}} M(v)\,|\pdr \hrho_\alpha|\, |\pdr \varphi| \dd v \dd r \dd \tau \nonumber\\
&\qquad + \frac{1}{\eps} \sum_{j=1}^{J+1} \int_0^T \int_{\Omega^{J+1}} \int_{\R^{(J+1)d}} M(v)\,(|({\mathcal L}r)_j|+|u(r_j,\tau)|)\,\hrho_\alpha\,|\pdv \varphi| \dd v \dd r \dd \tau
\nonumber\\
&=: {\rm Q}_1 + {\rm Q}_2 + {\rm Q}_3 + {\rm Q}_4 \qquad \forall\, \varphi \in L^2(0,T; W^{1,2}_{*,M}(\Omega^{J+1} \times \R^{(J+1)d})) \quad \forall\,t \in (0,T].
\end{align*}
Next, we shall bound each of the terms ${\rm Q}_1, \dots, {\rm Q}_4$. Thanks to \eqref{eq:conservation=1}, \eqref{eq:energy-alpha1} and Sobolev embedding,
\begin{align*}
{\rm Q}_1 &\leq \frac{2\beta^2}{\eps^2} \sum_{j=1}^{J+1} \int_0^T \int_{\Omega^{J+1}} \int_{\R^{(J+1)d}} M(v)\,\sqrt{\hrho_\alpha} \,|\pdv \sqrt{\hrho_\alpha}| \; |\pdv \varphi| \dd v \dd r \dd \tau \nonumber\\
&\leq \frac{2\beta^2}{\eps^2} \sum_{j=1}^{J+1} \int_0^T  \|\hrho_\alpha\|_{L^1_M(\Omega^{J+1} \times \R^{(J+1)d})}
\|\pdv \sqrt{\hrho_\alpha}\|_{L^2_M(\Omega^{J+1} \times \R^{(J+1)d})}\, \|\pdv \varphi\|_{L^\infty(\Omega^{J+1} \times \R^{(J+1)d})} \dd \tau\\
&\leq \frac{2\beta^2}{\eps^2} \,
\left(\sum_{j=1}^{J+1} \int_0^T \|\pdv \sqrt{\hrho_\alpha}\|_{L^2_M(\Omega^{J+1} \times \R^{(J+1)d})}^2 \dd \tau\right)^{\frac{1}{2}}
\left(\sum_{j=1}^{J+1} \int_0^T \|\pdv \varphi\|_{L^\infty(\Omega^{J+1} \times \R^{(J+1)d})}^2 \dd \tau\right)^{\frac{1}{2}}\\
& \leq C\|\varphi\|_{L^2(0,T;W^{s,2}(\Omega^{J+1} \times \R^{(J+1)d}))} \qquad \forall\, \varphi \in L^2(0,T;W^{s,2}_*(\Omega^{J+1} \times \R^{(J+1)d})),\quad
s> (J+1)d + 1,
\end{align*}
where $C$ is a positive constant, independent of $\alpha \in (0,1]$. By an identical argument,
\begin{align*}
{\rm Q}_3 & \leq C\|\varphi\|_{L^2(0,T;W^{s,2}(\Omega^{J+1} \times \R^{(J+1)d}))} \qquad \forall\, \varphi \in L^2(0,T;W^{s,2}_*(\Omega^{J+1} \times \R^{(J+1)d})),\quad
s> (J+1)d + 1,
\end{align*}
where $C$ is a positive constant, independent of $\alpha \in (0,1]$. Next,
\begin{align*}
{\rm Q}_2 & \leq \frac{1}{\eps} \sum_{j=1}^{J+1} \int_0^T  \||v_j|\, \hrho_\alpha\|_{L^1_M({\Omega^{J+1} \times \R^{(J+1)d}})}
\|\pdr \varphi\|_{L^\infty({\Omega^{J+1} \times \R^{(J+1)d}})} \dd \tau.
\end{align*}
Now, by Cauchy's inequality and the inequality \eqref{eq:ab}, we have
\begin{align*}
&\||v_j|\, \hrho_\alpha\|_{L^1_M({\Omega^{J+1} \times \R^{(J+1)d}})}  = \int_{\Omega^{J+1} \times \R^{(J+1)d}} M(v)\,|v_j|\,\hrho_\alpha
 \dd r \dd v\\
&\quad \leq \frac{1}{2}\int_{\Omega^{J+1} \times \R^{(J+1)d}} M(v)\,(1 + |v_j|^2)\,\hrho_\alpha \dd r \dd v
\leq \frac{1}{2}L^{(J+1)d} + 2\beta
\int_{\Omega^{J+1} \times \R^{(J+1)d}} M(v)\, \frac{|v_j|^2}{4\beta}\, \hrho_\alpha \dd r \dd v\\
&\quad \leq \frac{1}{2}L^{(J+1)d} + 2\beta
\int_{\Omega^{J+1} \times \R^{(J+1)d}} M(v)\big({\rm e}^{\frac{1}{4\beta}|v_j|^2}-1\big)\dd r \dd v
+ 2\beta
\int_{\Omega^{J+1} \times \R^{(J+1)d}} M(v)\, \mathcal{F}(\hrho_\alpha) \dd r \dd v,
\end{align*}
and hence, by \eqref{eq:energy-alpha1},
\begin{align*}
\||v_j|\, \hrho_\alpha\|_{L^\infty(0,T;L^1_M({\Omega^{J+1} \times \R^{(J+1)d}}))} \leq \frac{1}{2}\|(1 + |v_j|^2)\, \hrho_\alpha\|_{L^\infty(0,T;L^1_M({\Omega^{J+1} \times \R^{(J+1)d}}))} \leq C,
\end{align*}
where $C$ is a positive constant, independent of $\alpha$, which then implies that
\begin{align*}
{\rm Q}_2 & \leq C\|\varphi\|_{L^2(0,T;W^{s,2}(\Omega^{J+1} \times \R^{(J+1)d}))} \qquad \forall\, \varphi \in L^2(0,T;W^{s,2}_*(\Omega^{J+1} \times \R^{(J+1)d})),\quad
s> (J+1)d + 1,
\end{align*}
where $C$ is a positive constant, independent of $\alpha \in (0,1]$.

It remains to bound ${\rm Q}_4$; proceeding in the same way as in \eqref{eq:pre-lim-2}, we deduce that
\begin{align*}
{\rm Q}_4 &\leq \frac{1}{\eps} \sum_{j=1}^{J+1} \int_0^T
\|(|({\mathcal L}r)_j|+|u(r_j,\tau)|)\,\hrho_\alpha\|_{L^1_M(\Omega^{J+1} \times \R^{(J+1)d})}
\, \|\pdv \varphi\|_{L^\infty(\Omega^{J+1} \times \R^{(J+1)d})} \dd \tau \\
& \leq C(\epsilon, J, T)(1 + \|u\|_{L^2(0,T;L^\infty(\Omega))})
\left(\sum_{j=1}^{J+1} \int_0^T \|\pdv \varphi\|_{L^\infty(\Omega^{J+1} \times \R^{(J+1)d})}^2 \dd \tau\right)^{\frac{1}{2}}.
\nonumber
\end{align*}
Thus we have shown that
\begin{align*}
{\rm Q}_4 & \leq C\|\varphi\|_{L^2(0,T;W^{s,2}(\Omega^{J+1} \times \R^{(J+1)d}))} \qquad \forall\, \varphi \in L^2(0,T;W^{s,2}_*(\Omega^{J+1} \times \R^{(J+1)d})),\quad
s> (J+1)d + 1,
\end{align*}
where $C$ is a positive constant, independent of $\alpha \in (0,1]$.

\smallskip

By collecting the bounds on ${\rm Q}_1, \dots, {\rm Q}_4$, we have that
\begin{align*}
\left|\int_0^T \big\langle M\,\pd_\tau\hrho_\alpha(\cdot,\cdot,\tau),\varphi(\cdot,\cdot,\tau)\big\rangle \dd \tau\right|
&\leq C\|\varphi\|_{L^2(0,T;W^{s,2}(\Omega^{J+1} \times \R^{(J+1)d}))} \\
&\qquad \forall\, \varphi \in L^2(0,T;W^{s,2}_*(\Omega^{J+1} \times \R^{(J+1)d})),\quad
s> (J+1)d + 1,
\end{align*}
where $C$ is a positive constant, independent of $\alpha \in (0,1]$, which then implies the following uniform
bound on the time derivative of $\hrho_\alpha$:
\begin{align}\label{eq:time-bound}
\|M\,\pd_t \hrho_\alpha\|_{L^2(0,T;(W^{s,2}_*(\Omega^{J+1} \times \R^{(J+1)d}))')} \leq C,\qquad s> (J+1)d + 1,
\end{align}
where $C$ is a positive constant, independent of $\alpha \in (0,1]$.

\smallskip

For future reference, we collect here the various uniform bounds we have derived on $\hrho_\alpha$, $\alpha \in (0,1]$:
\begin{subequations}
\begin{alignat}{2}
\mathcal{F}(\hrho_\alpha) \quad & \mbox{is bounded in $L^\infty(0,T;L^1_M(\Omega^{J+1} \times \R^{(J+1)d}))$}, \label{eq:bound1}\\
\sum_{j=1}^{J+1} |\pdv \sqrt{\hrho_\alpha}|^2 \quad & \mbox{is bounded in $L^1(0,T;L^1_M(\Omega^{J+1} \times \R^{(J+1)d}))$},\label{eq:bound2}\\
\alpha \sum_{j=1}^{J+1} |\pdr \sqrt{\hrho_\alpha}|^2 \quad & \mbox{is bounded in $L^1(0,T;L^1_M(\Omega^{J+1} \times \R^{(J+1)d}))$},\label{eq:bound3}\\
M\,\pd_t \hrho_\alpha \quad & \mbox{is bounded in $L^2(0,T;(W^{s,2}_*(\Omega^{J+1} \times \R^{(J+1)d}))')$},\quad s>(J+1)d + 1,\label{eq:bound4}\\
\hrho_\alpha \geq 0\quad  & \mbox{and$\quad\|\hrho_\alpha(t)\|_{L^1_M(\Omega^{J+1} \times \R^{(J+1)d})} = \|\hrho_{0}\|_{L^1_M(\Omega^{J+1} \times \R^{(J+1)d}),}\quad$  $t \in [0,T]$},
\label{eq:bound5}\\
(1 + |v_j|^2)\, \hrho_\alpha \quad & \mbox{is bounded in $L^\infty(0,T;L^1_M(\Omega^{J+1} \times \R^{(J+1)d}))$,\quad $j=1,\dots,J+1$,} \label{eq:bound6}\\
|({\mathcal L}r)_j+u(r_j,\tau)|\,\hrho_\alpha \quad & \mbox{is bounded in $L^2(0,T;L^1_M(\Omega^{J+1} \times \R^{(J+1)d}))$,\quad $j=1,\dots,J+1$.} \label{eq:bound7a}
\end{alignat}
By writing $\hrho = (\sqrt{\hrho\,}\,)^2$, it then also follows from \eqref{eq:bound2}, \eqref{eq:bound3} and \eqref{eq:bound5} that
\begin{alignat}{2}
\nabla_v \hrho_\alpha \quad & \mbox{is bounded in $L^2(0,T;L^1_M(\Omega^{J+1} \times \R^{(J+1)d}))$}, \label{eq:bound7}\\
\alpha^{\frac{1}{2}} \,\nabla_r \hrho_\alpha \quad & \mbox{is bounded in $L^2(0,T;L^1_M(\Omega^{J+1} \times \R^{(J+1)d}))$}, \label{eq:bound8}
\end{alignat}
\end{subequations}
where $\nabla_v := (\pd_{v_1}^{\rm T},\dots, \pd_{v_{J+1}}^{\rm T})^{\rm T}$ and $\nabla_r := (\pd_{r_1}^{\rm T},\dots, \pd_{r_{J+1}}^{\rm T})^{\rm T}$
are $(J+1)d$-component column vectors.

We proceed by considering the Maxwellian-weighted Orlicz space $L^\Phi_M(\Omega^{J+1} \times \R^{(J+1)d})$, with Young's function $\Phi(r) = \mathcal{F}(1+|r|)$
(cf. Kufner, John \& Fu\v{c}ik \cite{KJF}, Sec. 3.18.2). This has a separable predual $E^\Psi_M(\Omega^{J+1} \times \R^{(J+1)d})$, with Young's function
 $\Psi(r) = {\rm e}^{|r|} - |r| - 1$; the (Banach) space $E^\Psi_M(\Omega^{J+1} \times \R^{(J+1)d})$ is defined as the closure, in the norm of the Orlicz space $L^\Psi_M(\Omega^{J+1} \times \R^{(J+1)d})$, of the set of all real-valued bounded measurable functions defined on $\Omega^{J+1} \times \R^{(J+1)d}$. As there exists a constant $K$ such that $\mathcal{F}(1+r) \leq K (1+\mathcal{F}(r))$ for all $r \geq 0$, it
 follows from \eqref{eq:bound1} that the sequence $(\mathcal{F}(1+\hrho_\alpha))_{\alpha >0}$ is bounded in $L^\infty(0,T;L^1_M(\Omega^{J+1} \times \R^{(J+1)d}))$. Hence,
 $\hrho_\alpha$ is bounded in $L^\infty(0,T;L^\Phi_M(\Omega^{J+1} \times \R^{(J+1)d})) = L^\infty(0,T;(E^\Psi_M(\Omega^{J+1} \times \R^{(J+1)d}))')=[L^1(0,T;E^\Psi_M(\Omega^{J+1} \times \R^{(J+1)d}))]'$.
 By the Banach--Alaoglu theorem, there exists a subsequence (not indicated) of the sequence $(\hrho_\alpha)_{\alpha>0}$ and a \begin{equation}\label{eq:Linfty-orlicz}
 \hrho \in
 L^\infty(0,T;L^\Phi_M(\Omega^{J+1} \times \R^{(J+1)d}))\qquad (\mbox{whereby also $~\mathcal{F}(\hrho) \in L^\infty(0,T;L^1(\Omega^{J+1} \times \R^{(J+1)d}))$~})
 \end{equation}
such that
\begin{alignat}{2}\label{eq:weak-orl1}
\hrho_\alpha &\rightharpoonup \hrho &&\quad \mbox{weakly$^*$ in $L^\infty(0,T;L^\Phi_M(\Omega^{J+1} \times \R^{(J+1)d})) = L^\infty(0,T;(E^\Psi_M(\Omega^{J+1} \times \R^{(J+1)d}))')$}.
\end{alignat}
As, by definition, $L^\infty(\Omega^{J+1} \times \R^{(J+1)d}) \subset E^\Psi_M(\Omega^{J+1} \times \R^{(J+1)d})$, it follows in particular that
\begin{alignat}{2}\label{eq:weak-orl2}
\hrho_\alpha &\rightharpoonup \hrho &&\quad \mbox{weakly$$ in $L^p(0,T;L^1_M(\Omega^{J+1} \times \R^{(J+1)d}))$}\qquad \forall\, p \in [1,\infty).
\end{alignat}

The convergence results \eqref{eq:alphabound-1a}--\eqref{eq:alphabound-4a} and \eqref{eq:bound1}--\eqref{eq:bound8} now imply the existence of
\[
\hrho \in L^\infty(0,T;L^2_M(\Omega^{J+1} \times \R^{(J+1)d})), \qquad \hrho\geq 0,
\]
with
\[\nabla_v \hrho \in L^2(0,T;L^2_M(\Omega^{J+1} \times \R^{(J+1)d}))\quad \mbox{and}\quad
M\,\pd_t \hrho \in L^2(0,T;(W^{s,2}(\Omega^{J+1} \times \R^{(J+1)d}))'),\quad s>(J+1)d + 1,
\]
such that
{\small
\begin{alignat*}{2}
\hrho_\alpha &\rightharpoonup \hrho &&\quad \mbox{weakly$^*$ in $L^\infty(0,T;L^2_M(\Omega^{J+1} \times \R^{(J+1)d}))$}, \\
\nabla_v \hrho_\alpha &\rightharpoonup \nabla_v \hrho &&\quad \mbox{weakly in $L^2(0,T;L^2_M(\Omega^{J+1} \times \R^{(J+1)d}))$},\\
\alpha \nabla_r \hrho_\alpha &\rightarrow 0 &&\quad \mbox{strongly in $L^2(0,T;L^2_M(\Omega^{J+1} \times \R^{(J+1)d}))$},\\
M\,\pd_t \hrho_\alpha &\rightharpoonup M\, \pd_t \hrho &&\quad \mbox{weakly in $L^2(0,T;(W^{s,2}(\Omega^{J+1} \times \R^{(J+1)d}))'),\!\quad\! s > (J+1)d\! +\! 1$},\\
v_j\, \hrho_\alpha &\rightharpoonup v_j\, \hrho &&\quad
\mbox{weakly in $L^2(0,T;L^1_M(\Omega^{J+1} \times \R^{(J+1)d}))$,\quad $j=1,\dots,J+1$},\\
(({\mathcal L}r)_j+u(r_j,\tau))\,\hrho_\alpha &\rightharpoonup (({\mathcal L}r)_j+u(r_j,\tau))\,\hrho &&\quad
\mbox{weakly in $L^2(0,T;L^1_M(\Omega^{J+1} \times \R^{(J+1)d}))$,\quad $j=1,\dots,J+1$.}
\end{alignat*}
}

Using these convergence results, passage to the limit $\alpha \rightarrow 0_+$ in \eqref{eq:weak-duality-b} implies the existence
of
\[
\hrho \in L^\infty(0,T;L^2_M(\Omega^{J+1} \times \R^{(J+1)d})),\qquad \hrho \geq 0,
\]
with
\[\nabla_v \hrho \in L^2(0,T;L^2_M(\Omega^{J+1} \times \R^{(J+1)d}))\;\; \mbox{and}\;\;
M\,\pd_t \hrho \in L^2(0,T;(W^{s,2}(\Omega^{J+1} \times \R^{(J+1)d}))'),\quad s>(J+1)d + 1,
\]
satisfying the following weak form of the Fokker--Planck equation: for all $t \in (0,T]$,
\begin{align}\label{eq:weak-duality-bb}
&\int_0^t \big\langle M\,\pd_\tau\hrho(\cdot,\cdot,\tau),\varphi(\cdot,\cdot,\tau)\big\rangle \dd \tau
+ \frac{\beta^2}{\eps^2}\left(\sum_{j=1}^{J+1} \int_0^t \int_{\Omega^{J+1}} \int_{\R^{(J+1)d}} M(v)\,\pdv \hrho \cdot \pdv \varphi \dd v \dd r \dd \tau \right)\nonumber\\
&\qquad\quad- \frac{1}{\eps} \left(\sum_{j=1}^{J+1} \int_0^t \int_{\Omega^{J+1}} \int_{\R^{(J+1)d}} M(v)\, v_j \hrho\cdot \pdr \varphi \dd v \dd r \dd \tau \right)\nonumber\\
&\qquad\quad- \frac{1}{\eps} \left(\sum_{j=1}^{J+1} \int_0^t \int_{\Omega^{J+1}} \int_{\R^{(J+1)d}} M(v)\,(({\mathcal L}r)_j+u(r_j,\tau))\,\hrho\cdot \pdv \varphi \dd v \dd r \dd \tau \right)
= 0 \nonumber \\
&\hspace{0.6in} \forall\, \varphi \in L^2(0,T; W^{1,2}_{*,M}(\Omega^{J+1} \times \R^{(J+1)d})\cap W^{s,2}_{*}(\Omega^{J+1} \times \R^{(J+1)d})), \quad s>(J+1)d+1.
\end{align}

\bigskip

It remains to discuss the attainment of the initial condition by $\hrho$. To this end, we require the following lemma.

\begin{lemma}\label{lemma-strauss}
Let $X$ and $Y$ be Banach spaces.
\begin{itemize}
\item[(a)] If the space $X$ is reflexive and is continuously embedded in the space $Y$, then
$$L^\infty(0,T; X) \cap \mathcal{C}_{w}([0,T]; Y) = \mathcal{C}_{w}([0,T];X).$$
\item[(b)] If $X$ has separable predual $E$ and $Y$ has predual $F$ such that
$F$ is continuously embedded in $E$, then
$$L^\infty(0,T;X) \cap \mathcal{C}_{w\ast}([0,T]; Y) = \mathcal{C}_{w\ast}([0,T];X).$$
\end{itemize}
\end{lemma}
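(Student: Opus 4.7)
The plan is to establish both inclusions in each part by the same scheme, with part (b) being a $w^*$-analogue of part (a). The easy inclusion in (a), namely $\mathcal{C}_{w}([0,T];X)\subset L^\infty(0,T; X) \cap \mathcal{C}_{w}([0,T]; Y)$, follows from the uniform boundedness principle (any weakly continuous $X$-valued map on a compact interval is weakly bounded, hence norm bounded) together with the fact that the continuous embedding $X \hookrightarrow Y$ sends weakly convergent sequences in $X$ to weakly convergent sequences in $Y$. The reverse, nontrivial inclusion is where reflexivity of $X$ is used in a compactness-uniqueness argument.

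For the nontrivial inclusion in (a), let $u \in L^\infty(0,T;X) \cap \mathcal{C}_w([0,T];Y)$ and let $M:=\|u\|_{L^\infty(0,T;X)}$. First I would show that $u(t)\in X$ and $\|u(t)\|_X\le M$ for every $t\in[0,T]$: choose a sequence $s_n\to t$ with $s_n$ in the full-measure set where $\|u(s_n)\|_X\le M$; by reflexivity of $X$, a subsequence $u(s_{n_k})$ converges weakly in $X$ to some $w\in X$; by $X\hookrightarrow Y$ it also converges weakly in $Y$ to $w$; by weak continuity in $Y$ the limit must be $u(t)$, so $u(t)=w\in X$, and weak lower semicontinuity of the norm gives $\|u(t)\|_X\le M$. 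Having shown $u(t)\in X$ everywhere with a uniform $X$-bound, the weak continuity in $X$ follows by a standard subsequence-of-subsequence argument: for any $\phi\in X'$ and $t_n\to t_0$, any subsequence of $u(t_n)$ is bounded in $X$, hence has a further weakly convergent subsequence in $X$ whose limit, by the same $X\hookrightarrow Y$ plus weak-$Y$-continuity identification, must equal $u(t_0)$; so $\langle \phi,u(t_n)\rangle_{X',X}\to\langle \phi,u(t_0)\rangle_{X',X}$.

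For (b), the roles of reflexivity and weak convergence are replaced by Banach--Alaoglu together with separability of the predual. Namely, bounded sets in $X=E'$ are sequentially $w^*$-compact because $E$ is separable, and the embedding $F\hookrightarrow E$ induces a continuous embedding of duals $X=E'\hookrightarrow F'=Y$ whose duality brackets are consistent for elements of $F$. I would repeat the scheme: pick $s_n\to t_0$ on the set where $\|u(s_n)\|_X\le M$, extract a $w^*$-convergent subsequence $u(s_{n_k})\rightharpoonup^*\tilde w$ in $X$, and test against $\phi\in F\subset E$ to identify $\tilde w$ with $u(t_0)$ through the embedding $E'\hookrightarrow F'$, thereby concluding that $u(t_0)\in X$. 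The full $w^*$-continuity in $X$ is then obtained by the same subsequence-of-subsequence argument, now testing against arbitrary $\phi\in E$.

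The main obstacle, and the delicate point that merits care in writing the proof, is the identification step in (b): the limit $\tilde w$ obtained via Banach--Alaoglu lives in $E'$, whereas the limit given by $w^*$-continuity in $Y$ lives in $F'$, and one must use the consistency of the dual pairings (coming from $F\hookrightarrow E$) to match them. In part (a) this step is harmless because one identifies weak limits in the larger space $Y$ via the continuous embedding $X\hookrightarrow Y$, but in part (b) one is only allowed to test with $\phi\in F$, so the identification takes place in $Y$ only, and it is the injectivity of $E'\hookrightarrow F'$ (which is guaranteed provided $F\hookrightarrow E$ has dense range, or more generally by the construction of the embedding itself) that transfers the identification back to $X$.
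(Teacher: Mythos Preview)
Your proposal is correct and follows precisely the standard Strauss argument that the paper invokes: the paper does not give a detailed proof but simply attributes part (a) to Strauss (cf.\ Lions \& Magenes, Lemma 8.1, Ch.~3, Sec.~8.4) and states that part (b) is proved analogously via the sequential Banach--Alaoglu theorem, which is exactly the compactness tool you use. Your careful discussion of the identification step in (b) and the role of the injectivity of $E'\hookrightarrow F'$ is a genuine point of care that the paper glosses over, but it is consistent with the intended argument.
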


Part (a) is due to Strauss \cite{Strauss} (cf. Lions \& Magenes \cite{Lions-Magenes}, Lemma 8.1, Ch. 3, Sec. 8.4); part (b) is proved analogously, {\em via}
the sequential Banach--Alaoglu theorem.

\smallskip

We shall prove that $\hrho \in \mathcal{C}_{w}([0,T];L^1_M(\Omega^{J+1} \times \R^{(J+1)d}))$. Let us first recall that, thanks to
\eqref{eq:Linfty-orlicz},
\begin{equation*}
 \hrho \in
 L^\infty(0,T;L^\Phi_M(\Omega^{J+1} \times \R^{(J+1)d}))\qquad (\mbox{whereby also $~\mathcal{F}(\hrho) \in L^\infty(0,T;L^1(\Omega^{J+1} \times \R^{(J+1)d}))$~}),
\end{equation*}
and, also,
\[\hrho \in W^{1,2}(0,T; M^{-1}(W^{s,2}(\Omega^{J+1} \times \R^{(J+1)d}))'), \quad s>(J+1)d+1.\]
We then apply Lemma \ref{lemma-strauss}(b) by taking:
\begin{itemize}
\item
$X:=L^\Phi_M(\Omega^{J+1} \times \R^{(J+1)d})$, the Maxwellian weighted Orlicz space with Young's function $$\Phi(r) = \mathcal{F}(1+|r|)$$
(cf. Kufner, John \& Fu\v{c}ik \cite{KJF}, Sec. 3.18.2) whose separable
predual $$E:=E^\Psi_M(\Omega^{J+1} \times \R^{(J+1)d})$$ has Young's function $$\Psi(r) = {\rm e}^{|r|} - |r| - 1;$$
\item and
$Y := M^{-1}(W^{s,2}(\Omega^{J+1} \times \R^{(J+1)d}))'$ whose predual with respect to the duality pairing
$$\langle M \cdot , \cdot \rangle_{W^{s,2}(\Omega^{J+1} \times \R^{(J+1)d})}, \quad s>(J+1)d+1,$$
is
$$F:=W^{s,2}(\Omega^{J+1} \times \R^{(J+1)d}), \quad s> (J+1)d + 1,$$
\end{itemize}
and noting that $\mathcal{C}_{w\ast}([0;T]; L^\Phi_M(\Omega^{J+1} \times \R^{(J+1)d})) \subset \mathcal{C}_{w}([0,T]; L^1_M(\Omega^{J+1} \times \R^{(J+1)d}))$.
This last inclusion and that $F \hookrightarrow E$ are proved by adapting
Def. 3.6.1. and Thm. 3.2.3 in Kufner, John \& Fu\v{c}ik \cite{KJF} to the measure $M(v)\dd v \dd r$ to show that
$L^\infty(\Omega^{J+1} \times \R^{(J+1)d}) \hookrightarrow L^\Xi_M(\Omega^{J+1} \times \R^{(J+1)d})$ for any Young's function $\Xi$,
and then adapting Theorem 3.17.7 {\em ibid.} to deduce that
$$F \hookrightarrow L^\infty(\Omega^{J+1} \times \R^{(J+1)d}) \hookrightarrow E^\Psi_M(\Omega^{J+1} \times \R^{(J+1)d})= E.$$
The abstract framework in Temam \cite{Temam}, Ch.\ 3, Sec.\ 4 then implies that $\hrho$ satisfies
$\hrho(\cdot,\cdot,0) = \hrho_0(\cdot,\cdot)$ in the sense of $\mathcal{C}_w([0,T]; L^1_M(\Omega^{J+1} \times \R^{(J+1)d}))$.

\bigskip

By taking $\varphi \equiv 1$ in \eqref{eq:weak-duality-bb}, we have that
\[\langle M\, \hrho(\cdot,\cdot,t),\, 1\big\rangle - \langle M\, \hrho(\cdot,\cdot,0),\, 1\big\rangle = \int_0^t \big\langle M\,\pd_\tau\hrho(\cdot,\cdot,\tau),\, 1\big\rangle \dd \tau = 0.\]
Hence,
\[ \langle M\, \hrho(\cdot,\cdot,t),\, 1\big\rangle - \langle M\, \hrho_0(\cdot,\cdot),\, 1\big\rangle = 0,\]
and this then gives
\[ \int_{\Omega^{J+1} \times \R^{(J+1)d}} M(v)\,\hrho(r,v,t)\dd r \dd v = \int_{\Omega^{J+1} \times \R^{(J+1)d}} M(v)\,\hrho_0(r,v)\dd r \dd v = 1
\qquad \forall\, t \in (0,T],\]
which, together with $\hrho \geq 0$, implies that $$\varrho=M\,\hrho \in L^\infty(0,T;L^1(\Omega^{J+1} \times \R^{(J+1)d}))$$ is a probability density function,
as required.


Noting that the function $\mathcal{F}$ is nonnegative and convex, for each fixed $\gamma \in (0,1]$ the first term on the left-hand side of \eqref{eq:pre-lim-1} is weakly lower-semicontinuous
in $L^1_M(\Omega^{J+1} \times \R^{(J+1)d})$, as $\alpha \rightarrow 0_+$ (cf. Theorem 3.20 in \cite{dacorogna}). Similarly,
since $\xi \in \mathbb{R} \mapsto |\xi|^2$, with $y \geq 0$, is a nonnegative convex function,
we have weak lower semicontinuity of the second term
on the left-hand side of \eqref{eq:pre-lim-1} (cf. Corollary 3.24 in \cite{dacorogna}) for each $\gamma \in (0,1]$. By passing to the limit $\alpha \rightarrow 0_+$ in \eqref{eq:pre-lim-1}, and then passing to the limit $\gamma \rightarrow 0_+$
using the dominated convergence theorem in the first term on the left-hand side and the monotone convergence
theorem in the second term on the left-hand side, we deduce that $\hrho$ satisfies the following energy inequality:
\begin{align}\label{eq:energy-hrho}
&\int_{\Omega^{J+1}} \int_{\R^{(J+1)d}} M(v)\,\mathcal{F}(\hrho(t)) \dd v \dd r
 + \frac{\beta^2}{2\eps^2} \sum_{j=1}^{J+1} \int_0^t \int_{\Omega^{J+1}} \int_{\R^{(J+1)d}} M(v)\,\frac{|\pdv \hrho\,|^2}{\hrho}
\, \dd v \dd r \dd \tau
\nonumber\\
&\leq  \int_{\Omega^{J+1}}  \int_{\R^{(J+1)d}} M(v)\,\mathcal{F}(\hrho_{0}) \dd v \dd r + \frac{16}{\beta}\,(J+1)d\, L^2\, T +  \frac{1}{\beta}(J+1)\, \|u\|_{L^2(0,T;L^\infty({\Omega}))}^2.
\end{align}

\noindent
It is important to note here that, although we had supposed that $\hrho_0 \in L^2_M(\Omega^{J+1} \times \R^{(J+1)d};\R_{\geq 0})$, the upper bound in \eqref{eq:energy-hrho} only depends
on the $L^1_M(\Omega^{J+1} \times \R^{(J+1)d})$ norm of $\mathcal{F}(\hrho_0)$, the $L^2(0,T;L^{\infty}({\Omega})^d)$ norm of $u$, and the constants $d, \beta, J, L, T$, all of which are independent of $\eps$.

\section{Existence of solutions to the coupled Oseen--Fokker--Planck system}
\label{sec:coupled}

We now return to the full system stated in the Introduction,
our objective being to prove the existence of large-data global weak solutions to the coupled Oseen--Fokker--Planck
system.
To this end, we formulate an iterative process, by defining the sequence of functions $(\uk,\hrhok)$,
for $k=1,2,\dots$, as follows.
We set $u^{(1)} \equiv 0$. Given a divergence-free $\uk \in L^2(0,T;W^{1,\sigma}_0(\Omega)^d)$, for some $k \geq 1$ and $\sigma>d$, we define $\hrhok$ as the weak solution (in a sense to be made precise below) of the Fokker--Planck equation:
\begin{alignat}{2}
\label{eq:FP-eq-k}
M\pd_t \hrhok - \frac{\beta^2}{\eps^2}\left(\sum_{j=1}^{J+1} \pdv \cdot (M \pdv \hrhok) \right) + \frac{1}{\eps}
\left(\sum_{j=1}^{J+1} M v_j\cdot \pdr \hrhok + (({\mathcal L}r)_j+\uk(r_j,t))\cdot\pdv (M\hrhok) \right) = 0,\\
~\hfill \mbox{for all $(r,v,t) \in \Omega^{J+1} \times \R^{(J+1)d} \times (0,T]$},\nonumber\\
\hrhok(r,v,0)=\hrho_0^{(k)}(r,v)~\hfill \qquad \mbox{for all $(r,v) \in \Omega^{J+1} \times \R^{(J+1)d}$},
\label{eq:FP-ini-k}
\end{alignat}
subject to a (weakly imposed) specular boundary condition with respect to the independent variable $r$. The precise specification of
the initial datum $\hrhok_0$ in terms of $\hrho_0$ will be detailed in the next subsection. Having determined
$\hrhok$ from this problem, we shall find the next velocity field iterate $u^{(k+1)}$ by solving, with $\hrhok$ fixed, the Oseen system
(cf. \eqref{eq:oseen-k} below). We shall prove that one can extract a subsequence from the sequence of iterates
$((u^{(k)},\hrhok))_{k \geq 1}$, which converges to a solution $(u,\hrho)$ of the coupled Oseen--Fokker--Planck system in the limit
of $k\rightarrow \infty$.

\subsection{Definition of the initial data}
First, we define the sequence of initial data $(\hrho_0^{(k)})_{k\geq 1}$ appearing in \eqref{eq:FP-ini-k}.
Given $\hrho_0$ as in \eqref{eq:ini-cond}, and letting
\[G_k(s) := \frac{s}{1+ k^{-\frac{1}{4}}\sqrt{s}},\qquad s \in [0,\infty),\]
we define
\[ \hrho_0^{(k)}:= G_k(\hrho_0),\qquad k = 1,2,\dots .\]
The purpose of this construction, which can be seen as a renormalization of the initial
datum $\hrho_0$, is to ensure that, under the original hypotheses, \eqref{eq:ini-cond}, on $\hrho_0$, the functions
$\hrho_0^{(k)}$ thus defined possess the following properties:
\begin{subequations}
\begin{alignat}{2}
\hrho_0^{(k)} & \in L^1_M(\Omega^{J+1} \times \R^{(J+1)d};\R_{\geq 0}),\qquad &&\mbox{for each fixed $k \geq 1$},\label{eq:inicond-a0}\\
M\mathcal{F}(\hrho_0^{(k)}) &\in L^1(\Omega^{J+1} \times \R^{(J+1)d};\R_{\geq 0}),\qquad &&\mbox{for each fixed $k \geq 1$},\label{eq:inicond-a00}\\
\int_{\Omega^{J+1} \times \R^{(J+1)d}} &M(v)\, \hrho_0^{(k)} \dd r \dd v \leq 1,\qquad &&\mbox{for each fixed $k \geq 1$},\label{eq:inicond-a000}\\
\hrho_0^{(k)} & \in L^2_M(\Omega^{J+1} \times \R^{(J+1)d};\R_{\geq 0})  \qquad && \mbox{for each fixed $k \geq 1$},\label{eq:inicond-a}
\end{alignat}
and, possibly for a subsequence only (not indicated),
\begin{alignat}{2}
\hrho_0^{(k)} &\rightarrow \hrho_0 \qquad & \mbox{strongly in $L^1_M(\Omega^{J+1} \times \R^{(J+1)d})$ as $k \rightarrow \infty$},\label{eq:inicond-c}\\
\mathcal{F}(\hrho_0^{(k)}) &\rightarrow \mathcal{F}(\hrho_0) \qquad & \mbox{strongly in $L^1_M(\Omega^{J+1} \times \R^{(J+1)d})$ as $k \rightarrow \infty$}.\label{eq:inicond-d}
\end{alignat}
\end{subequations}
We shall now proceed to show that these properties do indeed hold; having done so, we shall explain their relevance in the
proof of our main result.

\smallskip

That $\hrho_0^{(k)} \geq 0$ for all $k \geq 1$ is a direct consequence of its definition and the assumed nonnegativity of
$\hrho_0$ (cf. \eqref{eq:ini-cond}).
By \eqref{eq:ini-cond}, and noting that $0 \leq G_k(s) \leq s$,
\eqref{eq:inicond-a0} and \eqref{eq:inicond-a000} directly follow.
The assertion \eqref{eq:inicond-a00} is also immediate by noting that $\mathcal{F}(\hrho_0^{(k)}) =
\mathcal{F}(G_k(\hrho_0)) \leq \max\{1,\mathcal{F}(\hrho_0)\}$.
We therefore proceed to prove the inclusion \eqref{eq:inicond-a}.

We have, for each $k \geq 1$, that
\begin{equation}\label{eq:inicond-a1}
\|\hrho_0^{(k)}\|^2_{L^2_M(\Omega^{J+1} \times \R^{(J+1)d})} = \|G_k(\hrho_0)\|^2_{L^2_M(\Omega^{J+1} \times \R^{(J+1)d})} \leq k^{\frac{1}{2}} \|\hrho_0\|_{L^1_M(\Omega^{J+1} \times \R^{(J+1)d})}.
\end{equation}
Thus we have verified \eqref{eq:inicond-a}.

Next,
\begin{align}\label{eq:inicond-c1}
\|\hrho_0^{(k)} - \hrho_0\|_{L^1_M(\Omega^{J+1} \times \R^{(J+1)d})} &= \|G_k(\hrho_0) - \hrho_0\|_{L^1_M(\Omega^{J+1} \times \R^{(J+1)d})}
\leq \left\|\frac{\hrho_0\, k^{-\frac{1}{4}}\,\sqrt{\hrho_0}}{1+k^{-\frac{1}{4}}\sqrt{\hrho_0}}\right\|_{L^1_M(\Omega^{J+1} \times \R^{(J+1)d})}.
\end{align}
Clearly,
\[ \frac{\hrho_0\, \,\sqrt{\hrho_0}}{k^{\frac{1}{4}} + \sqrt{\hrho_0}} \rightarrow 0
\qquad \mbox{a.e. on $\Omega^{J+1} \times \R^{(J+1)d}$}.\]
Also, trivially,
\[0 \leq \frac{\hrho_0\, k^{-\frac{1}{4}}\,\sqrt{\hrho_0}}{1+k^{-\frac{1}{4}}\sqrt{\hrho_0}} \leq \hrho_0
\in L^1_M(\Omega^{J+1} \times \R^{(J+1)d}).\]
Hence, by the dominated convergence theorem,
\[ \lim_{k \rightarrow \infty} \left\|\frac{\hrho_0\, k^{-\frac{1}{4}}\,\sqrt{\hrho_0}}{1+k^{-\frac{1}{4}}\sqrt{\hrho_0}}\right\|_{L^1_M(\Omega^{J+1} \times \R^{(J+1)d})} = 0.\]
By passing to the limit $k \rightarrow \infty$ in \eqref{eq:inicond-c1} we then deduce that
\[ \lim_{k \rightarrow \infty} \|\hrho_0^{(k)} - \hrho_0\|_{L^1_M(\Omega^{J+1} \times \R^{(J+1)d})} = 0.\]
Thus we have shown \eqref{eq:inicond-c}.

To prove \eqref{eq:inicond-d}, thanks to \eqref{eq:inicond-c}, it suffices to show that, as $k \rightarrow \infty$,
\[ \frac{\hrho_0}{1 + k^{-\frac{1}{4}}\sqrt{\hrho_0}} \log \left( \frac{\hrho_0}{1 + k^{-\frac{1}{4}}\sqrt{\hrho_0}}\right) \rightarrow \hrho_0 \log \hrho_0
\qquad \mbox{strongly in $L^1_M(\Omega^{J+1} \times \R^{(J+1)d})$}.\]
To this end we write
\begin{align}\label{eq:proof-of-d}
\frac{\hrho_0}{1 + k^{-\frac{1}{4}}\sqrt{\hrho_0}} \log \left( \frac{\hrho_0}{1 + k^{-\frac{1}{4}}\sqrt{\hrho_0}}\right) = \frac{\hrho_0 \log \hrho_0}{1 + k^{-\frac{1}{4}}\sqrt{\hrho_0}} - \hrho_0\, \frac{\log \left(1 + k^{-\frac{1}{4}}\sqrt{\hrho_0}\right)}{1 + k^{-\frac{1}{4}}\sqrt{\hrho_0}}.
\end{align}
We shall show below that the first fraction on the right-hand side of the equality \eqref{eq:proof-of-d} converges to $\hrho_0 \log \hrho_0$
strongly in $L^1_M(\Omega^{J+1} \times \R^{(J+1)d})$ while the second fraction converges to $0$ strongly in
$L^1_M(\Omega^{J+1} \times \R^{(J+1)d})$, and that will complete the proof of \eqref{eq:inicond-d}.  Indeed,
that the second fraction on the right-hand side of \eqref{eq:proof-of-d} converges to $0$ strongly in $L^1_M(\Omega^{J+1} \times \R^{(J+1)d})$ follows directly from the dominated
convergence theorem by noting that
\[ \left|\frac{\log \left(1 + k^{-\frac{1}{4}}\sqrt{\hrho_0}\right)}{1 + k^{-\frac{1}{4}}\sqrt{\hrho_0}}\right|\leq \frac{1}{{\rm e}}\qquad\mbox{and}\qquad
\lim_{k \rightarrow \infty} \frac{\log \left(1 + k^{-\frac{1}{4}}\sqrt{\hrho_0}\right)}{1 + k^{-\frac{1}{4}}\sqrt{\hrho_0}} = 0\quad\mbox{a.e. on $\Omega^{J+1} \times \R^{(J+1)d}$.}\]
Focusing now on the first fraction on the right-hand side of \eqref{eq:proof-of-d}, we consider
\begin{align*}
\frac{\hrho_0 \log \left(\hrho_0\right)}{1 + k^{-\frac{1}{4}}\sqrt{\hrho_0}} - \hrho_0 \log \hrho_0
= - \hrho_0 \log \hrho_0 \frac{k^{-\frac{1}{4}}\sqrt{\hrho_0}}{1 + k^{-\frac{1}{4}}\sqrt{\hrho_0}}.
\end{align*}
The term on the right-hand side of this equality converges to $0$ strongly in $L^1_M(\Omega^{J+1} \times \R^{(J+1)d})$ as $k \rightarrow \infty$, thanks to the dominated convergence theorem.
That completes the proof of \eqref{eq:inicond-d}.

The significance of \eqref{eq:inicond-a0}--\eqref{eq:inicond-d} is that these are precisely the properties
which we used in the previous section to prove, for a fixed divergence-free velocity field $u$,
contained in $L^2(0,T;W^{1,\sigma}_0(\Omega)^d)$, $\sigma>d$, the existence of a solution $\hrho$ to the Fokker--Planck equation, subject to such initial data for $\hrho$.

\subsection{Existence of a solution to the initial-value problem \eqref{eq:FP-eq-k}, \eqref{eq:FP-ini-k}}
Having verified all of \eqref{eq:inicond-a0}--\eqref{eq:inicond-d}, the arguments
developed in Section \ref{sec:FP}
imply the existence of a weak solution $\hrho^{(k)}$ to the problem \eqref{eq:FP-eq-k} for a given
divergence-free $u^{(k)} \in L^2(0,T;W^{1,\sigma}_0(\Omega)^d)$ with $\sigma>d$. More precisely, there exists a
\[
\hrhok \in \mathcal{C}_w([0,T];L^1_M(\Omega^{J+1} \times \R^{(J+1)d};\R_{\geq 0})),
\]
with
\[\nabla_v \hrhok \in L^2(0,T;L^1_M(\Omega^{J+1} \times \R^{(J+1)d})), \quad
M\,\pd_t \hrhok \in L^2(0,T;(W^{s,2}(\Omega^{J+1} \times \R^{(J+1)d}))'),\quad s>(J+1)d + 1,
\]
and satisfying
\begin{alignat*}{2}
v_j\, \hrhok &\in L^2(0,T;L^1_M(\Omega^{J+1} \times \R^{(J+1)d})),\quad j=1,\dots,J+1,\\
(({\mathcal L}r)_j+u^{(k)}(r_j,\tau))\,\hrhok &\in L^2(0,T;L^1_M(\Omega^{J+1} \times \R^{(J+1)d})),\quad j=1,\dots,J+1,
\end{alignat*}
such that,  for all $t \in (0,T]$:
\begin{align}\label{eq:weak-duality-bb-k}
&\int_0^t \big\langle M\,\pd_\tau\hrhok(\cdot,\cdot,\tau),\varphi(\cdot,\cdot,\tau)\big\rangle \dd \tau
+ \frac{\beta^2}{\eps^2}\left(\sum_{j=1}^{J+1} \int_0^t \int_{\Omega^{J+1}} \int_{\R^{(J+1)d}} M(v)\,\pdv \hrhok \cdot \pdv \varphi \dd v \dd r \dd \tau \right)\nonumber\\
&\qquad- \frac{1}{\eps} \left(\sum_{j=1}^{J+1} \int_0^t \int_{\Omega^{J+1}} \int_{\R^{(J+1)d}} M(v)\, v_j \hrhok\cdot \pdr \varphi \dd v \dd r \dd \tau \right)\nonumber\\
&\qquad- \frac{1}{\eps} \left(\sum_{j=1}^{J+1} \int_0^t \int_{\Omega^{J+1}} \int_{\R^{(J+1)d}} M(v)\,(({\mathcal L}r)_j+u^{(k)}(r_j,\tau))\,\hrhok\cdot \pdv \varphi \dd v \dd r \dd \tau \right)
= 0 \nonumber \\
&\hspace{0.5in} \forall\, \varphi \in L^2(0,T; W^{1,2}_{*,M}(\Omega^{J+1} \times \R^{(J+1)d})\cap W^{s,2}_{*}(\Omega^{J+1} \times \R^{(J+1)d})), \quad s>(J+1)d+1.
\end{align}
Furthermore $\hrhok(\cdot,\cdot,0)=\hrhok_0(\cdot,\cdot)$ in the sense of $\mathcal{C}_w([0,T];L^1_M(\Omega^{J+1} \times \R^{(J+1)d};\R_{\geq 0}))$, and
\[ \int_{\Omega^{J+1} \times \R^{(J+1)d}} M(v)\,\hrhok(r,v,t)\dd r \dd v = \int_{\Omega^{J+1} \times \R^{(J+1)d}} M(v)\,\hrhok_0(r,v)\dd r \dd v \leq 1, \quad t \in (0,T].\]
In addition, thanks to \eqref{eq:energy-hrho}, $\hrho^{(k)}$ satisfies the following energy inequality:
%
\begin{align}\label{eq:energy-hrho-k}
&\int_{\Omega^{J+1}} \int_{\R^{(J+1)d}} M(v)\,\mathcal{F}(\hrhok(t)) \dd v \dd r
 + \frac{\beta^2}{2\eps^2} \sum_{j=1}^{J+1} \int_0^t \int_{\Omega^{J+1}} \int_{\R^{(J+1)d}} M(v)\,\frac{|\pdv \hrhok|^2}{\hrho^{(k)}}
\, \dd v \dd r \dd \tau
\nonumber\\
&\leq  \int_{\Omega^{J+1}}  \int_{\R^{(J+1)d}} M(v)\,\mathcal{F}(\hrho_{0}^{(k)}) \dd v \dd r + \frac{16}{\beta}\,(J+1)d\, L^2\, T +  \frac{1}{\beta}(J+1)\, \|u^{(k)}\|_{L^2(0,T;L^\infty({\Omega}))}^2.
\end{align}
%

\noindent
It is important to note here that the upper bound in \eqref{eq:energy-hrho-k} only involves
the $L^1_M(\Omega^{J+1} \times \R^{(J+1)d})$ norm of $\mathcal{F}(\hrhok_0)$, which,
thanks to \eqref{eq:inicond-d}, converges to $\mathcal{F}(\hrho_0)$ strongly in $L^1_M(\Omega^{J+1} \times \R^{(J+1)d})$, as $k \rightarrow \infty$;
and on the $L^2(0,T;L^\infty(\Omega)^d)$ norm of $u^{(k)}$,
which we shall now bound by a constant, independent of $k$ and of $\eps$. Once we have done so,
\eqref{eq:energy-hrho-k}
will yield a uniform-in-$k$ (and $\eps$-uniform) bound on the $L^\infty(0,T;L^1_M(\Omega^{J+1} \times \R^{(J+1)d}))$
norm of $\mathcal{F}(\hrho^{(k)})$ and the $L^2(0,T;L^2_M(\Omega^{J+1} \times \R^{(J+1)d}))$
norm of $\nabla_v \sqrt{\hrhok}$, which will, together with the strong convergence of $\hrho^{(k)}$ to $\hrho$ in $L^1(0,T;L^1_M(\Omega^{J+1}\times \R^{(J+1)d}))$, which we shall also prove, yield the convergence results required to
pass to the limit in the weak form of \eqref{eq:FP-eq-k} as $k \rightarrow \infty$.

\medskip

\subsection{Existence of a solution to the Oseen system}
Having shown the existence of a solution $\hrhok$ to \eqref{eq:FP-eq-k}, \eqref{eq:FP-ini-k}
for a given divergence-free
$u^{(k)} \in L^2(0,T;W^{1,\sigma}_0(\Omega)^d)$ with $\sigma>d$,
we define $(u^{(k+1)},\pi^{(k+1)})$, with $u^{(k+1)} \in L^\infty(0,T;L^2(\Omega)^d) \cap
L^2(0,T;W^{1,2}_0(\Omega)^d)$, and $\pi^{(k+1)} \in \mathcal{D}'(0,T;L^2(\Omega)/\R)$
as the weak solution of the unsteady Oseen system:
\begin{alignat}{2}\label{eq:oseen-k}
\pd_t u^{(k+1)} + (b\cdot \nabla) u^{(k+1)} - \mu \triangle u^{(k+1)} + \nabla \pi^{(k+1)} &= \nabla \cdot \KK^{(k)} &&\qquad \mbox{for
$(x, t) \in \Omega \times (0,T]$}, \nonumber\\
\nabla \cdot u^{(k+1)} &= 0 &&\qquad \mbox{for $(x, t) \in \Omega \times (0, T]$},\\
u^{(k+1)}(x,0) & = u_0(x)&&\qquad \mbox{for $x \in \Omega$},\nonumber
\end{alignat}
where $u_0 \in W^{1-2/z,z}_0({\Omega})^d$, with $z=d+\vartheta$ for some $\vartheta \in (0,1)$, is divergence-free, and
\begin{align*}
\KK^{(k)}(x,t) &:=\frac{\int_{D^{J}\times \R^{(J+1)d}} \sum_{j=1}^J (F(q_j)\otimes q_j)\,M\,\hrhok\bigl(B(q,x),v,t\bigr)
\dd q  \dd v}
{\int_{D^{J}\times \R^{(J+1)d}} M\,\hrhok\,\bigl(B(q,x),v,t\bigr)
\dd q  \dd v},\qquad
(x,t) \in \Omega \times (0,T].
\end{align*}

Thanks to \eqref{eq:ce},
\begin{equation}\label{eq:Kbond}
\|\KK^{(k)}\|_{L^\infty(0,T;L^\infty(\Omega))} \leq C,
\end{equation}
where $C$ is a positive constant, independent of $k$. Thus, there exists a $\KK \in L^\infty(0,T;L^\infty(\Omega;\R^{d\times d}_{\rm symm}))$ (to be identified), and a subsequence, not indicated, such that
\begin{equation}\label{eq:oseen-k-K}
\KK^{(k)} \rightarrow \KK\qquad \mbox{weak$^*$ in $L^\infty(0,T;L^\infty(\Omega;\R^{d\times d}_{\rm symm}))$ as $k \rightarrow \infty$}.
\end{equation}

As $W^{1-2/z,z}_0(\Omega)^d \hookrightarrow L^2(\Omega)^d$ for $z=d+\vartheta$ for some $\vartheta \in (0,1)$, by standard arguments from the analysis of the incompressible Navier--Stokes equations (cf., for example, \cite{Temam}, Chpt. III)
we deduce from \eqref{eq:Kbond} that there exists a unique weak solution $(u^{(k+1)},\pi^{(k+1)})$ to the Oseen system
with  $u^{(k+1)} \in L^\infty(0,T;L^2(\Omega)^d)\cap L^2(0,T;W^{1,2}_0(\Omega)^d)$, and
\[\|u^{(k+1)}\|_{L^\infty(0,T;L^2(\Omega)) \cap L^2(0,T;W^{1,2}({\Omega}))} \leq C(1 + \|u_0\|_{L^2(\Omega)}),\]
where $C$ is independent of $k$. Hence, by interpolation,\footnote{By the Gagliardo--Nirenberg inequality,
$\|v\|_{L^4(\Omega)} \leq C \|v\|_{L^2(\Omega)}^{1/2} \|v\|_{W^{1,2}(\Omega)}^{1/2}$ for $d=2$, and
$\|v\|_{L^{10/3}(\Omega)} \leq C \|v\|_{L^2(\Omega)}^{2/5} \|v\|_{W^{1,2}(\Omega)}^{3/5}$ for $n=3$. Hence, by the application of H\"older's inequality,
$\|v\|_{L^4(0,T;L^4(\Omega))} \leq C \|v\|_{L^\infty(0,T;L^2(\Omega))}^{1/2}  \|v\|_{L^2(0,T;W^{1,2}(\Omega))}^{1/2}$
for $d=2$ and $\|v\|_{L^{10/3}(0,T;L^{10/3}(\Omega))} \leq C \|v\|_{L^\infty(0,T;L^2(\Omega))}^{2/5}  \| v\|_{L^2(0,T;W^{1,2}(\Omega))}^{3/5}$ for $d=3$.}
\[ \|u^{(k+1)}\|_{L^{\hat\sigma}(Q_T)} \leq C \quad \mbox{where}\quad \left\{ \begin{array}{ll}
                                                                     \mbox{$\hat\sigma=4$} & \mbox{when $d=2$},\\
                                                                     \mbox{$\hat\sigma=\frac{10}{3}$}
                                                                     & \mbox{when $d=3$,}\end{array}\right.\]
where $Q_T:=\Omega \times (0,T)$. Therefore, also,
\[ \|b \otimes u^{(k+1)}\|_{L^{\hat\sigma}(Q_T)} \leq C \quad \mbox{where}\quad \left\{ \begin{array}{ll}
                                                                     \mbox{$\hat\sigma=4$} & \mbox{when $d=2$},\\
                                                                     \mbox{$\hat\sigma=\frac{10}{3}$}
                                                                     & \mbox{when $d=3$.}\end{array}\right.\]

\begin{remark}\label{rem:relax}
We note here in passing that the regularity hypothesis $b \in L^\infty(0,T; L^\infty(\Omega)^d)$ that was used here
to deduce the last inequality can be weakened to assuming instead that $b \in L^s(0,T;L^s(\Omega)^d)$ for some $s>2d(d+2)/(2(d+2)-d^2)$, $d=2,3$. The latter weaker assumption on $b$ results in $\|b \otimes u^{(k+1)}\|_{L^{\hat\sigma}(Q_T)} \leq C$ for some $\hat\sigma>d$, which then still
suffices to draw the same conclusions to the ones below.
\end{remark}

Continuing with our stronger but simpler assumption
that $b \in L^\infty(0,T; L^\infty(\Omega)^d)$, we have that
\[ \|\KK^{(k)}- b\otimes u^{(k+1)}\|_{L^{\hat\sigma}(Q_T)} \leq C \quad \mbox{where}\quad \left\{ \begin{array}{ll}
                                                                     \mbox{$\hat\sigma=4$} & \mbox{when $d=2$},\\
                                                                     \mbox{$\hat\sigma=\frac{10}{3}$}
        & \mbox{when $d=3$.}\end{array}\right.\]
Clearly, $\hat\sigma=2 + \frac{4}{d}$, $d=2,3$.

We shall now show that the divergence-free function
$u^{(k+1)}$ possesses additional regularity, in the sense that
$u^{(k+1)} \in L^2(0,T;W^{1,\sigma}_0(\Omega)^d)$, with $\sigma:=\min(\hat{\sigma},z)$; we note that this
fixes the value of $\sigma$, and it is clear that $\sigma>d$, as is required by the arguments
contained in Sections \ref{sec:use} and \ref{sec:FP}.
To do so, we shall move the convective term in the Oseen equation to the right-hand side of the equation,
resulting in an unsteady Stokes system with source term $\nabla\cdot(\KK^{(k)}- b\otimes u^{(k+1)})$.
This then enables us to apply the regularity result for the unsteady Stokes system
stated in \cite{KS2001} (cf. pp.~3067--3069 therein, in particular),
which guarantees the existence of a positive constant $C=C_\sigma$, independent of $k$, such that
\[
\|u^{(k+1)}\|_{W^{1,\frac{1}{2}}_\sigma(Q_T)}
\leq C\left(\|\KK^{(k)}- b \otimes u^{(k+1)}\|_{L^\sigma(Q_T)} + \|u_0\|_{W^{1-\frac{2}{\sigma},\sigma}(\Omega)}\right),
\]
where $\sigma=\min(\hat{\sigma},z)>d$, $\hat\sigma:=2 + \frac{4}{d}$, with $z=d+\vartheta$ for some $\vartheta \in (0,1)$, and
$$W^{1,\frac{1}{2}}_\sigma (Q_T) := L^\sigma(0,T;W^{1,\sigma}_0(\Omega)^d)
\cap W^{1/2,\sigma}(0,T;L^\sigma(\Omega)^d).$$

As $W^{1,\frac{1}{2}}_\sigma (Q_T) \hookrightarrow L^2(0,T;W^{1,\sigma}_0({\Omega})^d)$, it follows that
\begin{align}\label{eq:uniform-u}
 \|u^{(k+1)}\|_{L^2(0,T;W^{1,\sigma}({\Omega}))}
\leq C(1 + \|u_0\|_{W^{1-\frac{2}{\sigma},\sigma}(\Omega)}),
\end{align}
where $\sigma=\min(\hat{\sigma},z)>d$, $\hat\sigma:=2 + \frac{4}{d}$, $d=2,3$, and $z=d+\vartheta$ for some $\vartheta \in (0,1)$.

\subsection{Passage to the limit $k \rightarrow \infty$}\label{sec:limit-oseen}
We deduce from \eqref{eq:uniform-u} and \eqref{eq:oseen-k} that
\begin{alignat}{2}
u^{(k)} & \rightarrow u \qquad && \mbox{weakly in $L^2(0,T;W^{1,\sigma}_0({\Omega})^d)$ as $k \rightarrow \infty$},\qquad \sigma>d,\nonumber\\
u^{(k)} & \rightarrow u \qquad && \mbox{weakly in $W^{1,2}(0,T;W^{-1,\sigma}({\Omega})^d)$ as $k \rightarrow \infty$},\qquad \sigma>d, 
\label{eq:NS-conv}\\
u^{(k)} & \rightarrow u \qquad && \mbox{strongly in $L^2(0,T;\mathcal{C}^{0,\gamma}(\overline\Omega)^d)$ as $k \rightarrow \infty$},\qquad 0<\gamma < 1-{\textstyle\frac{d}{\sigma}},\qquad \sigma>d, \nonumber
\end{alignat}
where the last result follows, via the Aubin--Lions lemma, thanks to the compact embedding of the Sobolev space
$W^{1,\sigma}_0({\Omega})^d$ into the H\"older space $\mathcal{C}^{0,\gamma}(\overline{\Omega})^d$ for $0<\gamma<1-\frac{d}{\sigma}$, $\sigma>d$.
Using \eqref{eq:oseen-k-K} and \eqref{eq:NS-conv} it is now straightforward to pass to the limit in \eqref{eq:oseen-k}.

All that remains to be done is to identify the weak$^*$ limit $\KK$ of the sequence $(\KK^{(k)})_{k \geq 0}$ in terms of
the limit $\hrho$ of the sequence $(\hrho^{(k)})_{k \geq 0}$. As $\KK^{(k)}$ has the form
\[ \frac{{\mathfrak A}^{(k)}}{{\mathfrak B}^{(k)}}, \quad k=0,1,\dots, \]
the limit $\KK$ is anticipated to be of the form
\[ \frac{\mathfrak A}{\mathfrak B},\]
where
\begin{align*}
{\mathfrak A}^{(k)} &:= \int_{D^{J}\times \R^{(J+1)d}} \sum_{j=1}^{J}(F(q_j)\otimes q_j)\,M\,\hrhok\bigl(B(q,x),v,t\bigr)
\dd q  \dd v,
\\
{\mathfrak B}^{(k)} &:= \int_{D^{J}\times \R^{(J+1)d}} M\,\hrhok\,\bigl(B(q,x),v,t\bigr)
\dd q  \dd v,
\\
\mathfrak{A} &:= \int_{D^{J}\times \R^{(J+1)d}} \sum_{j=1}^{J}(F(q_j)\otimes q_j)\,M\,\hrho\bigl(B(q,x),v,t\bigr)
\dd q  \dd v,
\\
\mathfrak{B} &:= \int_{D^{J}\times \R^{(J+1)d}} M\,\hrho\,\bigl(B(q,x),v,t\bigr)
\dd q  \dd v.
\end{align*}

The identification of the limit $\KK$ proceeds as follows. First we need to prove strong convergence of the sequence $(\hrho^{(k)})_{k\geq 0}$.
As we are now required to work under the original hypotheses on the initial
condition, stated in \eqref{eq:ini-cond}, rather than the stronger assumption used for the parabolic regularization of the Fokker--Planck
equation, we can no longer use our earlier argument.
In other words, the only piece of information we are allowed to
use at this point is the energy inequality \eqref{eq:energy-hrho-k}, in conjunction with the bound on
$(u^{(k)})_{k \geq 0}$ supplied by \eqref{eq:uniform-u}.

We therefore argue as follows. Since we have by now already passed to the limit $\alpha \rightarrow 0_+$, and have
thereby removed the $r$-diffusion term from the Fokker--Planck equation, we can rewrite \eqref{eq:FP-eq-k} as
\begin{alignat}{2}
\label{eq:FP-eq-k1}
M\pd_t \hrhok - \frac{\beta^2}{\eps^2}\left(\sum_{j=1}^{J+1} \pdv\!\cdot(M \pdv \hrhok) \right) + \frac{1}{\eps}
\left(\sum_{j=1}^{J+1} M v_j\!\cdot \pdr \hrhok\right) \!=\!  -\frac{1}{\eps}\sum_{j=1}^{J+1}\left((({\mathcal L}r)_j+\uk(r_j,t))\cdot\pdv (M\hrhok) \right)\!,\nonumber\\
~\hfill \mbox{in $\mathcal{D}'(\Omega^{J+1} \times \R^{(J+1)d} \times (0,T))$}
\end{alignat}
(i.e., in the sense of distributions on $\Omega^{J+1} \times \R^{(J+1)d} \times (0,T)$),
and we can exploit the fact that the differential operator appearing on the left-hand side of \eqref{eq:FP-eq-k1} is hypoelliptic.
Thus we can replicate
the argument appearing in the Appendix of the work of DiPerna \& Lions \cite{DL}, concerning strong $L^1$
compactness of a sequence of solutions to a hypoelliptic equation driven by a sequence of source terms that is equibounded in $L^1$
and has uniform decay as $|v| \rightarrow \infty$ in a sense to be made precise below.
Having done so, we will deduce the strong convergence of the sequence $(\hrho^{(k)})_{k \geq 0}$ in
the function space $L^1(0,T;L^1_M(\Omega^{J+1} \times \R^{(J+1)d}))$; i.e.
\begin{alignat}{2}\label{eq:FP-k-strong}
\hrho^{(k)} &\rightarrow \hrho \qquad &&\mbox{strongly in $L^1(0,T;L^1_M(\Omega^{J+1} \times \R^{(J+1)d}))\quad$ as $k \rightarrow \infty$}.
\end{alignat}
To this end, we will first show that the expression appearing on the right-hand side of \eqref{eq:FP-eq-k1} is bounded
in the norm of $L^1(0,T;L^1(\Omega^{J+1} \times \R^{(J+1)d}))$, uniformly with respect to $k$. Clearly, for any $j \in \{1,\dots,J+1\}$,
and $k \geq 1$,
\begin{align}\label{eq:rhs-k-bound}
&\|(({\mathcal L}r)_j+\uk)\cdot\pdv (M\hrhok)\|_{L^1(0,T;L^1(\Omega^{J+1} \times \R^{(J+1)d}))} \nonumber\\
&\qquad \leq
C \int_0^T \left(1 + \|u^{(k)}(\cdot,t)\|_{L^\infty(\Omega)}\right) \|\pdv (M\hrhok(\cdot,\cdot,t))\|_{L^1(\Omega^{J+1} \times \R^{(J+1)d})}
\dd t\nonumber\\
&\qquad \leq C \int_0^T \left(1 + \|u^{(k)}(\cdot,t)\|_{L^\infty(\Omega)}\right) \|M\, |v_j|\,\hrhok(\cdot,\cdot,t)\|_{L^1(\Omega^{J+1} \times \R^{(J+1)d})}\dd t\nonumber\\
&\qquad\quad + C \int_0^T \left(1 + \|u^{(k)}(\cdot,t)\|_{L^\infty(\Omega)}\right) \|M\, \pdv \hrhok(\cdot,\cdot,t)\|_{L^1(\Omega^{J+1} \times \R^{(J+1)d})} \dd t.
 \end{align}
The first term on the right-hand side of \eqref{eq:rhs-k-bound} is bounded, using \eqref{eq:ab}, \eqref{eq:energy-hrho-k}, \eqref{eq:inicond-d}, and
\eqref{eq:uniform-u}, as follows:
\begin{align*}
 \|M\, |v_j|\,\hrhok(\cdot,\cdot,t)\|_{L^1(\Omega^{J+1} \times \R^{(J+1)d})} &\leq \int_{\Omega^{J+1}} \int_{\R^{(J+1)d}}(M(v)\,({\rm e}^{|v_j|} -1) + M(v)\,\mathcal{F}(\hrhok(t)) \dd v \dd r
\\&\leq C\left(1 + \int_{\Omega^{J+1}} \int_{\R^{(J+1)d}} M(v)\,\mathcal{F}(\hrhok(t)) \dd v \dd r\right)
\\&\leq C,
\end{align*}
where $C$ is a positive constant, independent of $k$; hence, noting \eqref{eq:uniform-u},
\begin{align}\label{eq:firstterm}
\int_0^T \left(1 + \|u^{(k)}(\cdot,t)\|_{L^\infty(\Omega)}\right) \|M\, |v_j|\,\hrhok(\cdot,\cdot,t)\|_{L^1(\Omega^{J+1} \times \R^{(J+1)d})}\dd t
\leq C \int_0^T \left(1 + \|u^{(k)}(\cdot,t)\|_{L^\infty(\Omega)}\right) \dd t \leq C,
\end{align}
where $C$ is a positive constant, independent of $k$.

The second term on the right-hand side of \eqref{eq:rhs-k-bound} is bounded as follows. First, using
the Cauchy--Schwarz inequality with respect to $r$ and $v$ we have that
\begin{align*}
\|M\, \pdv \hrhok(\cdot,\cdot,t)\|_{L^1(\Omega^{J+1} \times \R^{(J+1)d})} &= \bigg\|M\, \pdv \bigg(\sqrt{\hrhok(\cdot,\cdot,t)}\bigg)^2\bigg\|_{L^1(\Omega^{J+1} \times \R^{(J+1)d})}
\\
&\leq 2\, \|M\,\hrhok(\cdot,\cdot,t)\|_{L^1(\Omega^{J+1} \times \R^{(J+1)d})}\, \bigg\|M\, \pdv \sqrt{\hrhok(\cdot,\cdot,t)}\,\bigg\|_{L^2(\Omega^{J+1} \times \R^{(J+1)d})}.
\end{align*}
Hence, now using the Cauchy--Schwarz inequality with respect to $t$, we deduce that
{
\begin{align*}
&\int_0^T \left(1 + \|u^{(k)}(\cdot,t)\|_{L^\infty(\Omega)}\right) \|M\, \pdv \hrhok(\cdot,\cdot,t)\|_{L^1(\Omega^{J+1} \times \R^{(J+1)d})} \dd t
\nonumber\\
&\qquad \leq 2 \,\|M\,\hrhok\|_{L^\infty(0,T;L^1(\Omega^{J+1} \times \R^{(J+1)d}))}\,
\int_0^T \left(1 + \|u^{(k)}(\cdot,t)\|_{L^\infty(\Omega)}\right)  \bigg\|M\, \pdv \sqrt{\hrhok(\cdot,\cdot,t)}\,\bigg\|_{L^2(\Omega^{J+1} \times \R^{(J+1)d})}
\dd t\nonumber\\
&\qquad \leq 2\, \|M\,\hrhok\|_{L^\infty(0,T;L^1(\Omega^{J+1} \times \R^{(J+1)d}))}\, \|1 + \|u^{(k)}(\cdot,t)\|_{L^\infty(\Omega)}\|_{L^2(0,T)}\,
\bigg\|M\, \pdv \sqrt{\hrhok}\,\bigg\|_{L^2(0,T;L^2(\Omega^{J+1} \times \R^{(J+1)d}))}.
\end{align*}
}

\noindent
Thus, by noting the uniform bounds \eqref{eq:energy-hrho-k}, \eqref{eq:inicond-d}, and \eqref{eq:uniform-u}, we have that
\begin{align}\label{eq:secondterm}
&\int_0^T \left(1 + \|u^{(k)}(\cdot,t)\|_{L^\infty(\Omega)}\right) \|M\, \pdv \hrhok(\cdot,\cdot,t)\|_{L^1(\Omega^{J+1} \times \R^{(J+1)d})} \dd t \leq C,
\end{align}
where $C$ is a positive constant, independent of $k$.

Using \eqref{eq:firstterm} and \eqref{eq:secondterm} in \eqref{eq:rhs-k-bound}, we then deduce that the expression on the right-hand side of \eqref{eq:FP-eq-k1}
is bounded in $L^1(0,T;L^1(\Omega^{J+1} \times \R^{(J+1)d}))$, uniformly with respect to $k$.

Next, we show that the sequence of functions appearing on the right-hand side of \eqref{eq:FP-eq-k1} has the following additional (`equiboundedness') property: for each $j \in \{1,\dots,J+1\}$,
\begin{equation}\label{eq:additional}
\lim_{R \rightarrow \infty} \sup_{k \geq 1} \|\chi_{|v|\geq R}(\cdot)\, (({\mathcal L}r)_j+\uk)\cdot\pdv
(M\hrhok)\|_{L^1(0,T;L^1(\Omega^{J+1} \times \R^{(J+1)d}))}  = 0,
\end{equation}
where $\chi_{|v| \geq R}$ is the characteristic function of the set of all $v \in \R^{(J+1)d}$ such that $|v|\geq R$, with $|\cdot|$ signifying
the Euclidean norm on
$\R^{(J+1)d}$. Similarly as in \eqref{eq:rhs-k-bound}, we have that
\begin{align}\label{eq:rhs-k-bound1}
&\|\chi_{|v|\geq R}(\cdot)\,(({\mathcal L}r)_j+\uk)\cdot\pdv (M\hrhok)\|_{L^1(0,T;L^1(\Omega^{J+1} \times \R^{(J+1)d}))} \nonumber\\
&\qquad \leq
C \int_0^T \left(1 + \|u^{(k)}(\cdot,t)\|_{L^\infty(\Omega)}\right) \|\chi_{|v|\geq R}(\cdot)\,\pdv (M\hrhok(\cdot,\cdot,t))\|_{L^1(\Omega^{J+1} \times \R^{(J+1)d})}
\dd t\nonumber\\
&\qquad \leq C \int_0^T \left(1 + \|u^{(k)}(\cdot,t)\|_{L^\infty(\Omega)}\right) \|\chi_{|v|\geq R}(\cdot)\,M\, |v_j|\,\hrhok(\cdot,\cdot,t)\|_{L^1(\Omega^{J+1} \times \R^{(J+1)d})}\dd t\nonumber\\
&\qquad\quad + C \int_0^T \left(1 + \|u^{(k)}(\cdot,t)\|_{L^\infty(\Omega)}\right) \|\chi_{|v|\geq R}(\cdot)\,M\, \pdv \hrhok(\cdot,\cdot,t)\|_{L^1(\Omega^{J+1} \times \R^{(J+1)d})} \dd t.
 \end{align}
The first term on the right-hand side of \eqref{eq:rhs-k-bound1} is bounded as follows. We first note that, for $|v|\geq R>0$, by \eqref{eq:ab},
\[0 \leq  M(v)\, |v_j|\,  \hrhok \leq M(v)\, |v|\,  \hrhok \leq \frac{4 \beta}{R} M(v)\, \frac{|v|^2}{4\beta}\,  \hrhok
\leq \frac{4 \beta}{R}\left(M(v) \big({\rm e}^{\frac{|v|^2}{4\beta}} - 1\big) + M(v) \mathcal{F}(\hrhok)\right).
\]
Therefore, using \eqref{eq:energy-hrho-k}, \eqref{eq:inicond-d}, and
\eqref{eq:uniform-u}, we have that
\begin{align*}
 \|\chi_{|v|\geq R}(\cdot)\,M\, |v_j|\,\hrhok(\cdot,\cdot,t)\|_{L^1(\Omega^{J+1} \times \R^{(J+1)d})}
 &\leq \frac{4 \beta}{R} \int_{\Omega^{J+1}} \int_{\R^{(J+1)d}}(M(v)\,({\rm e}^{\frac{|v|^2}{4\beta}} -1) + M(v)\,\mathcal{F}(\hrhok(t)) \dd v \dd r
\\&\leq \frac{C}{R}\left(1 + \int_{\Omega^{J+1}} \int_{\R^{(J+1)d}} M(v)\,\mathcal{F}(\hrhok(t)) \dd v \dd r\right)
\\&\leq \frac{C}{R},
\end{align*}
where $C$ is a positive constant, independent of $k$; hence, noting \eqref{eq:uniform-u} again,
\begin{align}\label{eq:firstterm1}
\int_0^T \left(1 + \|u^{(k)}(\cdot,t)\|_{L^\infty(\Omega)}\right) \|\chi_{|v|\geq R}(\cdot)\,M\, |v_j|\,\hrhok(\cdot,\cdot,t)\|_{L^1(\Omega^{J+1} \times \R^{(J+1)d})}\dd t
\leq \frac{C}{R},
\end{align}
where $C$ is a positive constant, independent of $k$.

The second term on the right-hand side of \eqref{eq:rhs-k-bound1} is bounded as follows. First, using
the Cauchy--Schwarz inequality with respect to $r$ and $v$ we have that
\begin{align*}
&\|\chi_{|v|\geq R}(\cdot)\, M\, \pdv \hrhok(\cdot,\cdot,t)\|_{L^1(\Omega^{J+1} \times \R^{(J+1)d})} = \bigg\|\chi_{|v|\geq R}(\cdot)\,M\, \pdv \bigg(\sqrt{\hrhok(\cdot,\cdot,t)}\bigg)^2\bigg\|_{L^1(\Omega^{J+1} \times \R^{(J+1)d})}
\\
&\qquad \leq 2\, \|\chi_{|v|\geq R}(\cdot)\, M\,\hrhok(\cdot,\cdot,t)\|_{L^1(\Omega^{J+1} \times \R^{(J+1)d})}\, \bigg\|M\, \pdv \sqrt{\hrhok(\cdot,\cdot,t)}\,\bigg\|_{L^2(\Omega^{J+1} \times \R^{(J+1)d})}.
\end{align*}
Hence, now using the Cauchy--Schwarz inequality with respect to $t$, we deduce that
{\footnotesize
\begin{align*}
&\int_0^T \left(1 + \|u^{(k)}(\cdot,t)\|_{L^\infty(\Omega)}\right) \|\chi_{|v|\geq R}(\cdot)\, M\, \pdv \hrhok(\cdot,\cdot,t)\|_{L^1(\Omega^{J+1} \times \R^{(J+1)d})} \dd t
\nonumber\\
&\quad\leq 2 \,\|\chi_{|v|\geq R}(\cdot)\,M\,\hrhok\|_{L^\infty(0,T;L^1(\Omega^{J+1} \times \R^{(J+1)d}))}\,
\int_0^T \left(1 + \|u^{(k)}(\cdot,t)\|_{L^\infty(\Omega)}\right)  \bigg\|M\, \pdv \sqrt{\hrhok(\cdot,\cdot,t)}\,\bigg\|_{L^2(\Omega^{J+1} \times \R^{(J+1)d})}
\dd t
\nonumber\\
&\quad\leq 2\, \|\chi_{|v|\geq R}(\cdot)\,M\,\hrhok\|_{L^\infty(0,T;L^1(\Omega^{J+1} \times \R^{(J+1)d}))}\, \|1 + \|u^{(k)}(\cdot,t)\|_{L^\infty(\Omega)}\|_{L^2(0,T)}\,
\bigg\|M\, \pdv \sqrt{\hrhok}\,\bigg\|_{L^2(0,T;L^2(\Omega^{J+1} \times \R^{(J+1)d}))}.
\end{align*}
}

However, for $|v| \geq R>0$, by \eqref{eq:ab},
\[ 0 \leq M(v)\, \hrhok \leq \frac{4 \beta}{R^2} M(v)\, \frac{|v|^2}{4\beta}\,  \hrhok
\leq \frac{4 \beta}{R^2}\left(M(v) \big({\rm e}^{\frac{|v|^2}{4\beta}} - 1\big) + M(v) \mathcal{F}(\hrhok)\right),
\]
and therefore, by noting the uniform bounds \eqref{eq:energy-hrho-k}, \eqref{eq:inicond-d}, and \eqref{eq:uniform-u}, we have that
\[  \|\chi_{|v|\geq R}(\cdot)\,M\,\hrhok\|_{L^\infty(0,T;L^1(\Omega^{J+1} \times \R^{(J+1)d}))} \leq \frac{C}{R^2},\]
where $C$ is a positive constant, independent of $k$. Thus, by noting the uniform bound  \eqref{eq:uniform-u}, we have that
\begin{align}\label{eq:secondterm1}
&\int_0^T \left(1 + \|u^{(k)}(\cdot,t)\|_{L^\infty(\Omega)}\right) \|\chi_{|v|\geq R}(\cdot)\,M\, \pdv
\hrhok(\cdot,\cdot,t)\|_{L^1(\Omega^{J+1} \times \R^{(J+1)d})} \dd t \leq \frac{C}{R^2},
\end{align}
where $C$ is a positive constant, independent of $k$. Hence, using \eqref{eq:firstterm1} and \eqref{eq:secondterm1} in \eqref{eq:rhs-k-bound1},
we obtain
\[ \|\chi_{|v|\geq R}(\cdot)\,(({\mathcal L}r)_j+\uk)\cdot\pdv (M\hrhok)\|_{L^1(0,T;L^1(\Omega^{J+1} \times \R^{(J+1)d}))} \leq \frac{C}{R},\]
where $C$ is a positive constant, independent of $k$, and therefore \eqref{eq:additional} directly follows.

Furthermore, we note that, similarly to the argument preceding \eqref{eq:firstterm1}, for $|v|\geq R>0$, by \eqref{eq:ab}, we have that
\[ 0 \leq M(v)\,  \hrhok_0 \leq \frac{4 \beta}{R^2} M(v)\, \frac{|v|^2}{4\beta}\,  \hrhok_0
\leq \frac{4 \beta}{R^2}\left(M(v) \big({\rm e}^{\frac{|v|^2}{4\beta}} - 1\big) + M(v) \mathcal{F}(\hrhok_0)\right).
\]
Therefore, using \eqref{eq:energy-hrho-k}, \eqref{eq:inicond-d}, and
\eqref{eq:uniform-u}, we have that
\begin{align}\label{eq:k-bound-ini}
 \|\chi_{|v|\geq R}(\cdot)\,M\,\hrhok_0\|_{L^1(\Omega^{J+1} \times \R^{(J+1)d})}
 &\leq \frac{4 \beta}{R^2} \int_{\Omega^{J+1}} \int_{\R^{(J+1)d}}(M(v)\,({\rm e}^{\frac{|v|^2}{4\beta}} -1) + M(v)\,\mathcal{F}(\hrhok_0) \dd v \dd r
\nonumber
 \\&\leq \frac{C}{R^2}\left(1 + \int_{\Omega^{J+1}} \int_{\R^{(J+1)d}} M(v)\,\mathcal{F}(\hrhok_0) \dd v \dd r\right)
\nonumber
\\&\leq \frac{C}{R^2},
\end{align}
where $C$ is a positive constant, independent of $k$.

To summarize, we have shown that the sequence on the right-hand side of \eqref{eq:FP-eq-k1} is bounded in the norm of $L^1(0,T;L^1(\Omega^{J+1} \times \R^{(J+1)d}))$,
uniformly with respect to $k$. We have also shown that \eqref{eq:additional} and \eqref{eq:k-bound-ini} hold. Having done so, we have verified
the conditions stated under (A.4) and (A.5) in the Appendix of DiPerna \& Lions \cite{DL}. The properties listed under (A.1)--(A.3) in \cite{DL}
follow from properties of the fundamental solution of the hypoelliptic operator on the
left-hand side of  \eqref{eq:FP-eq-k1}, and can be verified by recalling the explicit expression for the
fundamental solution (see, for example, Section II.1 in \cite{Bouchut}). Having checked each of (A.1)--(A.5) in \cite{DL},
an identical argument to the one in the Appendix of \cite{DL} yields the strong convergence of $(M\hrhok)_{k \geq 0}$
to $M\hrho$ in the norm of $L^1(0,T;L^1(\Omega^{J+1} \times \R^{(J+1)d}))$, as stated in \eqref{eq:FP-k-strong}, and hence, thanks to the boundedness of this sequence in $L^\infty(0,T;L^1(\Omega^{J+1} \times \R^{(J+1)d}))$ (which follows from
\eqref{eq:energy-hrho-k}, \eqref{eq:inicond-d} and \eqref{eq:NS-conv}$_3$), strong convergence of $(M\hrhok)_{k \geq 0}$ to $M\hrho$ in $L^p(0,T;L^1(\Omega^{J+1} \times \R^{(J+1)d}))$  also follows, for all $p \in [1,\infty)$; equivalently,
$(\hrhok)_{k \geq 0}$ converges to $\hrho$ in $L^p(0,T;L^1_M(\Omega^{J+1} \times \R^{(J+1)d}))$  for all $p \in [1,\infty)$.

We are now ready for the identification of the weak$^*$ limit $\KK$ of the sequence $(\KK^{(k)})_{k\geq 0}$
in terms of $\hrho$. The argument consists of the following six steps.

\begin{itemize}
\item[(i)] The strong convergence \eqref{eq:FP-k-strong} of the sequence $(\hrho^{(k)})_{k \geq 0}$ in $L^1(0,T;L^1_M(\Omega^{J+1} \times \R^{(J+1)d}))$ implies a.e.
convergence of (a subsequence, not indicated, of)
${\mathfrak A}^{(k)}$ to ${\mathfrak A}$ on  $\Omega \times (0,T)$. Let us show that this is indeed the case: since the Jacobian
$|\mbox{det }\! B|$ is constant and $F \in L^\infty(D^J;\R^d)$, it follows from \eqref{eq:FP-k-strong} by performing
the change of variables $r=B(q,x)$ that, for any $j \in \{1,\dots, J\}$, also
\[~\qquad \int_0^T \int_{\Omega} \int_{D^J \times \R^{(J+1)d}} |F(q_j) \otimes q_j|\,|\hrho^{(k)}(B(q,x),v,t) - \hrho(B(q,x),v,t)|\,M(v) \dd q \dd v \dd x \dd t \rightarrow 0.\]
This then implies that there exists a subsequence, not indicated, such that
\[\int_{D^J \times \R^{(J+1)d}} |F(q_j) \otimes q_j|\, |\hrho^{(k)}(B(q,x),v,t)-\hrho(B(q,x),v,t)|\,M(v) \dd q \dd v \rightarrow 0\]
for a.e. $(x,t) \in \Omega \times (0,T)$. Indeed, by defining, for each $j \in \{1,\dots, J\}$,
\[ \delta_{kj}(x,t):=\int_{D^J \times \R^{(J+1)d}}|F(q_j) \otimes q_j|\,|\hrho^{(k)}(B(q,x),v,t)-\hrho(B(q,x),v,t)|\,M(v) \dd q \dd v,\]
Tonelli's theorem yields that
$\delta_{kj} \in L^1(\Omega\times (0,T);\R_{\geq 0})$ for all $k \geq 1$. As,
\[ \|\delta_{kj}\|_{L^1(\Omega\times (0,T))}=\int_0^T \int_{\Omega} \delta_{kj}(q,t)\dd q \dd t \rightarrow 0,\]
there exists a subsequence of $(\delta_{kj})_{k \geq 1}$, not indicated, such that $\delta_{kj}(x,t) \rightarrow 0_+$ for a.e. $(x,t) \in \Omega\times (0,T)$, for each $j \in \{1,\dots, J\}$,
\item[(ii)] Analogously, ${\mathfrak B}^{(k)}$ converges to $\mathfrak{B}$ a.e. on  $\Omega \times (0,T)$.
\item[(iii)] Now (i) and (ii) imply that ${\mathfrak A}^{(k)}/{\mathfrak B}^{(k)}$ converges to $\mathfrak{A}/\mathfrak{B}$ a.e. on  $\Omega \times (0,T)$.
\item[(iv)] Since $|{\mathfrak A}^{(k)}/{\mathfrak B}^{(k)}| \leq C$, where $C$ is a positive constant, independent of $k$,
the dominated convergence theorem yields that
$\int_{E} {\mathfrak A}^{(k)}/{\mathfrak B}^{(k)} \dd x \dd t$
converges to $\int_{E} \mathfrak{A}/\mathfrak{B} \dd x \dd t$ for every measurable set $E \subset \Omega \times (0,T)$.
\item[(v)] Now (iv), together with the fact that $({\mathfrak A}^{(k)}/{\mathfrak B}^{(k)})_{k \geq 0}$ is bounded in $L^\infty(\Omega \times (0,T))$, implies weak$^*$ convergence in
$L^\infty(\Omega \times (0,T))$ of ${\mathfrak A}^{(k)}/{\mathfrak B}^{(k)}$ to
$\mathfrak{A}/\mathfrak{B}$ thanks to Corollary 2.49 in \cite{Fonseca_Leoni}.
\item[(vi)] However, \eqref{eq:oseen-k-K} states that ${\mathfrak A}^{(k)}/{\mathfrak B}^{(k)}$ converges
weakly$^*$ to $\KK$, in $L^\infty(\Omega \times (0,T))$. Therefore, by uniqueness of the weak$^*$ limit, $\KK = \mathfrak{A}/\mathfrak{B}$.
\end{itemize}
Thus we have shown that
\begin{align*} \KK = \frac{\mathfrak A}{\mathfrak B} =
\frac{\int_{D^{J}\times \R^{(J+1)d}} \sum_{j=1}^J (F(q_j)\otimes q_j)\,M\,\hrho\bigl(B(q,x),v,t\bigr)
\dd q  \dd v}{\int_{D^{J}\times \R^{(J+1)d}} M\,\hrho\,\bigl(B(q,x),v,t\bigr)
\dd q  \dd v}.
\end{align*}

\smallskip

Finally, we can pass to the limit $k \rightarrow \infty$ in the sequence of Fokker--Planck equations \eqref{eq:weak-duality-bb-k}. As this part of the proof is very similar to the passage to the
limit $\alpha \rightarrow 0_+$ expounded in the previous section, we confine ourselves to summarizing
the main points.

The strong convergence result \eqref{eq:FP-k-strong} and the energy inequality \eqref{eq:energy-hrho-k}
imply the existence of
\[
\hrho \in L^\infty(0,T;L^1_M(\Omega^{J+1} \times \R^{(J+1)d};\R_{\geq 0})),
\]
with
\[ \nabla_v \sqrt{\hrho} \in L^2(0,T;L^2_M(\Omega^{J+1} \times \R^{(J+1)d})),\]
\[\nabla_v \hrho \in L^2(0,T;L^1_M(\Omega^{J+1} \times \R^{(J+1)d}))\quad \mbox{and}\quad
M\,\pd_t \hrho \in L^2(0,T;(W^{s,2}(\Omega^{J+1} \times \R^{(J+1)d}))'),\quad s>(J+1)d + 1,
\]
such that, as $k \rightarrow \infty$,
{\small
\begin{alignat*}{2}
\hrhok &\rightarrow \hrho &&\!\!\left\{\begin{array}{ll}\mbox{weakly$^*$ in $L^\infty(0,T;L^1_M(\Omega^{J+1} \times \R^{(J+1)d}))$},\\
\mbox{strongly in $L^p(0,T;L^1_M(\Omega^{J+1} \times \R^{(J+1)d}))\quad$ for all $p \in [1,\infty)$,}
\end{array}\right. \\
\nabla_v \hrhok &\rightharpoonup \nabla_v \hrho &&\quad \mbox{weakly in $L^2(0,T;L^1_M(\Omega^{J+1} \times \R^{(J+1)d}))$},\\
M\,\pd_t \hrhok &\rightharpoonup M\, \pd_t \hrho &&\quad \mbox{weakly in $L^2(0,T;(W^{s,2}(\Omega^{J+1} \times \R^{(J+1)d}))'),\!\quad\! s > (J+1)d\! +\! 1$},\\
v_j\, \hrhok &\rightharpoonup v_j\, \hrho &&\quad
\mbox{weakly in $L^2(0,T;L^1_M(\Omega^{J+1} \times \R^{(J+1)d}))$,\quad $j=1,\dots,J+1$},\\
(({\mathcal L}r)_j+u^{(k)}(r_j,\tau))\,\hrhok &\rightharpoonup (({\mathcal L}r)_j+u(r_j,\tau))\,\hrho &&\quad
\mbox{weakly in $L^2(0,T;L^1_M(\Omega^{J+1} \times \R^{(J+1)d}))$,\quad $j=1,\dots,J+1$.}
\end{alignat*}
}

Using these convergence results, passage to the limit $k \rightarrow \infty$ in \eqref{eq:weak-duality-bb-k}
implies the existence
of
\[
\hrho \in L^\infty(0,T;L^1_M(\Omega^{J+1} \times \R^{(J+1)d};\R_{\geq 0})),
\]
with
\[ \nabla_v \sqrt{\hrho} \in L^2(0,T;L^2_M(\Omega^{J+1} \times \R^{(J+1)d})),\]
\[\nabla_v \hrho \in L^2(0,T;L^1_M(\Omega^{J+1} \times \R^{(J+1)d}))\quad \mbox{and}\quad
M\,\pd_t \hrho \in L^2(0,T;(W^{s,2}(\Omega^{J+1} \times \R^{(J+1)d}))'),\quad s>(J+1)d + 1,
\]
satisfying the following weak form of the Fokker--Planck equation:
\begin{align}\label{eq:weak-duality-bblim}
&\int_0^t \big\langle M\,\pd_\tau\hrho(\cdot,\cdot,\tau),\varphi(\cdot,\cdot,\tau)\big\rangle \dd \tau
+ \frac{\beta^2}{\eps^2}\left(\sum_{j=1}^{J+1} \int_0^t \int_{\Omega^{J+1}} \int_{\R^{(J+1)d}} M(v)\,\pdv \hrho \cdot \pdv \varphi \dd v \dd r \dd \tau \right)\nonumber\\
&\qquad- \frac{1}{\eps} \left(\sum_{j=1}^{J+1} \int_0^t \int_{\Omega^{J+1}} \int_{\R^{(J+1)d}} M(v)\, v_j \hrho\cdot \pdr \varphi \dd v \dd r \dd \tau \right)\nonumber\\
&\qquad- \frac{1}{\eps} \left(\sum_{j=1}^{J+1} \int_0^t \int_{\Omega^{J+1}} \int_{\R^{(J+1)d}} M(v)\,(({\mathcal L}r)_j+u(r_j,\tau))\,\hrho\cdot \pdv \varphi \dd v \dd r \dd \tau \right)
= 0 \nonumber \\
&\hspace{0.5in} \forall\, \varphi \in L^2(0,T; W^{1,2}_{*,M}(\Omega^{J+1} \times \R^{(J+1)d})\cap W^{s,2}_*(\Omega^{J+1} \times \R^{(J+1)d})), \quad s>(J+1)d+1,  \quad \forall\,t \in (0,T].
\end{align}
Furthermore $\hrho(\cdot,\cdot,0)=\hrho_0(\cdot,\cdot)$ in the sense of $\mathcal{C}_w([0,T];L^1_M(\Omega^{J+1} \times \R^{(J+1)d};\R_{\geq 0}))$, and
\[ \int_{\Omega^{J+1} \times \R^{(J+1)d}} M\,\hrho(r,v,t)\dd r \dd v = \int_{\Omega^{J+1} \times \R^{(J+1)d}} M\, \hrho_0(r,v)\dd r \dd v = 1.\]
In addition, $\hrho$ satisfies the following energy inequality:
%
\begin{align*}
\int_{\Omega^{J+1}} \int_{\R^{(J+1)d}} M(v)\,\mathcal{F}(\hrho(t)) \dd v \dd r
 + \frac{\beta^2}{2\eps^2} \sum_{j=1}^{J+1} \int_0^t \int_{\Omega^{J+1}} \int_{\R^{(J+1)d}} M(v)\,\frac{|\pdv \hrho|^2}{\hrho}
\, \dd v \dd r \dd \tau
\nonumber\\
\leq  C\bigg[1 + \int_{\Omega^{J+1}} \int_{\R^{(J+1)d}} M(v)\,\mathcal{F}(\hrho_{0}) \dd v \dd r \bigg],
\end{align*}
where $C=C(\|u_0\|_{W^{1-\frac{2}{\sigma},\sigma}(\Omega)},\|b\|_{L^\infty(0,T;L^\infty(\Omega))})$,
$\sigma:=\min(\hat{\sigma},z)>d$, with $\hat\sigma:=2 + \frac{4}{d}$ and $z=d+\vartheta$ for some $\vartheta \in (0,1)$. In particular, $C$ is independent of $\eps>0$.

This then completes the proof of the existence of large-data global weak solutions to the coupled Oseen--Fokker--Planck system under consideration, for all $\eps>0$.

\section{Trace theorems for the solution of the Fokker--Planck equation}
\label{sec:trace}
In this section, by using similar arguments as in \cite{onthetraceMischler}, we prove that the solution to the Fokker--Planck equation has a unique trace on the boundary of our domain,
which is defined thanks to a Green's formula. We then use this result to prove that the specular boundary condition is attained in a strong sense by the solution. To this end, given the vector
\[ E_j=E_j(r,v,t):=\frac{1}{\eps} (({\mathcal L}r)_j+u(r_j,t)) - \frac{\beta^2}{\eps^2} v_j\]
and a weak solution $\varrho=\varrho(r,v,t) $ of the Fokker--Planck equation
\begin{alignat}{2}
\label{eq:FP-eq1}
 \Lambda_{E_j}(\varrho) := \pd_t \varrho + \sum_{j=1}^{J+1} E_j(r,v,t) \cdot \pdv \varrho + \sum_{j=1}^{J+1} (\pdv \cdot E_j(r,v,t)) \varrho - \frac{\beta^2}{\eps^2}\, \sum_{j=1}^{J+1} \pdv^2 \varrho + \frac{1}{\eps}
\sum_{j=1}^{J+1} v_j\cdot \pdr \varrho = 0,\\
~\hfill \mbox{for all $(r,v,t) \in \Omega^{J+1} \times \R^{(J+1)d} \times (0,T]$},\nonumber\\
\varrho(r,v,0)=\varrho_0(r,v)~\hfill \qquad \mbox{for all $(r,v) \in \Omega^{J+1} \times \R^{(J+1)d}$},
\label{eq:FP-ini1}
\end{alignat}
satisfying the specular boundary condition in a weak sense, we show that $\varrho$ has a trace $\gamma \varrho$ on the boundary $ \partial \Omega^{(j)} \times \R^{(J+1)d} \times (0,T)$, $j=1,\dots,J+1$,
and a trace $\gamma_t \varrho = \varrho(\cdot,t)$ on the section $\Omega ^{J+1} \times \R^{(J+1)d} \times \{t\} $ for all $t \in [0,T]$. These trace functions will be shown to be
well-defined thanks to a Green's formula, which we shall now discuss.

In the previous section we showed that $\varrho \in L^\infty(0,T;L^1(\Omega^{J+1} \times \R^{(J+1)d};\R_{\geq 0}))$ is
a solution to \eqref{eq:FP-eq1} in the sense of distributions, i.e.,
%
\begin{alignat}{2}
\label{conjug-eq:FP-eq}
\int_0^T \int_{\Omega^{J+1}} \int_{\R^{(J+1)d}}
\varrho \, \Lambda^*_{E_j} (\varphi) \dd v \dd r \dd \tau = 0,
\end{alignat}
for all test functions $ \varphi \in \cD(\overline{D}) := \mathcal{C}^{\infty}_0( \overline{\Omega^{J+1}} \times \R^{(J+1)d} \times [0,T])$, where we have set:
\begin{alignat}{2}
\label{conjug_op:FP-eq}
\Lambda^*_{E_j}(\varrho)=\pd_t \varphi + \sum_{j=1}^{J+1} E_j(r,v,t) \cdot \pdv \varphi + \sum_{j=1}^{J+1} (\pdv \cdot E_j(r,v,t)) \, \varphi + \frac{\beta^2}{\eps^2}\, \sum_{j=1}^{J+1} \pdv^2 \varphi + \frac{1}{\eps}
\sum_{j=1}^{J+1} v_j\cdot \pdr \varphi.
\end{alignat}
From the previous section we know that $u \in L^{2}(0,T;W^{1,\sigma}(\Omega)^d)$, with $\sigma>d$.
Since, by Morrey's inequality, $W^{1,\sigma}(\Omega) \hookrightarrow L^{\infty}(\Omega)$,
we have in particular that $u \in L^1(0,T;L^{\infty}(\Omega)^d)$. We thereby deduce that
$E_j \in L^1(0,T;L^\infty_{loc}(\Omega;W^{1,\infty}_{loc}(\R^{d})))^d$ and
$\pdv \cdot E_j \in L^{\infty}(0,T;L^{\infty}_{loc}(\Omega \times \R^{d}))$ for all
$j=1,\dots, J+1$.

We shall suppose henceforth that the initial datum $\varrho_0$ for the Fokker--Planck equation has the following
factorized form: $\varrho_0(r,v) = M(v)\, \hrho_0(r)$, where $\hrho_0$ is a nonnegative function of $r$ only, such that
$\int_{\Omega^{J+1}} \hrho_0(r) \dd r = 1$,
and $$\hrho_0 \in L^2(\Omega^{J+1} \times \R^{{(J+1)}d};\R_{\geq 0}).$$
Under this hypothesis it directly follows that
\[ \hrho \in L^\infty(0,T; L^2_M(\Omega^{J+1} \times \R^{(J+1)d};\R_{\geq 0})),\]
and
\[ \varrho \in L^\infty(0,T; L^2_{M^{-1}}(\Omega^{J+1} \times \R^{(J+1)d};\R_{\geq 0})),\]
 and consequently, since $M^{-1}(v) \ge (2\pi \beta)^{\frac{1}{2}(J+1)}$ for all $v \in \R^{(J+1)d}$, that
 \[ \varrho \in L^\infty(0,T; L^2(\Omega^{J+1} \times \R^{(J+1)d};\R_{\geq 0})).\]

 \begin{remark}
 To show that $\hrho_0 \in L^2(\Omega^{J+1} \times \R^{{(J+1)}d};\R_{\geq 0})$ implies $ \hrho \in L^\infty(0,T; L^2_M(\Omega^{J+1} \times \R^{(J+1)d};\R_{\geq 0}))$, one has to follow a similar line of argument as in Section \ref{sec:FP}, Subsection \ref{subsec_2:FP}. Indeed, it suffices to test equation \eqref{eq:weak-duality-b} with the function
\[\hrho_\alpha
\in L^2(0,T;W^{1,2}_{*,M}(\Omega^{J+1} \times \mathbb{R}^{(J+1)d})),\]
rather than
 \[\mathcal{F}'(\hrho_\alpha+\gamma) = \log(\hrho_\alpha + \gamma)
\in L^2(0,T;W^{1,2}_{*,M}(\Omega^{J+1} \times \mathbb{R}^{(J+1)d})),\]
\noindent where $\hrho_\alpha \geq 0$ and $\gamma>0$, and pass to the limit $\alpha \rightarrow 0_+$ in the equation
satisfied by $\hrho_\alpha$ using the bounds resulting from the corresponding energy estimate.
 \end{remark}

\subsection{Statement of the Trace Theorem}
\label{sec:use1}

\begin{theorem}
\label{theorem:trace}
Let $\varrho \in L^\infty(0,T;L^2(\Omega^{J+1} \times \R^{(J+1)d};\R_{\geq 0}))$ be a solution of equation \eqref{conjug-eq:FP-eq}.
Then, for every $t \in [0,T]$, there exists a $\gamma_t \varrho \in L^1(\Omega^{J+1} \times \R^{(J+1)d})$ and a
$\gamma \varrho$ defined on $ \partial \Omega^{(j)} \times \R^{(J+1)d} \times (0,T)$ for $j=1,\dots,J+1$, such that:
\[
\gamma_t \varrho \in C([0,T]; L^1_{loc}(\Omega^{J+1} \times \R^{(J+1)d})) \quad \mbox{and} \quad \gamma \varrho \in L^1_{loc}\bigl(\partial  \Omega^{(j)} \times \R^{(J+1)d} \times [0,T], (v_j
\cdot n(r_j))^2 \, \dd v \dd s(r) \dd \tau \bigr),
\]
for $j=1,\dots,J+1$, and which satisfy the Green's formula
{
\begin{align}
\label{Green:FP-eq}
&\int_{t_0}^{t_1} \int_{\Omega^{J+1}} \int_{\R^{(J+1)d}}
\varrho \,  \Lambda^*_{E_j} (\varphi)  \dd v \dd r \dd \tau  \nonumber \\  & \qquad \qquad =\Bigl[ \int_{\Omega^{J+1}} \int_{\R^{(J+1)d}}
\varrho(\cdot, \tau) \, \varphi \dd v \dd r\Bigr]_{t_0}^{t_1} + \sum_{j=1}^{J+1} \int_{t_0}^{t_1} \int_{\partial \Omega^{(j)}} \int_{\R^{(J+1)d}} (v_j
\cdot n(r_j)) \,
\gamma \varrho \, \varphi \dd v \dd s(r) \dd \tau
\end{align}
}

\noindent
for all $t_0, \, t_1 \in [0,T]$ and for all test functions $\varphi \in \cD_0(\overline{D})$, the space of functions
$\varphi \in \cD(\overline{D})$ such that $\varphi=0$ on $\Sigma_0 \times (0,T)$, where
$\Sigma_0:= \bigcup_{j=1}^{J+1} \Bigl \{ (r,v) \in \partial \Omega^{(j)} \times \R^{(J+1)d}, \, v_j \cdot n(r_j)=0 \Bigr \}$ and we have used the notation $\cD(\overline{D}) :=  \mathcal{C}^{\infty}_0(\overline{\Omega^{J+1}} \times \R^{(J+1)d} \times [0,T])$.
\end{theorem}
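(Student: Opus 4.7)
The plan is to follow the strategy of Mischler \cite{onthetraceMischler}, which adapts Ukai's and Bardos's classical trace theory for kinetic/transport equations to Fokker--Planck operators. The operator $\Lambda_{E_j}$ is first-order in the ``hyperbolic'' variables $(t,r)$ and second-order in $v$, and the only boundary on which a trace has to be constructed is $\partial\Omega^{(j)}$; on such a face the characteristic direction of the pure transport part is $v_j$, so the natural weight on the trace is $|v_j\cdot n(r_j)|^2\,\dd s(r)\dd v\dd\tau$, and test functions must vanish on the singular (grazing) set $\Sigma_0$ where $v_j\cdot n(r_j)=0$.

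First I would verify the formula \eqref{Green:FP-eq} in the classical sense for a smooth $\tilde\varrho\in\mathcal{C}^\infty(\overline{\Omega^{J+1}}\times\mathbb{R}^{(J+1)d}\times[0,T])$ by ordinary integration by parts: the $v$-derivatives integrate to zero because $\varphi\in\mathcal{C}^\infty_0$ in $v$, integration by parts in $t$ produces the two time slices, and integration by parts in $r_j$ against $\frac{1}{\eps}v_j\cdot\partial_{r_j}$ produces, for each $j$, a boundary term on $\partial\Omega^{(j)}$ carrying the factor $v_j\cdot n(r_j)$. Using the regularity inherited from Section \ref{sec:FP} under the factorized initial datum hypothesis, namely $\varrho\in L^\infty(0,T;L^2(\Omega^{J+1}\times\mathbb{R}^{(J+1)d}))$ together with $\partial_{v_j}\varrho\in L^2(0,T;L^2_{M^{-1}})$ coming from the energy inequality divided through by the Maxwellian, I would then regularise $\varrho$ by a double mollification: convolution by an approximate identity $\kappa_\delta$ in the $(r,v)$-variable (truncating sharp corners of $\overline{\Omega}^{J+1}$ via a Lipschitz extension in the $r$-variable, as in \cite{onthetraceMischler}) followed by a Friedrichs commutator argument to control the defect $[\Lambda_{E_j},\kappa_\delta\ast]\varrho$ in $L^1(\Omega^{J+1}\times\mathbb{R}^{(J+1)d}\times(0,T))$; this produces a smooth $\varrho_\delta$ satisfying $\Lambda_{E_j}\varrho_\delta=g_\delta$ with $g_\delta\to 0$ in $L^1$.

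The heart of the argument is then a uniform trace estimate. I would multiply $\Lambda_{E_j}\varrho_\delta=g_\delta$ by $\varrho_\delta$ and integrate over $\Omega^{J+1}\times\mathbb{R}^{(J+1)d}\times(0,t)$, obtaining a ``squared'' Green identity
\[
\tfrac{1}{2}\!\int_{\Omega^{J+1}\!\times\mathbb{R}^{(J+1)d}}\!\!\varrho_\delta^{\,2}(\cdot,t)\,\dd r\dd v
+\tfrac{1}{2\eps}\sum_{j=1}^{J+1}\int_0^t\!\!\int_{\partial\Omega^{(j)}\times\mathbb{R}^{(J+1)d}}\!\!(v_j\cdot n(r_j))\,\varrho_\delta^{\,2}\,\dd s(r)\dd v\dd\tau
+\tfrac{\beta^2}{\eps^2}\!\int_0^t\!\!\int|\nabla_v\varrho_\delta|^2\,\dd r\dd v\dd\tau
\]
equals controlled volume terms coming from $\sum_j\partial_{v_j}\!\cdot E_j$, from $g_\delta\varrho_\delta$, and from the initial data. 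Splitting the boundary term into its $(v_j\cdot n(r_j))_\pm$ parts and using the weakly-imposed specular condition (which, after passage through mollification, becomes an asymptotic balance between the incoming and outgoing fluxes of $\varrho_\delta^2$ on $\partial\Omega^{(j)}$), I would conclude that $|v_j\cdot n(r_j)|^{1/2}\gamma\varrho_\delta$ is bounded in $L^2_{loc}$ on $\partial\Omega^{(j)}\times\mathbb{R}^{(J+1)d}\times[0,T]$ uniformly in $\delta$, hence converges weakly to a limit that I define to be $\gamma\varrho$, which automatically lies in $L^1_{loc}(\partial\Omega^{(j)}\times\mathbb{R}^{(J+1)d}\times[0,T],(v_j\cdot n(r_j))^2\dd v\dd s(r)\dd\tau)$ by Cauchy--Schwarz. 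The time trace $\gamma_t\varrho$ is obtained simultaneously: rewriting the mollified equation as $\partial_t\varrho_\delta=\cdots$ with the right-hand side controlled in $L^2(0,T;(W^{s,2})')$ (as in \eqref{eq:bound4}) gives $\varrho_\delta\in\mathcal{C}([0,T];(W^{s,2}_{loc})')$; combined with the uniform $L^\infty(0,T;L^2)$-bound, interpolation yields $\gamma_t\varrho_\delta\to\gamma_t\varrho$ in $\mathcal{C}([0,T];L^1_{loc}(\Omega^{J+1}\times\mathbb{R}^{(J+1)d}))$.

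The last step is to pass to the limit $\delta\to 0_+$ in \eqref{Green:FP-eq} written for $\varrho_\delta$: the volume term on the left converges by the $L^2$-convergence of $\varrho_\delta$ and smoothness of $\Lambda^*_{E_j}\varphi$; the two time-slice terms converge by the convergence of the time trace; the boundary term converges by the established weak convergence in the weighted $L^2$-space, provided that $(v_j\cdot n(r_j))\varphi$ lies in that space's dual — which is exactly the content of the assumption $\varphi\in\cD_0(\overline D)$, since vanishing on $\Sigma_0$ guarantees that $\varphi/|v_j\cdot n(r_j)|$ stays bounded on each compact subset. The main obstacle, as in Mischler's original argument, is the commutator estimate that ensures $g_\delta\to 0$ in $L^1$ uniformly up to the boundary despite the unbounded drift $({\mathcal L}r)_j+u(r_j,t)$: this is handled by choosing the tangential/normal decomposition of the $r$-mollifier near $\partial\Omega$ so that it commutes to leading order with $v_j\cdot\partial_{r_j}$ and by exploiting $\nabla_v\varrho\in L^2_{M^{-1}}$ to absorb the $v$-commutator with the drift $E_j$.
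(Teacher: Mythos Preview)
Your overall architecture (regularise \`a la DiPerna--Lions/Mischler, establish a priori trace estimates, pass to the limit) matches the paper. The genuine gap is in what you call the ``heart of the argument''. Multiplying $\Lambda_{E_j}\varrho_\delta=g_\delta$ by $\varrho_\delta$ produces the boundary flux $\tfrac{1}{2\eps}\sum_j\int(v_j\cdot n(r_j))\,\varrho_\delta^{\,2}$, which is \emph{signed}: you cannot bound its positive (outgoing) part without controlling the negative (incoming) part, and vice versa. Your proposed fix---``using the weakly-imposed specular condition''---is circular: Theorem~\ref{theorem:trace} is stated for distributional solutions of \eqref{conjug-eq:FP-eq} on the \emph{open} set, with no boundary condition imposed; the specular condition is something one verifies \emph{a posteriori} by means of the very trace whose existence you are trying to establish. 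Moreover, the convolution--translation that renders $\varrho_\delta$ smooth up to $\partial\Omega^{(j)}$ does not preserve specular symmetry, so no ``asymptotic balance of incoming and outgoing fluxes of $\varrho_\delta^2$'' is available. The paper's remedy is to test the Green identity not with $\varrho$ itself but with $\tilde\beta_{\tilde\eps}(\varrho)\,\psi(v_j\cdot n(r_j))\,\Phi$, taking $\tilde\beta_{\tilde\eps}(s)\to|s|$ and, crucially, $\psi(z)=z$: the boundary integrand then carries the factor $(v_j\cdot n)\,\psi(v_j\cdot n)=(v_j\cdot n)^2\ge0$, giving a one-sided $L^1$ estimate with weight $(v_j\cdot n)^2$ that requires no boundary information whatsoever.

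A secondary issue concerns your route to $\gamma_t\varrho$: from $\varrho_\delta\in L^\infty(0,T;L^2)$ and $\partial_t\varrho_\delta\in L^2(0,T;(W^{s,2})')$ you obtain at best $\mathcal{C}_w([0,T];L^2)$, not strong convergence of the time slices in $\mathcal{C}([0,T];L^1_{loc})$. The paper instead applies \emph{both} a priori estimates (the interior one with $\psi\equiv1$, $\Phi\in\mathcal{C}^\infty_0(\Omega^{J+1}\times\R^{(J+1)d})$, and the boundary one with $\psi(z)=z$) to the \emph{difference} $\varrho_k-\varrho_l$, which solves $\Lambda_{E_j}(\varrho_k-\varrho_l)=r_k-r_l\to 0$ in $L^1_{loc}$; this yields directly that $(\varrho_k(\cdot,t))_k$ and $(\gamma\varrho_k)_k$ are Cauchy in $\mathcal{C}([0,T];L^1_{loc}(\mathcal{O}))$ and in $L^1_{loc}(\partial\Omega^{(j)}\times\R^{(J+1)d}\times[0,T],\dd\mu_2)$ respectively. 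The limits are the traces, and Green's formula then passes to the limit by strong convergence on both sides.
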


Let us first introduce some additional notation. Since $\partial \Omega$ is $\mathcal{C}^2$, $\Omega$ is locally on one side of
$\partial \Omega$ and there exists a function $d=d_{\Omega} \in W^{2,\infty}(\R^{d})$ such that for all $z$ in an interior
neighbourhood of $\partial \Omega$ one has
$$d(z)=-\mathrm{dist}(z,\partial \Omega).$$

We define in $\overline{\Omega}$ the gradient field
$$n(z)=\nabla_z d(z),$$
which coincides with the unit outward normal vector to $\Omega$ at every point of $\partial \Omega$.
Hence, the unit outward normal (column-)vector to $\partial \Omega_j$ at $r_j \in \partial\Omega_j$, for $j=1,\dots, J+1$, is  $$n(r_j)=\nabla_{r_j} d(r_j)= \partial_{r_j} d(r_j).$$
Here, the set $\Omega_j$ still denotes $\Omega$; by assigning it the index $j$, however, we wish to emphasize by our notation that in
the consideration of the distance to the boundary of $\Omega$, the distance of the coordinate $r_j \in \Omega_j$ is measured to the
boundary $\partial \Omega_j$ of the set that contains it.

We consider
\[\dd \mu_i = |n(r_j) \cdot v_j|^{i}\dd v \dd s(r) \dd t, \qquad i=1,2,\]
which are measures defined on $\partial \Omega^{(j)} \times \R^{(J+1)d} \times (0,T)$.
For a given real $R>0$, we define the sets
\[ B_R=\bigl \{ y \in \R^{d} \,  :  \, |y| < R \bigr \}, \qquad \cO := \Omega^{J+1} \times \R^{(J+1)d}, \qquad \cD := \Omega^{J+1} \times \R^{(J+1)d} \times (0,T),\]
\[ \cO_R := (\Omega \cap B_R)^{J+1} \times B_R^{J+1},\qquad \cD_R:= \cO_R \times (0,T).\]
We shall also use the abbreviation $L_R^{a,b}$ for the function space $L^{a}(0,T;L^{b}(\cO_R))$, and $L^{a,b}_{loc}$ for the
function space $L^{a}(0,T;L^{b}_{loc}(\cO))$.
\begin{proof}[Proof of Theorem \ref{theorem:trace}]
The proof of the theorem will be performed in three steps. First, we obtain two a priori estimates assuming that the solution of
equation \eqref{eq:FP-eq1} is smooth. Then, following the method proposed by DiPerna \& Lions in \cite{DiPerna1989}, we approach
the weak solution $\varrho$ of equation \eqref{eq:FP-eq1} by a sequence of regular functions $(\varrho_k)_{k\geq 1}$, which are
solutions of equation \eqref{eq:FP-eq1} with an error term $r_k$ that vanishes at infinity; these regular functions satisfy
the two a priori estimates from the first step. Finally, we deduce the existence of a trace by passing to the limit.

{\sc{Step 1: A priori estimates.}} In this step, we derive two a priori estimates. We first assume that
\[\varrho \in W^{1,1}_{loc}\bigl(\Omega^{J+1} \times \R^{(J+1)d} \times (0,T) \bigr)\]
so that the following manipulations are admissible. We consider three functions that we shall specify later: $\psi \in C^1(\R)$
nondecreasing with $\psi(0)=0$, 
$\Phi=\Phi(r,v,t) \in \mathcal{C}^{\infty}_0 \bigl(\R^{(J+1)d} \times \R^{(J+1)d} \times [0,T]\bigr)$ and $\tilde \beta \in C^1(\R)$, and
we fix $t_0, t_1 \in [0,T]$. Below, we shall write $\psi$ for $\psi(v_j \cdot n(r_j))$.
We use Green's formula together with equation \eqref{eq:FP-eq1} to get
\begin{align}
\begin{split}
\label{Green-apriori:FP-eq}
& \Biggl[ \int_{\Omega^{J+1}} \int_{\R^{(J+1)d}}
\tilde \beta(\varrho) \, \psi \, \Phi  \dd v \dd r \Biggr]_{t_0}^{t_1} + \sum_{j=1}^{J+1} \int_{t_0}^{t_1} \int_{\partial \Omega^{(j)}} \int_{\R^{(J+1)d}} (v_j
\cdot n(r_j)) \, \tilde \beta(\varrho) \, \psi \, \Phi \dd v \dd s(r) \dd \tau \\
& \qquad \qquad = \int_{t_0}^{t_1} \int_{\Omega^{J+1}} \int_{\R^{(J+1)d}} \Lambda_{E_j} \bigl( \tilde \beta(\varrho(r,v,\tau)) \, \Phi(r,v,\tau) \, \psi(v_j \cdot n(r_j)) \bigr) \dd v \dd r \dd \tau  \\
& \qquad \qquad = \int_{t_0}^{t_1} \int_{\Omega^{J+1}} \int_{\R^{(J+1)d}} \Biggl \{ \Phi \, \psi \, \Lambda_{E_j} \bigl( \tilde \beta(\varrho(r,v,\tau)) \bigr) + \tilde \beta(\varrho) \, \Phi \, \Lambda_{E_j} \bigl(  \psi(v_j \cdot n(r_j)) \bigr) \\
& \qquad \qquad \qquad + \tilde \beta(\varrho) \, \psi \, \Lambda_{E_j}\Phi(r,v,\tau) \Biggl \} \dd v \dd r \dd \tau  \\
& \qquad \qquad = \int_{t_0}^{t_1} \int_{\Omega^{J+1}} \int_{\R^{(J+1)d}} \Biggl \{ \Phi \, \psi \, \tilde \beta'(\varrho) \, \Lambda_{E_j} \varrho + \tilde \beta(\varrho) \, \Phi \, \psi'(v_j \cdot n(r_j)) \, \Lambda_{E_j} \bigl(v_j \cdot n(r_j) \bigr) \\
& \qquad \qquad \qquad + \tilde \beta(\varrho) \, \psi \, \Lambda_{E_j}\Phi(r,v,\tau) \Biggl \} \dd v \dd r \dd \tau \\
& \qquad \qquad = \int_{t_0}^{t_1} \int_{\Omega^{J+1}} \int_{\R^{(J+1)d}} \Biggl \{ \tilde \beta(\varrho)\,\Phi \Biggl[\psi'(v_j \cdot n(r_j)) \Biggl(\frac{1}{\eps}
\sum_{j=1}^{J+1} v_j^{\rm T} \, D^2 d_{\Omega} \, v_j + \sum_{j=1}^{J+1} E_j(r,v,\tau)\cdot n(r_j) \Biggr)  \\
& \qquad \qquad \qquad  + \sum_{j=1}^{J+1} (\pdv \cdot E_j(r,v,\tau)) \psi(v_j \cdot n(r_j)) \Biggr] + \tilde \beta(\varrho) \, \psi \, \Lambda_{E_j} \Phi \Biggr \} \dd v \dd r \dd \tau.
\end{split}
\end{align}
We now fix $t_0 \in [0,T]$, a compact set $K$ of $\cO := \Omega^{J+1} \times \R^{(J+1)d}$, $\psi(z)=1$ and
$\tilde \beta=\tilde\beta_{\tilde \eps}$ where $\tilde \beta_{\tilde \eps}$ is a sequence of smooth even and nonnegative functions such
that $\tilde \beta_{\tilde \eps}(0)=0$ and $\tilde \beta_{\tilde \eps}(y) \rightarrow |y|$, for all $y \in \R$. We can then choose
$\Phi \in \mathcal{C}^{\infty}_0(\Omega^{J+1} \times \R^{(J+1)d})$ in such a way that $0 \le \Phi \le 1$ in $\Omega^{J+1} \times \R^{(J+1)d} \times (0,T)$, $\Phi = 1$ on $K$ and we denote by $R>0$ a real number satisfying  $\mathrm{supp} \, \Phi \subset \cO_R$.
The identity \eqref{Green-apriori:FP-eq} then implies that, for all $t \in [0,T]$,
\begin{align*}
\int_{\Omega^{J+1}} \int_{\R^{(J+1)d}}
\tilde \beta_{\tilde \eps}(\varrho(\cdot, t_1))\, \Phi  \dd v \dd r & = \int_{\Omega^{J+1}} \int_{\R^{(J+1)d}} \tilde \beta_{\tilde \eps}(\varrho(\cdot, t_0))\, \Phi  \dd v \dd r
\\
& \quad + \int_{t_0}^{t_1}  \int_{\Omega^{J+1}} \int_{\R^{(J+1)d}}
\tilde \beta_{\tilde \eps}(\varrho) \, \sum_{j=1}^{J+1} (\pdv \cdot E_j(r,v,\tau)) \dd v \dd r \dd \tau
\\
& \quad + \int_{t_0}^{t_1}  \int_{\Omega^{J+1}} \int_{\R^{(J+1)d}} \tilde \beta_{\tilde \eps}(\varrho) \, \Lambda_{E_j} \Phi \dd v \dd r \dd \tau\\
\end{align*}
\begin{align*}
& \le \|\tilde \beta_{\tilde \eps}(\varrho(\cdot, t_0)) \|_{L^1_R} + C_{R} \int_0^T  \sum_{j=1}^{J+1} \|\pdv \cdot E_j(\cdot,\cdot,\tau) \|_{L^{\infty}_R} \| \tilde \beta_{\tilde \eps}(\varrho) (\cdot,\tau) \|_{L^1_R} \dd \tau \\
& \quad + C_{R,\eps,\beta,J} \| \nabla \Phi \|_{L^{\infty}} \int_0^T \Biggl( 1 + \sum_{j=1}^{J+1}  \| E_j(\cdot,\cdot,\tau) \|_{L^{\infty}_R} \Biggr) \| \tilde \beta_{\tilde \eps}(\varrho)(\cdot,\tau) \|_{L^1_R} \dd \tau  \\
& \quad + C_{R,\eps,\beta,J} \, \| \Delta \Phi \|_{L^{\infty}} \int_0^T \| \tilde \beta_{\tilde \eps}(\varrho)(\cdot, \tau) \|_{L^1_R} \dd \tau.
\end{align*}
Letting $\tilde \eps$ tend to $0$, we deduce our first a priori estimate:

\begin{align}
\label{apriori1:FP-eq}
\begin{split}
\sup_{\tau \in [0,T]}\| \varrho(\cdot,\tau))\|_{L^1(K)} & \le  \|\varrho(\cdot,t_0) \|_{L^1_R} + C_{R} \int_0^T  \sum_{j=1}^{J+1} \|\pdv \cdot E_j(\cdot,\cdot,\tau)\|_{L^{\infty}_R} \| \varrho (\cdot,\tau) \|_{L^1_R} \dd \tau \\
& \quad + C_{R,\eps,\beta,J} \int_0^T \Biggl( 1 + \sum_{j=1}^{J+1}  \| E_j(\cdot,\cdot,\tau) \|_{L^{\infty}_R} \Biggr) \| \varrho(\cdot,\tau) \|_{L^1_R} \dd \tau.
\end{split}
\end{align}

Let us now fix a compact subset $K$ of $\partial \Omega^{(j)} \times \R^{(J+1)d}$, $\psi(z)=z$, $t_0=0$, $t_1=T$, with $\tilde \beta$ as before.
We choose $\Phi \in \mathcal{C}^{\infty}_0(\overline{\Omega^{J+1}} \times \R^{(J+1)d})$ 
in such a way that $0 \le \Phi \le 1$ in $\cO$, $\Phi \equiv 1$ on $K$, and we 
denote by $R>0$ a real number satisfying $\mathrm{supp} \, \Phi \subset B_R \times B_R$. 

We then deduce from the identity \eqref{Green-apriori:FP-eq} a second a priori estimate:
\begin{align}
\begin{split}
& \sum_{j=1}^{J+1} \int_{0}^{T} \int_{\partial \Omega^{(j)}} \int_{\R^{(J+1)d}} (v_j
\cdot n(r_j))^2 \, \tilde \beta_{\tilde \eps}(\varrho) \, \Phi \dd v \dd s(r) \dd \tau \\
&\qquad = -  \Biggl[ \int_{\Omega^{J+1}} \int_{\R^{(J+1)d}}
( v_j \cdot n(r_j)) \tilde \beta_{\tilde \eps}(\varrho) \, \Phi  \dd v \dd r \Biggr]_{0}^{T} \\
& \qquad \qquad + \int_{0}^{T} \int_{\Omega^{J+1}} \int_{\R^{(J+1)d}} \Biggl \{ \tilde \beta_{\tilde \eps}(\varrho)\Phi \Biggl[\frac{1}{\eps}
\sum_{j=1}^{J+1} v_j^{\rm T} \, D^2 d_{\Omega} \, v_j + \sum_{j=1}^{J+1} E_j(r,v,\tau)\cdot n(r_j)  \\
& \qquad \qquad \qquad  + \sum_{j=1}^{J+1} ( v_j \cdot n(r_j)) \, (\pdv \cdot E_j(r,v,\tau)) \Biggr] + (v_j
\cdot n(r_j)) \, \tilde \beta_{\tilde \eps}(\varrho)  \, \Lambda_{E_j}\Phi \Biggr \} \dd v \dd r \dd \tau
\\
& \le R \, (\|\tilde \beta_{\tilde \eps}(\varrho(\cdot,T)) \|_{L^1} + \|\tilde \beta_{\tilde \eps}(\varrho(\cdot,0)) \|_{L^1})
\\
& \qquad \qquad + \int_{0}^{T} \int_{\Omega^{J+1}} \int_{\R^{(J+1)d}} \Biggl \{ \tilde \beta_{\tilde \eps}(\varrho) \, \Phi \, \Biggl[\frac{1}{\eps}
\sum_{j=1}^{J+1} v_j^{\rm T} \, D^2 d_{\Omega} \, v_j + \sum_{j=1}^{J+1} E_j(r,v,\tau)\cdot n(r_j)   \\
& \qquad \qquad  + \sum_{j=1}^{J+1} ( v_j \cdot n(r_j)) \, (\pdv \cdot E_j(r,v,\tau)) \Biggr] + (v_j
\cdot n(r_j)) \tilde \beta_{\tilde \eps}(\varrho) \, \Lambda_{E_j} \Phi \Biggr \} \dd v \dd r \dd \tau \\
 & \qquad \qquad  + C_{R} \int_0^T  \sum_{j=1}^{J+1} \|\pdv \cdot E_j(\cdot,\cdot,\tau) \|_{L^{\infty}_R} \| \tilde \beta_{\tilde \eps}(\varrho)  (\cdot,\tau) \|_{L^1_R} \dd \tau  \nonumber \\
& \qquad \qquad + C_{R,\eps,\beta,J} \int_0^T \Biggl( 1 + \sum_{j=1}^{J+1}  \| E_j(\cdot,\cdot,\tau) \|_{L^{\infty}_R} \Biggr) \| \tilde \beta_{\tilde \eps}(\varrho) (\cdot,\tau) \|_{L^1_R} \dd \tau.
\end{split}
\end{align}
Letting $\tilde \eps$ tend to $0$, we then have that
\begin{align}
\label{apriori2}
&\| \varrho\|_{L^1([0,T] \times K, \dd \mu_2)} \nonumber\\
&\qquad  \le R \, \bigl( \|\varrho(\cdot,T) \|_{L^1} + \|\varrho(\cdot,0) \|_{L^1} \bigr) + C_{R} \int_0^T  \sum_{j=1}^{J+1} \| \pdv \cdot E_j(\cdot,\cdot,\tau) \|_{L^{\infty}_R} \| \varrho (\cdot,\tau) \|_{L^1_R} \dd \tau  \\
& \qquad \qquad + C_{R,\eps,\beta,J} \int_0^T \Biggl( 1 + \sum_{j=1}^{J+1}  \| E_j(\cdot,\cdot,\tau) \|_{L^{\infty}_R} \Biggr) \| \varrho(\cdot,\tau) \|_{L^1_R} \dd \tau. \nonumber
\end{align}

{\sc{Step 2: Regularization.}} In this step, we prove the following lemma, which states that $\varrho$ can be approximated by a sequence
$\varrho_k$ of regular functions, defined on $\overline{\cO} \times [0,T]$, and we solve \eqref{eq:FP-eq1} with an error term $r_k$, which
tends to $0$ as $k \rightarrow \infty$. Given the sequence of mollifiers $(\omega_k)_{k \geq 1}$ defined by
\[
\omega_k(z)=k^d \omega(kz), \quad k \in \bbN, \quad \omega \in C^{\infty}(\R^d;\R_{\geq 0}), \quad \mathrm{supp} \, \omega \subset B_1,
\quad \int_{\R^d} \omega(z) \dd z = 1,
\]
where $\bbN$ is the set of all positive integers,
we introduce the sequence of regularized functions $$\tilde \varrho_k = \varrho \star_{r,k} \omega_k *_v \omega_k,$$ where $*_v$
denotes the usual convolution; thus,
\begin{align*}
(u *_v H_k)(v) &:= \int_{\R^{(J+1)d}} u(\eta) \, H_{k}\bigl(v-\eta \bigr) \dd \eta,
\\ &= \int_{\R^{(J+1)d}} u(\eta) \, \prod_{j=1}^{J+1} \, h_{k}\bigl(v_j-\eta_j \bigr) \dd \eta \\
&= \int_{\R^d} \dots \int_{\R^d} u(\eta_1,\dots, \eta_{J+1})\, \prod_{j=1}^{J+1} \, h_{k}\bigr(v_j-\eta_j \bigr) \dd \eta_1 \cdots \dd \eta_{J+1},
\end{align*}
for any function $u \in L^1(\R^{(J+1)d})$ and a function $H_k(v) :=\prod_{j=1}^{J+1} \, h_{k}\bigl(v_j\bigr)$,
$v:=(v_1^{\rm T}, \dots , v_{J+1}^{\rm T})^{\rm T} \in \R^{(J+1)d}$, where $v_j \in \R^d$ for $j=1,\dots, J+1$, $h_{k} \in L^1(\R^{d})$,
$\mathrm{supp} \, h_{k} \subset B_{\frac{1}{k}}$.
We have that the convolution above is well-defined since $ H_k \in L^1(\R^{(J+1)d})$. Hence, by Young's inequality for convolutions,
$u *_v H_k \in  L^1(\R^{(J+1)d})$ and
$$ \| u *_v H_k \|_{L^1(\R^{(J+1)d})} \leq \| u  \|_{L^1(\R^{(J+1)d})} \| H_k \|_{L^1(\R^{(J+1)d})}.$$
Now, let $u \in L^1_{loc}(\overline{\Omega^{J+1}})$. We extend $u$ by $0$ to the complement of $\Omega^{J+1}$ and we denote by $\star_{r,k}$
the convolution--translation defined by:
\begin{align*}
(u \star_{r,k} H_k)(r) &:= \int_{\R^{(J+1)d}} u(y) \, H_{k}\bigg(r-\frac{2}{k}n(r)-y \bigg) \dd y \\
&= \int_{\R^{(J+1)d}} u(y) \, \prod_{j=1}^{J+1} \, h_{k}\bigg(r_j-\frac{2}{k}n(r_j)-y_j \bigg) \dd y \\
&= \int_{\R^d} \dots \int_{\R^d} u(y_1,\dots, y_{J+1})\, \prod_{j=1}^{J+1} \, h_{k}\bigg(r_j-\frac{2}{k}n(r_j)-y_j \bigg) \dd y_1 \cdots \dd y_{J+1},
\end{align*}
where $h_{k} \in L^1(\R^{d})$, $\mathrm{supp} \, h_{k} \subset B_{\frac{1}{k}}$, $r:=(r_1^{\rm T}, \dots, r_{J+1}^{\rm T})^{\rm T} \in \Omega^{J+1}$,
$r_j \in \overline{\Omega_j} \subset \R^d$ for all $j=1,\dots, J+1$.
The point of using a convolution--translation is to ensure that the variable $y$ stays in the interior of the domain $\Omega^{J+1}$,
so that we do not create bad discontinuities in the derivatives of $u$ at the boundary of the domain. Indeed, since the mollifiers $h_k$ are
compactly supported in $B_{\frac{1}{k}}$, we have that $y_j \in B(r_j-\frac{2}{k}n(r_j),\frac{1}{k})$, for all $j=1,\dots, J+1$.
Set $\tilde r_j := r_j-\frac{2}{k}n(r_j)$ and $d(y_j, \partial \Omega_j):= \inf \{|y_j-z| \, : \, z \in \partial \Omega_j \}$ for all $j=1,\dots ,J+1$. Hence $|y_j-\tilde r_j|< \frac{1}{k}$ and for $z \in \partial \Omega_j$:
\begin{align*}
    |y_j-z| &= |y_j-\tilde r_j + \tilde r_j-z| \\
    & \geq \bigl||y_j-\tilde r_j| - |z-\tilde r_j|\bigr|.
\end{align*}
Since $|z-\tilde r_j|=d(\tilde r_j, \partial \Omega_j) > \frac{2}{k}$, we obtain that $|y_j-\tilde r_j| - |z-\tilde r_j|
< \frac{1}{k} - \frac{2}{k}= -\frac{1}{k}$. Thus we deduce that $|y_j-z|>\frac{1}{k}>0$.
This implies $d(y_j, \partial \Omega_j)>0$, for all $j=1,\dots,J+1$.
Hence, $y_j$ is in the interior of $\Omega_j$ for all $j=1,\dots, J+1$, which implies that $y$ is in the interior of $\Omega^{J+1}$.
\begin{lemma}\label{lem:5.2}
For each $k \in \bbN$ there exists a function
\[\varrho_k \in C(\overline{\Omega^{J+1}} \times \R^{(J+1)d} \times [0,T]) \cap W^{1,1} (0,T; W^{1,\infty}_{loc}(\overline{\cO})),\]
such that the sequence $\varrho_k$ satisfies:
\begin{align}
\label{convergence:FP}
\begin{split}
\mbox{$\varrho_k$ is bounded in $L^{\infty}(0,T;L^2_{loc}(\overline{\cO}))$, $\hspace{0.1in}$} \\
\mbox{$\varrho_k \rightarrow \varrho$ in $L^{a}(0,T;L^2_{loc}(\overline{\cO})) \quad \forall\, a \in [1, \infty)$}
\end{split}
\end{align}
and
\begin{align}
\label{rest:FP}
\Lambda_{E_j}\varrho_k = r_k  \mbox{ in $\mathcal{D}'\bigl(\Omega^{J+1} \times \R^{(J+1)d} \times (0,T)\bigr)$},
\end{align}
where $r_k$ converges to $0$ in $L^1_{loc}(\cO \times [0,T])$.
\end{lemma}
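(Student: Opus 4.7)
The plan is to construct $\varrho_k$ exactly as advertised in the paragraphs preceding the statement, namely $\varrho_k := \varrho \star_{r,k} \omega_k *_v \omega_k$, and then to verify the three claims in turn. The convolution--translation $\star_{r,k}$ is designed so that, for $r \in \overline{\Omega^{J+1}}$, the $y$-values being sampled are guaranteed to lie in the interior of $\Omega^{J+1}$ (as was verified in the paragraph defining $\star_{r,k}$); this guarantees that $\varrho_k$ is well-defined up to the boundary and enjoys the same smoothness as $\omega_k$ itself.

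First, I would establish the regularity and convergence statements \eqref{convergence:FP}. Continuity of $\varrho_k$ on $\overline{\Omega^{J+1}} \times \R^{(J+1)d} \times [0,T]$ follows from the continuity of translates in $L^1_{loc}$ together with the smoothness of the mollifiers $\omega_k$. Since by the assumption following the remark in Section~\ref{sec:use1} we have $\varrho \in L^\infty(0,T; L^2(\cO))$, Young's inequality for convolutions together with the uniform bound $\|\omega_k\|_{L^1} = 1$ yields a uniform $L^\infty(0,T; L^2_{loc}(\overline{\cO}))$ bound on $\varrho_k$; and standard mollifier properties give $\varrho_k(\cdot,\cdot,t) \to \varrho(\cdot,\cdot,t)$ in $L^2_{loc}$ for a.e.\ $t$, which combined with the uniform $L^2_{loc}$ bound upgrades to convergence in $L^a(0,T; L^2_{loc})$ for every $a \in [1,\infty)$ via the dominated convergence theorem. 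The claim that $\varrho_k \in W^{1,1}(0,T; W^{1,\infty}_{loc}(\overline{\cO}))$ is obtained by differentiating the mollifiers: spatial derivatives hit $\omega_k$ directly, and for $\partial_t \varrho_k$ we use that $\partial_t \varrho$ can be read off from \eqref{eq:FP-eq1} as a sum of first- and second-order $(r,v)$-derivatives with coefficients in $L^1(0,T; L^\infty_{loc})$, so after mollification every term is bounded locally.

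Next, I would compute $\Lambda_{E_j} \varrho_k$. Since $\Lambda_{E_j} \varrho = 0$ in $\mathcal{D}'(\cD)$, we obtain formally
\begin{equation*}
r_k := \Lambda_{E_j} \varrho_k = \bigl[\Lambda_{E_j},\; \star_{r,k} \omega_k *_v \omega_k \bigr] \varrho,
\end{equation*}
where the commutator can be split into four contributions: (a) a commutator with the $v$-transport operator $\sum_j E_j \cdot \partial_{v_j}$; (b) a commutator with the zeroth-order multiplication operator $\sum_j (\partial_{v_j} \cdot E_j)$; (c) a commutator with the $r$-transport operator $(1/\eps)\sum_j v_j \cdot \partial_{r_j}$, which in our setting picks up an extra error from the non-commutation of $\partial_{r_j}$ with the translation $r \mapsto r - (2/k) n(r)$; and (d) a trivial contribution from $\partial_t$ and $\partial_{v_j}^2$, which commute exactly with the mollifications.

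The main obstacle is showing $r_k \to 0$ in $L^1_{loc}(\cO \times [0,T])$, and the crux is the transport commutator~(a), which is precisely the situation covered by the DiPerna--Lions commutator lemma. Here $E_j$ is affine in $v$, while in $r$ it depends on $(\cL r)_j$ (which is smooth) and on $u(r_j,t)$; by the maximal regularity result from Section~\ref{sec:coupled} we have $u \in L^2(0,T; W^{1,\sigma}(\Omega)^d)$ for some $\sigma > d$, so $E_j \in L^2(0,T; W^{1,\sigma}_{loc}(\Omega^{J+1}))$ in $r$ and is smooth in $v$. Combined with $\varrho \in L^\infty(0,T; L^2(\cO))$ and the H\"older pairing $1/\sigma + 1/2 \leq 1$, the standard commutator lemma (cf.\ \cite{DL}, Lemma~II.1, and \cite{DiPerna1989}) yields that contribution~(a) tends to $0$ in $L^1_{loc}$. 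Contribution~(b) vanishes to $0$ because $\partial_{v_j} \cdot E_j$ is constant in $v$ and Lipschitz in $r$, while contribution~(c) is handled analogously after observing that the Jacobian $I - (2/k)\nabla n(r_j)$ of the translation differs from the identity by an $O(1/k)$ term in $W^{1,\infty}$ thanks to $d_\Omega \in W^{2,\infty}(\R^d)$, so the additional error is of the schematic form $(1/k) \cdot v_j \cdot (\nabla n)(r_j) \cdot (\nabla \omega_k * \varrho)$, which tends to $0$ in $L^1_{loc}$ after an integration by parts transferring the derivative onto the $L^2$ function~$\varrho$. Assembling (a)--(d) gives $r_k \to 0$ in $L^1_{loc}(\cO \times [0,T])$ and concludes the proof.
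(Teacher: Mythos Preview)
Your proposal follows essentially the same route as the paper's proof: the same mollified approximation $\varrho_k = \varrho \star_{r,k} \omega_k *_v \omega_k$, the same four-term decomposition of the commutator (your (a)--(d) correspond to the paper's $r^1_k$, $r^2_k$, $r^4_k$, and the exactly-vanishing $r^3_k$), and the same appeal to the DiPerna--Lions machinery combined with the $W^{1,\sigma}$-in-$r$ regularity of $u$ and the $L^2$ integrability of $\varrho$.

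One point of imprecision worth flagging concerns your handling of contribution (c). You cannot literally ``transfer the derivative onto the $L^2$ function $\varrho$'' by integration by parts, because $\varrho$ carries no a priori $r$-regularity. What the paper does instead---and what the DiPerna--Lions framework dictates---is the two-step density argument: first verify that for \emph{smooth} $\varrho$ the commutator tends to $0$ (here your integration by parts is legitimate), then establish a uniform a priori bound $\|r^4_k(\varrho)\|_{L^1(\mathcal{D}_R)} \leq C\|\varrho\|_{L^1(\mathcal{D}_{R+1})}$ by direct estimation (exploiting that $|v_j - \eta_j| \leq 1/k$ and the $O(1/k)$ Jacobian error both cancel the factor $k$ coming from $\|\nabla\omega_k\|_{L^1}$), and finally conclude via $r^4_k(\varrho) = r^4_k(\varrho_\delta) + r^4_k(\varrho - \varrho_\delta)$ for a smooth approximant $\varrho_\delta$. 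The same structure is needed for your contribution (a), where the paper in fact carries out the commutator estimate for the piece $u(r_j,t) - u(y_j,t)$ explicitly using the Hardy--Littlewood maximal function bound on $\nabla u$, rather than invoking a black-box lemma; this is where the pairing of $\varrho \in L^2$ with $\nabla u \in L^2$ is actually used.
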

\begin{proof}
The proof of this lemma is inspired by the work \cite{DiPerna1989} of DiPerna \& Lions.
By considering $\varrho$ as a function of $t$, $y$ and $\eta$, i.e.,  $\varrho = \varrho(y,\eta,t)$, we multiply equation  \eqref{eq:FP-eq1} by the test function
\[ \prod_{l=1}^{J+1} \omega_k\bigg(r_l-\frac{2}{k}n(r_l)-y_l \bigg)\,\prod_{m=1}^{J+1} \omega_k(v_m-\eta_m) \, \in \mathcal{C}^\infty_0(\Omega^{J+1}_{y} \times \R^{(J+1)d}_{\eta})\]
for fixed $r \in \overline{\Omega^{J+1}}$ and $v \in \R^{(J+1)d}$, and integrate over $y$ and $\eta$. We get
\begin{align}
\begin{split}
 \pd_t \tilde \varrho_k &= - \Biggl(\sum_{j=1}^{J+1} E_j(r,v,t) \cdot \pdv \varrho \Biggr)\star_{r,k} \omega_k *_v \omega_k -
 \Biggl(\sum_{j=1}^{J+1} (\pdv \cdot E_j(r,v,t)) \varrho \Biggr)\star_{r,k} \omega_k *_v \omega_k \\ &\quad + \Biggl( \frac{\beta^2}{\eps^2}\, \sum_{j=1}^{J+1} \pdv^2 \varrho \Biggr)\star_{r,k} \omega_k *_v \omega_k - \Biggl(\frac{1}{\eps}
\sum_{j=1}^{J+1} v_j\cdot \pdr \varrho \Biggr)\star_{r,k} \omega_k *_v \omega_k \in L^1(0,T;W^{1,\infty}_{loc}(\cO)),
\end{split}
\end{align}
and, in particular, $\tilde \varrho_k \in W^{1,1}(0,T;W^{1,\infty}_{loc}(\cO))$.

Let us define $\varrho_k$ to be the continuous representative of $\tilde \varrho_k$ in the class of functions almost everywhere equal to  $\tilde \varrho_k$. Then $\varrho_k$ solves \eqref{rest:FP} with
$$r_k=r^1_k(\varrho)+r^2_k(\varrho)+r^3_k(\varrho)+r^4_k(\varrho),$$
where
\begin{align*}
r^1_k(\varrho)&=\sum_{j=1}^{J+1} E_j(r,v,t) \cdot \pdv \varrho_k - \Biggl(\sum_{j=1}^{J+1} E_j(r,v,t) \cdot \pdv \varrho \Biggr)\star_{r,k} \omega_k *_v \omega_k,\\
r^2_k(\varrho)&=\sum_{j=1}^{J+1} (\pdv \cdot E_j(r,v,t)) \varrho_k - \Biggl( \sum_{j=1}^{J+1} (\pdv \cdot E_j(r,v,t)) \varrho \Biggr)\star_{r,k} \omega_k *_v \omega_k,\\
r^3_k(\varrho)&=\Biggl( \frac{\beta^2}{\eps^2}\, \sum_{j=1}^{J+1} \pdv^2 \varrho \Biggr)\star_{r,k} \omega_k *_v \omega_k - \frac{\beta^2}{\eps^2}\, \sum_{j=1}^{J+1} \pdv^2 \varrho_k,\\
r^4_k(\varrho)&= \frac{1}{\eps}
\sum_{j=1}^{J+1} v_j\cdot \pdr \varrho_k - \Biggl(\frac{1}{\eps}
\sum_{j=1}^{J+1} v_j\cdot \pdr \varrho \Biggr)\star_{r,k} \omega_k *_v \omega_k.
\end{align*}
We have to prove that $r^1_k(\varrho)$, $r^2_k(\varrho)$, $r^3_k(\varrho)$ and $r^4_k(\varrho)$ all converge to $0$ in $L^1_{loc}$.
Let us remark that if $\varrho$ is smooth then one has
$$\pdr (\varrho \star_{r,k}\omega_k)=\bigg(I-\frac{2}{k}D^2 d_{\Omega}\bigg)(\pdr \varrho) \star_{r,k} \omega_k,$$
and therefore
\begin{align}
\label{limit_rest4}
r^4_k(\varrho) \rightarrow_{k \rightarrow \infty} 0  \quad \mbox{ in }  L^1_{loc}\bigl(\Omega^{J+1} \times \R^{(J+1)d} \times (0,T)\bigr).
\end{align}
Indeed, we have that
\begin{align*}
\pdr (\varrho \star_{r,k}\omega_k) &=  \pdr \biggl( \int_{\Omega^{J+1}}  \varrho(y,v,t) \, \prod_{l=1}^{J+1} \omega_k\bigg(r_l-\frac{2}{k}n(r_l)-y_l\bigg) \dd y \biggr) \\
& = \int_{\Omega^{J+1}}  \varrho(y,v,t) \, \pdr \biggl( \prod_{l=1}^{J+1}  \omega_k\Bigl(r_l-\frac{2}{k}n(r_l)-y_l\Bigr) \biggr) \dd y \\
& = \int_{\Omega^{J+1}}  \varrho(y,v,t) \, \pdr  \omega_k \Bigl(r_j-\frac{2}{k}n(r_j)-y_j\Bigr) \prod_{l=1,\, l \neq j}^{J+1}  \omega_k \Bigl(r_l-\frac{2}{k}n(r_l)-y_l\Bigr) \dd y \\
& = \bigg(I-\frac{2}{k}D^2 d_{\Omega}\bigg) \int_{\Omega^{J+1}}  \varrho(y,v,t) \, \nabla \omega_k\bigg(r_j-\frac{2}{k}n(r_j)-y_j \bigg)  \prod_{l=1,\, l \neq j}^{J+1}  \omega_k \Bigl(r_l-\frac{2}{k}n(r_l)-y_l\Bigr)  \dd y \\
& = - \bigg(I-\frac{2}{k}D^2 d_{\Omega}\bigg) \int_{\Omega^{J+1}}  \varrho(y,v,t) \, \partial_{y_j} \biggl( \omega_k\Bigl(r_j-\frac{2}{k}n(r_j)-y_j \Bigr) \biggr)  \prod_{l=1,\, l \neq j}^{J+1}  \omega_k \Bigl(r_l-\frac{2}{k}n(r_l)-y_l\Bigr)  \dd y \\
& = \bigg(I-\frac{2}{k}D^2 d_{\Omega}\bigg) \int_{\Omega^{J+1}}  \partial_{y_j} \varrho(y,v,t) \, \omega_k\bigg(r_j-\frac{2}{k}n(r_j)-y_j \bigg)  \prod_{l=1,\, l \neq j}^{J+1}  \omega_k \Bigl(r_l-\frac{2}{k}n(r_l)-y_l\Bigr) \dd y \\
& = \bigg(I-\frac{2}{k}D^2 d_{\Omega}\bigg) \int_{\Omega^{J+1}}  \partial_{y_j} \varrho(y,v,t) \, \prod_{l=1}^{J+1}  \omega_k \Bigl(r_l-\frac{2}{k}n(r_l)-y_l\Bigr) \dd y \\
&=\bigg(I-\frac{2}{k}D^2 d_{\Omega}\bigg)(\pdr \varrho) \star_{r,k} \omega_k.
\end{align*}
To deal with a general $\varrho \in L^{\infty,1}_{loc}$ we begin by proving an a priori estimate. One has
\begin{align*}
r^4_k(\varrho) & = \frac{1}{\eps}
\sum_{j=1}^{J+1} \int_{\R^{(J+1)d}}  \int_{\Omega^{J+1}}  \biggl \{ v_j \cdot \pdr \Bigl[\varrho(y,\eta,t) \, \prod_{l=1}^{J+1} \omega_k \Bigl(r_l-\frac{2}{k}n(r_l)-y_l \Bigr) \, \prod_{m=1}^{J+1} \omega_k(v_m-\eta_m) \Bigr] \\
& \quad - \eta_j \cdot \partial_{y_j} \varrho(y,\eta,t) \, \prod_{l=1}^{J+1} \omega_k \Bigl(r_l-\frac{2}{k}n(r_l)-y_l \Bigr) \, \prod_{m=1}^{J+1} \omega_k(v_m-\eta_m)  \biggr \} \dd y \dd \eta.
\end{align*}
By differentiation with respect to $r_j$ in the first integrand and integration by parts in the second integrand we obtain
\begin{align*}
r^4_k(\varrho) & = \frac{1}{\eps}
\sum_{j=1}^{J+1} \int_{\R^{(J+1)d}} \int_{\Omega^{J+1}}  \biggl \{ v_j \cdot \biggl[\varrho(y,\eta,t) \, \biggl(I-\frac{2}{k}D^2 d_{\Omega}\biggr)\nabla \omega_k \biggl(r_j-\frac{2}{k}n(r_j)-y_j \biggr) \, \\
& \qquad \qquad \times \prod_{l=1,l \neq j}^{J+1} \omega_k \Bigl(r_l-\frac{2}{k}n(r_l)-y_l \Bigr) \, \prod_{m=1}^{J+1}\omega_k(v_m-\eta_m) \biggr] - \eta_j \cdot \nabla  \omega_k \biggl(r_j-\frac{2}{k}n(r_j)-y_j \biggr) \, \varrho(y,\eta,t) \\
& \qquad \qquad \times \prod_{l=1,\, l \neq j}^{J+1} \omega_k \Bigl(r_l-\frac{2}{k}n(r_l)-y_l \Bigr) \, \prod_{m=1}^{J+1} \omega_k(v_m-\eta_m)  \biggr \} \dd y \dd \eta
\end{align*}
\begin{align*}
& = \frac{1}{\eps}
\sum_{j=1}^{J+1} \int_{\R^{(J+1)d}} \int_{\Omega^{J+1}}  \varrho(y,\eta,t) \, \prod_{m=1}^{J+1} \omega_k(v_m-\eta_m)\prod_{l=1,\, l \neq j}^{J+1} \omega_k \Bigl(r_l-\frac{2}{k}n(r_l)-y_l \Bigr) \\
&  \qquad \qquad \times \biggl\{ (v_j-\eta_j) \cdot \nabla \omega_k \bigg(r_j-\frac{2}{k}n(r_j)-y_j\bigg)
- \frac{2}{k} \, v_j \cdot \, \left(D^2 d_{\Omega}\right) \nabla \omega_k \bigg(r_j-\frac{2}{k}n(r_j)-y_j\bigg) \biggr \} \dd y \dd \eta.
\end{align*}
\begin{lemma}
There exists a constant $C$, which only depends on $R$ and $d_{\Omega}$, such that the following bound holds:
$$\| r^4_k(\varrho)\|_{L^1(\cD_R)} \le C \, \| \varrho \|_{L^1(\cD_{R+1})}.$$
\end{lemma}
\begin{proof} The proof proceeds as follows. We note that
{\small
\begin{align}
\begin{split}
\label{r_4_bound}
      \| r^4_k(\varrho) \|_{L^1(\cD_R)} &= \int_0^T \int_{(\Omega \cap B_{R})^{J+1}} \int_{B_{R}^{J+1}} \biggl | \frac{1}{\eps}
\sum_{j=1}^{J+1} \int_{\R^{(J+1)d}} \int_{\Omega^{J+1}}  \varrho(y,\eta,t) \, \prod_{m=1}^{J+1} \omega_k(v_m-\eta_m)\prod_{l=1,\, l \neq j}^{J+1} \omega_k \Bigl(r_l-\frac{2}{k}n(r_l)-y_l \Bigr)
 \\
&  \qquad   \times\biggl\{(v_j-\eta_j) \cdot \nabla \omega_k \bigg(r_j-\frac{2}{k}n(r_j)-y_j\bigg)
- \frac{2}{k} v_j \cdot (D^2 d_{\Omega})\nabla \omega_k \bigg(r_j-\frac{2}{k} \, n(r_j)-y_j\bigg) \biggr \} \dd y \dd \eta \,\biggr | \dd v \dd r \dd t \\
& \leq \frac{1}{\eps}
\sum_{j=1}^{J+1} \int_0^T \int_{(\Omega \cap B_R)^{J+1}} \int_{B_R^{J+1}} \int_{\R^{(J+1)d}} \int_{\Omega^{J+1}} \biggl | \varrho(y,\eta,t) \, \prod_{m=1}^{J+1} \omega_k(v_m-\eta_m)\prod_{l=1,\, l \neq j}^{J+1}
\omega_k \Bigl(r_l-\frac{2}{k}n(r_l)-y_l \Bigr)
 \\
&  \qquad  \times  \biggl\{(v_j-\eta_j) \cdot \nabla \omega_k \bigg(r_j-\frac{2}{k}n(r_j)-y_j\bigg)
- \frac{2}{k} \, v_j \cdot (D^2 d_{\Omega})\nabla \omega_k \bigg(r_j-\frac{2}{k} \, n(r_j)-y_j\bigg) \biggr \} \biggr | \dd y \dd \eta \dd v \dd r \dd t \\
& \leq \frac{1}{\eps}
\sum_{j=1}^{J+1} \int_0^T \int_{(\Omega \cap B_R)^{J+1}} \int_{B_R^{J+1}} \int_{\R^{(J+1)d}} \int_{\Omega^{J+1}} \biggl | \varrho(y,\eta,t) \, \prod_{m=1}^{J+1} \omega_k(v_m-\eta_m)\prod_{l=1,\, l \neq j}^{J+1} \omega_k \Bigl(r_l-\frac{2}{k}n(r_l)-y_l \Bigr)
 \\
&  \qquad \qquad  \times \biggl\{(v_j-\eta_j) \cdot \nabla \omega_k \bigg(r_j-\frac{2}{k}n(r_j)-y_j\bigg)\biggr\} \biggr | \dd y \dd \eta \dd v \dd r \dd t \\
& + \frac{1}{\eps} \sum_{j=1}^{J+1} \int_0^T \int_{(\Omega \cap B_R)^{J+1}} \int_{B_R^{J+1}} \int_{\R^{(J+1)d}}
\int_{\Omega^{J+1}} \biggl | \frac{2}{k} \, \varrho(y,\eta,t) \, \prod_{m=1}^{J+1} \omega_k(v_m-\eta_m)\prod_{l=1,\, l \neq j}^{J+1}
\omega_k \Bigl(r_l-\frac{2}{k}n(r_l)-y_l \Bigr) \\
& \qquad \qquad \times \biggl\{v_j \cdot (D^2 d_{\Omega})\nabla \omega_k \bigg(r_j-\frac{2}{k}n(r_j)-y_j\bigg) \biggr\} \biggr | \dd y \dd \eta \dd v \dd r \dd t \\
& \leq I_1 + I_2,
\end{split}
\end{align}
}
where we set
\begin{align*}
 I_1 &:= \frac{1}{\eps}
\sum_{j=1}^{J+1} \int_0^T \int_{(\Omega \cap B_R)^{J+1}} \int_{B_R^{J+1}} \int_{\R^{(J+1)d}} \int_{\Omega^{J+1}}
\biggl | \varrho(y,\eta,t) \, \prod_{m=1}^{J+1} \omega_k(v_m-\eta_m)\prod_{l=1,\, l \neq j}^{J+1} \omega_k \Bigl(r_l-\frac{2}{k}n(r_l)-y_l \Bigr)
 \\
&  \qquad \qquad  \times \biggl\{ (v_j-\eta_j) \cdot \nabla \omega_k \bigg(r_j-\frac{2}{k}n(r_j)-y_j\bigg) \biggr \} \biggr | \dd y \dd \eta \dd v \dd r \dd t
\end{align*}
and
\begin{align*}
 I_2 &:=  \frac{1}{\eps} \sum_{j=1}^{J+1} \int_0^T \int_{(\Omega \cap B_R)^{J+1}} \int_{B_R^{J+1}} \int_{\R^{(J+1)d}} \int_{\Omega^{J+1}} \biggl | \frac{2}{k} \, \varrho(y,\eta,t) \, \prod_{m=1}^{J+1} \omega_k(v_m-\eta_m)\prod_{l=1,\, l \neq j}^{J+1} \omega_k \Bigl(r_l-\frac{2}{k} \, n(r_l)-y_l \Bigr) \\
& \qquad \qquad \times \biggl\{v_j \cdot (D^2 d_{\Omega})\nabla \omega_k \bigg(r_j-\frac{2}{k}n(r_j)-y_j\bigg) \biggr\}\biggr | \dd y \dd \eta \dd v \dd r \dd t.
\end{align*}
We have the following upper bound on $I_1$:
\begin{align*}
 I_1 & \leq \frac{1}{\eps}
\sum_{j=1}^{J+1} \int_0^T \int_{(\Omega \cap B_R)^{J+1}} \int_{B_R^{J+1}} \int_{\R^{(J+1)d}} \int_{\Omega^{J+1}}  \varrho(y,\eta,t) \, \prod_{m=1, \,m \neq j}^{J+1} \omega_k(v_m-\eta_m) \prod_{l=1,\, l \neq j}^{J+1} \omega_k \Bigl(r_l-\frac{2}{k}n(r_l)-y_l \Bigr)
 \\
&  \qquad \qquad  \times \biggl|  \omega_k(v_j-\eta_j) (v_j-\eta_j) \biggr| \biggl|\nabla \omega_k \bigg(r_j-\frac{2}{k}n(r_j)-y_j\bigg) \biggr|  \dd y \dd \eta \dd v \dd r \dd t.
\end{align*}
Now, using the Fubini--Tonelli theorem, we obtain:
\begin{align}
\label{I_1:bound}
\begin{split}
&I_1  \leq \frac{1}{\eps}
\sum_{j=1}^{J+1} \int_0^T \int_{\Omega \cap B_R} \int_{B_R} \int_{\R^{(J+1)d}} \int_{\Omega^{J+1}}  \varrho(y,\eta,t) \, \biggl( \prod_{m=1, \,m \neq j}^{J+1} \int_{B_R} \omega_k(v_m-\eta_m) \dd v_m \biggr)
 \\
&\!\times \biggl(\prod_{l=1,\, l \neq j}^{J+1} \int_{\Omega \cap B_R}\!\omega_k \Bigl(r_l-\frac{2}{k}n(r_l)-y_l \Bigr) \dd r_l \biggr) \biggl|  \omega_k(v_j-\eta_j) (v_j-\eta_j) \biggr| \biggl|\nabla \omega_k \bigg(r_j-\frac{2}{k}n(r_j)-y_j\bigg) \biggr|  \dd y \dd \eta \dd v_j \dd r_j \dd t.
\end{split}
\end{align}
First, using the change of variable $z=v_m-\eta_m$, which implies that $\dd z=\dd v_m$, we obtain the following upper bound:
\begin{align}
\begin{split}
\label{upp_bound_1}
\prod_{m=1, \,m \neq j}^{J+1} \int_{B_R} \omega_k(v_m-\eta_m) \dd v_m & = \prod_{m=1, \,m \neq j}^{J+1} \int_{\R^d} \omega_k(z) \dd z \\
& = \prod_{m=1, \,m \neq j}^{J+1} \int_{\R^d} k^d \omega(kz) \dd z \\
& = \prod_{m=1, \,m \neq j}^{J+1} \int_{\R^d} \omega(z) \dd z \\
& =1.
\end{split}
\end{align}
\begin{remark}
\label{remark1_bound}
Let us also remark that since $|v_m-\eta_m| \le \frac{1}{k}$ for all $m=1,\dots, J+1$ and  $k \ge 1$, i.e., $\frac{1}{k} \le 1$,
which imply that $|\eta_m| \le |v_m| + \frac{1}{k} \le R + 1$, for all $m=1,\dots, J+1$, we have that $\eta \in B^{J+1}_{R+1}$.
\end{remark}
Then, we perform the change of variable $s_l=r_l-\frac{2}{k}n(r_l)-y_l$, which implies that
$\dd s_l = (I-\frac{2}{k} \nabla_{r_l}n(r_l)) \dd r_l$, i.e.,  $\dd r_l = (I-\frac{2}{k} \nabla_{r_l}n(r_l))^{-1} \dd s_l$ for all $l=1,\dots, J+1$.
For $l \in \{1,\dots, J+1\}$ fixed (and therefore not explicitly indicated),  we set $A:=\frac{2}{k} \nabla_{r_l}n(r_l)$. We have that
$$|A| \leq \frac{2 \, \|\nabla n\|_{L^{\infty}(\R^d)}}{k}.$$
Hence $|A|<1$ for all $k > 2 \, \|\nabla n\|_{L^{\infty}(\R^d)}=:b$. In that case,
\begin{align*}
   |(I-A)^{-1}| &= \biggl|\sum_{n=0}^{\infty} A^n \biggr| \\
   & \leq \sum_{n=0}^{\infty} \bigl| A \bigr|^n \\
   & \leq \sum_{n=0}^{\infty} \biggl(\frac{2 \, \|\nabla n\|_{L^{\infty}(\R^d)}}{k} \biggr)^n \\
   & = \frac{1}{1- \frac{2 \, \|\nabla n\|_{L^{\infty}(\R^d)}}{k}} \\
   & = \frac{k}{k-2 \, \|\nabla n\|_{L^{\infty}(\R^d)}}.
\end{align*}
We have that the function $g : x \mapsto \frac{x}{x-b}$, where $x>b$, is strictly monotonic decreasing.
Indeed, $g'(x)=\frac{-b}{(x-b)^2} < 0$ for all $x >b$. Hence, taking $x \geq 2b$, the maximum of $g$ is achieved at $x=2b$, where $g(x)=2$.

\smallskip

Therefore, by choosing $k \ge 2 b$, we get
$$  |(I-A)^{-1}| \leq 2;$$
thus, the change of variable gives
\begin{align}
\begin{split}
\label{upp_bound_2}
    \prod_{l=1,\, l \neq j}^{J+1} \int_{\Omega \cap B_R} \omega_k \Bigl(r_l-\frac{2}{k}n(r_l)-y_l \Bigr) \dd r_l & \leq    \prod_{l=1,\, l \neq j}^{J+1} 2 \int_{\R^d} \omega_k(s_l) \dd s_l \\
     & = \prod_{l=1,\, l \neq j}^{J+1} 2 \int_{\R^d} k^d \omega(k \, s_l) \dd s_l \\
     & = \prod_{l=1,\, l \neq j}^{J+1} 2 \int_{\R^d} \omega(z) \dd z \\
     & \leq  2^J.
    \end{split}
\end{align}
\begin{remark}
\label{remark2_bound}
Let us also remark that since $|r_l-\frac{2}{k}n(r_l)-y_l| \le \frac{1}{k}$, for all $m=1,\dots, J+1$, and for $k \ge 3$,
i.e., $\frac{3}{k} \le 1$, which imply that $|y_l| \le |r_l| + \frac{3}{k} \le R + 1$, we have that $y \in (\Omega \cap B_{R+1})^{J+1}$.
\end{remark}
Hence, using estimates \eqref{upp_bound_1} and \eqref{upp_bound_2} together with Remarks \ref{remark1_bound} and \ref{remark2_bound} in \eqref{I_1:bound},
we get for all $k \ge \max(2 b,3)$ that
\begin{align}
\begin{split}
\label{I_1:bound2}
I_1 & \leq \frac{2^J}{\eps}
\sum_{j=1}^{J+1} \int_0^T \int_{\Omega \cap B_R} \int_{B_R} \int_{B_{R+1}^{J+1}} \int_{(\Omega \cap B_{R+1})^{J+1}}  \varrho(y,\eta,t)\,
\biggl|  \omega_k(v_j-\eta_j) (v_j-\eta_j) \biggr|  \\
&
\qquad \qquad \qquad \times \biggl|\nabla \omega_k \bigg(r_j-\frac{2}{k}n(r_j)-y_j\bigg) \biggr|  \dd y \dd \eta \dd v_j \dd r_j \dd t \\
& \leq \frac{2^J}{\eps}
\sum_{j=1}^{J+1} \int_0^T \int_{B_{R+1}^{J+1}} \int_{(\Omega \cap B_{R+1})^{J+1}}  \varrho(y,\eta,t) \biggl( \int_{B_R}
\biggl|  \omega_k(v_j-\eta_j) (v_j-\eta_j) \biggr| \dd v_j \biggr)  \\
&
\qquad \qquad \qquad \times \biggl( \int_{\Omega \cap B_R} \biggl|\nabla \omega_k \bigg(r_j-\frac{2}{k}n(r_j)-y_j\bigg) \biggr| \dd r_j\biggr)  \dd y \dd \eta \dd t.
\end{split}
\end{align}
Again, by performing the same change of variable as in \eqref{upp_bound_2}, we obtain, for a constant $C_1>0$, independent of $k$, the
following bound:
\begin{align}
\label{upp_bound_3}
    \begin{split}
    \int_{\Omega \cap B_R} \biggl|\nabla \omega_k \bigg(r_j-\frac{2}{k}n(r_j)-y_j\bigg) \biggr| \dd r_j & \leq 2  \int_{\R^d} \bigl|\nabla \omega_k(\xi) \bigr| \dd \xi \\
    & = 2 \int_{\R^d} k^{d+1} |\nabla \omega(k \, \xi) \bigr| \dd \xi \\
    & = 2 k \int_{\R^d}  |\nabla \omega(z) \bigr| \dd z \\
    & \leq  2 \, k \, C_1.
    \end{split}
\end{align}
Finally, noting that $w_k \in C_0^{\infty}(\R^d)$, we obtain, for a constant $C_2>0$, independent of $k$, that
\begin{align}
\label{upp_bound_4}
    \begin{split}
       \int_{B_R}
\biggl|  \omega_k(v_j-\eta_j) (v_j-\eta_j) \biggr| \dd v_j & \leq \int_{\R^d} \omega_k(\xi) |\xi| \dd \xi \\
& = \int_{\R^d} k^d \omega(k \,\xi) |\xi| \dd \xi \\
& = \frac{1}{k} \int_{\R^d} \omega(z) |z| \dd z \\
& \leq \frac{C_2}{k}.
    \end{split}
\end{align}
We use \eqref{upp_bound_3} and \eqref{upp_bound_4} in \eqref{I_1:bound2}, for a constant $C>0$, independent of $k$,
we get the following bound on the term $I_1$:
\begin{align}
\begin{split}
\label{I_1:bound_final}
I_1 & \leq \frac{2^J}{\eps}
\sum_{j=1}^{J+1} \int_0^T \int_{B_{R+1}^{J+1}} \int_{(\Omega \cap B_{R+1})^{J+1}} \varrho(y,\eta,t) \biggl( \int_{B_R}
\biggl|  \omega_k(v_j-\eta_j) (v_j-\eta_j) \biggr| \dd v_j \biggr)  \\
&
\qquad \qquad \qquad \times \biggl( \int_{\Omega \cap B_R} \biggl|\nabla \omega_k \bigg(r_j-\frac{2}{k}n(r_j)-y_j\bigg) \biggr| \dd r_j\biggr)  \dd y \dd \eta \dd t \\
 & \leq \frac{2^{J+1} \, C_1 \, C_2}{\eps}
\sum_{j=1}^{J+1} \int_0^T \int_{B_{R+1}^{J+1}} \int_{(\Omega \cap B_{R+1})^{J+1}}  \varrho(y,\eta,t) \dd y \dd \eta \dd t \\
& \leq C \| \varrho\|_{L^1(\cD_{R+1})}.
\end{split}
\end{align}
Now, using the same technique as for the bound on $I_1$, we get the following bound on the term $I_2$:
\begin{align}
\begin{split}
\label{I_2:bound_final}
I_2 & \leq \frac{4 \, |B_R|}{\eps \, k}
\sum_{j=1}^{J+1} \int_0^T \int_{\Omega \cap B_R} \int_{\R^{(J+1)d}} \int_{\Omega^{J+1}} \bigl|  D^2 d_{\Omega} \bigr| \varrho(y,\eta,t) \, \biggl( \prod_{m=1}^{J+1} \int_{B_R} \omega_k(v_m-\eta_m) \dd v_m \biggr)
 \\
& \qquad \qquad\times \biggl(\prod_{l=1,\, l \neq j}^{J+1} \omega_k \Bigl(r_l-\frac{2}{k}n(r_l)-y_l \Bigr) \dd r_l \biggr) \, \biggl|\nabla \omega_k \bigg(r_j-\frac{2}{k}n(r_j)-y_j\bigg) \biggr|  \dd y \dd \eta \dd r_j \dd t \\
& \leq  \frac{2^{J+2} \, |B_R| \, \| D^2 d_{\Omega}\|_{L^{\infty}(\R^d)}}{\eps \, k}
\sum_{j=1}^{J+1} \int_0^T \int_{B_{R+1}^{J+1}} \int_{(\Omega \cap B_{R+1})^{J+1}}  \varrho(y,\eta,t) \\
& \qquad \qquad \times \biggl( \int_{\Omega \cap B_R} \biggl|\nabla \omega_k \bigg(r_j-\frac{2}{k}n(r_j)-y_j\bigg) \biggr| \dd r_j \biggr) \dd y \dd \eta \dd t \\
& \leq \frac{2^{J+2} \, C_1 \, |B_R| \, \| D^2 d_{\Omega}\|_{L^{\infty}(\R^d)}}{\eps}
\sum_{j=1}^{J+1} \int_0^T \int_{B_{R+1}^{J+1}} \int_{(\Omega \cap B_{R+1})^{J+1}}  \varrho(y,\eta,t) \dd y \dd \eta \dd t \\
& \leq C \, \| \varrho\|_{L^1(\cD_{R+1})}.
\end{split}
\end{align}
In conclusion, using \eqref{I_1:bound_final} and \eqref{I_2:bound_final} in \eqref{r_4_bound}, we get,
with a constant $C$ that only depends on $R$ and $d(r)$, the following bound:
\begin{align}
\label{apriori3}
\begin{split}
\| r^4_k(\varrho) \|_{L^1(\cD_R)}  & \le I_1 + I_2\\
&\le C \| \varrho\|_{L^1(\cD_{R+1})}.
\end{split}
\end{align}
That completes the proof of the lemma.
\end{proof}

Next, for $ \varrho \in L^\infty(0,T;L^1(\Omega^{J+1} \times \R^{(J+1)d};\R_{\geq 0}))$ we argue by density; in other words, we consider a
sequence $(\varrho_{\eps})_{\eps>0}$ of smooth functions such that $\varrho_{\eps} \rightarrow \varrho$ in $L^1_{loc}(\overline{\cD})$,
and we note the following obvious decomposition of the function $r^4_k(\varrho)$:
\[
r^4_k(\varrho)= r^4_k(\varrho_{\eps})+ r^4_k(\varrho - \varrho_{\eps}),
\]
which obviously converges to $0$ in $L^1_{loc}(\overline{\cD})$ as $k \rightarrow \infty$ and $\eps \rightarrow 0$,
thanks to \eqref{limit_rest4} and \eqref{apriori3}.

The convergence of the sequence $r^1_k(\varrho)$ to $0$, as $k \rightarrow \infty$, has been proved, in a simpler case, in the article of DiPerna \& Lions \cite{DiPerna1989},
Lemma II.1. By following a similar line of argument as in \cite{DiPerna1989}, we proceed as follows. If $\varrho$ and $E_j$ are sufficiently smooth, we have that
\begin{align*}
r^1_k(\varrho)&=\sum_{j=1}^{J+1} E_j(r,v,t) \cdot \pdv \varrho_k - \Biggl(\sum_{j=1}^{J+1} E_j(r,v,t) \cdot \pdv \varrho \Biggr)\star_{r,k} \omega_k *_v \omega_k \\
&= \sum_{j=1}^{J+1}\int_{\R^{(J+1)d}} \int_{\Omega^{J+1}}  \Bigl \{ E_j(r,v,t) \cdot \pdv \Bigl[\varrho(y,\eta,t) \, \prod_{l=1}^{J+1}\omega_k\bigg(r_l-\frac{2}{k}n(r_l)-y_l \bigg) \, \prod_{m=1}^{J+1} \omega_k(v_m-\eta_m) \Bigr] \\
& \qquad - E_j(y,\eta,t)  \cdot \partial_{\eta_j} \varrho(y,\eta,t) \, \prod_{l=1}^{J+1} \omega_k\bigg(r_l-\frac{2}{k}n(r_l)-y_l \bigg) \, \prod_{m=1}^{J+1} \omega_k(v_m-\eta_m)  \Bigr \} \dd y \dd \eta.
\end{align*}
By performing integration by parts on the second integrand, we get
\begin{align*}
r^1_k(\varrho)&= \sum_{j=1}^{J+1}\int_{\R^{(J+1)d}} \int_{\Omega^{J+1}}  \biggl \{ E_j(r,v,t) \cdot \nabla \omega_k(v_j-\eta_j) \, \prod_{l=1}^{J+1} \omega_k\Bigl(r_l-\frac{2}{k}n(r_l)-y_l \Bigr) \\
& \qquad  \times \varrho(y,\eta,t) \prod_{m=1, \,m \neq j}^{J+1} \omega_k(v_m-\eta_m) + \partial_{\eta_j}  \cdot \Bigl[ E_j(y,\eta,t) \, \omega_k(v_j-\eta_j) \Bigr] \\
& \qquad \times \varrho(y,\eta,t) \, \prod_{m=1, \,m \neq j}^{J+1} \omega_k(v_m-\eta_m) \, \prod_{l=1}^{J+1} \omega_k\Bigl(r_l-\frac{2}{k}n(r_l)-y_l \Bigr) \biggr \} \dd y \dd \eta  \\
&= \sum_{j=1}^{J+1}\int_{\R^{(J+1)d}} \int_{\Omega^{J+1}}  \biggl \{ E_j(r,v,t) \cdot \nabla \omega_k(v_j-\eta_j) \, \prod_{l=1}^{J+1} \omega_k\Bigl(r_l-\frac{2}{k}n(r_l)-y_l \Bigr) \\
& \qquad \times  \varrho(y,\eta,t) \, \prod_{m=1, \,m \neq j}^{J+1} \omega_k(v_m-\eta_m) + \partial_{\eta_j}  \cdot [E_j(y,\eta,t)] \, \varrho(y,\eta,t) \\
& \qquad \times \prod_{m=1}^{J+1} \omega_k(v_m-\eta_m)  \prod_{l=1}^{J+1} \omega_k\bigg(r_l-\frac{2}{k}n(r_l)-y_l\bigg)  -  E_j(y,\eta,t) \cdot \nabla \omega_k(v_j-\eta_j) \\
& \qquad \times \varrho(y,\eta,t) \, \prod_{l=1}^{J+1} \omega_k\bigg(r_l-\frac{2}{k}n(r_l)-y_l \bigg) \prod_{m=1, \,m \neq j}^{J+1} \omega_k(v_m-\eta_m) \biggr \} \dd y \dd \eta \\
&= \sum_{j=1}^{J+1}\int_{\R^{(J+1)d}} \int_{\Omega^{J+1}}  \varrho(y,\eta,t) \, \prod_{l=1}^{J+1} \omega_k\bigg(r_l-\frac{2}{k}n(r_l)-y_l\bigg)  \prod_{m=1, \,m \neq j}^{J+1} \omega_k(v_m-\eta_m)\\
& \qquad  \times \Bigl \{ \bigl[  E_j(r,v,t)- E_j(y,\eta,t) \bigr] \cdot \nabla \omega_k(v_j-\eta_j) \Bigr \} \dd y \dd \eta
\\
&\qquad\qquad + \Bigl(\sum_{j=1}^{J+1} \varrho(r,v,t)  \, (\pdv \cdot E_j(r,v,t)) \Bigr)\star_{r,k} \omega_k *_v \omega_k.
\end{align*}
The second term on the right-hand side converges to $$\sum_{j=1}^{J+1} \varrho(r,v,t) (\pdv \cdot E_j(r,v,t))$$ in $L^1_{loc}$,
as $k$ tends to $\infty$, by standard results for convolutions.

For the first integral, on the one hand, we have that
%
\begin{align*}
& \sum_{j=1}^{J+1}\int_{\R^{(J+1)d}} \int_{\Omega^{J+1}}  \varrho(y,\eta,t) \,  \prod_{l=1}^{J+1} \omega_k\bigg(r_l-\frac{2}{k}n(r_l)-y_l\bigg) \, \prod_{m=1, \,m \neq j}^{J+1} \omega_k(v_m-\eta_m) \,  E_j(r,v,t) \cdot \nabla \omega_k(v_j-\eta_j)  \dd y \dd \eta \\
&= - \sum_{j=1}^{J+1}\int_{\R^{(J+1)d}} \int_{\Omega^{J+1}}  \varrho(y,\eta,t)  \,  \prod_{l=1}^{J+1} \omega_k\bigg(r_l-\frac{2}{k}n(r_l)-y_l\bigg)
\\
&\qquad \qquad \times \prod_{m=1, \,m \neq j}^{J+1} \omega_k(v_m-\eta_m) \,  E_j(r,v,t) \cdot \partial_{\eta_j} \bigl(\omega_k(v_j-\eta_j) \bigr) \dd y \dd \eta \\
&= \sum_{j=1}^{J+1}\int_{\R^{(J+1)d}} \int_{\Omega^{J+1}}  \partial_{\eta_j} \varrho(y,\eta,t) \cdot  E_j(r,v,t) \,  \omega_k(v_j-\eta_j) \,  \prod_{l=1}^{J+1} \omega_k\bigg(r_l-\frac{2}{k}n(r_l)-y_l\bigg) \\
&\qquad \qquad \times  \prod_{m=1, \,m \neq j}^{J+1} \omega_k(v_m-\eta_m) \dd y \dd \eta \\
&= \sum_{j=1}^{J+1}\int_{\R^{(J+1)d}} \int_{\Omega^{J+1}}  \partial_{\eta_j} \varrho(y,\eta,t) \cdot  E_j(r,v,t) \, \prod_{l=1}^{J+1} \omega_k\bigg(r_l-\frac{2}{k}n(r_l)-y_l\bigg) \, \prod_{m=1}^{J+1} \omega_k(v_m-\eta_m) \dd y \dd \eta,
\end{align*}
which converges to $$\sum_{j=1}^{J+1} \partial_{v_j} \varrho(r,v,t) \cdot  E_j(r,v,t)$$ in $L^1_{loc}$ as $k$ tends to $\infty$.
On the other hand,
{\small
\begin{align*}
&\quad - \sum_{j=1}^{J+1}\int_{\R^{(J+1)d}} \int_{\Omega^{J+1}}  \varrho(y,\eta,t)  \,  \prod_{l=1}^{J+1} \omega_k\bigg(r_l-\frac{2}{k}n(r_l)-y_l\bigg) \, \prod_{m=1, \,m \neq j}^{J+1} \omega_k(v_m-\eta_m) \, E_j(y,\eta,t) \cdot \nabla \omega_k(v_j-\eta_j)  \dd y \dd \eta \\
&=  \sum_{j=1}^{J+1}\int_{\R^{(J+1)d}} \int_{\Omega^{J+1}}  \varrho(y,\eta,t) \,  \,  \prod_{l=1}^{J+1} \omega_k\bigg(r_l-\frac{2}{k}n(r_l)-y_l\bigg) \, \prod_{m=1, \,m \neq j}^{J+1} \omega_k(v_m-\eta_m) \, \,  E_j(y,\eta,t) \cdot \partial_{\eta_j} \bigl(\omega_k(v_j-\eta_j) \bigr) \dd y \dd \eta \\
&= - \sum_{j=1}^{J+1}\int_{\R^{(J+1)d}} \int_{\Omega^{J+1}}  \partial_{\eta_j}  \cdot \bigl[\varrho(y,\eta,t) \, E_j(y,\eta,t)\bigr]  \, \omega_k(v_j-\eta_j) \,  \prod_{l=1}^{J+1} \omega_k\bigg(r_l-\frac{2}{k}n(r_l)-y_l\bigg) \, \prod_{m=1, \,m \neq j}^{J+1} \omega_k(v_m-\eta_m) \dd y \dd \eta \\
&= - \sum_{j=1}^{J+1}\int_{\R^{(J+1)d}} \int_{\Omega^{J+1}} \partial_{\eta_j}  \cdot \bigl[\varrho(y,\eta,t) \, E_j(y,\eta,t)\bigr] \, \prod_{l=1}^{J+1} \omega_k\bigg(r_l-\frac{2}{k}n(r_l)-y_l\bigg) \, \prod_{m=1}^{J+1} \omega_k(v_m-\eta_m) \dd y \dd \eta
\\
& =- \sum_{j=1}^{J+1}\int_{\R^{(J+1)d}} \int_{\Omega^{J+1}}  \partial_{\eta_j} \varrho(y,\eta,t) \cdot E_j(y,\eta,t) \, \prod_{l=1}^{J+1} \omega_k\bigg(r_l-\frac{2}{k}n(r_l)-y_l\bigg) \, \prod_{m=1}^{J+1} \omega_k(v_m-\eta_m) \dd y \dd \eta \\
&  \quad - \sum_{j=1}^{J+1}\int_{\R^{(J+1)d}} \int_{\Omega^{J+1}}  \varrho(y,\eta,t) \, \partial_{\eta_j}  \cdot E_j(y,\eta,t) \, \prod_{l=1}^{J+1} \omega_k\bigg(r_l-\frac{2}{k}n(r_l)-y_l\bigg) \, \prod_{m=1}^{J+1} \omega_k(v_m-\eta_m) \dd y \dd \eta,
\end{align*}
}

\noindent
which converges to $$ - \sum_{j=1}^{J+1} \partial_{v_j} \varrho(r,v,t) \cdot  E_j(r,v,t) - \sum_{j=1}^{J+1} \varrho(r,v,t) \, (\pdv \cdot E_j(r,v,t))$$
in $L^1_{loc}$ as $k$ tends to $\infty$. Hence $r^1_k$ converges in $L^1_{loc}$ to $0$ as $k$ tends to $\infty$.
The general case follows by means of a density argument, using the inequality stated in the next lemma.
\begin{lemma}
There exists a constant $C$, which only depends on $R$ and $d_{\Omega}$, such that the following bound holds:
$$ \| r^1_k(\varrho) \|_{L^1(\cD_R)} \leq  C  \| \varrho\|_{L^2(\cD_{R+1})}.$$
\end{lemma}
\begin{proof} We begin by noting that
\begin{align*}
r^1_k(\varrho) &= \sum_{j=1}^{J+1}\int_{\R^{(J+1)d}} \int_{\Omega^{J+1}}  \varrho(y,\eta,t) \, \prod_{l=1}^{J+1} \omega_k\biggl(r_l-\frac{2}{k}n(r_l)-y_l \biggr)  \prod_{m=1, \,m \neq j}^{J+1} \omega_k(v_m-\eta_m)
\\
&\qquad \qquad \times \Bigl \{ \bigl[  E_j(r,v,t)
- E_j(y,\eta,t)  \bigr] \cdot \nabla \omega_k(v_j-\eta_j) \Bigr \} \dd y \dd \eta \\
& \quad + \sum_{j=1}^{J+1}\int_{\R^{(J+1)d}} \int_{\Omega^{J+1}}  \partial_{\eta_j}  \cdot E_j(y,\eta,t) \\
& \qquad \qquad \times \prod_{l=1}^{J+1} \omega_k\biggl(r_l-\frac{2}{k}n(r_l)-y_l \biggr) \, \prod_{m=1}^{J+1} \omega_k(v_m-\eta_m)  \, \varrho(y,\eta,t) \dd y \dd \eta \\
& =: I_1 + I_2,
\end{align*}
where we set
\begin{align}
\begin{split}
        \label{I_1:def2}
 I_1 & :=   \sum_{j=1}^{J+1}\int_{\R^{(J+1)d}} \int_{\Omega^{J+1}}  \varrho(y,\eta,t) \, \prod_{l=1}^{J+1} \omega_k\biggl(r_l-\frac{2}{k}n(r_l)-y_l \biggr)  \prod_{m=1, \,m \neq j}^{J+1} \omega_k(v_m-\eta_m) \\
 &\qquad \qquad \times \Bigl \{ \bigl[  E_j(r,v,t) - E_j(y,\eta,t)  \bigr] \cdot \nabla \omega_k(v_j-\eta_j) \Bigr \} \dd y \dd \eta,
 \end{split}
\end{align}
and
\begin{align}
\begin{split}
        \label{I_2:def2}
    I_2 & := \sum_{j=1}^{J+1}\int_{\R^{(J+1)d}} \int_{\Omega^{J+1}}  \partial_{\eta_j}  \cdot E_j(y,\eta,t) \prod_{l=1}^{J+1} \omega_k\biggl(r_l-\frac{2}{k}n(r_l)-y_l \biggr) \, \prod_{m=1}^{J+1} \omega_k(v_m-\eta_m)  \, \varrho(y,\eta,t) \dd y \dd \eta.
    \end{split}
\end{align}
Let us recall that
\[ E_j=E_j(r,v,t):=\frac{1}{\eps} (({\mathcal L}r)_j+u(r_j,t)) - \frac{\beta^2}{\eps^2} v_j.\]
Hence,
\begin{align}
    \begin{split}
        \label{E_j_decomposition}
        E_j(r,v,t)-E_j(y,\eta,t)= \frac{1}{\eps} \bigl( ({\mathcal L}r)_j - ({\mathcal L}y)_j \bigr) + \frac{1}{\eps} \bigl( u(r_j,t) - u(y_j,t) \bigr) - \frac{\beta^2}{\eps^2} (v_j - \eta_j).
    \end{split}
\end{align}
Using \eqref{E_j_decomposition} in \eqref{I_1:def2}, we get
\begin{align}
\begin{split}
 I_1 & :=   \sum_{j=1}^{J+1}\int_{\R^{(J+1)d}} \int_{\Omega^{J+1}}  \varrho(y,\eta,t) \, \prod_{l=1}^{J+1} \omega_k\biggl(r_l-\frac{2}{k}n(r_l)-y_l \biggr)  \prod_{m=1, \,m \neq j}^{J+1} \omega_k(v_m-\eta_m) \\
 &\qquad \qquad \times \biggl \{ \biggl[   \frac{1}{\eps} \bigl( ({\mathcal L}r)_j  - ({\mathcal L}y)_j \bigr)
 + \frac{1}{\eps} \bigl( u(r_j,t) - u(y_j,t) \bigr) - \frac{\beta^2}{\eps^2} (v_j - \eta_j)  \biggr] \cdot \nabla \omega_k(v_j-\eta_j) \biggr \} \dd y \dd \eta \\
&=: I^1_1+ I_1^2 +I_1^3,
 \end{split}
\end{align}
where we set
\begin{align*}
     I^1_1 &:= \sum_{j=1}^{J+1}\int_{\R^{(J+1)d}} \int_{\Omega^{J+1}}  \varrho(y,\eta,t) \, \prod_{l=1}^{J+1} \omega_k\biggl(r_l-\frac{2}{k}n(r_l)-y_l \biggr)  \prod_{m=1, \,m \neq j}^{J+1} \omega_k(v_m-\eta_m) \\
        & \qquad \qquad \quad \times \frac{1}{\eps} \bigl( ({\mathcal L}r)_j  - ({\mathcal L}y)_j \bigr) \cdot
        \nabla \omega_k(v_j-\eta_j) \dd y \dd \eta,
\end{align*}
\begin{align*}
     I^2_1 &:=  \sum_{j=1}^{J+1}\int_{\R^{(J+1)d}} \int_{\Omega^{J+1}}  \varrho(y,\eta,t) \, \prod_{l=1}^{J+1} \omega_k\biggl(r_l-\frac{2}{k}n(r_l)-y_l \biggr)  \prod_{m=1, \,m \neq j}^{J+1} \omega_k(v_m-\eta_m) \\
        & \qquad \qquad \quad \times \frac{1}{\eps} \, \bigl( u(r_j,t) - u(y_j,t) \bigr) \cdot \nabla \omega_k(v_j-\eta_j) \dd y \dd \eta,
\end{align*}
and
\begin{align*}
     I^3_1 &:= - \sum_{j=1}^{J+1}\int_{\R^{(J+1)d}} \int_{\Omega^{J+1}}  \varrho(y,\eta,t) \, \prod_{l=1}^{J+1} \omega_k\biggl(r_l-\frac{2}{k}n(r_l)-y_l \biggr)  \prod_{m=1, \,m \neq j}^{J+1} \omega_k(v_m-\eta_m) \\
        & \qquad \qquad \quad \times \frac{\beta^2}{\eps^2} \, (v_j - \eta_j)  \cdot \nabla \omega_k(v_j-\eta_j) \dd y \dd \eta.
\end{align*}
On the one hand, we have that
\begin{align*}
    \| I^3_1 \|_{L^1(\cD_R)} &= \int_0^T  \int_{(\Omega \cap B_R)^{J+1}} \int_{B_R^{J+1}} \biggl |  \sum_{j=1}^{J+1}\int_{\R^{(J+1)d}} \int_{\Omega^{J+1}}  \varrho(y,\eta,t) \, \prod_{l=1}^{J+1} \omega_k\biggl(r_l-\frac{2}{k}n(r_l)-y_l \biggr)  \\
        & \qquad \times \prod_{m=1, \,m \neq j}^{J+1} \omega_k(v_m-\eta_m) \, \frac{\beta^2}{\eps^2} \, (v_j - \eta_j)  \cdot \nabla \omega_k(v_j-\eta_j) \dd y \dd \eta
        \biggr | \dd v \dd r \dd t \\
    & \leq \sum_{j=1}^{J+1} \int_0^T \int_{(\Omega \cap B_R)^{J+1}} \int_{B_R^{J+1}} \int_{\R^{(J+1)d}} \int_{\Omega^{J+1}} \biggl | \varrho(y,\eta,t) \, \prod_{l=1}^{J+1} \omega_k\biggl(r_l-\frac{2}{k}n(r_l)-y_l \biggr) \\
        & \qquad \times  \prod_{m=1, \,m \neq j}^{J+1} \omega_k(v_m-\eta_m) \frac{\beta^2}{\eps^2} \, (v_j - \eta_j)  \cdot \nabla \omega_k(v_j-\eta_j) \biggr | \dd y \dd \eta
         \dd v \dd r \dd t.
\end{align*}
Noting that $|v_j-\eta_j| \leq \frac{1}{k}$ for all $j=1,\dots,J+1$, we then deduce that
\begin{align*}
    \| I^3_1 \|_{L^1(\cD_R)}
    & \leq \frac{\beta^2}{k \, \eps^2} \sum_{j=1}^{J+1} \int_0^T \int_{(\Omega \cap B_R)^{J+1}} \int_{B_R^{J+1}} \int_{\R^{(J+1)d}} \int_{\Omega^{J+1}} \biggl | \varrho(y,\eta,t) \, \prod_{l=1}^{J+1} \omega_k\biggl(r_l-\frac{2}{k}n(r_l)-y_l \biggr)  \\
        & \qquad \qquad \quad \times \prod_{m=1, \,m \neq j}^{J+1} \omega_k(v_m-\eta_m) \, \nabla \omega_k(v_j-\eta_j) \biggr | \dd y \dd \eta
         \dd v \dd r \dd t\\
       & \leq \frac{\beta^2}{k \, \eps^2} \sum_{j=1}^{J+1} \int_0^T \int_{(\Omega \cap B_R)^{J+1}} \int_{B_R^{J+1}} \int_{\R^{(J+1)d}} \int_{\Omega^{J+1}} \biggl | \varrho(y,\eta,t) \, \prod_{l=1}^{J+1} \omega_k\biggl(r_l-\frac{2}{k}n(r_l)-y_l \biggr)  \\
        & \qquad \qquad \quad \times \prod_{m=1, \,m \neq j}^{J+1} \omega_k(v_m-\eta_m) \, \nabla \omega_k(v_j-\eta_j) \biggr | \dd y \dd \eta
         \dd v \dd r \dd t\\
    & \leq \frac{\beta^2}{k \, \eps^2} \sum_{j=1}^{J+1} \int_0^T \int_{\R^{(J+1)d}} \int_{\Omega^{J+1}} \varrho(y,\eta,t) \, \biggl( \prod_{l=1}^{J+1} \int_{\Omega \cap B_R}  \omega_k\biggl(r_l-\frac{2}{k}n(r_l)-y_l \biggr) \dd r_l \biggr)  \\
        & \qquad \qquad \quad \times  \biggl(\prod_{m=1, \,m \neq j}^{J+1} \int_{B_R}  \omega_k(v_m-\eta_m) \dd v_m \biggr) \, \biggl( \int_{B_R} \bigl | \nabla \omega_k(v_j-\eta_j) \bigr | \dd v_j \biggr) \dd y \dd \eta \dd t\\
        & \leq \frac{2^{J+1} \, \beta^2}{k \, \eps^2} \sum_{j=1}^{J+1} \int_0^T \int_{B_{R+1}^{J+1}} \int_{(\Omega \cap B_{R+1})^{J+1}} \varrho(y,\eta,t) \, \biggl( \int_{B_R} \bigl | \nabla \omega_k(v_j-\eta_j) \bigr | \dd v_j \biggr) \dd y \dd \eta \dd t\\
           & \leq \frac{2^{J+1} \, \beta^2}{k \, \eps^2} \sum_{j=1}^{J+1} \int_0^T \int_{B_{R+1}^{J+1}} \int_{(\Omega \cap B_{R+1})^{J+1}} \varrho(y,\eta,t) \, \biggl( \int_{\R^d} \bigl | \nabla \omega_k(\xi) \bigr | \dd \xi \biggr) \dd y \dd \eta \dd t\\
           & = \frac{2^{J+1} \, \beta^2}{k \, \eps^2} \sum_{j=1}^{J+1} \int_0^T \int_{B_{R+1}^{J+1}} \int_{(\Omega \cap B_{R+1})^{J+1}} \varrho(y,\eta,t) \, \biggl( \int_{\R^d} k^{d+1} \bigl | \nabla \omega(k \xi) \bigr | \dd \xi \biggr) \dd y \dd \eta \dd t\\
\end{align*}
\begin{align*}
           & = \frac{2^{J+1} \, \beta^2}{k \, \eps^2} \sum_{j=1}^{J+1} \int_0^T \int_{B_{R+1}^{J+1}} \int_{(\Omega \cap B_{R+1})^{J+1}} \varrho(y,\eta,t) \, \biggl( \int_{\R^d} k \bigl | \nabla \omega(z) \bigr | \dd z \biggr) \dd y \dd \eta \dd t\\
           & \leq \frac{2^{J+1} \, (J+1) \, \beta^2\, C}{\eps^2} \int_0^T \int_{B_{R+1}^{J+1}} \int_{(\Omega \cap B_{R+1})^{J+1}} \varrho(y,\eta,t) \dd y \dd \eta \dd t\\
           & \leq C \| \varrho\|_{L^1(\cD_{R+1})},
\end{align*}
where $C>0$ is a constant independent of $k$.
On the other hand, we have that
\begin{align*}
    \| I^1_1 \|_{L^1(\cD_R)} & = \int_0^T  \int_{(\Omega \cap B_R)^{J+1}} \int_{B_R^{J+1}} \biggl |  \sum_{j=1}^{J+1} \int_{\R^{(J+1)d}} \int_{\Omega^{J+1}} \varrho(y,\eta,t) \prod_{l=1}^{J+1} \omega_k\biggl(r_l-\frac{2}{k}n(r_l)-y_l \biggr) \\
        & \qquad \qquad \quad  \times \prod_{m=1, \,m \neq j}^{J+1} \omega_k(v_m-\eta_m) \bigl( ({\mathcal L}r)_j  - ({\mathcal L}y)_j \bigr)
        \cdot
        \nabla \omega_k(v_j-\eta_j)  \dd y \dd \eta\,
        \biggr | \dd v \dd r \dd t
    \\
    & \leq  \frac{1}{\eps} \sum_{j=1}^{J+1} \int_0^T \int_{(\Omega \cap B_R)^{J+1}} \int_{B_R^{J+1}} \int_{\R^{(J+1)d}} \int_{\Omega^{J+1}} \biggl | \varrho(y,\eta,t) \prod_{l=1}^{J+1} \omega_k\biggl(r_l-\frac{2}{k}n(r_l)-y_l \biggr) \\
        & \qquad \qquad \quad  \times \prod_{m=1, \,m \neq j}^{J+1} \omega_k(v_m-\eta_m) \bigl( ({\mathcal L}r)_j  - ({\mathcal L}y)_j \bigr) \cdot
        \nabla \omega_k(v_j-\eta_j) \biggr | \dd y \dd \eta
         \dd v \dd r \dd t,
\end{align*}
and noting that $|({\mathcal L}r)_j  - ({\mathcal L}y)_j | \leq |r_j-y_j| \leq \frac{3}{k}$ for all $j=1,\dots,J+1$, we get
\begin{align*}
    \| I^1_1 \|_{L^1(\cD_R)}
    & \leq  \frac{3}{k \, \eps} \sum_{j=1}^{J+1} \int_0^T \int_{(\Omega \cap B_R)^{J+1}} \int_{B_R^{J+1}} \int_{\R^{(J+1)d}} \int_{\Omega^{J+1}} \varrho(y,\eta,t) \prod_{l=1}^{J+1} \omega_k\biggl(r_l-\frac{2}{k}n(r_l)-y_l \biggr) \\
        & \qquad \qquad \quad  \times \prod_{m=1, \,m \neq j}^{J+1} \omega_k(v_m-\eta_m) \,
        \Bigl | \nabla \omega_k(v_j-\eta_j) \Bigr | \dd y \dd \eta
         \dd v \dd r \dd t \\
   & \leq \frac{2^{J+1} \, (J+1) \, C}{\eps} \int_0^T \int_{B_{R+1}^{J+1}} \int_{(\Omega \cap B_{R+1})^{J+1}} \varrho(y,\eta,t) \dd y \dd \eta \dd t\\
           & \leq C \| \varrho\|_{L^1(\cD_{R+1})},
\end{align*}
where $C>0$ is a constant independent of $k$.

In order to show that, for a constant $C>0$ independent of $k$, we have $ \| I_1 \|_{L^1(\cD_R)} \leq C \| \varrho\|_{L^2(\cD_{R+1})}$, it remains to prove that $\| I^2_1 \|_{L^1(\cD_R)} \leq C \| \varrho\|_{L^2(\cD_{R+1})}$.
Using the fundamental theorem of calculus we have, for any sufficiently smooth real-valued function $u$, that
$$ u(r_j,t) - u(y_j,t) = \int_0^1 \nabla u \Bigl(y_j + h \, (r_j - y_j)\Bigr) (r_j-y_j) \dd h.$$

Now, for $u \in L^2({0,T; W^{1,\sigma}}(\Omega)^d)$, with $\sigma>d$, a solution to the Oseen system \eqref{eq:NSe}, we define
$$\tilde u (\cdot,t) = \left\{
    \begin{array}{cl}
        u(\cdot,t) &\quad \mbox{in $\Omega$} \\
        0  &\quad \mbox{in $\R^d \setminus \Omega$.}
    \end{array}
\right.
$$
Let us first study the smoothed functions
$$u^{\delta}(\cdot,t) := \tilde u *_{x} \omega_{\delta}(\cdot,t) \qquad \mbox{for $\delta>0$},$$
where $\omega_{\delta}(x):= \frac{1}{\delta^d} \, \omega(\frac{x}{\delta})$ denotes the usual mollifier.

We first claim that
$$u^{\delta} \rightarrow \tilde u \, \, \mbox{in} \, \, L^2(\R^{d} \times (0,T)) \, \, \mbox{as} \, \, \delta \rightarrow 0.$$
To prove this, we first note that if $\tilde u$ is smooth, then, for $x \in \R^d$,
\begin{align*}
    u^{\delta}(x,t) - \tilde u(x,t) & = \frac{1}{\delta^d} \int_{B(x,\delta)} \omega \Bigl(\frac{x-z}{\delta}\Bigr)
    ( \tilde u(z,t) - \tilde u (x,t)) \dd z \\
    &= \int_{B(0,1)} \omega(y) \bigl( \tilde u(x+ \delta\,  y, t) - \tilde u(x,t) \bigr) \dd y \\
    & = \int_{B(0,1)} \omega(y) \int_0^1 \frac{\dd}{\dd h}\bigl( \tilde u(x+ h \, \delta\,  y, t) \bigr) \dd h \dd y
    \\
    & = \delta \int_{B(0,1)} \omega(y) \int_0^1 \nabla \tilde u(x+ h \, \delta \, y , t) \cdot y \dd h \dd y.
\end{align*}
Hence,
\begin{align*}
    |u^{\delta}(x,t) - \tilde u(x,t)| & \leq \delta \int_0^1 \int_{B(0,1)}  |\nabla \tilde u(x+ h \, \delta \, y, t) |  \dd y \dd h,
\end{align*}
and using the change of variable $z=x+ h \, \delta \, y$,
\begin{align*}
    |u^{\delta}(x,t) - \tilde u(x,t)| & \leq \delta \int_0^1 \, \frac{1}{(h \, \delta)^d} \, \int_{B(x,\,h\delta)}  |\nabla \tilde u(z,t) |  \dd z \dd h \\
    & \leq C \, |B| \, \delta \, \mathcal{M}_{|\nabla \tilde u|}(x,t),
\end{align*}
\noindent where $\mathcal{M}_f(x):= \sup_{r>0} \dashint_{B(x,r)}{|f(y)|} \dd y$ is the Hardy--Littlewood maximal function.

Thus
\begin{align}
\label{estim_tild_u}
\begin{split}
     \int_{\R^d}  |u^{\delta}(x,t) - \tilde u(x,t)|^2 \dd x & \leq C \delta^2 \int_{\R^d}  |\mathcal{M}_{|\nabla \tilde u|}(x,t)|^2 \dd x \\
    & \leq C \delta^2 \int_{\R^d} |\nabla \tilde u(x,t)|^2 \dd x,
\end{split}
\end{align}
where we have used a classical strong $(p,p)$-bound on the maximal function, which asserts that the maximal operator
is bounded in $L^p(\R^d)$ for $1 < p \leq \infty$, i.e.,
$$ \| \mathcal{M}_f\|_p \leq c \, \| f \|_p,$$
where $c = c(d, p)>0$ is a constant. Now, for the smooth functions
$$ \tilde{\tilde{u}}(\cdot,t) := \tilde u *_{x} \omega_{\lambda}(\cdot,t) \qquad \mbox{with $\lambda>0$}$$ and
$$ u^{\delta}(\cdot,t) := \tilde{\tilde{u}} *_{x} \omega_{\delta}(\cdot,t) \qquad \mbox{with $\delta>0$}, $$
from estimate \eqref{estim_tild_u}, we still have
\begin{align*}
     \int_{\R^d}  |u^{\delta}(x,t) - \tilde{\tilde{u}}(x,t)|^2 \dd x &
    \leq C \delta^2 \int_{\R^d} |\nabla \tilde{\tilde{u}} (x,t)|^2 \dd x.
\end{align*}
Thus,
\begin{align}
\label{estim_tild_tild_u}
\begin{split}
     \int_{\R^d}  |\bigl((\tilde u *_{x} \omega_{\lambda})*_{x} \omega_{\delta} \bigr)(x,t) - \tilde u *_{x} \omega_{\lambda}(x,t)|^2 \dd x &
    \leq C \delta^2 \int_{\R^d} |\nabla  (\tilde u *_{x} \omega_{\lambda})(x,t)|^2 \dd x \\
    &  \leq C \delta^2 \int_{\R^d} |\nabla \tilde u (x,t)|^2 \dd x.
\end{split}
\end{align}
Also, since for a.e. $t \in (0,T)$
$$\tilde u(\cdot, t) \in L^{\sigma}(\R^d),$$
\noindent there exists a subsequence $(\lambda_{k_t})_{t \geq 0}$ such that
$$\tilde u  *_{x} \omega_{\lambda_{k_t}}(\cdot,t) \rightarrow_{\lambda_{k_t} \rightarrow 0^{+}} \tilde u (\cdot, t) \qquad \mbox{a.e. in $\R^d$}.$$
Hence, on the one hand,
$$(\tilde u *_{x} \omega_{\delta}) *_{x} \omega_{\lambda_{k_t}}(\cdot,t) \rightarrow_{\lambda_{k_t} \rightarrow 0^{+}} \tilde u *_{x} \omega_{\delta} (\cdot, t) \qquad \mbox{a.e. in $\R^d$},$$
on the other hand, since
$$(\tilde u  *_{x} \omega_{\lambda}) *_{x} \omega_{\delta} = (\tilde u  *_{x} \omega_{\delta}) *_{x} \omega_{\lambda},$$
using Fatou's lemma in \eqref{estim_tild_tild_u}, we get
\small{
\begin{align*}
       \int_{\R^d} \liminf_{\lambda_{k_t} \rightarrow 0^+} |\bigl((\tilde u *_{x} \omega_{\lambda})*_{x} \omega_{\delta} \bigr)(x,t) - \tilde u *_{x} \omega_{\lambda}(x,t)|^2 \dd x & \leq
     \liminf_{\lambda_{k_t} \rightarrow 0^+} \int_{\R^d} |\bigl((\tilde u *_{x} \omega_{\lambda})*_{x} \omega_{\delta} \bigr)(x,t) - \tilde u *_{x} \omega_{\lambda}(x,t)|^2 \dd x
    \\
    &  \leq C \delta^2 \int_{\R^d} |\nabla \tilde u (x,t)|^2 \dd x,
\end{align*}
}
that is,
\begin{align*}
     \int_{\R^d} |(\tilde u *_{x} \omega_{\delta})(x,t) - \tilde u (x,t)|^2 \dd x
    &  \leq C \delta^2 \int_{\R^d} |\nabla \tilde u (x,t)|^2 \dd x \\
    & \leq C \delta^2 (1+ \| u_0 \|_{L^2(\Omega)}^2).
\end{align*}
However, since
$$   C \delta^2 (1+ \| u_0 \|_{L^2(\Omega)}^2) \rightarrow_{\delta \rightarrow 0^+} 0,$$
which implies that
$$  \int_{\R^d} |(\tilde u *_{x} \omega_{\delta})(x,t) - \tilde u (x,t)|^2 \dd x \rightarrow_{\delta \rightarrow 0^+} 0 $$
uniformly in $t \in [0,T]$, by Lebesgue's dominated convergence theorem we then have that
$$  \int_0^T \int_{\R^d} |(\tilde u *_{x} \omega_{\delta})(x,t) - \tilde u (x,t)|^2 \dd x \dd t \rightarrow_{\delta \rightarrow 0^+} 0;$$
that is,
$$u^{\delta} \rightarrow \tilde u \, \, \mbox{in} \, \, L^2(\R^{d} \times (0,T)) \, \, \mbox{as} \, \, \delta \rightarrow 0,$$
as was asserted above.

Hence, we deduce that there exists a subsequence $(\delta_p)_{p>0}$ such that
$$ (\tilde u *_{x} \omega_{\delta_p})(x,t) - \tilde u \rightarrow_{\delta_p \rightarrow 0^+} 0 \quad \mbox{for a.e. $(x,t) \in \R^d \times (0,T)$}. $$
Set $u_p(x,t) := (\tilde u *_{x} \omega_{\delta_p})(x,t). \,$
Since
\begin{align*}
    I^2_1 &:=  \sum_{j=1}^{J+1}\int_{\R^{(J+1)d}} \int_{\Omega^{J+1}}  \varrho(y,\eta,t) \, \prod_{l=1}^{J+1} \omega_k\biggl(r_l-\frac{2}{k}n(r_l)-y_l \biggr)  \prod_{m=1, \,m \neq j}^{J+1} \omega_k(v_m-\eta_m) \\
        & \qquad \qquad \quad \times \frac{1}{\eps} \, \bigl( u(r_j,t) - u(y_j,t) \bigr) \cdot \nabla \omega_k(v_j-\eta_j) \dd y \dd \eta,
\end{align*}
this implies, by Fatou's lemma, that
\begin{align}
\label{fatou_1}
\begin{split}
    | I^2_1 |
        & \leq \sum_{j=1}^{J+1}\int_{\R^{(J+1)d}} \int_{\Omega^{J+1}}  \varrho(y,\eta,t) \, \prod_{l=1}^{J+1} \omega_k\biggl(r_l-\frac{2}{k}n(r_l)-y_l \biggr)  \prod_{m=1, \,m \neq j}^{J+1} \omega_k(v_m-\eta_m) \\
        & \qquad \qquad \quad \times \frac{1}{\eps} \, |  u(r_j,t) - u(y_j,t) | \,\, \bigl| \nabla \omega_k(v_j-\eta_j) \bigr| \dd y \dd \eta \\
        & \leq \liminf_{\delta_p \rightarrow 0}  \sum_{j=1}^{J+1}\int_{\R^{(J+1)d}} \int_{\Omega^{J+1}}  \varrho(y,\eta,t) \, \prod_{l=1}^{J+1} \omega_k\biggl(r_l-\frac{2}{k}n(r_l)-y_l \biggr)  \prod_{m=1, \,m \neq j}^{J+1} \omega_k(v_m-\eta_m) \\
        & \qquad \qquad \quad \times \frac{1}{\eps} \, \bigl | u_p(r_j,t) - u_p(y_j,t) \bigr | \cdot \nabla \omega_k(v_j-\eta_j) \dd y \dd \eta \\
        & :=  \liminf_{\delta_p \rightarrow 0} \, I^{2,p}_1.
        \end{split}
\end{align}
Moreover,
\begin{align*}
     I^{2,p}_1
        & \leq \frac{3}{k}\sum_{j=1}^{J+1}\int_{\R^{(J+1)d}} \int_{\Omega^{J+1}}  \varrho(y,\eta,t) \, \prod_{l=1}^{J+1} \omega_k\biggl(r_l-\frac{2}{k}n(r_l)-y_l \biggr)  \prod_{m=1, \,m \neq j}^{J+1} \omega_k(v_m-\eta_m) \\
        & \qquad \qquad \quad \times \frac{1}{\eps} \, \biggl| \biggl(\int_0^1 \nabla u_p \Bigl(y_j + h \, (r_j - y_j)\Bigr) \dd h \biggr) \biggr | \,\, \bigl| \nabla \omega_k(v_j-\eta_j) \bigr| \dd y \dd \eta.
\end{align*}
By noting that $(\nabla \omega)_k(z)= k^d \, \nabla \omega(kz)$, which implies that $k (\nabla \omega)_k(z)= \nabla \omega_k(z)$, we get
\begin{align*}
    | I^{2,p}_1 |
        & \leq 3 \sum_{j=1}^{J+1}\int_{\R^{(J+1)d}} \int_{\Omega^{J+1}}  \varrho(y,\eta,t) \, \prod_{l=1}^{J+1} \omega_k\biggl(r_l-\frac{2}{k}n(r_l)-y_l \biggr)  \prod_{m=1, \,m \neq j}^{J+1} \omega_k(v_m-\eta_m) \\
        & \qquad \qquad \quad \times \frac{1}{\eps} \, \biggl| \biggl(\int_0^1 \nabla u_p \Bigl(y_j + (r_j - y_j)\Bigr) \dd h \biggr) \biggr | \,\, \bigl| (\nabla \omega)_k(v_j-\eta_j) \bigr| \dd y \dd \eta.
\end{align*}
Hence we have that
\begin{align}
\begin{split}
\label{I_2:bound2}
    \| I^{2,p}_1 \|_{L^1(\cD_R)}
        & \leq \frac{3 \cdot 2^J}{\eps} \sum_{j=1}^{J+1} \int_0^T \int_{B_{R+1}^{J+1}} \int_{(\Omega \cap B_{R+1})^{J+1}}  \varrho(y,\eta,t) \, \biggl( \int_{\Omega \cap B_{R}} \biggl[ \omega_k\biggl(r_j-\frac{2}{k}n(r_j)-y_j \biggr)  \\
        & \qquad \qquad \times \int_0^1 \Bigl| \nabla u_p \Bigl(y_j + h \, (r_j - y_j)\Bigr) \Bigr |  \dd h \biggr] \dd r_j \biggr) \, \biggl( \int_{B_R} \bigl| (\nabla \omega)_k(v_j-\eta_j) \bigr| \dd v_j \biggr) \dd y \dd \eta \dd t.
        \end{split}
\end{align}
On the one hand,
\begin{align}
    \begin{split}
    \label{I_2:bound3}
        \int_{B_R} \bigl| (\nabla \omega)_k(v_j-\eta_j) \bigr| \dd v_j & \leq \int_{\R^d}\bigl| (\nabla \omega)_k(\xi)\bigr| \dd \xi \\
        & = \int_{\R^d} k^d \bigl|\nabla \omega(k \, \xi) \bigr| \dd \xi \\
         &  = \int_{\R^d} \bigl|\nabla \omega(z) \bigr| \dd z \\
         & \leq C,
    \end{split}
\end{align}
where $C$ is a positive constant, independent of $k$. Furthermore, we have that
\begin{align*}
    &\int_{\Omega \cap B_{R}} \omega_k\biggl(r_j-\frac{2}{k}n(r_j)-y_j \biggr) \,  \biggl[ \int_0^1 \Bigl| \nabla u_p \Bigl(y_j + h \, (r_j - y_j)\Bigr) \Bigr |  \dd h \biggr] \dd r_j \\
    & \qquad \qquad =  \int_0^1 \int_{\Omega \cap B_{R}} \omega_k\biggl(r_l-\frac{2}{k}n(r_l)-y_l \biggr) \, \Bigl| \nabla u_p \Bigl(y_j + h \, (r_j - y_j)\Bigr) \Bigr | \dd r_j \dd h.
\end{align*}
Thus, by using the change of variable $z_j=r_j-y_j$, which implies that $\dd z_j = \dd r_j$, where $r_j$ and $y_j \in B_{R+1}$, we get that
\begin{align*}
    &\int_{\Omega \cap B_{R}} \omega_k\biggl(r_j-\frac{2}{k}n(r_j)-y_j \biggr) \,  \biggl[ \int_0^1 \Bigl| \nabla u_p \Bigl(y_j + h \, (r_j - y_j)\Bigr) \Bigr |  \dd h \biggr] \dd r_j \\
    & \qquad \qquad \leq  \int_0^1 \int_{B_{2(R+1)}} \omega_k\biggl(z_j-\frac{2}{k}n(z_j+y_j) \biggr) \, \Bigl| \nabla u_p \Bigl(y_j + h \, z_j \Bigr) \Bigr | \dd z_j \dd h \\
     & \qquad \qquad =  \int_0^1 \int_{B_{2(R+1)}} k^d \, \omega \biggl(k \, z_j-\ 2 \, n(z_j+y_j) \biggr) \, \Bigl| \nabla u_p \Bigl(y_j + h \, z_j \Bigr) \Bigr | \dd z_j \dd h.
\end{align*}
Now, by the change of variable $\xi_j = k \, z_j$, which implies that $\dd \xi_j = k^d \dd z_j$ and $|\xi_j| \leq 1 + 2 \| n \|_{L^\infty(\R^d)} := R'$ since $|\xi_j - 2 \, n \Bigl(\frac{\xi_j}{k} + y_j\Bigr)| \leq 1$, we obtain
\begin{align*}
    &\int_{\Omega \cap B_{R}} \omega_k\biggl(r_j-\frac{2}{k}n(r_j)-y_j \biggr) \,  \biggl[ \int_0^1 \Bigl| \nabla u_p \Bigl(y_j + h \, (r_j - y_j)\Bigr) \Bigr |  \dd h \biggr] \dd r_j \\
     & \qquad \qquad \leq  \int_0^1 \int_{B_{1 + 2 \| n \|_{L^\infty(\R^d)}}}  \omega \biggl(\xi_j - 2 \, n \Bigl(\frac{\xi_j}{k}+y_j\Bigr) \biggr) \, \Bigl| \nabla u_p \Bigl(y_j + \frac{h}{k} \, \xi_j \Bigr) \Bigr | \dd \xi_j \dd h \\
     & \qquad \qquad \leq  \int_0^1 \int_{B_{R'}} \Bigl| \nabla u_p \Bigl(y_j + \frac{h}{k} \, \xi_j \Bigr) \Bigr | \dd \xi_j \dd h.
\end{align*}
Next, by performing the change of variable $s=y_j + \frac{h}{k} \, \xi_j$, which implies that $\dd s = \bigl(\frac{h}{k}\bigr)^d \, \dd \xi_j $ and $s \in B(y_j,\frac{h}{k} \, R')$, we get
\begin{align*}
    \int_{\Omega \cap B_{R}} \omega_k\biggl(r_j-\frac{2}{k}n(r_j)-y_j \biggr) \,  \biggl[ \int_0^1 \Bigl| \nabla u_p \Bigl(y_j + h \, (r_j - y_j)\Bigr) \Bigr |  \dd h \biggr] \dd r_j & \leq  \int_0^1 \int_{B(y_j,\frac{h}{k} \, R')} \Bigl| \nabla u_p (s) \Bigr | \frac{\dd s}{\bigl(\frac{h}{k} \bigr)^d} \dd h \\
     & \leq  C_{R'}\int_0^1 \dashint_{B(y_j,\frac{h}{k} \, R')} \Bigl| \nabla u_p (s) \Bigr | \dd s \dd h.
\end{align*}
Consider again the Hardy--Littlewood maximal function $\mathcal{M}_f(x):= \sup_{r>0} \dashint_{B(x,r)}{|f(y)|} \dd y$.
We then have that
 \\
     \begin{align}
     \label{I_2:bound4}
    \int_{\Omega \cap B_{R}} \omega_k\biggl(r_j-\frac{2}{k}n(r_j)-y_j \biggr) \,  \biggl[ \int_0^1 \Bigl| \nabla u_p \Bigl(y_j + h \, (r_j - y_j)\Bigr) \Bigr |  \dd h \biggr] \dd r_j & \leq C_{R'} \mathcal{M}_{|\nabla u_p|} (y_j,t).
\end{align}

Hence, by noting inequalities \eqref{I_2:bound3} and \eqref{I_2:bound4} in \eqref{I_2:bound2}, we obtain
\begin{align*}
    \| I^{2,p}_1 \|_{L^1(\cD_R)}
        & \leq \frac{3 \cdot 2^J \, C_{R'} \, C}{\eps} \sum_{j=1}^{J+1} \int_0^T \int_{B_{R+1}^{J+1}} \int_{(\Omega \cap B_{R+1})^{J+1}}  \varrho(y,\eta,t) \, \mathcal{M}_{|\nabla u_p|} (y_j,t) \dd y \dd \eta \dd t.
\end{align*}
Using the Cauchy--Schwarz inequality and the boundedness of the maximal operator
in $L^2(\R^d)$, we get
\begin{align}
\label{bound_I_2,p}
\begin{split}
    \| I^{2,p}_1 \|_{L^1(\cD_R)}
        & \leq \frac{3 \cdot 2^J \, C_{R'} \, C}{\eps} \sum_{j=1}^{J+1} \int_0^T  \| \varrho (\cdot,\cdot,t) \|_{L^2 \bigl((\Omega \cap B_{R+1})^{J+1} \times B_{R+1}^{J+1}\bigr)} \,  \|\mathcal{M}_{|\nabla u_p|} (y_j,t) \|_{{L^2}(\R^d)} \dd t \\
        & \leq \frac{3 \cdot 2^J \, C_{R'} \, C \, {(J+1)}}{\eps}\int_0^T  \| \varrho (\cdot,\cdot,t) \|_{L^2 \bigl((\Omega \cap B_{R+1})^{J+1} \times B_{R+1}^{J+1}\bigr)} \,  \| \nabla u_p(\cdot,t) \|_{{L^2}(\R^d)} \dd t \\
        & \leq \frac{3 \cdot 2^J \, C_{R'} \, C \, {(J+1)}}{\eps} \, \| \varrho  \|_{L^2((\Omega \cap B_{R+1})^{J+1} \times B_{R+1}^{J+1} \times (0,T))} \, \| \nabla u_p\|_{L^2 (0,T;{L^2}(\R^d))} \\
         & \leq \frac{3 \cdot 2^J \, C_{R'} \, C \, {(J+1)}}{\eps} \, \| \varrho  \|_{L^2((\Omega \cap B_{R+1})^{J+1} \times B_{R+1}^{J+1} \times (0,T))} \, \| \nabla u\|_{L^2 (0,T;{L^2}(\R^d))} \\
        & \leq C \, (\| u_0\|_{L^2(\R^d)} + 1) \, \| \varrho  \|_{L^2((\Omega \cap B_{R+1})^{J+1} \times B_{R+1}^{J+1} \times (0,T))} \\
        & \leq C \, \| \varrho  \|_{L^2((\Omega \cap B_{R+1})^{J+1} \times B_{R+1}^{J+1} \times (0,T))} \\
        & = C  \| \varrho  \|_{L^2(\cD_{R+1})}.
\end{split}
\end{align}
Now, since from \eqref{fatou_1} we have that
\begin{align*}
    | I^2_1 |
        & \leq \liminf_{\delta_p \rightarrow 0} \, I^{2,p}_1,
\end{align*}
we deduce that
\begin{align*}
    \| I^2_1 \|_{L^1(\cD_R)} & = \int_0^T  \int_{(\Omega \cap B_R)^{J+1}} \int_{B_R^{J+1}} \bigl | I^2_1
        \bigr | \dd v \dd r \dd t \\
        & \leq \int_0^T  \int_{(\Omega \cap B_R)^{J+1}} \int_{B_R^{J+1}} \liminf_{\delta_p \rightarrow 0} \, I^{2,p}_1 \dd v \dd r \dd t \\
         & \leq \liminf_{\delta_p \rightarrow 0} \, \int_0^T  \int_{(\Omega \cap B_R)^{J+1}} \int_{B_R^{J+1}} I^{2,p}_1 \dd v \dd r \dd t \\
         &= \liminf_{\delta_p \rightarrow 0}    \| I^{2,p}_1 \|_{L^1(\cD_R)}.
\end{align*}
Finally, from \eqref{bound_I_2,p}, we get
\begin{align*}
    \| I^2_1 \|_{L^1(\cD_R)}
         & \leq  C  \| \varrho  \|_{L^2(\cD_{R+1})},
\end{align*}
as has been asserted.

\begin{remark}
We note in passing that in bounding $\| I^2_1 \|_{L^1(\cD_R)}$ we could have used a more direct argument.
Indeed, by a standard property of the maximal function (cf., for example, Corollary 4.3 in \cite{max_func_aalto}),
we have that if $u \in W^{1,p}(\R^d)$, with $1 \leq p \leq \infty$ then there is a set $E$ of measure zero such that the following inequality holds
\begin{align}
   \label{ineq:max_fct}
|u(x) - u(y)| \leq c |x - y| \bigl( \mathcal{M}_{\nabla u}(x) + \mathcal{M}_{\nabla u}(y) \bigr),
\end{align}
\noindent for all $x,y \in \R^{d} \setminus E$.
Since
\begin{align*}
    I^2_1 &:=  \sum_{j=1}^{J+1}\int_{\R^{(J+1)d}} \int_{\Omega^{J+1}}  \varrho(y,\eta,t) \, \prod_{l=1}^{J+1} \omega_k\biggl(r_l-\frac{2}{k}n(r_l)-y_l \biggr)  \prod_{m=1, \,m \neq j}^{J+1} \omega_k(v_m-\eta_m) \\
        & \qquad \qquad \quad \times \frac{1}{\eps} \, \bigl( u(r_j,t) - u(y_j,t) \bigr) \cdot \nabla \omega_k(v_j-\eta_j) \dd y \dd \eta,
\end{align*}
we have that
\begin{align*}
    \| I^2_1 \|_{L^1(\cD_R)} & = \int_0^T  \int_{(\Omega \cap B_R)^{J+1}} \int_{B_R^{J+1}} \biggl |\sum_{j=1}^{J+1}\int_{\R^{(J+1)d}} \int_{\Omega^{J+1}}  \varrho(y,\eta,t) \, \prod_{l=1}^{J+1} \omega_k\biggl(r_l-\frac{2}{k}n(r_l)-y_l \biggr)  \prod_{m=1, \,m \neq j}^{J+1} \omega_k(v_m-\eta_m) \\
        & \qquad \qquad \quad \times \frac{1}{\eps} \, \bigl( u(r_j,t) - u(y_j,t) \bigr) \cdot \nabla \omega_k(v_j-\eta_j) \dd y \dd \eta \biggr| \dd v \dd r \dd t \\
        & \leq \sum_{j=1}^{J+1} \int_0^T  \int_{(\Omega \cap B_R)^{J+1}} \int_{B_R^{J+1}} \int_{\R^{(J+1)d}} \int_{\Omega^{J+1}} \biggl | \varrho(y,\eta,t) \, \prod_{l=1}^{J+1} \omega_k\biggl(r_l-\frac{2}{k}n(r_l)-y_l \biggr)  \prod_{m=1, \,m \neq j}^{J+1} \omega_k(v_m-\eta_m) \\
        & \qquad \qquad \quad \times \frac{1}{\eps} \, \bigl( u(r_j,t) - u(y_j,t) \bigr) \cdot \nabla \omega_k(v_j-\eta_j) \biggr| \dd y \dd \eta  \dd v \dd r \dd t \\
        & = \sum_{j=1}^{J+1} \int_0^T  \int_{(\Omega \cap B_R)^{J+1}} \int_{B_R^{J+1}} \int_{\R^{(J+1)d}} \int_{\Omega^{J+1}} \biggl | \varrho(y,\eta,t) \, \prod_{l=1}^{J+1} \omega_k\biggl(r_l-\frac{2}{k}n(r_l)-y_l \biggr)  \prod_{m=1, \,m \neq j}^{J+1} \omega_k(v_m-\eta_m) \\
        & \qquad \qquad \quad \times \frac{1}{\eps} \, \bigl| u(r_j,t) - u(y_j,t) \bigr| \, \bigl| \nabla \omega_k(v_j-\eta_j) \bigr| \dd y \dd \eta  \dd v \dd r \dd t.
\end{align*}
From inequality \eqref{ineq:max_fct} it then directly follows that
\begin{align*}
&    \| I^2_1 \|_{L^1(\cD_R)}  \leq \sum_{j=1}^{J+1} \int_0^T  \int_{(\Omega \cap B_R)^{J+1}} \int_{B_R^{J+1}} \int_{\R^{(J+1)d}} \int_{\Omega^{J+1}} \, \varrho(y,\eta,t) \, \prod_{l=1}^{J+1} \omega_k\biggl(r_l-\frac{2}{k}n(r_l)-y_l \biggr)  \prod_{m=1, \,m \neq j}^{J+1} \omega_k(v_m-\eta_m) \\
        & \qquad \qquad \times \frac{c}{\eps} \, |r_j - y_j| \, \bigl| \mathcal{M}_{\nabla u}(r_j,t) + \mathcal{M}_{\nabla u}(y_j , t) \bigr| \, \bigl| \nabla \omega_k(v_j-\eta_j) \bigr| \dd y \dd \eta  \dd v \dd r \dd t
        \\
        & \qquad \qquad \times \Bigl(\mathcal{M}_{\nabla u}(r_j,t) + \mathcal{M}_{\nabla u}(y_j , t)\Bigr) \biggr] \dd r_j \biggr) \, \biggl( \int_{B_R} \bigl| (\nabla \omega)_k(v_j-\eta_j) \bigr| \dd v_j \biggr) \dd y \dd \eta \dd t \\
%
    %
    & \leq \frac{3 \cdot 2^J \, c \, C}{\eps} \sum_{j=1}^{J+1} \int_0^T \int_{B_{R+1}^{J+1}} \int_{(\Omega \cap B_{R+1})^{J+1}}  \varrho(y,\eta,t) \, \biggl( \int_{\Omega \cap B_{R}} \biggl[ \omega_k\biggl(r_j-\frac{2}{k}n(r_j)-y_j \biggr)  \\
        & \qquad \qquad \times \Bigl(\mathcal{M}_{\nabla u}(r_j,t) + \mathcal{M}_{\nabla u}(y_j , t)\Bigr) \biggr] \dd r_j \biggr) \dd y \dd \eta \dd t
    \\
    & = \frac{3 \cdot 2^J \, c \, C}{\eps} \Biggl[ \sum_{j=1}^{J+1} \int_0^T \int_{B_{R+1}^{J+1}} \int_{(\Omega \cap B_{R+1})^{J+1}}  \varrho(y,\eta,t) \, \biggl( \int_{\Omega \cap B_{R}} \biggl[ \omega_k\biggl(r_j-\frac{2}{k}n(r_j)-y_j \biggr)
    \mathcal{M}_{\nabla u}(r_j,t)  \biggr] \dd r_j \biggr) \dd y \dd \eta \dd t\\
     &\qquad \qquad + \, \sum_{j=1}^{J+1} \int_0^T \int_{B_{R+1}^{J+1}} \int_{(\Omega \cap B_{R+1})^{J+1}}  \varrho(y,\eta,t)
         \biggl( \int_{\Omega \cap B_{R}} \omega_k\biggl(r_j-\frac{2}{k}n(r_j)-y_j \biggr)  \dd r_j \biggr) \, \mathcal{M}_{\nabla u}(y_j,t) \, \dd y \dd \eta \dd t \Biggr]
        \\
        & \leq
        \frac{3 \cdot 2^J \, c \, C}{\eps} \Biggl[ \sum_{j=1}^{J+1} \int_0^T \int_{B_{R+1}^{J+1}} \int_{(\Omega \cap B_{R+1})^{J+1}}  \varrho(y,\eta,t) \, \biggl( \int_{\Omega \cap B_{R}} \biggl[ \omega_k\biggl(r_j-\frac{2}{k}n(r_j)-y_j \biggr)
        \mathcal{M}_{\nabla u}(r_j,t)  \biggr] \dd r_j \biggr) \dd y \dd \eta \dd t\\
        &\qquad \qquad  + \, 2 \, \sum_{j=1}^{J+1} \int_0^T \int_{B_{R+1}^{J+1}} \int_{(\Omega \cap B_{R+1})^{J+1}}  \varrho(y,\eta,t) \mathcal{M}_{\nabla u}(y_j,t)  \, \dd y \dd \eta \dd t. \Biggr]
        \end{align*}

       Then, using changes of variable, we get

        \begin{align*}
      \| I^2_1 \|_{L^1(\cD_R)} & \leq
         \frac{3 \cdot 2^J \, c \, C}{\eps} \Biggl[ \sum_{j=1}^{J+1} \int_0^T \int_{B_{R+1}^{J+1}} \int_{(\Omega \cap B_{R+1})^{J+1}}  \varrho(y,\eta,t) \, \biggl( \int_{B_{2(R+1)}} \biggl[ \omega_k\biggl(z_j-\frac{2}{k}n(z_j+y_j) \biggr)  \\
        & \qquad \qquad \times \mathcal{M}_{\nabla u}(z_j+y_j,t)  \biggr] \dd z_j \biggr) \dd y \dd \eta \dd t
        \, + \, 2\, C \, \| \varrho  \|_{L^2{(\cD_{R+1})}} \Biggr]
        \\
        & =
         \frac{3 \cdot 2^J \, c \, C}{\eps} \Biggl[ \sum_{j=1}^{J+1} \int_0^T \int_{B_{R+1}^{J+1}} \int_{(\Omega \cap B_{R+1})^{J+1}}  \varrho(y,\eta,t) \, \biggl( \int_{B_{2(R+1)}} \biggl[k^d \,  \omega \biggl(k \, z_j- 2 \, n(z_j+y_j) \biggr)  \\
        & \qquad \qquad \times \mathcal{M}_{\nabla u}(z_j+y_j,t)  \biggr] \dd z_j \biggr) \dd y \dd \eta \dd t
        \, + \, 2\, C \, \| \varrho  \|_{L^2{(\cD_{R+1})}} \Biggr]    \\
        & =
         \frac{3 \cdot 2^J \, c \, C}{\eps} \Biggl[ \sum_{j=1}^{J+1} \int_0^T \int_{B_{R+1}^{J+1}} \int_{(\Omega \cap B_{R+1})^{J+1}}  \varrho(y,\eta,t) \, \biggl( \int_{B_{R'}} \biggl[ \omega \biggl(\xi_j- 2 \, n(\frac{\xi_j}{k} + y_j) \biggr)  \\
        & \qquad \qquad \times \mathcal{M}_{\nabla u}\bigl(\frac{1}{k} \, \xi_j + y_j,t \bigr)  \biggr] \dd \xi_j \biggr) \dd y \dd \eta \dd t
        \, + \, 2\, C \, \| \varrho  \|_{L^2{(\cD_{R+1})}} \Biggr]
        \\
        & \leq
         \frac{3 \cdot 2^J \, c \, C}{\eps} \Biggl[ \sum_{j=1}^{J+1} \int_0^T \int_{B_{R+1}^{J+1}} \int_{(\Omega \cap B_{R+1})^{J+1}}  \varrho(y,\eta,t) \, \biggl( \int_{B_{R'}} \biggl[
         \mathcal{M}_{\nabla u}\bigl(\frac{1}{k} \, \xi_j + y_j,t \bigr)  \biggr] \dd \xi_j \biggr) \dd y \dd \eta \dd t
       \\
        & \qquad \qquad  + \, 2\, C \, \| \varrho  \|_{L^2{(\cD_{R+1})}} \Biggr].
      \end{align*}
Finally, we have
        \begin{align*}
         \| I^2_1 \|_{L^1(\cD_R)} & \leq
         \frac{3 \cdot 2^J \, c \, C}{\eps} \Biggl[ \sum_{j=1}^{J+1} \int_0^T \int_{B_{R+1}^{J+1}} \int_{(\Omega \cap B_{R+1})^{J+1}}  \varrho(y,\eta,t) \, \biggl( \int_{B_{\bigl(y_j,\frac{1}{k} R'\bigr)}}
         \mathcal{M}_{\nabla u}(s,t )  \, \, \frac{\dd s}{\frac{1}{k^d}} \biggr) \dd y \dd \eta \dd t
       \\
        & \qquad \qquad
       + \, 2\, C \, \| \varrho  \|_{L^2{(\cD_{R+1})}} \Biggr]
        \\
         & \leq
         \frac{3 \cdot 2^J \, c \, C}{\eps} \Biggl[ \sum_{j=1}^{J+1} \int_0^T \int_{B_{R+1}^{J+1}} \int_{(\Omega \cap B_{R+1})^{J+1}}  \varrho(y,\eta,t) \, \,
          \mathcal{M}_{\mathcal{M}_{\nabla u}}(y_j,t)  \, \, \dd y \dd \eta \dd t
       \\
        & \qquad \qquad  + \, 2\, C \, \| \varrho  \|_{L^2{(\cD_{R+1})}} \Biggr]
        \\
         & \leq
         \frac{3 \cdot 2^J \, c \, C}{\eps} \Biggl[ \sum_{j=1}^{J+1} \int_0^T \| \varrho (\cdot,\cdot,t) \|_{L^2{\bigl( (\Omega \cap B_{R+1})^{J+1} \times B_{R+1}^{J+1} \bigr)}} \, \,
          \| \mathcal{M}_{\mathcal{M}_{\nabla u}}(y_j,t)  \|_{L^2{(\R^d)}} \, \dd t
         + \, 2\, C \, \| \varrho  \|_{L^2{(\cD_{R+1})}} \Biggr]
        \\
         & \leq
         \frac{3 \cdot 2^J \, c \, C}{\eps} \Biggl[ \sum_{j=1}^{J+1} \int_0^T \| \varrho (\cdot,\cdot,t) \|_{L^2{\bigl( (\Omega \cap B_{R+1})^{J+1} \times B_{R+1}^{J+1} \bigr)}} \, \,
          \| \mathcal{M}_{\nabla u}(y_j,t)  \|_{L^2{(\R^d)}} \, \dd t
         + \, 2\, C \, \| \varrho  \|_{L^2{(\cD_{R+1})}} \Biggr]
        \\
        & \leq  C \, \| \varrho  \|_{L^2{(\cD_{R+1})}}.
\end{align*}
\end{remark}
Therefore, we have that
$$\| I_1 \|_{L^1(\cD_R)} \leq C \, \| \varrho  \|_{L^2{(\cD_{R+1})}},$$

\medskip

Now, we shall show that $\| I_2 \|_{L^1(\cD_R)} \leq C \| \varrho  \|_{L^1(\cD_{R+1})}$. Indeed,
\begin{align*}
    \| I_2 \|_{L^1(\cD_R)} &=\int_0^T\int_{(\Omega \cap  B_R)^{J+1}} \int_{{B_R}^{J+1}} \biggl | \sum_{j=1}^{J+1}\int_{\R^{(J+1)d}} \int_{\Omega^{J+1}}  \partial_{\eta_j}  \cdot E_j(y,\eta,t) \prod_{l=1}^{J+1} \omega_k\biggl(r_l-\frac{2}{k}n(r_l)-y_l \biggr) \\
    & \qquad \times \prod_{m=1}^{J+1} \omega_k(v_m-\eta_m)  \, \varrho(y,\eta,t) \dd y \dd \eta \biggr | \dd v \dd r \dd t \\
    & \leq \sum_{j=1}^{J+1} \int_0^T \int_{(\Omega \cap  B_R)^{J+1}} \int_{{B_R}^{J+1}} \int_{\R^{(J+1)d}} \int_{\Omega^{J+1}} \bigl |\partial_{\eta_j}  \cdot E_j(y,\eta,t) \bigr |  \prod_{l=1}^{J+1} \omega_k\biggl(r_l-\frac{2}{k}n(r_l)-y_l \biggr) \\
    & \qquad \times \prod_{m=1}^{J+1} \omega_k(v_m-\eta_m)  \, \varrho(y,\eta,t) \dd y \dd \eta \dd v \dd r \dd t \\
    & \leq C \, 2^{J+1} \sum_{j=1}^{J+1} \int_0^T \int_{B_{R+1}^{J+1}} \int_{(\Omega \cap B_{R+1})^{J+1}} \varrho(y,\eta,t)  \, \bigl |\partial_{\eta_j}  \cdot E_j(y,\eta,t) \bigr | \dd y \dd \eta \dd t \\
    & \leq C \, \| \varrho  \|_{L^1(\cD_{R+1})}.
\end{align*}
In conclusion, we have that
$$\| r^1_k(\varrho) \|_{L^1(\cD_R)} \leq C \, \| \varrho \|_{L^2(\cD_{R+1})},$$
as has been asserted. That completes the proof of the lemma.
\end{proof}
Then, for $ \varrho \in L^\infty(0,T;L^2(\Omega^{J+1} \times \R^{(J+1)d};\R_{\geq 0}))$ we argue again by density:
we consider a sequence $(\varrho_{\eps})_{\eps>0}$ of smooth functions, such that $ \varrho_{\eps} \rightarrow \varrho$ in $L^2_{loc}(\overline{\cD})$, and we write:
\[
r^1_k(\varrho)= r^1_k(\varrho_{\eps})+ r^1_k(\varrho - \varrho_{\eps}),
\]
which obviously converges to $0$ in $L^1_{loc}(\overline{\cD})$ as $k \rightarrow \infty$ and $\eps \rightarrow 0$.

\smallskip

Next, we consider the term $r^3_k(\varrho)$. We begin by observing that the following equalities hold:
\begin{align*}
r^3_k(\varrho)&=\Biggl( \frac{\beta^2}{\eps^2}\, \sum_{j=1}^{J+1} \pdv^2 \varrho \Biggr)\star_{r,k} \omega_k *_v \omega_k - \frac{\beta^2}{\eps^2}\, \sum_{j=1}^{J+1} \pdv^2 \varrho_k \\
\qquad &= \frac{\beta^2}{\eps^2} \sum_{j=1}^{J+1} \int_{\R^{(J+1)d}} \int_{\Omega^{J+1}}  \biggl \{ \partial_{\eta_j}^2 \varrho(y,\eta,t) \, \prod_{l=1}^{J+1} \omega_k\Bigl(r_l-\frac{2}{k}n(r_l)-y_l \Bigr) \, \prod_{m=1}^{J+1} \omega_k(v_m-\eta_m) \\
\qquad & \quad - \pdv^2 \Bigl[ \varrho(y,\eta,t) \, \prod_{l=1}^{J+1} \omega_k\Bigl(r_l-\frac{2}{k}n(r_l)-y_l \Bigr) \, \prod_{m=1}^{J+1} \omega_k(v_m-\eta_m) \Bigr]  \biggr \} \dd y \dd \eta,
\end{align*}
and hence, by using an integration by parts on the first integrand, we obtain
\begin{align*}
r^3_k(\varrho) &= \frac{\beta^2}{\eps^2} \sum_{j=1}^{J+1} \int_{\R^{(J+1)d}} \int_{\Omega^{J+1}}  \biggl \{  \varrho(y,\eta,t) \, \partial_{\eta_j}^2[\omega_k(v_j-\eta_j)] \, \prod_{l=1}^{J+1} \omega_k\biggl(r_l-\frac{2}{k}n(r_l)-y_l \biggr) \, \prod_{m=1, \,m \neq j}^{J+1} \omega_k(v_m-\eta_m)  \\
\qquad & \quad - \varrho(y,\eta,t) \pdv^2 [\omega_k(v_j-\eta_j)] \, \prod_{l=1}^{J+1} \omega_k\biggl(r_l-\frac{2}{k}n(r_l)-y_l \biggr) \, \prod_{m=1, \,m \neq j}^{J+1} \omega_k(v_m-\eta_m) \biggr \} \dd y \dd \eta,\\
 &= \frac{\beta^2}{\eps^2} \sum_{j=1}^{J+1} \int_{\R^{(J+1)d}} \int_{\Omega^{J+1}}  \biggl \{  \varrho(y,\eta,t) \, (\Delta \omega)_k(v_j-\eta_j) \prod_{l=1}^{J+1} \omega_k\biggl(r_l-\frac{2}{k}n(r_l)-y_l \biggr) \, \prod_{m=1, \,m \neq j}^{J+1} \omega_k(v_m-\eta_m) \\
\qquad & \quad - \varrho(y,\eta,t) \, (\Delta \omega)_k(v_j-\eta_j) \prod_{l=1}^{J+1} \omega_k\biggl(r_l-\frac{2}{k}n(r_l)-y_l \biggr) \, \prod_{m=1, \,m \neq j}^{J+1} \omega_k(v_m-\eta_m) \biggr \} \dd y \dd \eta\\
\qquad & = 0.
\end{align*}

Having dealt with the terms $r^1_k(\varrho)$, $r^3_k(\varrho)$ and $r^4_k(\varrho)$, we are now left with the task of considering the remaining term, $r^2_k(\varrho)$. We begin by noting that
\begin{align*}
r^2_k(\varrho)&=\sum_{j=1}^{J+1} (\pdv \cdot E_j(r,v,t)) \varrho_k - \Biggl( \sum_{j=1}^{J+1} (\pdv \cdot E_j(r,v,t)) \varrho \Biggr)\star_{r,k} \omega_k *_v \omega_k \\
&= \sum_{j=1}^{J+1}\int_{\R^{(J+1)d}} \int_{\Omega^{J+1}}  \biggl \{(\pdv \cdot E_j(r,v,t)) \, \varrho(y,\eta,t) \, \prod_{l=1}^{J+1} \omega_k\biggl(r_l-\frac{2}{k}n(r_l)-y_l \biggr) \, \prod_{m=1}^{J+1} \omega_k(v_m-\eta_m) \\
&\quad - \partial_{\eta_j}  \cdot E_j(y,\eta,t)  \,  \varrho(y,\eta,t) \, \prod_{l=1}^{J+1} \omega_k\biggl(r_l-\frac{2}{k}n(r_l)-y_l \biggr) \, \prod_{m=1}^{J+1} \omega_k(v_m-\eta_m) \biggr \} \dd y \dd \eta,
\end{align*}
which, as long as $\varrho$ and $E_j$ are sufficiently smooth, converges to $0$ in $L^1_{loc}$ as $k$ tends to $\infty$ by standard results
on convolutions. The general case then follows by using a density argument using the inequality which we shall next prove.
For a constant $C>0$, independent of $k$, we have that
\begin{align*}
      \| r^2_k (\varrho)\|_{L^1(\cD_R)} &= \int_0^T \int_{(\Omega \cap  B_R)^{J+1}} \int_{{B_R}^{J+1}} \biggl | \sum_{j=1}^{J+1}\int_{\R^{(J+1)d}} \int_{\Omega^{J+1}}  \biggl \{ \bigl[ (\pdv \cdot E_j(r,v,t)) - \partial_{\eta_j}  \cdot E_j(y,\eta,t) \bigr] \, \varrho(y,\eta,t) \\
      & \qquad \times \prod_{l=1}^{J+1} \omega_k\biggl(r_l-\frac{2}{k}n(r_l)-y_l \biggr) \, \prod_{m=1}^{J+1} \omega_k(v_m-\eta_m) \biggr\} \dd y
      \dd \eta\, \biggr | \dd v \dd r \dd t \\
& \leq \frac{1}{\eps}
\sum_{j=1}^{J+1} \int_0^T \int_{(\Omega \cap B_R)^{J+1}} \int_{B_R^{J+1}} \int_{\R^{(J+1)d}} \int_{\Omega^{J+1}} \biggl | \biggl \{ \bigl[ (\pdv \cdot E_j(r,v,t)) - \partial_{\eta_j}  \cdot E_j(y,\eta,t) \bigr] \, \varrho(y,\eta,t) \\
      & \qquad \times \prod_{l=1}^{J+1} \omega_k\biggl(r_l-\frac{2}{k}n(r_l)-y_l \biggr) \, \prod_{m=1}^{J+1} \omega_k(v_m-\eta_m) \biggr\} \dd y \dd \eta\, \biggr | \dd y \dd \eta \dd v \dd r \dd t \\
    & \leq C \, 2^{J+1} \sum_{j=1}^{J+1} \int_0^T \int_{B_{R+1}^{J+1}} \int_{(\Omega \cap B_{R+1})^{J+1}} \bigl | (\pdv \cdot E_j(r,v,t)) - \partial_{\eta_j}  \cdot E_j(y,\eta,t) \bigr | \, \varrho(y,\eta,t)  \dd y \dd \eta \dd t \\
     & \leq C \, \sum_{j=1}^{J+1}  \|\pdv \cdot E_j \|_{L^{\infty}\bigl(0,T;L^{\infty}{\bigl((\Omega \cap B_{R+1}) \times B_{R+1}\bigr)\bigr)}} \,\int_0^T \int_{B_{R+1}^{J+1}} \int_{(\Omega \cap B_{R+1})^{J+1}}  \varrho(y,\eta,t)  \dd y \dd \eta \dd t
    \\
    & \leq C \, \| \varrho  \|_{L^1(\cD_{R+1})}.
\end{align*}
Then, for $ \varrho \in L^\infty(0,T;L^1(\Omega^{J+1} \times \R^{(J+1)d};\R_{\geq 0}))$ we argue again by density: we consider a sequence $(\varrho_{\eps})_{\eps}$ of smooth functions, such that $ \varrho_{\eps} \rightarrow \varrho$ in $L^1_{loc}(\overline{\cD})$, and we write:
\[
r^2_k(\varrho)= r^2_k(\varrho_{\eps})+ r^2_k(\varrho - \varrho_{\eps}),
\]
which obviously converges to $0$ in $L^1_{loc}(\overline{\cD})$ as $k\rightarrow \infty$ and $\eps \rightarrow 0$.
That completes Step 2.
\end{proof}
{\sc{Step 3: Passing to the limit.}} Thanks to \eqref{convergence:FP} we have that $\varrho_k(\cdot, t)$ converges to $\varrho(\cdot, t)$ in $L^1_{loc}(\cO)$ for almost all $t \in [0,T]$ and we denote by $t_0$ such a time.
For all $k,l \in \bbN$ the difference $\varrho_k - \varrho_l$ belongs to $W^{1,1}(0,T;W^{1,\infty}_{loc}(\cO))$ and solves
\[
\Lambda_{E_j}(\varrho_k - \varrho_l)=r_k-r_l \quad in \quad  \mathcal{D}'(\Omega^{J+1} \times \R^{(J+1)d} \times (0,T)).   
\]
The estimate \eqref{apriori1:FP-eq} applied to $\varrho_k - \varrho_l$ and Lemma \ref{lem:5.2} imply that, for all compact sets $K \subset \cO$, one has
\begin{equation}
\label{cauchyseq}
\sup_{\tau \in [0,T]}\| (\varrho_k - \varrho_l)(\cdot, \tau) \|_{L^1(K)} \rightarrow_{k,l \rightarrow \infty} 0.
\end{equation}
We then deduce from \eqref{cauchyseq} that there exists for all $t \in [0,T]$ a function $\gamma_t \varrho$ such that $\varrho_k(\cdot,t)$ converges to  $\gamma_t \varrho$ in $C([0,T];L^1_{loc}(\cO)),$ and in particular from \eqref{convergence:FP}, by uniqueness of the limit we get
\begin{equation}
\label{trace:equality2}
\varrho(r,v,t)= \gamma_t \varrho(r,v) \,
\mbox{ for almost every $(r,v,t)$ in $\Omega^{J+1} \times \R^{(J+1)d} \times (0,T)$}.
\end{equation}
Moreover, for all $t \in [0,T]$ and $R>0$ we have from \eqref{convergence:FP} and \eqref{trace:equality2}, by lower semi-continuity and thanks to Lebesgue's dominated convergence theorem, since $\varrho_k$ is bounded in $L^{\infty}(0,T;L^1_{loc}(\cO))$, that
\[
\| \gamma_t \varrho \|_{L^1_R} \le \lim_{k \rightarrow \infty} \sup_{t \in [0,T]} \|\varrho_k(\cdot, t) \|_{L^1_R} = \|\varrho \|_{L^{\infty,1}_R}.
\]
We have that $\varrho_k(\cdot, t) = (\gamma_t \varrho)\star_{r,k} \omega_k *_v \omega_k$ a.e. in $\cO$ for all $k \in \bbN$
and $t \in [0,T]$, and since the two functions $\varrho_k(\cdot, t)$ and $(\gamma_t \varrho)\star_{r,k} \omega_k *_v \omega_k$
are continuous, this holds everywhere in $\cO$ and thus $\varrho_k (\cdot, t) \rightarrow \gamma_t \varrho$ in $L^1_{loc}(\cO)$
for all $t \in [0,T]$. We note that $\gamma_t \varrho = \varrho(\cdot, t)$. From \eqref{cauchyseq}, we deduce
that $\varrho \in C([0,T];L^1_{loc}(\cO))$.

The estimate \eqref{apriori2} applied to $\varrho_k - \varrho_l$, Lemma \ref{lem:5.2} and the convergence \eqref{convergence:FP} imply that for all compact subsets $K \subset \partial \Omega^{(j)} \times \R^{(J+1)d}$ one has
\begin{equation}
\label{trace:cv}
\int_0^T \int_K| \gamma \varrho_k - \gamma \varrho_l| \dd \mu_2(r,v,t) \rightarrow_{k,l \rightarrow \infty} 0.
\end{equation}
We deduce from \eqref{trace:cv} the existence of a function $\gamma \varrho \in L^1_{loc}(\partial \Omega^{(j)} \times \R^{(J+1)d} \times [0,T], \dd \mu_2)$, which is the limit of $(\gamma \varrho_k)$.

Finally, for a fixed $\varphi \in \mathcal{C}^{\infty}_0( \overline{\Omega^{J+1}} \times \R^{(J+1)d} \times [0,T])$ there exists a constant $C>0$ such that $|\varphi(r,v,t)| \le C |n(r_j) \cdot v_j|$  on $\partial \Omega^{(j)} \times \R^{(J+1)d} \times (0,T)$, to ensure that the integral $$\sum_{j=1}^{J+1} \int_{t_0}^{t_1} \int_{\partial \Omega^{(j)}} \int_{\R^{(J+1)d}} (v_j
\cdot n(r_j)) \,
\gamma \varrho \, \varphi \dd v \dd s(r) \dd \tau,$$ appearing on the right-hand side of \eqref{Green:FP-eq}, is finite (since $\gamma \varrho \in L^1_{loc}(\partial \Omega^{(j)} \times \R^{(J+1)d} \times [0,T], \dd \mu_2)$, where $\dd \mu_2 = |n(r_j) \cdot v_j|^{2}\dd v \dd s(r) \dd \tau$). Therefore, the Green's formula \eqref{Green:FP-eq} is established by writing it first for $\varrho_k$ and then passing to the limit $k \rightarrow \infty$. Uniqueness of the trace follows from Green's formula.
That completes the proof.
\end{proof}

\subsection{Fokker-Planck equation with specular reflection on the boundary}

We show in this section that the specular boundary condition is attained in a strong sense by the solution of equation \eqref{eq:FP-eq1}.
In the previous section we showed that $\varrho \in L^\infty(0,T;L^1(\Omega^{J+1} \times \R^{(J+1)d};\R_{\geq 0}))$
is a solution to the problem \eqref{eq:FP-eq1}, \eqref{eq:FP-ini1} in the sense of distributions, i.e.,
\begin{alignat}{2}
\label{conjug-eq:FP-equation}
\int_0^T \int_{\Omega^{J+1}} \int_{\R^{(J+1)d}}
\varrho \, \Lambda^*_{E_j} (\varphi) \dd v \dd r \dd \tau = 0,
\end{alignat}
for all test functions $ \varphi \in W^{1,1}_0(0,T;W^{s,2}_*(\Omega^{J+1} \times \R^{(J+1)d}))$ with $s>(J+1)d+1$. Now, we want to prove that the solution $\varrho$ satisfies the following \textit{specular boundary
condition} on $\partial \Omega^{(j)}$, $j=1,\dots, J+1$:
\begin{align}
\label{eq:rho-specular}
\varrho(r,v,t) = \varrho(r,v_*^{(j)},t)\qquad \mbox{for all $(r,v,t) \in \partial\Omega^{(j)} \times \R^{(J+1)d} \times
 (0,T]$, with $v \cdot \nu^{(j)}(r)<0$},
\end{align}
where
\[ v_*^{(j)} =v_*^{(j)}(r,v) := v - 2(v\cdot \nu^{(j)}(r))\,\nu^{(j)}(r), \qquad j=1,\dots, J+1.\]
To do so, let introduce some notational conventions. We define the field $\Pi_{r_j}$ of projection operators on the hyperplane, which is orthogonal to $\nu(r_j)$, in such a way that
\[ v_j = (\nu(r_j) \cdot v_j)\,\nu(r_j) + \Pi_{r_j} v_j, \]
and
\[\nu(r_j) \cdot \Pi_{r_j} v_j=0, \qquad
\mbox{for all $v_j \in \R^d$}.\]

Given three functions $\phi \in \mathcal{C}^{\infty}_0(\R^{(J+1)d} \times (0,T))$,  $\psi \in \mathcal{C}^{\infty}_0([0,\infty))$ with $\psi(0)=0$, and $\Psi \in \mathcal{C}^{\infty}_0(\R^{d-1})$, we set
\begin{equation}
\label{testfunction_RS}
\varphi(r,v,t) = \phi(r,t) \, \psi((\nu(r_j) \cdot v_j)^2)\, \Psi(\Pi_{r_j} v_j),
\end{equation}
and we define, following  \cite{onthetraceMischler}, the class $\cR \cS$ (standing for \textit{r\'eflexion sp\'eculaire}) as the space of functions $\varphi$ which can be expressed in the form \eqref{testfunction_RS}.
We now show that $\varphi$ satisfies the specular boundary condition. By replacing $v$ in \eqref{testfunction_RS} with $v_*^{(j)}$, we have
\begin{equation}
\label{particulartestfunction}
\varphi(r,v_*^{(j)},t) = \phi(r,t) \, \psi((\nu(r_j) \cdot v_*^{(j)})^2)\, \Psi(\Pi_{r_j}v_*^{(j)}).
\end{equation}
Since
\begin{align*}
v_*^{(j)} &=v_j-2(\nu(r_j) \cdot v_j)\nu(r_j) \\
&= \Pi_{r_j} v_j - (\nu(r_j) \cdot v_j)\nu(r_j),
\end{align*}
we have that $\Pi_{r_j}v_*^{(j)} = \Pi_{r_j} v_j$ and $(\nu(r_j) \cdot v_*^{(j)})^2= (\nu(r_j) \cdot v_j)^2 |\nu(r_j)|^2=(\nu(r_j) \cdot v_j)^2$. In particular, we get
\begin{equation*}
\varphi(r,v_*^{(j)},t) = \varphi(r,v_j,t) .
\end{equation*}
Therefore, thanks to \eqref{conjug-eq:FP-equation} and the Green's formula \eqref{Green:FP-eq}, the trace $\gamma \varrho$ is well-defined and satisfies
\begin{equation}
\sum_{j=1}^{J+1} \int_{0}^{T} \int_{\partial \Omega^{(j)}} \int_{\R^{(J+1)d}}  (v_j
\cdot \nu(r_j)) \,
\gamma \varrho (r,v,\tau) \, \varphi(r,v,\tau) \dd v \dd s(r) \dd \tau=0 \quad \forall\, \varphi \in \cR \cS.
\end{equation}
Hence, for almost every $(r,t) \in \partial \Omega^{(j)} \times (0,T)$, for all $\tilde \psi$ odd, such that $|\tilde \psi(z)| \le C \, z^2$, for all $\Psi$, by summing twice the same integral we have that
\begin{equation}
\sum_{j=1}^{J+1} \int_{v^{'} \in \Pi_{r_j}(\R^d)} \int_{v^{''} \in \R_{\geq 0}}
\Bigl[\gamma \varrho (r,v'+v'' \, \nu(r_j) ,\tau) +
\gamma \varrho (r,v'+v'' \,\nu(r_j) ,\tau) \Bigr]\, \Psi(v') \, \tilde \psi
(v'') \dd v_j' \dd v''=0.
\end{equation}
Hence, by performing a change of variable in the second integral ($v''$ becomes $-v''$ and we use the fact that $\tilde \psi$ is an odd function), we get
\begin{equation}
\sum_{j=1}^{J+1} \int_{v^{'} \in \Pi_{r_j}(\R^d)} \int_{v^{''} \in \R_{\geq 0}}
\Bigl[\gamma \varrho (r,v'+v'' \, \nu(r_j) ,\tau) -
\gamma \varrho (r,v'-v'' \,\nu(r_j) ,\tau) \Bigr]\, \Psi(v') \, \tilde \psi
(v'') \dd v_j' \dd v''=0,
\end{equation}
which is equivalent to $\gamma \varrho (r,v,t) =
\gamma \varrho (r,v_*^{(j)},t)$ for almost every $(r,v,t) \in \partial \Omega^{(j)} \times \R^{(J+1)d} \times
 (0,T]$, i.e.,  $\varrho$ satisfies the specular reflection boundary condition \eqref{eq:rho-specular}.


\section{The small-mass limit and equilibration in momentum space}
\label{sec:SM}

In the previous section we showed the existence of functions $u = u_\eps$ and $\hrho=\hrho_\eps$,
such that
\[ u_\eps \in \mathcal{C}([0,T];L^\sigma(\Omega)^d) \cap L^2(0,T;W^{1,\sigma}_0(\Omega)^d) \cap W^{1,2}(0,T;W^{-1,\sigma}(\Omega)^d),\]
with $\sigma=\min(\hat\sigma,z)>d$, $\hat{\sigma}:=2 + \frac{4}{d}$ and $z=d+\vartheta$ for some $\vartheta \in (0,1)$,
is a weak solution to the Oseen system \eqref{eq:NSe}, and $\hrho_\epsilon$ with
\[
\mathcal{F}(\hrho_\eps) \in L^\infty(0,T;L^1_M(\Omega^{J+1} \times \R^{(J+1)d};\R_{\geq 0})),
\]
\[ \nabla_v \sqrt{\hrho_\eps} \in L^2(0,T;L^2_M(\Omega^{J+1} \times \R^{(J+1)d})),\]
\[\nabla_v \hrho_\eps \in L^2(0,T;L^1_M(\Omega^{J+1} \times \R^{(J+1)d}))\quad \mbox{and}\quad
M\,\pd_t \hrho_\eps \in L^2(0,T;(W^{s,2}(\Omega^{J+1} \times \R^{(J+1)d}))'),\quad s>(J+1)d + 1,
\]
satisfies the following weak form of the Fokker--Planck equation: for all $t \in (0,T]$,
\begin{align}\label{eq:weak-duality-eps}
&\int_0^t \big\langle M\,\pd_\tau\hrho_\eps(\cdot,\cdot,\tau),\varphi(\cdot,\cdot,\tau)\big\rangle \dd \tau
+ \frac{\beta^2}{\eps^2}\left(\sum_{j=1}^{J+1} \int_0^t \int_{\Omega^{J+1}} \int_{\R^{(J+1)d}} M(v)\,\pdv \hrho_\eps \cdot \pdv \varphi \dd v \dd r \dd \tau \right)\nonumber\\
&\qquad- \frac{1}{\eps} \left(\sum_{j=1}^{J+1} \int_0^t \int_{\Omega^{J+1}} \int_{\R^{(J+1)d}} M(v)\, v_j \hrho_\eps\cdot \pdr \varphi \dd v \dd r \dd \tau \right)\nonumber\\
&\qquad- \frac{1}{\eps} \left(\sum_{j=1}^{J+1} \int_0^t \int_{\Omega^{J+1}} \int_{\R^{(J+1)d}} M(v)\,(({\mathcal L}r)_j+u_\eps(r_j,\tau))\,\hrho_\eps\cdot \pdv \varphi \dd v \dd r \dd \tau \right)
= 0 \nonumber \\
&\hspace{0.5in} \forall\, \varphi \in L^2(0,T; W^{1,2}_{*,M}(\Omega^{J+1} \times \R^{(J+1)d})\cap W^{s,2}_*(\Omega^{J+1} \times \R^{(J+1)d})), \quad s>(J+1)d+1.
\end{align}
Furthermore $\hrho_\eps(\cdot,\cdot,0)=\hrho_0(\cdot,\cdot)$ in the sense of $\mathcal{C}_w([0,T];L^1_M(\Omega^{J+1} \times \R^{(J+1)d};\R_{\geq 0}))$, and
\begin{equation}\label{eq:conservation}
 \int_{\Omega^{J+1} \times \R^{(J+1)d}} M\,\hrho_\eps(r,v,t)\dd r \dd v = \int_{\Omega^{J+1} \times \R^{(J+1)d}} M\, \hrho_0(r,v)\dd r \dd v = 1
 \qquad \forall\, t \in (0,T].
\end{equation}
In addition, $\hrho_\eps$ satisfies the following energy inequality:
{
\begin{align}\label{eq:energy-eps}
\int_{\Omega^{J+1}} \int_{\R^{(J+1)d}} M(v)\,\mathcal{F}(\hrho_\eps(t)) \dd v \dd r
 + \frac{\beta^2}{2\eps^2} \sum_{j=1}^{J+1} \int_0^t \int_{\Omega^{J+1}} \int_{\R^{(J+1)d}} M(v)\,\frac{|\pdv \hrho_\eps|^2}{\hrho_\eps}
\, \dd v \dd r \dd \tau
\nonumber\\
\leq  C\bigg[1 + \int_{\Omega^{J+1}} \int_{\R^{(J+1)d}} M(v)\,\mathcal{F}(\hrho_{0}) \dd v \dd r \bigg],
\end{align}
}

\noindent
where $C=C(\|u_0\|_{W^{1-\frac{2}{\sigma},\sigma}(\Omega)},\|b\|_{L^\infty(0,T;L^\infty(\Omega))})$, $\sigma=\min(\hat\sigma,z)>d$, $\hat{\sigma}:=2 + \frac{4}{d}$ and
$z=d+\vartheta$ for some $\vartheta \in (0,1)$, as in the previous section;
in particular, $C$ is independent of $\eps>0$. Motivated by the ideas in \cite{PS08}, the aim of this section is to rigorously identify the small-mass limit of the system, corresponding to passage to the limit $\eps \rightarrow 0_+$.

We begin by noting that $(\mathcal{F}(\hrho_\eps))_{\eps>0}$ is bounded in $L^\infty(0,T;L^1_M(\Omega^{J+1} \times \R^{(J+1)d}))$,
and the sequence $(\nabla_v \sqrt{\hrho_\eps})_{\eps}$ is bounded in $L^2(0,T;L^2_M(\Omega^{J+1} \times \R^{(J+1)d}))$.
Hence from equation \eqref{eq:weak-duality-eps} we have that the sequence $(M\,\pd_t \hrho_\eps)_{\eps>0}$ is bounded in
$L^2(0,T;(W^{s,2}(\Omega^{J+1} \times \R^{(J+1)d}))')$, for $s>(J+1)d + 1$.

We now proceed analogously as in the paragraph following \eqref{eq:bound8}. We consider the Maxwellian-weighted Orlicz space $L^\Phi_M(\Omega^{J+1} \times \R^{(J+1)d})$, with Young's function $\Phi(r) = \mathcal{F}(1+|r|)$
(cf. Kufner, John \& Fu\v{c}ik \cite{KJF}, Sec. 3.18.2). This has a separable predual $E^\Psi_M(\Omega^{J+1} \times \R^{(J+1)d})$, with Young's function
 $\Psi(r) = {\rm e}^{|r|} - |r| - 1$; the space $E^\Psi_M(\Omega^{J+1} \times \R^{(J+1)d})$ is defined as the closure of all bounded measurable functions
 in the norm of the Orlicz space $L^\Psi_M(\Omega^{J+1} \times \R^{(J+1)d})$. As there exists a constant $K$ such that $\mathcal{F}(1+r) \leq K (1+\mathcal{F}(r))$ for all $r \geq 0$, it
 follows from \eqref{eq:bound1} that the sequence $(\mathcal{F}(1+\hrho_\eps))_{\eps >0}$ is bounded in $L^\infty(0,T;L^1_M(\Omega^{J+1} \times \R^{(J+1)d}))$. Hence,
 $\hrho_\eps$ is bounded in $L^\infty(0,T;L^\Phi_M(\Omega^{J+1} \times \R^{(J+1)d})) = L^\infty(0,T;(E^\Psi_M(\Omega^{J+1} \times \R^{(J+1)d}))')$.
 By the Banach--Alaoglu theorem, there exists a subsequence (not indicated) of the sequence $(\hrho_\eps)_{\eps>0}$ and a
 \[\hrho_{(0)} \in L^\infty(0,T;L^\Phi_M(\Omega^{J+1} \times \R^{(J+1)d};\mathbb{R}_{\geq 0}))\quad (\mbox{whereby also $\mathcal{F}(\hrho_{(0)}) \in L^\infty(0,T;L^1_M(\Omega^{J+1} \times \R^{(J+1)d};\mathbb{R}_{\geq 0}))$})\]
(not to be confused with the initial datum $\hrho_0$) such that, as $\eps \rightarrow 0_+$,
\begin{alignat}{2}\label{eq:weak-orl1-eps}
\hrho_\eps &\rightharpoonup \hrho_{(0)} \geq 0 &&\quad \mbox{weakly$^*$ in $L^\infty(0,T;L^\Phi_M(\Omega^{J+1} \times \R^{(J+1)d})) = L^\infty(0,T;(E^\Psi_M(\Omega^{J+1} \times \R^{(J+1)d}))')$}.
\end{alignat}
As, by definition, $L^\infty(\Omega^{J+1} \times \R^{(J+1)d}) \subset E^\Psi_M(\Omega^{J+1} \times \R^{(J+1)d})$, it follows in particular that
\begin{alignat}{2}\label{eq:weak-orl2-eps}
\hrho_\eps &\rightharpoonup \hrho_{(0)} &&\quad \mbox{weakly$$ in $L^p(0,T;L^1_M(\Omega^{J+1} \times \R^{(J+1)d}))$}\qquad \forall\, p \in [1,\infty),\nonumber
\\
M\,\pd_t \hrho_\eps &\rightharpoonup M\,\pd_t \hrho_{(0)} &&\quad \mbox{weakly in $L^2(0,T;(W^{s,2}(\Omega^{J+1} \times \R^{(J+1)d}))'), \quad s>(J+1)d + 1$}.
\\
v_j\, \hrho_\eps &\rightharpoonup v_j\, \hrho_{(0)} &&\quad
\mbox{weakly in $L^2(0,T;L^1_M(\Omega^{J+1} \times \R^{(J+1)d}))$,\quad $j=1,\dots,J+1$},\nonumber
\end{alignat}
After multiplying \eqref{eq:pre-lim-1} by $\eps^2$, taking $t=T$, omitting the first and third term from the left-hand side,
passing to the limit $\alpha \rightarrow 0_+$ for a fixed $\gamma \in (0,1]$ using the weak lower-semicontinuity of the
second term on the left-hand side, and then passing to the limit $\eps\rightarrow 0_+$, noting, as in
\eqref{eq:uniform-u}, that
\begin{align}\label{eq:uniform-u-eps}
\|u_\eps\|_{L^2(0,T;W^{1,\sigma}({\Omega})) \cap W^{1,2}(0,T;W^{-1,\sigma}({\Omega}))}
\leq C(1 + \|u_0\|_{W^{1-\frac{2}{\sigma},\sigma}(\Omega)}),
\end{align}
with $\sigma>d$, whereby
$$\|u_\epsilon\|_{L^2(0,T;L^\infty({\Omega}))} \leq C(1 + \|u_0\|_{W^{1-\frac{2}{\sigma},\sigma}(\Omega)}),$$ where $C$ is a positive constant independent of $\epsilon$, we have that
\begin{align}\label{eq:energy-0}
\sum_{j=1}^{J+1} \int_0^T \int_{\Omega^{J+1}} \int_{\R^{(J+1)d}} M(v)\,\frac{|\pdv \hrho_{(0)}\,|^2}{\hrho_{(0)} + \gamma}
\, \dd v \dd r \dd \tau \leq 0.
\end{align}
Hence, $\pdv \hrho_{(0)} = 0$ a.e. in $\Omega^{J+1} \times \R^{(J+1)d} \times (0,T)$ for all $j \in \{1,\dots,J+1\}$. As $\hrho_{(0)}$ has vanishing weak
derivatives with respect to all coordinates of $v_j$ for all $j \in \{1,\dots,J+1\}$ it follows that $\hrho_{(0)}$ is constant with respect to all
$v_j$, $j \in \{1,\dots,J+1\}$. In other words, $\hrho_{(0)}(r,v,t) = \eta(r,t)$ for a function $\eta \in L^\infty(0,T; L^1(\Omega^{J+1}))$, to be determined.

An identical argument to the one following Lemma \ref{lemma-strauss} implies that
\[ \hrho_{(0)} \in \mathcal{C}_w([0,T]; L^1_M(\Omega^{J+1} \times \R^{(J+1)d};\mathbb{R}_{\geq 0})).\]
It then follows from \eqref{eq:conservation} that
\begin{equation}\label{eq:conservation0}
\int_{\Omega^{J+1} \times \R^{(J+1)d}} \hrho_{(0)}(r,v,t) \dd r \dd v = 1\qquad \forall\, t \in (0,T].
\end{equation}

We deduce from \eqref{eq:uniform-u-eps} that
\begin{alignat}{2} \label{eq:NS-conv-eps}
u_\eps & \rightharpoonup u_{(0)} \qquad && \mbox{weakly in $L^2(0,T;W^{1,\sigma}_0({\Omega})^d)$ as $\eps \rightarrow 0_+$},\qquad \sigma>d,\nonumber\\
u_\eps & \rightharpoonup u_{(0)} \qquad && \mbox{weakly in $W^{1,2}(0,T;W^{-1,\sigma}({\Omega})^d)$ as $\eps \rightarrow 0_+$},\qquad \sigma>d,\\
u_\eps & \rightarrow u_{(0)} \qquad && \mbox{strongly in $L^2(0,T;\mathcal{C}^{0,\gamma}(\overline{\Omega})^d)$ as $\eps \rightarrow 0_+$},\qquad 0<\gamma < 1-{\textstyle\frac{d}{\sigma}},\qquad \sigma>d, \nonumber
\end{alignat}
where the last result follows, via the Aubin--Lions lemma, thanks to the compact embedding of the Sobolev space
$W^{1,\sigma}({\Omega})^d$ into the H\"older space $\mathcal{C}^{0,\gamma}(\overline{\Omega})^d$ for $0<\gamma<1-\frac{d}{\sigma}$, $\sigma>d$.
Hence also
\begin{alignat}{2}\label{eq:rho-u-eps}
 (({\mathcal L}r)_j+u_\eps(r_j,\tau))\,\hrho_\eps &\rightharpoonup (({\mathcal L}r)_j+u_{(0)}(r_j,\tau))\,\hrho_{(0)} &&\quad
\mbox{weakly in $L^2(0,T;L^1_M(\Omega^{J+1} \times \R^{(J+1)d}))$,}
\end{alignat}
for each $j=1,\dots,J+1$.
Using \eqref{eq:weak-orl1-eps}, \eqref{eq:weak-orl2-eps}, \eqref{eq:NS-conv-eps} and \eqref{eq:rho-u-eps} we can now pass to the limit $\eps\rightarrow 0_+$
in \eqref{eq:weak-duality-eps} to deduce that, for all $t \in (0,T]$,
\begin{align}\label{eq:weak-duality-0}
&\sum_{j=1}^{J+1} \int_0^t \int_{\Omega^{J+1}} \int_{\R^{(J+1)d}} M(v)\,\pdv \hrho_{(0)} \cdot \pdv \varphi \dd v \dd r \dd \tau= 0 \nonumber\\
&\hspace{0.5in} \forall\, \varphi \in L^2(0,T; W^{1,2}_{*,M}(\Omega^{J+1} \times \R^{(J+1)d})\cap W^{s,2}_*(\Omega^{J+1} \times \R^{(J+1)d})), \quad s>(J+1)d+1.
\end{align}
Thus,
\begin{align}\label{eq:rho0-eq}
\sum_{j=1}^{J+1} \pdv\! \cdot (M(v)\,\pdv \hrho_{(0)}) = 0
\qquad\mbox{in $\mathcal{D}'(\Omega^{J+1} \times \R^{(J+1)d}\times (0,T))$}.
\end{align}
%
%
By defining
\begin{equation}\label{eq:eta}
\varrho_{(0)}:= M\,\hrho_{(0)} = M\, \eta,
\end{equation}
with $\eta \in L^\infty(0,T;L^1(\Omega^{J+1}))$, to be determined, it directly follows from \eqref{eq:rho0-eq} that
\[ \mathcal{L}_0^*\varrho_{(0)} = 0 \qquad\mbox{in $\mathcal{D}'(\Omega^{J+1} \times \R^{(J+1)d} \times (0,T))$}.\]
%

Using \eqref{eq:weak-duality-0}, \eqref{eq:weak-duality-eps} can now be rewritten in the following equivalent form:
for all $t \in (0,T]$,
\begin{align}\label{eq:weak-duality-eps1}
&\eps \int_0^t \big\langle M\,\pd_\tau\hrho_\eps(\cdot,\cdot,\tau),\varphi(\cdot,\cdot,\tau)\big\rangle \dd \tau
+ \left(\beta^2\sum_{j=1}^{J+1} \int_0^t \int_{\Omega^{J+1}} \int_{\R^{(J+1)d}} M(v)\,\pdv \left(\frac{\hrho_\eps - \hrho_{(0)}}{\eps}\right) \cdot \pdv \varphi \dd v \dd r \dd \tau \right)\nonumber\\
&\qquad- \left(\sum_{j=1}^{J+1} \int_0^t \int_{\Omega^{J+1}} \int_{\R^{(J+1)d}} M(v)\, v_j \hrho_\eps\cdot \pdr \varphi \dd v \dd r \dd \tau \right)\nonumber\\
&\qquad- \left(\sum_{j=1}^{J+1} \int_0^t \int_{\Omega^{J+1}} \int_{\R^{(J+1)d}} M(v)\,(({\mathcal L}r)_j+u_\eps(r_j,\tau))\,\hrho_\eps\cdot \pdv \varphi \dd v \dd r \dd \tau \right)
= 0 \nonumber \\
&\hspace{0.9cm} \forall\, \varphi \in L^2(0,T; W^{1,2}_{*,M}(\Omega^{J+1} \times \R^{(J+1)d})\cap W^{s,2}_*(\Omega^{J+1} \times \R^{(J+1)d})), \quad s>(J+1)d+1.
\end{align}

We now continue by performing some formal calculations, where the word `formal' refers to the fact that all manipulations with limits with respect to $\epsilon \rightarrow 0_+$ that we shall encounter will be assumed to be meaningful, without rigorous justification. The purpose of these formal calculations is to illuminate why the partial differential equation satisfied by $\eta$ is indeed the one that our subsequent rigorous, but less enlightening, argument will ultimately deliver.

First we let $\eps\rightarrow 0_+$ in \eqref{eq:weak-duality-eps1} and note \eqref{eq:weak-orl2-eps} and \eqref{eq:rho-u-eps} to deduce that, for all $t \in (0,T]$,
\begin{align}\label{eq:weak-duality-eps2}
&\lim_{\eps \rightarrow 0_{+}}\left(\beta^2 \sum_{j=1}^{J+1} \int_0^t \int_{\Omega^{J+1}} \int_{\R^{(J+1)d}} M(v)\,\pdv \left(\frac{\hrho_\eps - \hrho_{(0)}}{\eps}\right) \cdot \pdv \varphi \dd v \dd r \dd \tau \right)\nonumber\\
&\qquad- \left(\sum_{j=1}^{J+1} \int_0^t \int_{\Omega^{J+1}} \int_{\R^{(J+1)d}} M(v)\, v_j \hrho_{(0)}\cdot \pdr \varphi \dd v \dd r \dd \tau \right)\nonumber\\
&\qquad- \left(\sum_{j=1}^{J+1} \int_0^t \int_{\Omega^{J+1}} \int_{\R^{(J+1)d}} M(v)\,(({\mathcal L}r)_j+u_{(0)}(r_j,\tau))\,\hrho_{(0)}\cdot \pdv \varphi \dd v \dd r \dd \tau \right)
= 0 \nonumber \\
&\hspace{0cm} \forall\,
\varphi \in L^2(0,T; W^{1,2}_{*,M}(\Omega^{J+1} \times \R^{(J+1)d})\cap W^{s,2}_*(\Omega^{J+1} \times \R^{(J+1)d})), \quad s>(J+1)d+1,
\end{align}
and hence, also, for all test functions $\varphi \in \mathcal{C}^\infty_0(\Omega^{J+1} \times \R^{(J+1)d} \times (0,T))$.

We define $\hrho_{(1)} \in \mathcal{D}'(\Omega^{J+1} \times \R^{(J+1)d} \times (0,T))$ by
\[ \hrho_{(1)} := \lim_{\eps \rightarrow 0_{+}} \frac{\hrho_{\eps} - \hrho_{(0)}}{\eps},\]
with the limit understood in the sense of $\mathcal{D}'(\Omega^{J+1} \times \R^{(J+1)d} \times (0,T))$, and let
\[ \varrho_{(1)}:= M\,\hrho_{(1)}.\]
%

%
By taking $t=T$ in \eqref{eq:weak-duality-eps2} passage to the limit $\eps \rightarrow 0_+$ yields
\[\cL_{0}^*\varrho_{(1)} =-\cL_{1}(u_{(0)})^*\varrho_{(0)} \qquad \mbox{in $\mathcal{D}'(\Omega^{J+1} \times \R^{(J+1)d} \times (0,T))$}.\]
Expanding the right-hand side of this equality we have that
\begin{equation}
\label{eq:2a}
\cL_{0}^*\varrho_{(1)}=\sum_{j=1}^{J+1} M\, v_j \cdot \pdr\eta+(({\mathcal L}r)_j+u_{(0)}(r_j,t))\cdot
(\pdv M)\, \eta  \qquad \mbox{in $\mathcal{D}'(\Omega^{J+1} \times \R^{(J+1)d} \times (0,T))$}.
\end{equation}
As $v_j M=-\beta \pdv M$, we therefore have that
\begin{equation}
\label{eq:2b1}
\cL_{0}^*\varrho_{(1)}=-\sum_{j=1}^{J+1} \Bigl(\beta \,\pdr\eta - \eta\,(({\mathcal L}r)_j+u_{(0)}(r_j,t))\Bigr)\cdot
\pdv M \qquad \mbox{in $\mathcal{D}'(\Omega^{J+1} \times \R^{(J+1)d} \times (0,T))$}.
\end{equation}
By \eqref{eq:need}, $\cL^*_{0,j}(\pdv M) = - (\pdv M)$, and upon taking the inner product of this $d$-component equality with the $d$-component vector field $\beta\, \pdr\eta-\eta\, (({\mathcal L}r)_j+u_{(0)}(r_j,t))$, which is, clearly, independent of
$v_j$, and then summing through $j=1,\dots, J+1$, we deduce that one solution of \eqref{eq:2b1} is
\begin{equation}
\label{eq:2c}
\sum_{j=1}^{J+1}\Bigl(\beta\, \pdr\eta-\eta\, (({\mathcal L}r)_j+u_{(0)}(r_j,t))\Bigr)
\cdot \pdv M.
\end{equation}
Therefore the general solution of \eqref{eq:2b1} is
\begin{align*}
\varrho_{(1)} &= \sum_{j=1}^{J+1}\Bigl(\beta\, \pdr\eta-\eta\, (({\mathcal L}r)_j+u_{(0)}(r_j,t))\Bigr)
\cdot \pdv M + \eta_{(1)}(r,t)\, M\\
&= \frac{1}{\beta}\sum_{j=1}^{J+1}M\,\Bigl(\eta\, (({\mathcal L}r)_j+u_{(0)}(r_j,t))- \beta\, \pdr\eta\Bigr)
\cdot v_j + \eta_{(1)}(r,t)\, M,
\end{align*}
where $\eta_{(1)} \in \mathcal{D}'(\Omega^{J+1} \times (0,T))$ is arbitrary, because $\cL^*_0(\eta_{(1)} M) = \eta_{(1)} \cL^*_0(M) = 0$ thanks to $\cL^*_0(M) = 0$.
%
As it will transpire from the calculations that follow, the choice of $\eta_{(1)}$ does not affect $\eta$, and $\eta_{(1)}$ will be therefore, ultimately, set to $0$. Since $M$ and $\pdv M$, $j=1,\dots,J+1$, belong to the topological vector space $\mathcal{S}$
of rapidly decreasing functions defined on $\R^{(J+1)d}$ (the test space for the Schwarz space $\mathcal{S}'$ of tempered distributions), the structure of $\varrho_{(1)}$ implies that $\varrho_{(1)} \in \mathcal{D}'(\Omega^{J+1} \times (0,T)) \otimes \mathcal{S}$, where the latter is the linear space of all finite linear combinations of products of the form $a(r,t) \,b(v)$ with $a  \in \mathcal{D}'(\Omega^{J+1} \times (0,T))$ and $b \in \mathcal{S}$.

%

We now define $\hrho_{(2)} \in \mathcal{D}'(\Omega^{J+1} \times \R^{(J+1)d} \times (0,T))$ by
\[ \hrho_{(2)} := \lim_{\eps \rightarrow 0_{+}} \frac{\hrho_{\eps} - \hrho_{(0)} - \eps \hrho_{(1)}}{\eps^2},\]
with the limit understood to be in $\mathcal{D}'(\Omega^{J+1} \times \R^{(J+1)d} \times (0,T))$, and let
\[ \varrho_{(2)}:= M\,\hrho_{(2)}.\]

Next, we subtract the equality \eqref{eq:weak-duality-eps2} from \eqref{eq:weak-duality-eps1}, divide the difference by $\eps$,
and use test functions $\varphi \in \mathcal{C}^\infty_0(\Omega^{J+1} \times \R^{(J+1)d} \times (0,T))$, so as to rewrite the
resulting equality as one in $\mathcal{D}'(\Omega^{J+1} \times \R^{(J+1)d} \times (0,T))$, and then pass to the limit $\eps \rightarrow 0_+$, noting the definitions of $\hrho_{(0)}$, $\hrho_{(1)}$, $\hrho_{(2)}$, $u_{(0)}$, and
defining
\[ u_{(1)}:= \lim_{\eps \rightarrow 0_+} \frac{u_\eps - u_{(0)}}{\eps}\qquad \mbox{in $\mathcal{D}'(\Omega \times (0,T))$}.\]
Hence,
\begin{align*}
M \partial_t \hrho_{(0)} - \sum_{j=1}^{J+1} \pdv \cdot (M \pdv \hrho_{(2)}) + \sum_{j=1}^{J+1} M v_j \cdot \pdr \hrho_{(1)}
+ \sum_{j=1}^{J+1} (\mathcal{L}r)_j \cdot \pdv (M\hrho_{(1)})\\
+ \sum_{j=1}^{J+1} u_{(1)}(r_j,\cdot) \cdot \pdv (M \hrho_{(0)}) + u_{(0)}(r_j,\cdot) \cdot \pdv (M \hrho_{(1)}) = 0.
\end{align*}
Recalling that, by definition, $\varrho_{(i)}= M \hrho_{(i)}$, $i=0,1,2$, we then have that
\begin{align*}
\partial_t \varrho_{(0)} - \mathcal{L}_0^*\varrho_{(2)} + \sum_{j=1}^{J+1} v_j \cdot \pdr \varrho_{(1)}
+ \sum_{j=1}^{J+1} (\mathcal{L}r)_j \cdot \pdv \varrho_{(1)}
+ \sum_{j=1}^{J+1} u_{(1)}(r_j,\cdot) \cdot \pdv \varrho_{(0)} + u_{(0)}(r_j,\cdot) \cdot \pdv \varrho_{(1)} = 0.
\end{align*}
Equivalently,
\begin{align}\label{eq:varrho2}
\mathcal{L}_0^*\varrho_{(2)}  = \partial_t \varrho_{(0)} + \sum_{j=1}^{J+1} v_j \cdot \pdr \varrho_{(1)} +
\sum_{j=1}^{J+1}((\mathcal{L}r)_j + u_{(0)}(r_j,\cdot)) \cdot \pdv \varrho_{(1)} + \sum_{j=1}^{J+1} u_{(1)}(r_j,\cdot) \cdot \pdv \varrho_{(0)}.
\end{align}
Since both  $\varrho_{(0)}$ and $\varrho_{(1)}$ belong to $\mathcal{D}'(\Omega^{J+1} \times (0,T)) \otimes \mathcal{S}$,
the same is true of the right-hand side of \eqref{eq:varrho2}.
It is therefore meaningful to test both sides of \eqref{eq:varrho2} with $\I(v)$ (considered as an element of $\mathcal{S}'$); upon noting that
\[\tensor[_{\mathcal S'}]{\langle}{} \I(v) , \mathcal{L}_0^*\varrho_{(2)} \tensor[]{\rangle}{_{\mathcal{S}}}
= \tensor[_{\mathcal S'}]{\langle}{} \mathcal{L}_0(\I(v)) , \varrho_{(2)} \tensor[]{\rangle}{_{\mathcal{S}}}
= \tensor[_{\mathcal S'}]{\langle}{} 0  , \varrho_{(2)} \tensor[]{\rangle}{_{\mathcal{S}}} =0\]
because $\mathcal{L}_{0,j}(\I(v_j))=0$ for all $j=1,\dots, J+1$, we arrive at
\[ 0 = \tensor[_{\mathcal S'}]{\bigg\langle}{} \I(v) , \partial_t \varrho_{(0)} + \sum_{j=1}^{J+1} v_j \cdot \pdr \varrho_{(1)} + \sum_{j=1}^{J+1}
((\mathcal{L}r)_j + u_{(0)}(r_j,\cdot)) \cdot \pdv \varrho_{(1)} + \sum_{j=1}^{J+1} u_{(1)}(r_j,\cdot) \cdot \pdv \varrho_{(0)}\tensor[]{\bigg\rangle}{_{\mathcal{S}}},\]
as an equality in $\mathcal{D}'(\Omega^{J+1} \times (0,T))$, where $\tensor[_{\mathcal S'}]{\langle}{} \cdot, \cdot \tensor[]{\rangle}{_{\mathcal S}}$
denotes the duality pairing between $\mathcal{S}'$ and $\mathcal{S}$. Hence,
\begin{align*} 0 &= (\partial_t \eta)\,  \tensor[_{\mathcal S'}]{\bigg\langle}{} \I(v), M \tensor[]{\bigg\rangle}{_{\mathcal S}} +
\tensor[_{\mathcal S'}]{\bigg\langle}{} \I(v),  \sum_{j=1}^{J+1} v_j \cdot \pdr \varrho_{(1)}\tensor[]{\bigg\rangle}{_{\mathcal S}} \\
&\quad + \tensor[_{\mathcal S'}]{\bigg\langle}{} \I(v), \sum_{j=1}^{J+1}
((\mathcal{L}r)_j + u_{(0)}(r_j,\cdot)) \cdot \pdv \varrho_{(1)}\tensor[]{\bigg\rangle}{_{\mathcal S}} + \tensor[_{\mathcal S'}]{\bigg\langle}{} \I(v), \sum_{j=1}^{J+1} u_{(1)}(r_j,\cdot) \cdot \pdv \varrho_{(0)} \tensor[]{\bigg\rangle}{_{\mathcal S}},
\end{align*}
as an equality in $\mathcal{D}'(\Omega^{J+1} \times (0,T))$. Thanks to the definition of partial derivative of a tempered distribution the last two terms on the right-hand side vanish, while $\tensor[_{\mathcal S'}]{\langle}{} \I(v), M \tensor[]{\rangle}{_{\mathcal S}}=\int_{\R^{(J+1)d}} M(v) \dd v = 1$, resulting in
\[ \partial_t \eta + \tensor[_{\mathcal S'}]{\bigg\langle}{} \I(v),  \sum_{j=1}^{J+1} v_j \cdot \pdr \varrho_{(1)}\tensor[]{\bigg\rangle}{_{\mathcal S}} = 0.\]
In order to simplify the second term on the left-hand side, we consider
\[ v_j \cdot \pdr \varrho_{(1)} = v_j \cdot \pdr \left(\sum_{k=1}^{J+1}\Bigl(\beta\, \partial_{r_k}\eta-\eta\, (({\mathcal L}r)_k+u_{(0)}(r_k,\cdot))\Bigr)
\cdot \partial_{v_k} M \right) + v_j \cdot \pdr (\eta_{(1)}\, M).\]
As $v_j \cdot \pdr (\eta_{(1)}\, M) = - \beta \,\pdv (M(\pdr\eta_{(1)}))$, we have that $\tensor[_{\mathcal S'}]{\langle}{} \I(v), v_j \cdot \pdr (\eta_{(1)}\, M) \tensor[]{\rangle}{_{\mathcal S}} = 0$ for all $j=1,\dots,J+1$. Consequently, the precise choice of $\eta_{(1)}$ is immaterial, to the extent that
\begin{equation}\label{eq:eta-eq}
\partial_t \eta + \tensor[_{\mathcal S'}]{\bigg\langle}{} \I(v), \sum_{j=1}^{J+1} v_j \cdot \pdr\left( \sum_{k=1}^{J+1}\Bigl(\beta\, \partial_{r_k}\eta-\eta\, (({\mathcal L}r)_k+u_{(0)}(r_k,\cdot))\Bigr)
\cdot \partial_{v_k} M\right) \tensor[]{\bigg\rangle}{_{\mathcal S}} = 0,
\end{equation}
regardless of the specific choice of $\eta_{(1)}$.
Now (with the integral over $\R^{(J+1)d}$ considered below understood as a Gel'fand--Pettis integral of a function with values in a topological vector space, which is in our case $\mathcal{D}'(\Omega^{J+1} \times (0,T))$), we have that
\begin{align*}
&\tensor[_{\mathcal S'}]{\bigg\langle}{} \I(v), \sum_{j=1}^{J+1} v_j \cdot \pdr\left( \sum_{k=1}^{J+1}\Bigl(\beta\, \partial_{r_k}\eta-\eta\, (({\mathcal L}r)_k+u_{(0)}(r_k,\cdot))\Bigr)
\cdot \partial_{v_k} M\right) \tensor[]{\bigg\rangle}{_{\mathcal S}}\\
& \quad =
\int_{\R^{(J+1)d}}
\sum_{j=1}^{J+1} v_j \cdot \pdr\left( \sum_{k=1}^{J+1}\Bigl(\beta\, \partial_{r_k}\eta-\eta\, (({\mathcal L}r)_k+u_{(0)}(r_k,\cdot))\Bigr)\cdot \partial_{v_k} M\right) \dd v\\
& \quad = \sum_{j,k=1}^{J+1}
\int_{\R^{(J+1)d}}
 v_j \cdot \pdr\left(\Bigl(\beta\, \partial_{r_k}\eta-\eta\, (({\mathcal L}r)_k+u_{(0)}(r_k,\cdot))\Bigr)\cdot \partial_{v_k} M\right) \dd v\\
&\quad = - \sum_{j,k=1}^{J+1}
\int_{\R^{(J+1)d}} \partial_{v_k}\cdot \Bigl(A_{jk}v_j\Bigr)M \dd v = - \sum_{j=1}^{J+1}
\int_{\R^{(J+1)d}} \mbox{tr\! }(A_{jj}) M \dd v = - \sum_{j=1}^{J+1}\mbox{tr\! }(A_{jj}),
\end{align*}
where
\[ A_{jk} := \pdr\Bigl(\beta\, \partial_{r_k}\eta-\eta\, (({\mathcal L}r)_k+u_{(0)}(r_k,\cdot))\Bigr)
\in [\mathcal{D}'(\Omega^{J+1} \times (0,T))]^{d \times d},\qquad j,k =1, \dots, J+1. \]
Thus, \eqref{eq:eta-eq} yields the following partial differential equation satisfied by $\eta$:
\begin{equation}\label{eq:eta-pre}
\partial_t \eta - \sum_{j=1}^{J+1}  \Bigl(\beta\, \pdr^2\eta-\pdr\cdot \Bigl(\eta\, (({\mathcal L}r)_j+u_{(0)}(r_j,\cdot))\Bigr)\Bigr)=0,\qquad \mbox{in $\mathcal{D}'(\Omega^{J+1} \times (0,T))$.}
\end{equation}
This is the nonlinear Fokker--Planck equation associated with the McKean--Vlasov diffusion
\begin{equation}
\label{eq:ld2}
\dot{r}_j=({\mathcal L}r)_j+u_{(0)}(r_j,t;\eta)+\sqrt{2\beta}\,\dot{W}_j,
\end{equation}
where $u_{(0)}$ is the limit (cf. \eqref{eq:NS-conv-eps}) of the sequence $(u_\eps)_{\eps >0}$ defined above. We emphasize here that we are yet to show that $u_{(0)}$ is a solution of the Oseen equation, whose right-hand side is to be identified.

\smallskip

This concludes our formal calculations. The rest of the section is devoted to making the above formal
passage to the small-mass limit
$\epsilon \rightarrow 0_+$ rigorous, including the rigorous identification of the equation \eqref{eq:eta-pre} satisfied by
$\eta$.

We shall suppose henceforth that the initial datum $\varrho_0$ for the Fokker--Planck equation has the following
factorized form: $\varrho_0(r,v) = M(v)\, \hrho_0(r)$, where $\hrho_0$ is a nonnegative function of $r$ only, such that
$\int_{\Omega^{J+1}} \hrho_0(r) \dd r = 1$,
and $$\hrho_0 \in L^2(\Omega^{J+1};\R_{\geq 0}).$$
Under this hypothesis it directly follows that
\[ \hrho_\eps \in L^\infty(0,T; L^2_M(\Omega^{J+1} \times \R^{(J+1)d};\R_{\geq 0})) \cap L^2(0,T;L^2(\Omega^{J+1}; W^{1,2}_M(\R^{(J+1)d})))\]
and
\[ \partial_t \hrho_\eps \in L^2(0,T;(W^{1,2}_M(\Omega^{J+1} \times \R^{(J+1)d}))').\]
Consequently, by a density argument, \eqref{eq:weak-duality-eps} implies that
\begin{align}\label{eq:weak-duality-eps11}
&\int_0^t \big\langle M\,\pd_\tau\hrho_\eps(\cdot,\cdot,\tau),\varphi(\cdot,\cdot,\tau)\big\rangle \dd \tau
+ \frac{\beta^2}{\eps^2}\left(\sum_{j=1}^{J+1} \int_0^t \int_{\Omega^{J+1}} \int_{\R^{(J+1)d}} M(v)\,\pdv \hrho_\eps \cdot \pdv \varphi \dd v \dd r \dd \tau \right)\nonumber\\
&\qquad- \frac{1}{\eps} \left(\sum_{j=1}^{J+1} \int_0^t \int_{\Omega^{J+1}} \int_{\R^{(J+1)d}} M(v)\, v_j \hrho_\eps\cdot \pdr \varphi \dd v \dd r \dd \tau \right)\nonumber\\
&\qquad- \frac{1}{\eps} \left(\sum_{j=1}^{J+1} \int_0^t \int_{\Omega^{J+1}} \int_{\R^{(J+1)d}} M(v)\,(({\mathcal L}r)_j+u_\eps(r_j,\tau))\,\hrho_\eps\cdot \pdv \varphi \dd v \dd r \dd \tau \right)
= 0 \nonumber \\
&\hspace{3in} \forall\, \varphi \in L^2(0,T; W^{1,2}_{*,M}(\Omega^{J+1} \times \R^{(J+1)d})), \quad \forall\,t \in (0,T].
\end{align}
Further, $\hrho_\epsilon(\cdot,\cdot,0) = \hrho_0(\cdot,\cdot)$ in the sense of $\mathcal{C}_{w}([0,T];L^2_M(\Omega^{J+1} \times \R^{(J+1)d};\R_{\geq 0}))$.
\bigskip

The next step in our rigorous passage to the limit $\epsilon \rightarrow 0_+$ in \eqref{eq:weak-duality-eps11} is motivated by the proof of Lemma 2 on p.1374 in the work of Carrillo and Goudon \cite{CG}.
First, we formulate the `macroscopic' equations satisfied by the moments of $\varrho_\epsilon$. By taking $\varphi(r,v,t) = \phi(r,t)$ with $\phi \in L^2(0,T; W^{1,2}(\Omega^{J+1}))$ in \eqref{eq:weak-duality-eps11} and defining
\[ \orho_\epsilon(r,t):=\int_{\R^{(J+1)d}} M(v)\,\hrho_\epsilon(r,v,t) \dd v \quad\mbox{and}\quad \mathcal{J}_{\epsilon,j}(r,t):=\frac{1}{\epsilon}
\int_{\R^{(J+1)d}} M(v)\,v_j \, \hrho_\epsilon(r,v,t)\dd v,\quad j=1,\dots, J+1,\]
we have that
\begin{align}\label{eq:Moment1}
&\int_0^t \big\langle \pd_\tau\orho_\eps(\cdot,\tau),\phi(\cdot,\tau)\big\rangle \dd \tau
- \sum_{j=1}^{J+1} \int_0^t \int_{\Omega^{J+1}}  \mathcal{J}_{\epsilon,j} \cdot \pdr \phi \dd r \dd \tau = 0 \qquad
\forall\, \phi \in L^2(0,T; W^{1,2}(\Omega^{J+1})), \quad \forall\,t \in (0,T],
\end{align}
subject to the initial condition $\orho_\epsilon(\cdot,0) = \orho_0(\cdot):=\int_{\R^{(J+1)d}} M(v)\,\hrho_\epsilon(r,v,0) \dd v$.

Next, let $v_{i,\ell}$, $\ell=1,\dots,d$, denote the components of the vector $v_i \in \R^d$, and consider the test functions
$\varphi(r,v,t)=\phi(r,t)\,v_{i,\ell}$ in \eqref{eq:weak-duality-eps}, for $i=1,\dots,J+1$ and $\ell=1,\dots,d$, and let $\mathcal{J}_{\epsilon,i,\ell}$, for $\ell=1,\dots,d$, denote the components of the $d$-component vector-function $\mathcal{J}_{\epsilon,i}$ for $i=1,\dots,J+1$.
Hence, we are led, for $i=1,\dots, J+1$ and $\ell=1,\dots,d$, to
\begin{align*}
&\epsilon^2 \int_0^t \langle \partial_\tau \mathcal{J}_{\epsilon,i,\ell},\phi \rangle \dd \tau  + \beta^2 \int_0^t \int_{\Omega^{J+1}} \mathcal{J}_{\epsilon,i,\ell} \, \phi \dd r \dd \tau  \\
&\quad - \sum_{j=1}^{J+1} \int_0^t \int_{\Omega^{J+1}} \left(\int_{\R^{(J+1)d}} M(v)\, v_j \, v_{i,\ell}\, \hrho_\epsilon \dd v \right) \cdot
(\pdr \phi) \dd r \dd \tau\\
&\quad - \int_0^t \int_{\Omega^{J+1}} \orho_\epsilon\, \left((\mathcal{L} r)_{i,\ell} + u_{\epsilon,\ell}(r_i, \tau)\right)\,\phi\, \dd r \dd \tau = 0\qquad
\forall\, \phi \in L^2(0,T; \mathcal{C}^\infty_0(\Omega^{J+1})), \quad \forall\,t \in (0,T],
\end{align*}
where $(\mathcal{L} r)_{i,\ell}$, $\ell=1,\dots,d$, are the components of the vector-function
$(\mathcal{L} r)_i$, and $u_{\epsilon,\ell}$, $\ell=1,\dots,d$, are the components of the vector-function $u_\epsilon$.

We note that
\[ \sum_{j=1}^{J+1} v_j\,v_{i,\ell}\cdot \pdr \phi = \sum_{j=1}^{J+1} \sum_{k=1}^{J+1} v_{j,k}\,v_{i,\ell}\cdot \partial_{r_{j,k}} \phi = [(v \otimes v) \partial_r \phi]_{i,\ell},\qquad i=1,\dots, J+1,\quad \ell=1,\dots,d,\]
where $v$ and $r$ are $(J+1)d$-component vectors, whose components are denoted by
$v_{j,k}$, $j=1,\dots,J+1$, $k=1,\dots,d$ (or $v_{i,\ell}$,
$i=1,\dots,J+1$, $\ell=1,\dots,d$), and $r_{j,k}$, $j=1,\dots,J+1$, $k=1,\dots,d$, respectively.
Thus, by defining the $\mathbb{R}^{(J+1)d} \times \mathbb{R}^{(J+1)d}$-valued function $\mathbb{P}_\epsilon$ by
\[ [\mathbb{P}_\epsilon(r,t)]_{i,\ell,j,k}:= \int_{\mathbb{R}^{(J+1)d}} M(v)\, v_{i,\ell}\, v_{j,k}\,\hrho_\epsilon(r,v,t) \dd v,\]
for $i,j=1,\dots,J+1$ and $\ell,k = 1,\dots, d$, we have that
\begin{align}\label{eq:Moment2}
&\epsilon^2 \int_0^t \langle \partial_\tau \mathcal{J}_{\epsilon,i,\ell},\phi \rangle \dd \tau  + \beta^2 \int_0^t \int_{\Omega^{J+1}} \mathcal{J}_{\epsilon,i,\ell} \, \phi \dd r \dd \tau  \nonumber\\
&\quad - \int_0^t \int_{\Omega^{J+1}}  \left[\mathbb{P}_\epsilon\, \partial_r \phi  \right]_{i,\ell} \dd r \dd \tau \nonumber\\
&\quad - \int_0^t \int_{\Omega^{J+1}} \orho_\epsilon\, \left((\mathcal{L} r)_{i,\ell} + u_{\epsilon,\ell}(r_i, \tau)\right)\,\phi\, \dd r \dd \tau = 0\qquad
\forall\, \phi \in L^2(0,T; \mathcal{C}^\infty_0(\Omega^{J+1})), \quad \forall\,t \in (0,T],
\end{align}
where $[\mathbb{P}_\epsilon\,\partial_r \phi]_{i,\ell}:= \sum_{j=1}^{J+1}\sum_{k=1}^d [\mathbb{P}_\epsilon]_{i,\ell,j,k}\,\partial_{r_{j,k}}\phi$.

\begin{lemma}\label{lemma:bounds}
Let $ 0 < T < \infty$ and $0 < \epsilon < 1$; then, the following properties hold:
\begin{itemize}
\item[(i)] The sequence $(u_\epsilon)_{\epsilon>0}$ is bounded in $L^2(0,T;\mathcal{C}^{0,\gamma}(\overline{\Omega})^d)$,
with $0<\gamma<1-\frac{d}{\sigma}$, $\sigma>d$, $d=2,3$, and therefore also in $L^2(0,T;L^\infty(\Omega)^d)$;
\item[(ii)] $(\overline{\varrho}_\epsilon)_{\epsilon>0} $ and $({\mathcal L}r\, \overline{\varrho}_\epsilon)_{\epsilon>0}$ are bounded in $L^\infty(0,T;L^2(\Omega^{J+1}))$ and $L^\infty(0,T;L^2(\Omega^{J+1})^{(J+1)d})$, respectively;
\item[(iii)] Consider the $(J+1)d$-component vector-function $\orho_\epsilon u_\epsilon$, whose components are
\[\orho_\epsilon(r_1,\dots,r_{J+1},t)\, u_\epsilon(r_i,t),\qquad \mbox{for $i=1,\dots,J+1$}.\]
Then, $(\overline{\varrho}_\epsilon u_\epsilon)_{\epsilon>0}$ is bounded in $L^2(0,T;L^2(\Omega^{(J+1)d})^{(J+1)d})$;

\item[(iv)] The sequences of dissipation terms
\[ \bigg( \frac{1}{ \sqrt{\epsilon M(v)}} \bigg(\frac{v_j}{\beta} \varrho_\epsilon - \sqrt{\epsilon} M(v)\,u_\epsilon(r_j,\cdot)  + \pdv \varrho_\epsilon \bigg) =: \frac{1}{\sqrt{\epsilon}} D_{\epsilon,j} \bigg)_{\epsilon>0},\qquad j=1,\dots,J+1,\]
are bounded in $L^2(\Omega^{J+1} \times \R^{(J+1)d}  \times (0,T))^d;$
\item[(v)] The sequences $(\mathcal{J}_{\epsilon,j})_{\epsilon>0}$, $j=1,\dots, J+1$, are bounded in $L^2(\Omega^{J+1} \times (0,T))^d$;
\item[(vi)] $\mathbb{P}_\epsilon$ can be expressed as $\mathbb{P}_\epsilon = \beta\, \overline{\varrho}_\epsilon\, \mathbb{I} + \sqrt{\epsilon}\, \mathbb{R}_\epsilon$, with $(\mathbb{R}_\epsilon)_{\epsilon>0}$ bounded in $L^2(\Omega^{J+1} \times (0,T))^{(J+1)d \times (J+1)d}$,
    and $\mathbb{I}_{i,\ell,j,k}:=\delta_{i,j}\,\delta_{\ell,k}$ for $i,j=1,\dots,J+1$ and $\ell,k = 1,\dots,d$.
\end{itemize}
\end{lemma}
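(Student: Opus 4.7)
The overarching strategy is that all six bounds will follow from a single uniform-in-$\epsilon$ dissipation estimate,
\[
\|\sqrt{M}\,\pdv\hrho_\epsilon\|_{L^2(0,T;L^2(\Omega^{J+1}\times\R^{(J+1)d}))}^2 \leq C\,\epsilon^2,
\]
together with the already-established control of $u_\epsilon$ from \eqref{eq:uniform-u-eps}. To establish this $O(\epsilon)$-smallness, I would apply the $L^2_M$-energy estimate of Section~\ref{sec:FP} (cf.~\eqref{eq:energy-aN}) to $\hrho_\epsilon$; this is legitimate under the temporary strengthening $\hrho_0\in L^2(\Omega^{J+1})$ adopted at the start of Section~\ref{sec:SM}, and upon dividing the relevant inequality by $\beta^2/\epsilon^2$ the claim follows, with constants depending only on $\|\hrho_0\|_{L^2(\Omega^{J+1})}^2$ and $\exp\bigl(C\beta^{-2}(T+\|u_\epsilon\|_{L^2(0,T;L^\infty(\Omega))}^2)\bigr)$, both $\epsilon$-independent by (i) below. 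The same estimate simultaneously yields $\|\hrho_\epsilon\|_{L^\infty(0,T;L^2_M(\Omega^{J+1}\times\R^{(J+1)d}))}\leq C$ uniformly in $\epsilon$.

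Assertions (i), (ii) and (iii) are then routine. For (i), the Sobolev embedding $W^{1,\sigma}_0(\Omega)\hookrightarrow\mathcal{C}^{0,\gamma}(\overline\Omega)$, valid for $\sigma>d$ and $0<\gamma<1-d/\sigma$, applied pointwise in $t$ to \eqref{eq:uniform-u-eps} gives the desired bound and in particular the $L^2(0,T;L^\infty(\Omega)^d)$-control of $u_\epsilon$. For (ii), Cauchy--Schwarz in $v$ with weight $M$, together with $\int M\,\dd v=1$, yields
\[
|\orho_\epsilon(r,t)|^2 \leq \int_{\R^{(J+1)d}} M(v)\,\hrho_\epsilon^2\,\dd v,
\]
so that $\|\orho_\epsilon\|_{L^\infty(0,T;L^2(\Omega^{J+1}))}\leq\|\hrho_\epsilon\|_{L^\infty(0,T;L^2_M)}\leq C$; the bound on $\mathcal{L}r\,\orho_\epsilon$ is immediate since $|(\mathcal{L}r)_j|\leq 4\sqrt{d}\,L$ on $\Omega^{J+1}$ (as already used in the treatment of $\mathrm{T}_2$ in Section~\ref{sec:FP}). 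For (iii), since $u_\epsilon(r_i,t)$ is independent of $r_k$ for $k\neq i$,
\[
\int_{\Omega^{J+1}} \orho_\epsilon(r,t)^2\,|u_\epsilon(r_i,t)|^2\,\dd r \leq \|u_\epsilon(\cdot,t)\|_{L^\infty(\Omega)}^2\,\|\orho_\epsilon(\cdot,t)\|_{L^2(\Omega^{J+1})}^2,
\]
and integrating in $t$ against the $L^2(0,T;L^\infty(\Omega))$-bound on $u_\epsilon$ delivers the claim.

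For (iv) and (v), the pivotal algebraic observation is $\pdv M = -\tfrac{v_j}{\beta}M$, which gives
\[
\tfrac{v_j}{\beta}\varrho_\epsilon + \pdv\varrho_\epsilon = M\,\pdv\hrho_\epsilon,
\]
and collapses the definition of $D_{\epsilon,j}$ to $D_{\epsilon,j} = \sqrt{M}\,\pdv\hrho_\epsilon - \sqrt{\epsilon}\,\sqrt{M}\,u_\epsilon(r_j,\cdot)$. For (iv), the decomposition
\[
\tfrac{1}{\sqrt{\epsilon}}D_{\epsilon,j} = \tfrac{1}{\sqrt{\epsilon}}\sqrt{M}\,\pdv\hrho_\epsilon - \sqrt{M}\,u_\epsilon(r_j,\cdot)
\]
reduces matters to two pieces: the first has squared $L^2$-norm equal to $\epsilon^{-1}\|\sqrt{M}\,\pdv\hrho_\epsilon\|_{L^2}^2 \leq C\epsilon$, and the second is bounded in $L^2$ via Fubini together with (i) and $\int M\,\dd v=1$. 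For (v), integration by parts in $v_j$ (using $v_jM=-\beta\,\pdv M$, legitimate thanks to the $L^2_M$-decay of $\hrho_\epsilon$) gives $\mathcal{J}_{\epsilon,j} = \tfrac{\beta}{\epsilon}\int M\,\pdv\hrho_\epsilon\,\dd v$, and Cauchy--Schwarz followed by Fubini yields $\|\mathcal{J}_{\epsilon,j}\|_{L^2(\Omega^{J+1}\times(0,T))}^2 \leq \tfrac{\beta^2}{\epsilon^2}\|\sqrt{M}\,\pdv\hrho_\epsilon\|_{L^2}^2 \leq C$.

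Finally, for (vi), integration by parts in $v_{i,\ell}$ coupled with $\int M\,v_{i,\ell}v_{j,k}\,\dd v = \beta\,\delta_{ij}\delta_{\ell k}$ produces the decomposition
\[
[\mathbb{P}_\epsilon]_{i,\ell,j,k} = \beta\,\delta_{ij}\delta_{\ell k}\,\orho_\epsilon + \beta\int_{\R^{(J+1)d}} M\,v_{j,k}\,\partial_{v_{i,\ell}}\hrho_\epsilon\,\dd v,
\]
whence $[\mathbb{R}_\epsilon]_{i,\ell,j,k} = \tfrac{\beta}{\sqrt{\epsilon}}\int M\,v_{j,k}\,\partial_{v_{i,\ell}}\hrho_\epsilon\,\dd v$. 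Cauchy--Schwarz in $v$, combined with $\int M\,v_{j,k}^2\,\dd v = \beta$, gives $|[\mathbb{R}_\epsilon]_{i,\ell,j,k}|^2 \leq \tfrac{\beta^3}{\epsilon}\int M|\partial_{v_{i,\ell}}\hrho_\epsilon|^2\,\dd v$, and integrating over $(r,t)$ and invoking the $O(\epsilon^2)$ dissipation bound yields $\|\mathbb{R}_\epsilon\|_{L^2}^2 \leq C\beta\epsilon$, hence uniform boundedness. The main conceptual obstacle throughout is that the weaker entropy-dissipation estimate $\|\sqrt{M}\,\pdv\sqrt{\hrho_\epsilon}\|_{L^2}^2 \leq C\epsilon^2$ available from \eqref{eq:energy-eps} alone does not transfer directly into the $L^2$-bound on $\sqrt{M}\,\pdv\hrho_\epsilon$ of order $\epsilon$ that is needed here (one would require pointwise control of $\hrho_\epsilon$); this is precisely why the strengthened hypothesis $\hrho_0\in L^2(\Omega^{J+1})$ is exploited, through the $L^2_M$-estimate whose constants remain uniform in $\epsilon$ thanks to (i).
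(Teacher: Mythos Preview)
Your proof is correct, and for items (i)--(iv) it is essentially identical to the paper's argument: the same $L^2_M$-energy estimate (cf.\ \eqref{eq:energy-a}) under the strengthened hypothesis $\hrho_0\in L^2(\Omega^{J+1})$ delivers the key bound $\|\sqrt{M}\,\pdv\hrho_\epsilon\|_{L^2}^2\le C\epsilon^2$ (which the paper records as \eqref{eq:5.26}), and (i)--(iii) proceed by Morrey/Cauchy--Schwarz exactly as you do.

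For (v) and (vi), your route differs and is in fact more elementary. In (v) the paper subtracts the mean $\overline{\varrho}_\epsilon M$ and invokes a Gaussian Poincar\'e--Sobolev inequality (citing Nash, Chernoff, Beckner) to control $\hrho_\epsilon/\overline{\varrho}_\epsilon - 1$ by its $v$-gradient; you instead integrate by parts directly via $v_jM=-\beta\,\pdv M$ to obtain $\mathcal{J}_{\epsilon,j}=\tfrac{\beta}{\epsilon}\int M\,\pdv\hrho_\epsilon\,\dd v$, then apply Cauchy--Schwarz. This bypasses the named inequality entirely. In (vi) the paper decomposes $v_{j,k}\varrho_\epsilon$ through the dissipation term $D_{\epsilon,j,k}$ (and carries along a term $\beta\int v_{i,\ell}\,u_{\epsilon,k}\,M\,\dd v$ which in fact vanishes since $\int v_{i,\ell}M\,\dd v=0$); you instead integrate by parts in $v_{i,\ell}$ to peel off $\beta\,\delta_{ij}\delta_{\ell k}\,\orho_\epsilon$ directly, leaving the remainder $\tfrac{\beta}{\sqrt{\epsilon}}\int M\,v_{j,k}\,\partial_{v_{i,\ell}}\hrho_\epsilon\,\dd v$. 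Both decompositions are valid; yours is shorter and avoids the detour through (iv). Your closing remark on why the weaker entropy estimate \eqref{eq:energy-eps} is insufficient, and hence why the $L^2$ hypothesis on $\hrho_0$ is genuinely needed, is a useful observation that the paper leaves implicit.
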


\begin{proof}
(i) In the previous section we showed that
\[ u_\eps \in \mathcal{C}([0,T];L^\sigma(\Omega)^d) \cap L^2(0,T;W^{1,\sigma}_0(\Omega)^d),\quad\mbox{with $\sigma>d$},\]
and $(u_\eps)_{\eps>0}$ is a bounded sequence in the norms of the function spaces appearing on the right-hand side of this
inclusion. Hence, using Morrey's inequality, we readily  deduce (i).

\smallskip

(ii) The Cauchy--Schwarz inequality implies the following bound:
\begin{equation*}
 |\overline{\varrho}_\epsilon(r,t)|^2 \le \left(\int_{\R^{(J+1)d}} M(v) \dd v\right) \left(\int_{\R^{(J+1)d}}|\hrho_\epsilon(r,v,t)|^2\,M(v) \dd v\right),
\end{equation*}
which then implies (ii), since $\hrho_\eps=\varrho_\epsilon/M$ is bounded in the function space
\[L^\infty(0,T; L^2_M(\Omega^{J+1} \times \R^{(J+1)d})) \cap L^2(0,T;L^2(\Omega^{J+1}; W^{1,2}_M(\R^{(J+1)d}))),\]
and $|{\mathcal L}r| \le C$ for all $r \in \overline{\Omega^{J+1}}$, where $C$ is a positive constant, independent of $\epsilon$.

\smallskip

(iii) Finally, we have that
\begin{align*}
\int_0^T \int_{\Omega^{J+1}}|\overline{\varrho}_\epsilon u_\epsilon|^2 \dd r \dd t & \le \int_0^T \|\overline{\varrho}_\epsilon(\cdot,t)\|^2_{L^2(\Omega^{J+1})} \|u_\epsilon(\cdot,t)\|^2_{L^\infty(\Omega)} \dd t \\
& \le  \|\overline{\varrho}_\epsilon\|^2_{L^\infty(0,T;L^2(\Omega^{J+1}))} \int_0^T \|u_\epsilon(\cdot,t)\|^2_{L^\infty(\Omega)} \dd t,
\end{align*}
which proves (iii) by using (i) and (ii).

\smallskip

(iv) Now, let us show that the sequence $(D_{\epsilon,j})_{\eps>0}$ is bounded in $L^2(\Omega^{J+1} \times \R^{(J+1)d}\times (0,T))^d$ for each $j=1,\dots,J+1$. On the one hand,
we know 
that $(\hrho_\eps)_{\eps>0}$ is a bounded sequence in the function space $L^\infty(0,T; L^2(\Omega^{J+1};L^2_M(\R^{(J+1)d}))) \cap L^2(0,T;L^2(\Omega^{J+1}; W^{1,2}_M(\R^{(J+1)d})))$; in particular,
\begin{equation}
\frac{\beta^2}{\eps^2}\int_0^T \int_{\Omega^{J+1}}\int_{\R^{(J+1)d}} \bigg| \frac{\pdv \varrho_\epsilon}{M(v)} + \frac{1}{\beta} \frac{v_j \varrho_\epsilon}{M(v)} \bigg|^2 M(v) \dd v \dd r \dd t
= \frac{\beta^2}{\eps^2}\int_0^T \int_{\Omega^{J+1}}\int_{\R^{(J+1)d}} \big| \pdv \, \hrho_\epsilon \big|^2 M(v) \dd v \dd r \dd t \leq C,
\label{eq:5.26}
\end{equation}
where $C$ is a positive constant, independent of $\epsilon$.

On the other hand, we write
\begin{align*}
& \int_0^T \int_{\Omega^{J+1}}\int_{\R^{(J+1)d}} \bigg| \frac{\pdv \varrho_\epsilon}{M(v)} + \frac{1}{\beta} \frac{v_j \varrho_\epsilon}{M(v)} - \sqrt{\epsilon}\, u_{\epsilon}(r_j,\cdot) \bigg|^2 M(v) \dd v \dd r \dd t \\
& \le 2 \int_0^T \int_{\Omega^{J+1}}\int_{\R^{(J+1)d}}  \bigg| \frac{\pdv \varrho_\epsilon}{M(v)} + \frac{1}{\beta} \frac{v_j \varrho_\epsilon}{M(v)} \bigg|^2 M(v) \dd v \dd r \dd t \\&\mathrel{\phantom{=}} +  2 \int_0^T \int_{\Omega^{J+1}}\int_{\R^{(J+1)d}}  \big| \sqrt{\epsilon}\, u_{\epsilon}(r_j,t) \big|^2 M(v) \dd v \dd r \dd t \\
& \le C\epsilon^2 +  2 |\Omega|^J \epsilon \int_0^T \int_{\Omega} |u_{\epsilon}(r_j,t)|^2 \dd r_j \dd t.
\end{align*}
Thus, using (i) and \eqref{eq:5.26} it follows that, for each $j=1,\dots,J+1$,
\[ \int_0^T \int_{\Omega^{J+1}}\int_{\R^{(J+1)d}}  |D_{\epsilon,j}|^2 \dd v \dd r \dd t \leq C \eps,\]
where $C$ is a positive constant, independent of $\epsilon$, which completes the proof of (iv).

\smallskip

(v) Next, we have that, since,
\[ \int_{\R^{(J+1)d}} M(v)\, v_j \dd v = 0,\qquad j=1,\dots,J+1,\]
also
\[  \int_{\R^{(J+1)d}} \overline{\varrho}_\epsilon(r,t)\, M(v)\, v_j \dd v  = \overline{\varrho}_\epsilon(r,t)  \int_{\R^{(J+1)d}} M(v)\, v_j \dd v = 0,\qquad j=1,\dots,J+1.\]
Therefore, by the Cauchy--Schwarz inequality and a Poincar\'{e}--Sobolev inequality with a Gaussian weight function,\footnote{See p.941 in Nash \cite{Nash}, p.533 in Chernoff \cite{Chernoff}, and p.397 in Beckner \cite{Beckner}.} we have that
\begin{align*}
|\mathcal{J}_{\epsilon,j}(r,t)|^2  &= \left| \int_{\R^{(J+1)d}} \frac{v_j}{\epsilon} \varrho_{\eps}(r,v,t)\dd v\right|^2 = \frac{1}{\epsilon^2}\left| \int_{\R^{(J+1)d}} v_j \, (\varrho_{\eps}(r,v,t) - \overline{\varrho}_\epsilon(r,t)\, M(v))\dd v\right|^2\\
&\leq  \frac{\overline{\varrho}^2_\epsilon(r,t)}{\epsilon^2} \left( \int_{\R^{(J+1)d}}|v_j|^2\,M(v) \dd v\right) \left(  \int_{\R^{(J+1)d}} \left|\frac{\varrho_{\eps}(r,v,t)}{\overline{\varrho}_\epsilon(r,t)\, M(v)} - 1\right|^2 M(v) \dd v\right)\\
& \leq \frac{\overline{\varrho}^2_\epsilon(r,t)}{\epsilon^2}\, \left( \int_{\R^{(J+1)d}}|v_j|^2\,M(v) \dd v\right) \left( \int_{\R^{(J+1)d}} \left|\nabla_v \left(\frac{\varrho_{\eps}(r,v,t)}{\overline{\varrho}_\epsilon(r,t)\, M(v)}\right)\right|^2 M(v) \dd v\right)\\
& = \frac{1}{\epsilon^2} \left( \int_{\R^{(J+1)d}}|v_j|^2\,M(v) \dd v\right) \left( \int_{\R^{(J+1)d}} \left|\nabla_v \left(\frac{\varrho_{\eps}(r,v,t)}{M(v)}\right)\right|^2 M(v) \dd v\right),\qquad j=1,\dots,J+1.
\end{align*}
Hence,
\[ | \mathcal{J}_{\epsilon,j}(r,t) |^2 \leq \frac{C}{\epsilon^2}   \int_{\R^{(J+1)d}} M\,|\nabla_v \hrho_{\eps}(r,v,t)|^2 \dd v,\qquad j=1,\dots,J+1,
\]
where $C$ is a positive constant, independent of $\epsilon$. Therefore,
\[ \int_0^T \int_{\Omega^{J+1}} |\mathcal{J}_{\epsilon,j}(r,t)|^2 \dd r \dd t \leq \frac{C}{\epsilon^2}\,\int_0^T \int_{\Omega^{J+1}}  \int_{\R^{(J+1)d}} M\, |\nabla_v \hrho_{\eps}(r,v,t)|^2 \dd v \dd r \dd t \leq C,\qquad j=1,\dots,J+1,\]
where $C$ is a positive constant, independent of $\epsilon$. That completes the proof of (v).

\smallskip

(vi) We recall from part (iv) the definition of $D_{\epsilon,j}$, $j=1,\dots,J+1$, and denote its $k$-th component by $D_{\epsilon,j,k}$, $k=1,\dots,d$. Analogously, let $u_{\epsilon,k}$ denote the $k$-th component of $u_\epsilon$, $k=1,\dots,d$. We then have that
\begin{align*}
[\mathbb{P}_\epsilon(r,t)]_{i,\ell,j,k} &= \int_{\R^{(J+1)d}} v_{i,\ell}\,v_{j,k} \,\varrho_\epsilon \dd v
\\
&= \beta \sqrt{\epsilon} \int_{\R^{(J+1)d}} \left(v_{i,\ell} \sqrt{M(v)} \, \frac{D_{\epsilon,j,k}(r,v,t)}{\sqrt{\epsilon}}\right) \dd v\\
&\quad + \beta \sqrt{\epsilon} \int_{\R^{(J+1)d}}  \left( v_{i,\ell} \, u_{\epsilon,k}(r_j,t)\right)  M(v)\dd v -\beta \int_{\R^{(J+1)d}} v_{i,\ell} \,\partial_{v_{j,k}} \varrho_\epsilon  \dd v.
\end{align*}
Focusing on the first two integrals, we define
\[ [\mathbb{R}_\epsilon(r,t)]_{i,\ell,j,k}: = \beta \int_{\R^{(J+1)d}} \left(v_{i,\ell} \sqrt{M(v)} \, \frac{D_{\epsilon,j,k}(r,v,t)}{\sqrt{\epsilon}}\right) \dd v
+ \beta  \int_{\R^{(J+1)d}}  \left( v_{i,\ell} \, u_{\epsilon,k}(r_j,t)\right)  M(v)\dd v.\]
The last of the three integrals in the expression for $\mathbb{P}_\epsilon$ is equal to $\delta_{i,j} \, \delta_{\ell,k}\,\overline{\varrho}_\epsilon$ by partial integration. Hence,
\[ [\mathbb{P}_\epsilon(r,t)]_{i,\ell,j,k} =  \beta\,\delta_{i,j}\, \delta_{\ell,k}\,\overline{\varrho}_\epsilon + \sqrt{\epsilon}\,[\mathbb{R}_\epsilon(r,t)]_{i,\ell,j,k}.\]
%
%
%
To complete the proof of (vi) it therefore remains to establish a uniform (with respect to $\epsilon$) bound on $[\mathbb{R}_\epsilon]_{i,\ell,j,k}$ in the norm of $L^2(\Omega^{J+1} \times (0,T))$, for $i,j=1,\dots, J+1$
and $\ell,k = 1, \dots, d$.

We have that
\begin{align*}
&\frac{1}{\beta^2}\int_0^T \int_{\Omega^{J+1}}|[\mathbb{R}_\epsilon]_{i,\ell,j,k}|^2 \dd r\dd t\\
&\quad= \int_0^T \int_{\Omega^{J+1}} \bigg|\int_{\R^{(J+1)d}} \left(v_{i,\ell} \sqrt{M(v)} \, \frac{D_{\epsilon,j,k}(r,v,t)}{\sqrt{\epsilon}}\right) \dd v
+ \int_{\R^{(J+1)d}}  \left( v_{i,\ell} \, u_{\epsilon,k}(r_j,t)\right)  M(v)\dd v\bigg|^2 \dd r \dd t \\
&\quad\le 2 \int_0^T \int_{\Omega^{J+1}}   \bigg|\int_{\R^{(J+1)d}} \left(v_{i,\ell} \sqrt{M(v)} \, \frac{D_{\epsilon,j,k}(r,v,t)}{\sqrt{\epsilon}}\right) \dd v\bigg|^2 \dd r \dd t\\
&\qquad + 2 \int_0^T \int_{\Omega^{J+1}} \bigg|\int_{\R^{(J+1)d}}  \left( v_{i,\ell} \, u_{\epsilon,k}(r_j,t)\right)  M(v)\dd v \bigg|^2 \dd r \dd t
\\
&\quad \le 2 \int_0^T \int_{\Omega^{J+1}} \bigg( \int_{\R^{(J+1)d}} |v_{i,\ell}|^2 M(v) \dd v \bigg) \bigg( \int_{\R^{(J+1)d}} \bigg|\frac{D_{\epsilon,j,k}(r,v,t)}{\sqrt{\epsilon}} \bigg|^2 \dd v \bigg) \dd r \dd t\\
&\qquad + 2 \bigg(\int_{\R^{(J+1)d}} |v_{i,\ell}|\, M(v) \dd v \bigg)^2 \bigg(\int_0^T \int_{\Omega^{J+1}}| u_{\epsilon,k}(r_j,t) |^2 \dd r \dd t \bigg) \\
&\quad \le C \int_0^T \int_{\Omega^{J+1}} \int_{\R^{(J+1)d}} \bigg|\frac{D_{\epsilon,j,k}(r,v,t)}{\sqrt{\epsilon}} \bigg|^2 \dd v \dd r \dd t + C\,|\Omega|^J \int_0^T \int_{\Omega}| u_{\epsilon,k}(r_j,t) |^2 \dd r_j \dd t,
\end{align*}
where $C$ is a positive constant, since moments of any order of $M$ are finite. Thus, the statement in part (vi) of the lemma follows from the assertions in parts (iv) and (i).
\end{proof}

Using the equations \eqref{eq:Moment1}, \eqref{eq:Moment2} together with the splitting of $\mathbb{P}_\epsilon$ introduced in part (vi) of Lemma \ref{lemma:bounds}, we arrive at the following system of moment equations:
\begin{eqnarray}
\left\{
\begin{array}{l}
\partial_t \overline{\varrho}_\epsilon + \mbox{div}_r\, \mathcal{J}_\epsilon = 0,\\
\beta\,\partial_r \overline{\varrho}_\epsilon = \sqrt{\epsilon} \big( - \epsilon \sqrt{\epsilon}\, \partial_t \mathcal{J}_\epsilon - \mbox{Div}_r \mathbb{R_\epsilon} \big) + \overline{\varrho}_\epsilon\,u_\epsilon  - \beta \mathcal{J}_\epsilon + {\mathcal L}r\, \overline{\varrho}_\epsilon.
\end{array}
\right.
\label{sys:Moments}
\end{eqnarray}

\begin{lemma}\label{le:strong-rho}
The sequence $(\overline{\varrho}_\epsilon)_\epsilon$ converges to $\overline{\varrho}= \eta$
weakly in the space $L^2(\Omega^{J+1} \times (0,T))$ and strongly in $L^p(\Omega^{J+1} \times (0,T))$ for all $p \in [1,2)$. Furthermore, we have that
\[ \lim_{\epsilon \rightarrow 0_+} \int_0^T \int_{\Omega^{J+1}} \int_{\R^{(J+1)d}} | \varrho_\epsilon- \overline{\varrho} M(v) | \dd v \dd r \dd t = 0.
\]
\end{lemma}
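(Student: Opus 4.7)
My plan is to proceed in three stages: weak $L^2$ convergence and identification of the limit, strong convergence via a compensated compactness (Div--Curl) argument, and a Gaussian Poincar\'e inequality to derive the final assertion.

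First, Lemma~\ref{lemma:bounds}(ii) provides boundedness of $(\overline{\varrho}_\eps)_{\eps>0}$ in $L^\infty(0,T;L^2(\Omega^{J+1}))\subset L^2(\Omega^{J+1}\times(0,T))$; I extract a (not-indicated) subsequence and obtain a limit $\overline{\varrho}\in L^2$ with $\overline{\varrho}_\eps \rightharpoonup \overline{\varrho}$ weakly in $L^2$. Writing $\int\overline{\varrho}_\eps\phi\,\dd r \dd t = \int\!\int M\hrho_\eps\phi\,\dd v \dd r \dd t$ for $\phi \in L^2(\Omega^{J+1}\times(0,T)) \cap L^\infty$ (extended to be constant in $v$, which makes it admissible in \eqref{eq:weak-orl2-eps}) and recalling $\varrho_{(0)} = M\eta$, I identify $\overline{\varrho} = \eta$; uniqueness of the limit then forces convergence of the entire sequence.

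Next, following the strategy of Carrillo--Goudon \cite{CG}, I introduce on $(0,T)\times\Omega^{J+1}$ the space-time fields
$$Z_\eps := (\overline{\varrho}_\eps,\,\mathcal{J}_{\eps,1},\ldots,\mathcal{J}_{\eps,J+1}), \qquad W_\eps := (\overline{\varrho}_\eps,\,0,\ldots,0),$$
both bounded in $L^2((0,T)\times\Omega^{J+1})^{1+(J+1)d}$ by Lemma~\ref{lemma:bounds}(ii),(v). The first equation of \eqref{sys:Moments} gives $\mathrm{div}_{(t,r)} Z_\eps = 0$, while the second, rewritten as
$$\beta\,\pdr \overline{\varrho}_\eps = -\eps^{2}\pd_t \mathcal{J}_\eps - \sqrt{\eps}\,\mathrm{Div}_r \mathbb{R}_\eps + \overline{\varrho}_\eps u_\eps - \beta \mathcal{J}_\eps + (\mathcal{L}r)\overline{\varrho}_\eps,$$
exhibits $\pdr\overline{\varrho}_\eps$ as the sum of two summands that tend to $0$ in $H^{-1}((0,T)\times\Omega^{J+1})$ (thanks to the $L^2$ bounds of Lemma~\ref{lemma:bounds}(v),(vi) and the explicit powers of $\eps$) plus three summands uniformly bounded in $L^2((0,T)\times\Omega^{J+1})$ (Lemma~\ref{lemma:bounds}(i)--(iii)), which are therefore precompact in $H^{-1}_{loc}$ by the Rellich--Kondrachov theorem. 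Since the non-trivial components of $\mathrm{curl}_{(t,r)} W_\eps$ are $\pm\pd_{r_i}\overline{\varrho}_\eps$, the Div--Curl lemma of Murat--Tartar should yield $\overline{\varrho}_\eps^{\,2} \rightharpoonup \overline{\varrho}^{\,2}$ in $\mathcal{D}'((0,T)\times\Omega^{J+1})$. Combining this with the weak $L^2$ convergence through the expansion $\int(\overline{\varrho}_\eps-\overline{\varrho})^2\chi = \int\overline{\varrho}_\eps^2\chi - 2\int\overline{\varrho}_\eps\overline{\varrho}\,\chi + \int\overline{\varrho}^2\chi$ for $\chi\in\mathcal{C}_c^\infty$ produces strong $L^2_{loc}$ convergence, hence a.e.\ convergence along a further subsequence; Vitali's convergence theorem, with uniform integrability of $|\overline{\varrho}_\eps|^p$ for $p\in[1,2)$ supplied by the $L^\infty(0,T;L^2)$ bound via H\"older, will then promote this to strong $L^p$ convergence of the full sequence on the bounded space-time domain.

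Finally, for the $L^1$ assertion, I decompose $\varrho_\eps - \overline{\varrho}M = M(\hrho_\eps - \overline{\varrho}_\eps) + M(\overline{\varrho}_\eps - \overline{\varrho})$; integrating the second summand against $\dd v$ (using $\int M\dd v = 1$) yields $\|\overline{\varrho}_\eps - \overline{\varrho}\|_{L^1(\Omega^{J+1}\times(0,T))}$, which vanishes by the preceding stage. For the first summand, I apply the Gaussian Poincar\'e inequality to $\sqrt{\hrho_\eps}$ and use the identity $|a-b| = |\sqrt{a}-\sqrt{b}|(\sqrt{a}+\sqrt{b})$ together with Cauchy--Schwarz, controlling the discrepancy $|\overline{\varrho}_\eps - \langle\sqrt{\hrho_\eps}\rangle_M^2| = \mathrm{Var}_M(\sqrt{\hrho_\eps})$ again by Poincar\'e, to obtain
$$\int_{\R^{(J+1)d}} M\,|\hrho_\eps - \overline{\varrho}_\eps|\dd v \leq C\sqrt{\overline{\varrho}_\eps}\left(\int_{\R^{(J+1)d}} M\,|\nabla_v\sqrt{\hrho_\eps}|^2\dd v\right)^{\!1/2} + C\int_{\R^{(J+1)d}} M\,|\nabla_v\sqrt{\hrho_\eps}|^2 \dd v.$$
Integrating in $(r,t)$, applying Cauchy--Schwarz, and invoking the mass conservation $\|\overline{\varrho}_\eps\|_{L^1(\Omega^{J+1}\times(0,T))} = T$ from \eqref{eq:conservation} and the dissipation bound $\int_0^T\!\int\!\int M|\nabla_v\sqrt{\hrho_\eps}|^2\dd v \dd r \dd t \leq C\eps^2$ derived from \eqref{eq:energy-eps}, I obtain a bound of order $O(\eps)$, which completes the argument. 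The hard part will be the second stage: carefully verifying that the $\sqrt{\eps}$-corrections in the second moment equation really do deliver $H^{-1}$-precompactness of $\pdr\overline{\varrho}_\eps$, and then upgrading the $\mathcal{D}'$-level conclusion of the Div--Curl lemma to global strong $L^p$ convergence over the full bounded space-time domain.
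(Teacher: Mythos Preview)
Your proposal is correct and follows essentially the same route as the paper: the Div--Curl lemma applied to the pair $(\overline{\varrho}_\eps,\mathcal{J}_\eps)$ and $(\overline{\varrho}_\eps,0)$ via the moment system \eqref{sys:Moments}, upgraded to strong $L^p$ convergence through a.e.\ convergence and Vitali, and the final $L^1$ statement via the triangle inequality plus a Gaussian Poincar\'e estimate. The only notable difference is in that last step: the paper applies the Poincar\'e inequality directly to $\hrho_\eps$ (using the $L^2_M$ bound $\|\nabla_v\hrho_\eps\|_{L^2_M}\le C\eps$ available since $\hrho_0\in L^2$), which is cleaner than your route via $\sqrt{\hrho_\eps}$ and the decomposition $|a-b|=|\sqrt{a}-\sqrt{b}|(\sqrt{a}+\sqrt{b})$, though both are valid.
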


\begin{proof}
We begin by focusing on the first equation in the system \eqref{sys:Moments}. We observe that the sequence $(\mbox{div}_{(r,t)}(\mathcal{J}_\epsilon,\overline{\varrho}_\epsilon))_{\epsilon>0}$ (where $\mbox{div}_{(r,t)}(\mathcal{J}_\epsilon,\overline{\varrho}_\epsilon)$ is the divergence with respect to the $(r,t)$ variables
of the vector field $(\mathcal{J}_\epsilon,\overline{\varrho}_\epsilon)$, defined as
$(\mbox{div}_r, \partial_t)\cdot (\mathcal{J}_\epsilon,\overline{\varrho}_\epsilon)$,) is, thanks to \eqref{sys:Moments}$_1$, the zero-sequence $(0)_{\epsilon>0}$, and it is therefore, trivially, precompact in $W^{-1,2}(\Omega^{J+1} \times (0,T))$.

Next, we focus on the second equation in the system \eqref{sys:Moments}, which we restate here for clarity:
\begin{equation}\label{eq:rep-partial-varrho}
\beta\, \partial_r \overline{\varrho}_\epsilon = \sqrt{\epsilon} (- \epsilon \sqrt{\epsilon}\, \partial_t \mathcal{J}_\epsilon - \mbox{Div}_r\, \mathbb{R}_\epsilon) + \overline{\varrho}_\epsilon u_\epsilon - \beta \,\mathcal{J}_\epsilon + \mathcal{L}r\, \overline{\varrho}_\epsilon.
\end{equation}

Thanks to parts (iii), (v) and (ii) of Lemma \ref{lemma:bounds} the sequence $(\overline{\varrho}_\epsilon u_\epsilon(r_j,\cdot) - \beta \,\mathcal{J}_{\epsilon,j} + (\mathcal{L}r)_j\, \overline{\varrho}_\epsilon)_{\epsilon>0}$ is bounded in the function space $L^2(\Omega^{J+1} \times (0,T))^d$, and therefore, thanks to the compact embedding of the space $L^2(\Omega^{J+1} \times (0,T))$ into
$W^{-1,2}(\Omega^{J+1} \times (0,T))$, the sequence
$(\overline{\varrho}_\epsilon u_\epsilon(r_j,\cdot) - \beta \,\mathcal{J}_{\epsilon,j} + (\mathcal{L}r)_j\, \overline{\varrho}_\epsilon)_{\epsilon>0}$
is a precompact set in the space $W^{-1,2}(\Omega^{J+1} \times (0,T))^d$, for each $j=1,\dots, J+1$.

Furthermore, by parts (v) and (vi) of Lemma \ref{lemma:bounds} the sequences $(\mathcal{J}_\epsilon)_{\epsilon>0}$ and
$(\mathbb{R}_\epsilon)_{\epsilon>0}$ are bounded in the spaces $L^2(\Omega^{J+1} \times (0,T))^{(J+1)d}$
and $W^{-1,2}(\Omega^{J+1} \times (0,T))^{(J+1)d\times (J+1)d}$, respectively; therefore, the sequence
$(- \epsilon \sqrt{\epsilon} \partial_t \mathcal{J}_\epsilon - \mbox{Div}_r\, \mathbb{R}_\epsilon)_{\epsilon>0}$ is bounded
in $W^{-1,2}(\Omega^{J+1} \times (0,T))^{(J+1)d}$, whereby, upon multiplication by $\sqrt{\epsilon}$, we have that the sequence
$(\sqrt{\epsilon} (- \epsilon \sqrt{\epsilon} \partial_t \mathcal{J}_\epsilon - \mbox{Div}_r\, \mathbb{R}_\epsilon))_{\epsilon>0}$ is precompact in the space $W^{-1,2}(\Omega^{J+1} \times (0,T))^{(J+1)d}$;
more precisely, it converges to $0$ in $W^{-1,2}(\Omega^{J+1} \times (0,T))^{(J+1)d}$, as $\epsilon \rightarrow 0_+$.
Thus, since $\beta>0$, we deduce from \eqref{eq:rep-partial-varrho} that the sequence $(\partial_r \overline{\varrho}_\epsilon)_{\epsilon>0}$ is precompact in $W^{-1,2}(\Omega^{J+1} \times (0,T))^{(J+1)d}$. Hence, the sequence $(\mbox{curl}_{(r,t)}(0,\overline{\varrho}_\epsilon))_{\epsilon>0}$ (where
$\mbox{curl}_{(r,t)}(0,\overline{\varrho}_\epsilon)$ is the curl with respect to the $(r,t)$ variables, defined as $\partial_{(r,t)}-\partial_{(r,t)}^{\rm T}$, of the $((J+1)d+1)$-component vector field $(0,\overline{\varrho}_\epsilon)$, where $0$ is a $(J+1)d$-component zero-vector), is a precompact set in $W^{-1,2}(\Omega^{J+1} \times (0,T))^{((J+1)d+1) \times ((J+1)d+1)}$.

Hence, a direct application of the Div-Curl Lemma (cf. \cite{Tar}) yields that the weak limit of the
scalar product of the sequences $((\mathcal{J}_\epsilon,\overline{\varrho}_\epsilon))_{\epsilon>0}$
and $((0,\overline{\varrho}_\epsilon))_{\epsilon>0}$ is equal to the scalar product of their weak limits; i.e.,
\[ (\mathcal{J}_\epsilon,\overline{\varrho}_\epsilon)\cdot(0,\overline{\varrho}_\epsilon)
=\overline{\varrho}_\epsilon^2
\rightharpoonup
(\mathcal{J},\overline{\varrho})\cdot(0,\overline{\varrho})
=\overline{\varrho}^2 \quad \mbox{in $\mathcal{D}'(\Omega^{J+1} \times (0,T))$.}
\]
Combining this with the weak convergence result $\overline{\varrho}_\epsilon\rightharpoonup\overline{\varrho}$ in $L^2(\Omega^{J+1} \times (0,T))$, we have that
\begin{align*}
\int_{\Omega^{J+1} \times (0,T)} |\overline{\varrho}_\epsilon - \overline{\varrho}|^2 \phi \dd r \dd t =
\int_{\Omega^{J+1} \times (0,T)} [\overline{\varrho}_\epsilon]^2\, \phi \dd r \dd t +
\int_{\Omega^{J+1} \times (0,T)} [\overline{\varrho}]^2 \,\phi \dd r \dd t
- 2 \int_{\Omega^{J+1} \times (0,T)} \overline{\varrho}_\epsilon \,\overline{\varrho}\, \phi \dd r \dd t\\
= \langle [\overline{\varrho}_\epsilon]^2, \phi \rangle + \langle [\overline{\varrho}]^2, \phi \rangle
- 2\, \langle \overline{\varrho}_\epsilon , \overline{\varrho}\, \phi\rangle \rightarrow 0\qquad \mbox{as $\epsilon \rightarrow 0_+$, for all $\phi \in \mathcal{C}^\infty_0(\Omega^{J+1} \times (0,T))$}.
\end{align*}
This proves the strong convergence of $\overline{\varrho}_\epsilon$ to $\overline{\varrho}$ in $L^2_{loc}(\Omega^{J+1} \times (0,T))$. Thus, for any compact subset $\mathfrak{D}$ of $\Omega^{J+1} \times (0,T)$, we can extract a subsequence from the sequence $(\overline{\varrho}_\epsilon)_{\epsilon>0}$ that converges to $\overline{\varrho}$ a.e. on $\mathfrak{D}$. Hence,
by considering a countable nested family of compact sets $\mathfrak{D}_j \subset \Omega^{J+1} \times (0,T)$ with $\cup_{j \geq 1}
\mathfrak{D}_j = \Omega^{J+1} \times (0,T)$, by successive extraction of subsequences, there exists a subsequence of
$(\overline{\varrho}_\epsilon)_{\epsilon>0}$ (not indicated), which converges to $\overline{\varrho}$ a.e. on
$\Omega^{J+1} \times (0,T)$.

By combining the weak convergence $\overline{\varrho}_\epsilon\rightharpoonup\overline{\varrho}$ in $L^2(\Omega^{J+1} \times (0,T))$ (which implies the weak converge $\overline{\varrho}_\epsilon\rightharpoonup\overline{\varrho}$ in $L^1(\Omega^{J+1} \times (0,T))$, and thereby, thanks to the Dunford--Pettis theorem (cf. Theorem 2.54 in \cite{Fonseca_Leoni}), equiintegrability of $(\overline{\varrho}_\epsilon)_{\epsilon>0}$ on $\Omega^{J+1} \times (0,T)$) and the a.e. convergence of $\overline{\varrho}_\epsilon$ to $\overline{\varrho}$, Vitali's convergence theorem (cf. Theorem 2.24 in \cite{Fonseca_Leoni}) yields the strong convergence of $\overline{\varrho}_\epsilon$ to $\overline{\varrho}$ in $L^1(\Omega^{J+1} \times (0,T))$, and
therefore, thanks to the boundedness of the sequence $\overline{\varrho}_\epsilon$ in $L^p(\Omega^{J+1} \times (0,T))$, $1 \le p \leq 2$, we have
strong convergence $\overline{\varrho}_\epsilon\rightarrow\overline{\varrho}$ in $L^p(\Omega^{J+1} \times (0,T))$
for all $p \in [1,2)$.

Next, by the triangle inequality and noting that $\int_{\R^{(J+1)d}} M(v) \dd v =1$, we have that
\begin{equation*}
\int_0^T \int_{\Omega^{J+1}} \int_{\R^{(J+1)d}} | \varrho_\epsilon - M(v) \overline{\varrho} | \dd v \dd r \dd t \le \int_0^T \int_{\Omega^{J+1}} \int_{\R^{(J+1)d}} | \varrho_\epsilon - M(v) \overline{\varrho}_\epsilon | \dd v \dd r \dd t + \int_0^T \int_{\Omega^{J+1}} | \overline{\varrho} - \overline{\varrho}_\epsilon | \dd r \dd t.
\end{equation*}
We have already shown that the second integral on the right-hand side of this inequality tends to 0 as $\epsilon$ tends to $0$. For the first integral, using the Cauchy--Schwarz inequality and a Poincar\'e--Sobolev inequality with a Gaussian weight function (cf. the proof of item (v) in Lemma \ref{lemma:bounds}),
we obtain
\begin{align*}
\int_{\R^{(J+1)d}} | \varrho_\epsilon - M(v) \overline{\varrho}_\epsilon | \dd v & \le \bigg ( \int_{\R^{(J+1)d}}  | \varrho_\epsilon - M(v) \overline{\varrho}_\epsilon |^2 \frac{1}{M(v)} \dd v \bigg)^\frac{1}{2}\\
& \le \overline{\varrho}_\epsilon \bigg ( \int_{\R^{(J+1)d}} \bigg | \frac{\varrho_\epsilon}{\overline{\varrho}_\epsilon M(v)} - 1 \bigg |^2 M(v) \dd v \bigg)^\frac{1}{2} \\
& \le \bigg ( \int_{\R^{(J+1)d}} \bigg | \nabla_v \bigg(\frac{\varrho_\epsilon}{M(v)} \bigg ) \bigg |^2 M(v) \dd v \bigg)^\frac{1}{2}.
\end{align*}
Since
\begin{align*}
\bigg (\int_0^T \int_{\Omega^{J+1}} \int_{\R^{(J+1)d}} \bigg | \nabla_v \bigg(\frac{\varrho_\epsilon}{M(v)} \bigg ) \bigg |^2 M(v) \dd v  \dd r \dd t \bigg)^\frac{1}{2}  \le C \epsilon,
\end{align*}
we deduce by the Cauchy--Schwarz inequality that
\begin{align*}
\int_0^T \int_{\Omega^{J+1}}\int_{\R^{(J+1)d}} | \varrho_\epsilon - M(v) \overline{\varrho}_\epsilon | \dd v \dd r \dd t& \le C \epsilon,
\end{align*}
and therefore,
\begin{equation*}
\lim_{\epsilon \rightarrow 0_+} \int_0^T \int_{\Omega^{J+1}} \int_{\R^{(J+1)d}} | \varrho_\epsilon - M(v) \overline{\varrho} | \dd v \dd r \dd t = 0.
\end{equation*}
That completes the proof of the lemma.
\end{proof}

\begin{remark}\label{rem:equilib}
The strong convergence $\varrho_\epsilon \rightarrow M(v)\, \overline{\varrho} = M(v)\, \eta = \rho_{(0)}$ in
$L^1(\Omega^{J+1} \times \R^{(J+1)d} \times (0,T))$ in the small-mass limit $\epsilon \rightarrow 0_+$, which we have rigorously proved above,
is referred to in the chemical physics literature as \textit{equilibration in momentum} space (cf. p.71 in \cite{CBH}),
in the sense that the limiting
probability density function $\rho_{(0)}$ has the factorized form $M(v)\, \eta$, where $\eta=\eta(r,t)$
is completely independent of $v$, and satisfies a Fokker--Planck equation, which we shall carefully identify below;
furthermore, by noting part (vi) of Lemma \ref{lemma:bounds} and Lemma \ref{le:strong-rho}, we deduce that
\[ \lim_{\epsilon \rightarrow 0_+} \int_{\mathbb{R}^{(J+1)d}} v_{i,l}\, v_{j,k}\, \varrho_\epsilon  \dd v =  \beta\, \delta_{i,j}\,\delta_{\ell,k}\,\eta,\qquad \mbox{where
$\beta = k\mathtt{T} \zeta$,}
\]
strongly in $L^p(\Omega^{J+1} \times (0,T))$ for all $p \in [1,2)$ and weakly in $L^2(\Omega^{J+1} \times (0,T))$, which is yet
another manifestation of equilibration in momentum space, as a consequence of the small mass limit $\epsilon \rightarrow 0_+$.
For further details in this direction, we point the reader to the paper of Schieber and \"Ottinger \cite{SO}, and references therein.
\end{remark}

Having shown the strong convergence $\varrho_\epsilon \rightarrow M(v)\, \overline{\varrho} = M(v)\, \eta = \rho_{(0)}$ in
$L^1(\Omega^{J+1} \times \R^{(J+1)d} \times (0,T))$, we are now ready to pass to the limit $\epsilon \rightarrow 0_+$ in the Oseen equation. All that remains to be done in this respect is to identify the weak$^*$ limit $\KK_{(0)}$ of the sequence $(\KK_\epsilon)_{\epsilon>0}$ in terms of the limit $\eta$ of the sequence $(\hrho_\epsilon)_{\epsilon>0}$, where
\[ \KK_\epsilon:=\frac{{\mathfrak A}_\epsilon}{{\mathfrak B}_\epsilon}, \qquad \epsilon>0, \]
with
\begin{align*}
{\mathfrak A}_\epsilon &:= \int_{D^{J}\times \R^{(J+1)d}} \sum_{j=1}^{J}(F(q_j)\otimes q_j)\,M\,\hrho_\epsilon\bigl(B(q,x),v,t\bigr)
\dd q  \dd v,
\\
{\mathfrak B}_\epsilon &:= \int_{D^{J}\times \R^{(J+1)d}} M\,\hrho_\epsilon\bigl(B(q,x),v,t\bigr)
\dd q  \dd v.
\end{align*}

The limit $\KK_{(0)}$ is anticipated to be of the form
\[ \frac{\mathfrak A_{(0)}}{\mathfrak B_{(0)}},\]
where
\begin{align*}
\mathfrak{A}_{(0)} &:= \int_{D^{J}\times \R^{(J+1)d}} \sum_{j=1}^{J}(F(q_j)\otimes q_j)\,M\,\eta\bigl(B(q,x),t\bigr)
\dd q  \dd v = \int_{D^{J}} \sum_{j=1}^{J}(F(q_j)\otimes q_j)\,\eta\bigl(B(q,x),t\bigr)\dd q,
\\
\mathfrak{B}_{(0)} &:= \int_{D^{J}\times \R^{(J+1)d}} M\,\eta\bigl(B(q,x),t\bigr)
\dd q  \dd v = \int_{D^{J}}\eta\bigl(B(q,x),t\bigr) \dd q.
\end{align*}
The proof of this is identical to the proof, presented in Section \ref{sec:limit-oseen}, that the weak$^*$ limit $\KK$ of the sequence $(\KK^{(k)})_{k \geq 0}$, where $\KK^{(k)}=\frac{{\mathfrak A}^{(k)}}{{\mathfrak B}^{(k)}}$, $k=0,1,\dots$, considered in terms of the limit $\hrho$ of the sequence $(\hrho^{(k)})_{k \geq 0}$, is of the form $\frac{\mathfrak A}{\mathfrak B}$,
the key ingredient in the argument being the strong convergence $\varrho_\epsilon \rightarrow M(v)\, \overline{\varrho} = M(v)\, \eta = \rho_{(0)}$ in $L^1(\Omega^{J+1} \times \R^{(J+1)d} \times (0,T))$, guaranteed by Lemma
\ref{le:strong-rho}. We do not repeat the proof, therefore.


We now return to  \eqref{eq:Moment1}, and perform partial integration in the first term on the left-hand side, yielding
\begin{align}\label{eq:Moment11}
&\int_{\Omega^{J+1}} \orho_\eps(r,t)\,\phi(r,t) \dd r
-\int_0^t \int_{\Omega^{J+1}} \orho_\eps(r,\tau)\,\pd_\tau \phi(r,\tau) \dd r \dd \tau
- \sum_{j=1}^{J+1} \int_0^t \int_{\Omega^{J+1}}  \mathcal{J}_{\epsilon,j} \cdot \pdr \phi \dd r \dd \tau \nonumber\\
&\qquad =
\int_{\Omega^{J+1}} \orho_0(r)\,\phi(r,0) \dd r \quad
\forall\, \phi \in L^2(0,T; W^{1,2}(\Omega^{J+1}))
\cap W^{1,2}(0,T; L^{2}(\Omega^{J+1}))
, \quad \forall\,t \in (0,T],
\end{align}
since $\orho_\epsilon(\cdot,0) = \orho_0(\cdot):=\int_{\R^{(J+1)d}} M(v)\,\hrho_\epsilon(r,v,0) \dd v$. Passage to the limit
$\epsilon \rightarrow 0_+$ then gives
\begin{align}\label{eq:Moment12}
&\int_{\Omega^{J+1}} \eta(r,t)\, \phi(r,t) \dd r
-\int_0^t \int_{\Omega^{J+1}} \eta(r,\tau)\,\pd_\tau \phi(r,\tau) \dd r \dd \tau
- \sum_{j=1}^{J+1} \int_0^t \int_{\Omega^{J+1}}  \mathcal{J}_{j} \cdot \pdr \phi \dd r \dd \tau \nonumber\\
&\quad =
\int_{\Omega^{J+1}} \orho_0(r)\, \phi(r,0) \dd r \qquad
\forall\, \phi \in L^2(0,T; W^{1,2}(\Omega^{J+1}))\cap W^{1,2}(0,T; L^{2}(\Omega^{J+1})), \quad \forall\,t \in (0,T],
\end{align}
where $\mathcal{J}_{j} := - \beta\, \pdr\eta + \eta\, (({\mathcal L}r)_j+u_{(0)}(r_j,\cdot))$ for
$j=1,\dots,J+1$. To see that this is indeed the case, we recall from the proof of Lemma \ref{le:strong-rho} that
the sequence $$(\sqrt{\epsilon} (- \epsilon \sqrt{\epsilon}\, \partial_t \mathcal{J}_\epsilon - \mbox{Div}_r\, \mathbb{R}_\epsilon))_{\epsilon>0}$$ converges to $0$ in $W^{-1,2}(\Omega^{J+1} \times (0,T))^d$ as $\epsilon \rightarrow 0_+$.
It then follows from \eqref{eq:rep-partial-varrho} that, for each $j=1,\dots,J+1$,
\begin{equation}\label{eq:J-limit}
\lim_{\epsilon \rightarrow 0_+} (-\beta\, \partial_{r_j} \overline{\varrho}_\epsilon + \overline{\varrho}_\epsilon ((\mathcal{L}r)_j + u_\epsilon(r_j,\cdot))) = \mathcal{J}_{j}\qquad \mbox{in  $W^{-1,2}(\Omega^{J+1} \times (0,T))^d$}.
\end{equation}
Thanks to \eqref{eq:NS-conv-eps}$_3$ and since $\overline{\varrho}_\epsilon \rightharpoonup \overline{\varrho}= \eta$ weakly* in $L^\infty(0,T;L^2(\Omega^{J+1}))$, it follows that, for each $j=1,\dots,J+1$,
$$\overline{\varrho}_\epsilon ((\mathcal{L}r)_j + u_\epsilon(r_j,\cdot))
\rightharpoonup \eta ((\mathcal{L}r)_j + u(r_j,\cdot))\qquad \mbox{weakly in $L^2(0,T;L^2(\Omega^{J+1})^d)$}.$$
Also, $\beta\, \partial_{r_j} \overline{\varrho}_\epsilon  \rightharpoonup \beta\, \partial_{r_j} \eta$ weakly* in
$L^\infty(0,T;W^{-1,2}(\Omega^{J+1})^d)$. Hence,
\[\mathcal{J}_{j} := - \beta\, \pdr\eta + \eta\, (({\mathcal L}r)_j+u_{(0)}(r_j,\cdot)),\qquad
j=1,\dots,J+1,\]
as an equality in $W^{-1,2}(\Omega^{J+1} \times (0,T))^d$.

Now, since $\mathcal{J}_{j} \in L^2(\Omega^{J+1} \times (0,T))^d$ and $\eta\, (({\mathcal L}r)_j+u_{(0)}(r_j,\cdot)) \in L^2(\Omega^{J+1} \times (0,T))^d$ for all
$j=1,\dots,J+1$, it follows that $\pdr\eta \in L^2(\Omega^{J+1} \times (0,T))^d$ for all
$j=1,\dots,J+1$. Therefore,
\begin{equation}\label{eq:Ij}
\mathcal{J}_{j} := - \beta\, \pdr\eta + \eta\, (({\mathcal L}r)_j+u_{(0)}(r_j,\cdot)),\qquad
j=1,\dots,J+1,
\end{equation}
as an equality in $L^2(\Omega^{J+1} \times (0,T))^d$.

To summarize the main result of this section, we have shown that the small-mass limit of the coupled Oseen--Fokker--Planck
system under consideration satisfies the following coupled problem: the velocity-pressure pair $(u_{(0)},\pi_{(0)})$
solves the Oseen system
\begin{alignat}{2}\label{eq:oseen0}
\begin{aligned}
\pd_t u_{(0)} + (b\cdot \nabla) u_{(0)} - \mu \triangle u_{(0)} + \nabla \pi_{(0)} &= \nabla \cdot \KK_{(0)} &&\qquad \mbox{for
$(x, t) \in {\Omega} \times (0,T]$}, \\
\nabla \cdot u_{(0)} &= 0 &&\qquad \mbox{for $(x, t) \in {\Omega} \times (0, T]$},\\
u_{(0)}(x,t) &=0 &&\qquad \mbox{for $(x,t) \in \partial\Omega \times (0,T]$},\\
u_{(0)}(x,0) & = u_0(x)&&\qquad \mbox{for $x \in {\Omega}$},
\end{aligned}
\end{alignat}
with
\begin{alignat}{2}\label{eq:kramers}
\KK_{(0)}(x,t) &:= \frac{\int_{D^{J}} \sum_{j=1}^{J}(F(q_j)\otimes q_j)\,\eta\bigl(B(q,x),t\bigr)\dd q}{\int_{D^{J}}\eta\bigl(B(q,x),t\bigr) \dd q} &&\qquad\mbox{for
$(x, t) \in \Omega \times (0, T]$},
\end{alignat}
and the nonnegative function $\eta$, with $\int_{\Omega^{J+1}} \eta(r,t) \dd r = 1$ for all $t \in [0,T]$, solves the following parabolic initial-boundary-value problem:
\begin{alignat}{2}
\pd_t \eta &=
\sum_{j=1}^{J+1}  \Bigl(\beta\, \pdr^2\eta-\pdr\cdot \Bigl(\eta\, (({\mathcal L}r)_j+u_{(0)}(r_j,\cdot))\Bigr)\Bigr)&&
\qquad \mbox{in
$\Omega^{J+1} \times (0,T]$}, \label{eq:eta-1}\\
\eta(\cdot,0) &=\hrho_0 \in L^2(\Omega^{J+1};\R_{\geq 0}), \label{eq:eta-2}
\end{alignat}
subject to the weakly imposed boundary condition
$
\mathcal{J}_{j}\cdot \nu(r_j) = 0$ on $\partial\Omega^{(j)} \times (0,T]$ for $j=1,\dots, J+1$
(implied by the third term on the left-hand side of the equation \eqref{eq:Moment12}); i.e.,  by recalling the identity \eqref{eq:Ij},
we have the following zero-normal-flux boundary condition on $\eta$:
\begin{align}\label{eq:eta-3}
\big(\beta\, \pdr\eta - \eta\, (({\mathcal L}r)_j+u_{(0)}(r_j,\cdot))\big)\cdot \nu(r_j) = 0
\qquad \mbox{on $\partial\Omega^{(j)} \times  (0,T]$\, for\, $j=1,\dots, J+1$}.
\end{align}

We note that the partial differential equation \eqref{eq:eta-1} is of the form $\mbox{div}_{(r,t)}(\mathcal{J},\eta) = 0$,
where $\mbox{div}_{(r,t)}$ is the space-time divergence of the $((J+1)d + 1)$-component vector-function $(\mathcal{J},\eta)$
defined on $\Omega^{J+1} \times (0,T)$, with $(\mathcal{J},\eta) \in L^2(\Omega^{J+1} \times (0,T))^{(J+1)d} \times L^2(\Omega^{(J+1)d}
\times (0,T))$. Consequently, by a standard trace theorem for the function space
$H(\mbox{div}, \mathfrak{D})$, with $\mathfrak{D}=\Omega^{J+1} \times (0,T)$, the vector-function
$(\mathcal{J},\eta)$ has a well-defined normal trace on the boundary $\partial(\Omega^{J+1} \times (0,T))$
of the domain $\Omega^{J+1} \times (0,T)$, contained in
$W^{-\frac{1}{2},2}(\partial(\Omega^{J+1} \times (0,T)))$; see, for example, 
Theorem 18.7 in \cite{BaCa}.
Thus, the boundary condition \eqref{eq:eta-3} for \eqref{eq:eta-1} is meaningful,
as an equality in $W^{-\frac{1}{2},2}(\partial\Omega^{(j)} \times (0,T))$
(the dual space of $W^{\frac{1}{2},2}_{00}(\partial\Omega^{(j)} \times (0,T))$, $j=1, \dots, J+1$; cf., for example, Theorem 18.9 in \cite{BaCa}).

We complete this section by proving the existence of a unique solution to the parabolic initial-boundary-value problem satisfied by $\eta$.
To this end, we introduce the real-valued function $\tilde \eta$ defined on $\Omega^{J+1} \times [0,T]$ by
\begin{align*}
\tilde \eta(r,t) & := \eta(r,t) - \frac{1}{|\Omega|} \int_{\Omega^{J+1}} \eta(r,t) \dd r \\
& = \eta(r,t) - \frac{1}{|\Omega|}.
\end{align*}
Hence, we have that the function $\tilde \eta$, with $ \int_{\Omega^{J+1}} \tilde \eta(r,t) \dd r = 0$ for all $t \in [0,T]$, solves the following parabolic initial-boundary-value problem:
\begin{alignat}{2}
\pd_t \tilde \eta &=
\sum_{j=1}^{J+1}  \Bigl(\beta\, \pdr^2\tilde \eta-\pdr\cdot \Bigl( \Bigl(\tilde \eta + \frac{1}{|\Omega|} \Bigr) \, (({\mathcal L}r)_j + u_{(0)}(r_j,\cdot)) \Bigr)\Bigr)
\qquad &&\mbox{in $\Omega^{J+1} \times (0,T]$}, \label{eq:tilde_eta-1}\\
\tilde \eta_0 &:= \tilde \eta(\cdot,0) =\hrho_0 - \frac{1}{|\Omega|} \in L^2(\Omega^{J+1};\R_{\geq 0}),\qquad &&\int_{\Omega^{J+1}} \tilde \eta_0(r) \dd r = 0,
\label{eq:tilde_eta-2}
\\
\label{eq:tilde_eta-3}
&\hspace{-5mm}(\beta\, \pdr\tilde \eta - \Bigl(\tilde \eta + \frac{1}{|\Omega|} \Bigr) \, (({\mathcal L}r)_j+ u_{(0)}(r_j,\cdot))\Bigr)\cdot \nu(r_j) = 0
\quad &&\mbox{on $\partial\Omega^{(j)} \times  (0,T]$, $\;j=1,\dots,J+1$}.
\end{alignat}
Let us introduce the Hilbert space
$$ H^1_{\star}(\Omega^{J+1}) := \left\{
      \begin{aligned}
        \varphi \in H^1(\Omega^{J+1}):  \int_{\Omega^{J+1}} \varphi(r) \dd r = 0
      \end{aligned}
    \right\}$$
equipped with the norm of $H^1(\Omega^{J+1})$, with an analogous definition of $L^2_\star(\Omega^{J+1})$ equipped with the norm of $L^2(\Omega^{J+1})$.

By \eqref{eq:Moment12}, the weak formulation of the problem \eqref{eq:tilde_eta-1}--\eqref{eq:tilde_eta-3}
therefore amounts to seeking a function $$\tilde\eta \in \mathcal{C}([0,T];L^2_\star(\Omega^{J+1})) \cap L^2(0,T; H^1_\star(\Omega^{J+1}))$$ with $$\partial_t \tilde \eta \in L^2(0,T;H^1_{\star}(\Omega^{J+1})'),$$
such that $\tilde \eta (\cdot,0) = \tilde \eta_0(\cdot)$,
and
\begin{align*}
&\langle \partial_t \tilde \eta, \varphi \rangle_{H^1_{\star}(\Omega^{J+1})' \times H^1_{\star}(\Omega^{J+1})}
+ \sum_{j=1}^{J+1} \int_{\Omega^{J+1}} \bigl[\beta \, \pdr \tilde \eta - \tilde \eta \, (({\mathcal L}r)_j+u_{(0)}(r_j,\cdot)) \bigr] \cdot \pdr \varphi \dd r\\
\\& \qquad \qquad  = \sum_{j=1}^{J+1} \int_{\Omega^{J+1}} \frac{1}{|\Omega|} \, (({\mathcal L}r)_j +u_{(0)}(r_j,\cdot))\cdot \pdr \varphi \dd r\qquad \forall\,
\varphi \in H^1_\star(\Omega^{J+1}).
\end{align*}
%

We consider the bilinear form $a(\cdot,\cdot)$ defined on $H^1_\star(\Omega^{J+1}) \times H^1_\star(\Omega^{J+1})$ by
\[ a(\psi, \varphi):=   \sum_{j=1}^{J+1} \int_{\Omega^{J+1}} \bigl[\beta \, \pdr \psi - \psi \, (({\mathcal L}r)_j + u_{(0)}(r_j,\cdot))\bigr] \cdot \pdr \varphi \dd r, \qquad \psi, \varphi \in H^1_\star(\Omega^{J+1}),\]
and set
\[ \ell(\varphi) := \sum_{j=1}^{J+1} \int_{\Omega^{J+1}} \frac{1}{|\Omega|} \, (({\mathcal L}r)_j + u_{(0)}(r_j,\cdot))\cdot \pdr \varphi \dd r, \qquad \varphi \in H^1_\star(\Omega^{J+1}).\]

Because $u_{(0)} \in L^2(0,T;L^\infty(\Omega)^d)$, we have that $\ell \in L^2(0,T;H^1_{\star}(\Omega^{J+1})')$.
The  bilinear  form $a(\cdot, \cdot)$ is  obviously  well-defined for every $\psi, \varphi$ in $H^1_{\star}(\Omega^{J+1})$.
Moreover, by the Cauchy--Schwarz inequality, $a(\cdot,\cdot)$ is bounded (and therefore continuous); i.e.,
\begin{align*}
    |a(\psi, \varphi)| \leq C\, \|\psi\|_{H^1(\Omega^{J+1})}  \|\varphi\|_{H^1(\Omega^{J+1})} \qquad \forall\, \psi, \varphi \in H^1_\star(\Omega^{J+1}),
\end{align*}
for some positive constant $C$, independent of $t \in [0,T]$. Furthermore, $a(\cdot,\cdot)$ satisfies a G{\aa}rding inequality;
indeed, we have that
\begin{align*}
a(\psi, \psi) &= \beta \sum_{j=1}^{J+1} \int_{\Omega^{J+1}} |\pdr \psi|^2 - \sum_{j=1}^{J+1} \int_{\Omega^{J+1}} \psi \, (({\mathcal L}r)_j +  u_{(0)}(r_j,\cdot))\cdot \pdr  \psi \dd r \\
&\geq  \frac{\beta}{2}\sum_{j=1}^{J+1} \int_{\Omega^{J+1}} |\pdr \psi|^2 - \frac{1}{2\beta}\sum_{j=1}^{J+1} \int_{\Omega^{J+1}} \,  |({\mathcal L}r)_j + u_{(0)}(r_j,\cdot)|^2 \, |\psi| ^2 \dd r \\
& \geq  \frac{\beta}{2} \,|\psi|^2_{H^1(\Omega^{J+1})} - \frac{1}{2\beta}\,\Bigl(  {\rm{ess.sup}}_{r \in \Omega^{J+1}} \sum_{j=1}^{J+1}|({\mathcal L}r)_j+ u_{(0)}(r_j,\cdot)|^2 \Bigr)  \|\psi\|^2_{L^2(\Omega^{J+1})}\\
&  =  \frac{\beta}{2}\, \|\psi \|^2_{H^1(\Omega^{J+1})} - \left(\frac{\beta}{2} + \frac{1}{2\beta}\Bigl(  {\rm{ess.sup}}_{r \in \Omega^{J+1}} \sum_{j=1}^{J+1}|({\mathcal L}r)_j+ u_{(0)}(r_j,\cdot)|^2 \Bigr)  \right) \|\psi\|^2_{L^2(\Omega^{J+1})}
\end{align*}
for all $\psi \in H^1_\star(\Omega^{J+1})$, which leads to
\begin{align*}
a(\psi, \psi) & \geq \alpha\, \|\psi\|^2_{H^1(\Omega^{J+1})} - C\, \|\psi\|^2_{L^2(\Omega^{J+1})} \qquad \forall\, \psi \in H^1_\star(\Omega^{J+1}),
\end{align*}
where
\[ \alpha:=\beta/2\qquad \mbox{and}\qquad  C:= \frac{\beta}{2} + \frac{1}{2\beta}\Bigl(  {\rm{ess.sup}}_{r \in \Omega^{J+1}} \sum_{j=1}^{J+1}|({\mathcal L}r)_j+ u_{(0)}(r_j,\cdot)|^2 \Bigr)  \]
are positive constants.

A classical abstract result due to J.-L. Lions (cf. \cite{brezis2010functional}, Theorem
10.9) then implies that, for any initial datum $\tilde \eta_0  \in L^2_\star(\Omega^{J+1})$
(and $u_{(0)} \in L^2(0,T;W^{1,\sigma}(\Omega)^d)$, with $\sigma>d$, fixed), there exists a unique function $\tilde \eta$ satisfying:
$$ \tilde \eta \in \mathcal{C}([0,T]; L^2_\star(\Omega^{J+1}))\cap L^2(0,T;H^1_{\star}(\Omega^{J+1})), \qquad \partial_t \tilde \eta \in L^2(0,T;H^1_{\star}(\Omega^{J+1})'), $$
$$ \langle \partial_t \tilde \eta, \varphi \rangle_{H^1_{\star}(\Omega^{J+1})' \times H^1_{\star}(\Omega^{J+1})}+a(\tilde \eta, \varphi) = \ell(\varphi) \qquad \mbox{for a.e. } t \in (0,T),
\quad \forall\, \varphi \in H^1_{\star}(\Omega^{J+1}), $$
and
$$\tilde \eta(\cdot,0)= \tilde \eta_0(\cdot).$$
That concludes the proof of the existence of a unique weak solution to the parabolic initial-boundary-value problem \eqref{eq:tilde_eta-1}--\eqref{eq:tilde_eta-3} satisfied by $\tilde\eta$, which therefore
also establishes the existence of a unique weak solution to the parabolic initial-boundary-value problem satisfied by
$\eta=\tilde\eta + 1/|\Omega|$, for $u_{(0)} \in L^2(0,T;W^{1,\sigma}(\Omega)^d)$, with $\sigma>d$, fixed. Similarly, the Oseen system
has, for a given fixed $\eta$, a unique weak solution pair $(u_{(0)},\pi_{(0)})$ (with $\pi_{(0)}$ understood to be
unique up to an additive constant). The uniqueness of a solution triple $(u_{(0)},\pi_{(0)}, \eta)$
satisfying the coupled problem we have arrived at in the small-mass limit is of course not guaranteed, since $\mathbb{K}_{(0)}$ is a nonlinear function of $\eta$ and $u_{(0)}$
enters into the evolution equation for $\eta$, so the coupled system for the small-mass limit $(u_{(0)},\pi_{(0)}, \eta)$ is still very much nonlinear.

\section{The small mass limit problem and the classical Hookean bead-spring-chain model}

Our aim in this final section is to explore the connection between the small-mass-limit problem \eqref{eq:oseen0}--\eqref{eq:eta-3} and the classical Hookean bead-spring-chain model for dilute polymeric fluids.
We begin by recalling that
\[ x = \frac{1}{J+1}\big(r_1 + \cdots + r_{j+1}\big)\quad \mbox{and}\quad q_j = r_{j+1}-r_j\quad \mbox{for $j=1,\dots,J$},\]
and perform a change of variables in order to transform the partial derivatives in \eqref{eq:eta-1} with respect to the variables $r_j$, $j=1,\dots, J+1$, into partial derivatives with respect to $x$ and $q_j$, $j=1,\dots, J$. To this end, note that
\begin{align*}
\partial_{r_1} &= -\partial_{q_1} + \frac{1}{J+1} \partial_x,\\
\partial_{r_{j+1}} &=  \partial_{q_j} - \partial_{q_{j+1}} + \frac{1}{J+1} \partial_x, \qquad j=1,\dots,J-1,\\
\partial_{r_{J+1}} & = \partial_{q_J} + \frac{1}{J+1}\partial_x.
\end{align*}
Thus,
\[ \partial_{r_1}^2 + \cdots + \partial_{r_{J+1}}^2 = (-\partial_{q_1})^2 + (\partial_{q_1} - \partial_{q_2})^2 + \cdots + (\partial_{q_{J-1}} - \partial_{q_J})^2 + (\partial_{q_J})^2 + \frac{1}{J+1} \partial_x^2.\]
Consider the matrix $\mathcal{B} \in \mathbb{R}^{(J+1)d \times Jd}$, called the \textit{incidence matrix}, which is a $(J+1) \times J$ block matrix with $d\times d $ blocks, defined by
\begin{eqnarray*}
\mathcal{B}:= \left(
\begin{array}{ccccc}
    -\mathbb{I} & \mathbb{O}  & \mathbb{O} & \dots  &\mathbb{O} \\
    \mathbb{I} & -\mathbb{I} & \mathbb{O} & \ddots &\mathbb{O} \\
    \mathbb{O} & \mathbb{I} & -\mathbb{I} & \mathbb{O} & \mathbb{O}\\
    \vdots & \ddots & \ddots & \ddots & \ddots \\
    \mathbb{O} & \dots & \mathbb{O} & \mathbb{I}&-\mathbb{I}\\
    \mathbb{O} & \dots & \mathbb{O} & \mathbb{O}&\mathbb{I}
\end{array}
\right).
\end{eqnarray*}
The $d\times d$ block at position $(i,j)$ in $\mathcal{B}$ is equal $-\mathbb{I}$ if the $j$th spring starts at bead $i$, it is equal to $\mathbb{I}$ if the $j$th spring ends at bead $i$, and it is equal to $\mathbb{O}$ otherwise, for $i=1, \dots, J+1$
and $j=1,\dots,J$. Note that
\[ \mathcal{R}:=\mathcal{B}^{\rm T} \mathcal{B} =
\left(
\begin{array}{ccccc}
    2\mathbb{I} & -\mathbb{I}  & \mathbb{O} & \dots  &\mathbb{O} \\
    -\mathbb{I} & 2\mathbb{I} & -\mathbb{I} & \ddots &\mathbb{O} \\
    \mathbb{O} & -\mathbb{I} & 2\mathbb{I} & -\mathbb{I} & \mathbb{O}\\
    \vdots & \ddots & \ddots & \ddots & \ddots \\
    \mathbb{O} & \dots & -\mathbb{I} & 2\mathbb{I}&-\mathbb{I}\\
    \mathbb{O} & \dots & \mathbb{O} & -\mathbb{I}&2\mathbb{I}
\end{array}
\right).
\]
The symmetric positive definite block matrix $\mathcal{R}:=\mathcal{B}^{\rm T}\mathcal{B}$ of size $Jd \times Jd$ is referred to as the \textit{Rouse matrix}. In terms of the Rouse matrix we have
\begin{align}\label{eq:term1}
\partial_{r_1}^2 + \cdots + \partial_{r_{J+1}}^2 = \partial_q^{\rm T} \mathcal{B}^{\rm T} \mathcal{B} \partial_q + \frac{1}{J+1} \partial_x^2 = \partial_q^{\rm T} \mathcal{R}\, \partial_q + \frac{1}{J+1} \partial_x^2,
\end{align}
where $\partial_q := (\partial_{q_1}^{\rm T}, \dots, \partial_{q_J}^{\rm T})^{\rm T}$.

Next, note that
\begin{align*}
\partial_{r_1}\cdot(\eta(\mathcal{L}r)_1) &=(\partial_{r_1}\eta)\cdot(\mathcal{L}r)_1 + \eta \partial_{r_1} \cdot (\mathcal{L}r)_1 = (\partial_{r_1}\eta)\cdot q_1 - d\eta.
\end{align*}
We define, with $r = B(q,x)$, where $q=(q_1^{\rm T}, \dots, q_J^{\rm T})^{\rm T} \in D^J$ and $x \in \Omega$,
\[ \psi(x,q,t) := \eta(B(q,x),t) = \eta(r,t).\]
Hence,
\[ \partial_{r_1}\cdot(\eta(\mathcal{L}r)_1) = \bigg(-\partial_{q_1}\psi + \frac{1}{J+1} \partial_x\psi\bigg) \cdot q_1 - d\psi.\]
Similarly,
\[\partial_{r_{j+1}}\cdot(\eta(\mathcal{L}r)_{j+1}) = \bigg(\partial_{q_j}\psi-\partial_{q_{j+1}}\psi + \frac{1}{J+1} \partial_x\psi\bigg) \cdot (q_{j+1}-q_j) - 2d\psi, \qquad j=1,\dots, J-1,
\]
and
\[ \partial_{r_{J+1}}\cdot(\eta(\mathcal{L}r)_{J+1}) = \bigg(\partial_{q_J}\psi + \frac{1}{J+1} \partial_x\psi\bigg) \cdot (-q_J) - d\psi.
\]
Thus we have that
\begin{align}\label{eq:term2}
\sum_{j=1}^{J+1} \pdr \cdot (\eta(\mathcal{L}r)_j) &= - [\mathcal{B} \partial_q\psi \cdot \mathcal{B}q + 2dJ\psi]
= -  [(\partial_q\psi)^{\rm T}\mathcal{B}^{\rm T} \mathcal{B}q + 2dJ\psi]\nonumber\\
& = -  [(\partial_q\psi)^{\rm T}\mathcal{B}^{\rm T} \mathcal{B}q + (\partial_q^{\rm T}(\mathcal{B}^{\rm T}\mathcal{B}q))\psi] = - \partial_q^{\rm T}(\psi\,\mathcal{R}\,q).
\end{align}
By combining \eqref{eq:term1} and \eqref{eq:term2} we deduce that
\begin{align}\label{eq:term3a}
- \sum_{j=1}^{J+1}(\beta \pdr^2 \eta - \pdr \cdot(\eta(\mathcal{L}r)_j)
&= - \left[\beta \partial_q^{\rm T} \mathcal{R}\, \partial_q\psi   + \partial_q^{\rm T}(\mathcal{R}\,q\,\psi) \right]-  \frac{\beta}{J+1} \partial_x^2 \psi \nonumber\\
&=- \beta \partial_q \cdot \left[\mathcal{R}\, \left(\partial_q\psi   + \frac{1}{\beta}q\,\psi\right) \right]-  \frac{\beta}{J+1} \partial_x^2 \psi.
\end{align}
Let
\[\mathfrak{M}(q):= (2\pi\beta)^{-\frac{1}{2}Jd}\,\mbox{exp}\left(-|q|^2/2\beta\right), \qquad \mbox{where $q=(q_1^{\rm T}, \dots, q_J^{\rm T})^{\rm T} \in D^{J}$}.\]
Hence, \eqref{eq:term3a} yields
\begin{align}\label{eq:term3}
- \sum_{j=1}^{J+1}(\beta \pdr^2 \eta - \pdr \cdot(\eta(\mathcal{L}r)_j)
&=- \beta \partial_q \cdot \left[\mathcal{R}\,\, \mathfrak{M}(q)\,\partial_q\left(\frac{\psi}{\mathfrak{M}(q)} \right)\right]-  \frac{\beta}{J+1} \partial_x^2 \psi.
\end{align}

Next, observe that
\begin{align*}
\pdr \cdot  (\eta u_{(0)}(r_j,t)) = u_{(0)}(r_j,t)\cdot \pdr\eta = u_{(0)}(r_j,t)\cdot \left\{
\begin{array}{ll}
- \partial_{q_1}\psi + \frac{1}{J+1} \partial_x \psi, &\qquad j=1, \nonumber\\
\partial_{q_{j-1}}\psi - \partial_{q_{j}}\psi + \frac{1}{J+1}\partial_x \psi, &\qquad j=2,\dots, J,\\
\partial_{q_J}\psi + \frac{1}{J+1}\partial_x \psi, &\qquad j=J+1.
\end{array}
\right.
\end{align*}
Thus we have that
\begin{align*}
\sum_{j=1}^{J+1} \pdr \cdot  (\eta u_{(0)}(r_j,t)) = \left(\frac{1}{J+1} \sum_{j=1}^{J+1} u_{(0)}(r_j,t)\right)\cdot \partial_x \psi + \sum_{j=1}^J (u_{(0)}(r_{j+1},t) - u_{(0)}(r_j,t))\cdot \partial_{q_j}\psi.
\end{align*}
By performing the approximations
\[ \left(\frac{1}{J+1} \sum_{j=1}^{J+1} u_{(0)}(r_j,t)\right) \approx u_{(0)}(x,t)\]
and
\[ (u_{(0)}(r_{j+1},t) - u_{(0)}(r_j,t)) \approx (\nabla u_{(0)}(x,t))(r_{j+1} - r_j) = (\nabla u_{(0)})(x,t)q_j,\]
we obtain
\begin{align}\label{eq:term4}
\begin{aligned}
\sum_{j=1}^{J+1} \pdr \cdot  (\eta u_{(0)}(r_j,t))  &\approx u_{(0)}(x,t) \cdot \partial_x \psi + \sum_{j=1}^J  (\nabla u_{(0)})(x,t)q_j \cdot \partial_{q_j}\psi\\
&= u_{(0)}(x,t) \cdot \partial_x \psi + \sum_{j=1}^J  \partial_{q_j}\cdot ((\nabla u_{(0)})(x,t)q_j\psi),
\end{aligned}
\end{align}
where the last equality is a consequence of the fact that
\[\partial_{q_j}\cdot ((\nabla u_{(0)})q_j) = \mbox{tr}(\nabla u_{(0)}) = \nabla \cdot u_{(0)} = 0.\]

By substituting \eqref{eq:term3} and \eqref{eq:term4} into \eqref{eq:eta-1} and writing $\nabla$ instead of $\partial_x$
and $\Delta$ instead of $\partial_x^2$, we have that
\begin{align*}
\partial_t \psi + u_{(0)} \cdot \nabla \psi + \sum_{j=1}^J  \partial_{q_j}\cdot ((\nabla u_{(0)})q_j\psi)
- \beta \partial_q \cdot \left[\mathcal{R}\,\, \mathfrak{M}(q)\,\partial_q\left(\frac{\psi}{\mathfrak{M}(q)} \right)\right]-  \frac{\beta}{J+1} \Delta \psi \approx 0,
\end{align*}
which can also be written as
\begin{align}\label{eq:term5}
\partial_t \psi + u_{(0)} \cdot \nabla \psi + \sum_{j=1}^J  \partial_{q_j}\cdot ((\nabla u_{(0)})q_j\psi)
- \beta \sum_{i,j=1}^J \partial_{q_j} \cdot \left[\mathcal{R}_{ij}\, \mathfrak{M}(q)\,\partial_{q_i}\left(\frac{\psi}{\mathfrak{M}(q)} \right)\right]-  \frac{\beta}{J+1} \Delta \psi \approx 0.
\end{align}
Upon replacing the approximate equality
in \eqref{eq:term5} by equality we arrive at the Fokker--Planck equation for the classical Hookean
bead-spring-chain model with centre-of-mass diffusion:
\begin{align}\label{eq:term6}
\partial_t \psi + u_{(0)} \cdot \nabla \psi + \sum_{j=1}^J  \partial_{q_j}\cdot ((\nabla u_{(0)})q_j\psi)
- \beta \sum_{i,j=1}^J \partial_{q_j} \cdot \left[\mathcal{R}_{ij}\, \mathfrak{M}(q)\,\partial_{q_i}\left(\frac{\psi}{\mathfrak{M}(q)} \right)\right]-  \frac{\beta}{J+1} \Delta \psi =  0.
\end{align}

The equation \eqref{eq:term6} is supplemented by the initial condition
\begin{align}\label{eq:term7}
\psi(x,q,0) = \psi_0(x,q),
\end{align}
where $\psi_0(x,q):=\hat{\varrho}_0(B(q,x))$ (cf. \eqref{eq:eta-2}).

Since \eqref{eq:term6} is now posed on the domain
$\Omega \times D^J \times (0,T]$ rather than on $\Omega^{J+1} \times (0,T]$, it is natural to replace
the zero-normal-flux boundary condition \eqref{eq:eta-3}
on $\partial\Omega^{(J+1)} \times (0,T]$ by zero-normal-flux boundary conditions on
$\partial\Omega \times D^J \times (0,T]$ and $\Omega \times \partial D^{J} \times (0,T]$;
i.e.,
\begin{align}\label{eq:term8a}
\nabla \psi (x,q,t) \cdot n_x(x) = 0\qquad \mbox{for all  $(x,q,t)\in \partial\Omega \times D^J \times (0,T]$},
\end{align}
where $n_x$ is the unit outward normal vector to $\partial\Omega$, and
\begin{align}\label{eq:term8}
\sum_{i=1}^J \left[\beta \,\mathcal{R}_{ij}\, \mathfrak{M}(q)\,\partial_{q_i}\left(\frac{\psi}{\mathfrak{M}(q)} \right)
- ((\nabla u_{(0)})q_j\psi)\right]\cdot n_{q_j} = 0
\end{align}
for all $(x,q,t) \in \Omega \times (D \times \dots \times \partial D \times \cdots \times D) \times (0,T]$, $j=1,\dots,J$,
where $n_{q_j}$ is the unit outward normal vector to $\partial D$ for the $j$th copy of the domain $D$ in the Cartesian product $D^J = D \times \cdots \times D$.

By integrating the Fokker--Planck equation \eqref{eq:term6} over $D^J$ and using the boundary condition \eqref{eq:term8},
and then integrating both the boundary condition \eqref{eq:term8a} and the initial condition \eqref{eq:term7} over $D^J$, we obtain
\begin{alignat}{2}\label{eq:term9}
\partial_t \left(\int_{D^J} \psi\, \mathrm{d}q\right)  +
 u_{(0)} \cdot \nabla \left(\int_{D^J} \psi \,\mathrm{d}q\right) -  \frac{\beta}{J+1} \Delta \left(\int_{D^J} \psi \,\mathrm{d}q\right)  &=  0&&\quad \mbox{in $\Omega \times (0,T]$},\nonumber\\
\nabla \left(\int_{D^J} \psi\, \mathrm{d}q\right) \cdot n_x &= 0&&\quad \mbox{on $\partial\Omega \times (0,T]$},\\
\left(\int_{D^J} \psi\, \mathrm{d}q\right)(\cdot,0) &= \left(\int_{D^J} \psi_0(\cdot,q)\, \mathrm{d}q\right)&&\quad \mbox{in $\Omega$}.\nonumber
\end{alignat}

If the initial datum $\psi_0$ is such that, for some constant $n>0$,
\[ \int_{D^J} \psi_0(x,q)\, \mathrm{d}q =  n^{-1} \qquad \mbox{for a.e. $x \in \Omega$},\]
then, by uniqueness of the solution to the initial-boundary-value problem
\eqref{eq:term9}, it follows that
\[ \int_{D^J} \psi(x,q,t)\, \mathrm{d}q =  n^{-1} \qquad \mbox{for a.e. $(x,t) \in \Omega\times [0,T]$};\]
that is
\[ \int_{D^J} \eta(B(q,x),t) \dd q  = \int_{D^J} \psi(x,q,t) \dd q = n^{-1} \qquad \mbox{for a.e. $(x,t) \in \Omega\times [0,T]$}, \]
whereby the expression for the tensor $\mathbb{K}_{(0)}$ stated in \eqref{eq:kramers} simplifies to
\begin{align}\label{eq:ourK0}
\mathbb{K}_{(0)} = n \int_{D^J} \sum_{j=1}^{J}(F(q_j)\otimes q_j)\, \psi(x,q,t) \dd q.
\end{align}
%

In this form, $\mathbb{K}_{(0)}$ is referred to as \textit{Kramers' expression} for the polymeric extra stress tensor for the bead-spring-chain model with
$J$ springs.
We highlight one small but relevant difference between the classical Kramers expression and \eqref{eq:ourK0}: in the classical Kramers expression the integral in $q$ is taken over the whole of $\mathbb{R}^{Jd}$, whereas in our case the integral in $q$ is 
over $D^J \subset \mathbb{R}^{Jd}$, where $D:=\Omega - \Omega$. In this respect the formula \eqref{eq:ourK0} is more consistent with the definition of the configuration vectors $q_j:=r_{j+1}-r_j$, $j=1,\dots,J$,
than its classical counterpart; it also avoids the nonphysical feature of the classical Hookean model that springs in a linear bead-spring-chain are allowed to stretch out to infinity even though their
endpoints are confined to a bounded flow domain $\Omega$. In our case, in contrast, if $\Omega$ is bounded, then so is $D^J$. Of course, if $\Omega$ happens to be the whole of $\mathbb{R}^d$ then
$D^J = \mathbb{R}^{Jd}$, so \eqref{eq:ourK0} and its classical counterpart will coincide.

The main obstacle in proving the existence of global weak solutions to the Hookean bead-spring-chain
model (cf. \cite{BS2010-hookean}, for example,) where integration in the Kramers expression is over $\mathbb{R}^{Jd}$, 
is lack of weak compactness of the sequence of approximating solutions to the Fokker--Planck equation
in the $|q|^2$-weighted $L^1$ space $L^1_{|q|^2}(\mathbb{R}^{Jd})$
(even though the sequence of approximating solutions is strongly convergent in
$L^1_{\rm{loc}}(\mathbb{R}^{Jd})$), which then obstructs passage to the limit in the classical Kramers expression
precisely because integration with respect to the configuration space variable $q$ there is over the whole of $\mathbb{R}^{Jd}$ rather than a bounded subset of $\mathbb{R}^{Jd}$. This difficulty was ultimately overcome in \cite{BS2018}
in the case of $d=2$ through a rigorous proof of the fact that the macroscopic closure of the Hookean dumbbell model ($J=1$) is the Oldroyd-B model, for which a global existence result is available (cf. \cite{barrett-boyaval-09}).
The existence of global weak solutions to the Hookean dumbbell model in the case of $d=3$, with the Kramers expression in its classical form (i.e. with integration over $q \in \mathbb{R}^{Jd}$) however
remains an open problem. With the Kramers expression defined by $\eqref{eq:ourK0}$ now, the situation is radically different: the technical difficulties caused by loss of compactness disappear, enabling completion
of the proof of existence of global weak solutions to the Hookean bead-spring-chain model in both two and three space dimensions by replicating the proof contained in  \cite{BS2010-hookean}.

\medskip

\noindent{\bf Acknowledgements.}
The authors thank Andrew Stuart (Caltech) for stimulating discussions, his help with the formulation of the McKean--Vlasov diffusion problem studied in this paper, and for suggesting to them 
the small-mass-limit problem explored in the penultimate section. Ghozlane Yahiaoui's work was supported by the UK Engineering and Physical Sciences Research Council [EP/L015811/1].
Endre S\"uli is grateful to John W. Barrett for numerous helpful discussions on the subject of this paper. 
\vspace{0.1in}

\bibliographystyle{amsplain}
\bibliography{ref}

\providecommand{\bysame}{\leavevmode\hbox to3em{\hrulefill}\thinspace}
\providecommand{\MR}{\relax\ifhmode\unskip\space\fi MR }
\providecommand{\MRhref}[2]{%
  \href{http://www.ams.org/mathscinet-getitem?mr=#1}{#2}
}
\providecommand{\href}[2]{#2}
\begin{thebibliography}{10}

\bibitem{max_func_aalto}
D.~Aalto and J.~Kinnunen, \emph{Maximal functions in {S}obolev spaces},
  International Mathematical Series, pp.~25--67, Springer New York, 2008.

\bibitem{BaCa}
C.~Baiocchi and A.~Capelo, \emph{Variational and quasivariational
  inequalities}, A Wiley-Interscience Publication, John Wiley \& Sons, Inc.,
  New York, 1984, Applications to free boundary problems, Translated from the
  Italian by Lakshmi Jayakar.

\bibitem{barrett-boyaval-09}
J.~W. Barrett and S.~Boyaval, \emph{Existence and approximation of a
  (regularized) {O}ldroyd-{B} model}, Math.\ Models Methods Appl.\ Sci.
  \textbf{21} (2011), 1783--1837.

\bibitem{BSS}
J.W.\ Barrett, {Ch}.\ Schwab, and E.\ S\"{u}li, \emph{Existence of global weak
  solutions for some polymeric flow models}, Math. Models Methods Appl. Sci.
  \textbf{15} (2005), 939--983.

\bibitem{BS}
J.W.\ Barrett and E.\ S\"{u}li, \emph{Existence of global weak solutions to
  some regularized kinetic models of dilute polymers}, Multiscale Model. Simul.
  \textbf{6} (2007), 506--546.

\bibitem{BS:M3AS}
\bysame, \emph{Existence of global weak solutions to dumbbell models for dilute
  polymers with microscopic cut-off}, M3AS: Mathematical Models and Methods in
  Applied Sciences \textbf{18} (2008), no.~6, 935--971.

\bibitem{BS2011-fene}
\bysame, \emph{Existence and equilibration of global weak solutions to kinetic
  models for dilute polymers {I}: Finitely extensible nonlinear bead-spring
  chains}, Math.\ Models Methods Appl.\ Sci. \textbf{21} (2011), no.~6,
  1211--1289.

\bibitem{BS2010-hookean}
\bysame, \emph{Existence and equilibration of global weak solutions to kinetic
  models for dilute polymers {II}: {H}ookean-type bead-spring chains}, Math.\
  Models Methods Appl.\ Sci. \textbf{22} (2012), no.~5, 1150024 (84 pages).

\bibitem{BS-2012-density-JDE}
\bysame, \emph{Existence of global weak solutions to finitely extensible
  nonlinear bead-spring chain models for dilute polymers with variable density
  and viscosity}, J. Differential Equations \textbf{253} (2012), 3610--3677.

\bibitem{BS-2015-compressible-M3AS}
\bysame, \emph{Existence of global weak solutions to compressible isentropic
  finitely extensible nonlinear bead-spring chain models for dilute polymers},
  Math.\ Models Methods Appl.\ Sci. \textbf{26} (2016), no.~3, 469--568.

\bibitem{BS2016}
\bysame, \emph{Existence of global weak solutions to compressible isentropic
  finitely extensible nonlinear bead-spring chain models for dilute polymers:
  the two-dimensional case}, J. Differential Equations \textbf{261} (2016),
  no.~1, 592--626.

\bibitem{BS2018}
\bysame, \emph{Existence of global weak solutions to the kinetic {H}ookean
  dumbbell model for incompressible dilute polymeric fluids}, Journal of
  Nonlinear Analysis, Series B: Real World Applications \textbf{36} (2018),
  362--395.

\bibitem{Beckner}
W.~Beckner, \emph{A generalized {P}oincar\'e inequality for {G}aussian
  measures}, Proc. Amer. Math. Soc. \textbf{105} (1989), no.~2, 397--400.

\bibitem{BCAH}
R.\ Bird, C.\ Curtiss, R.\ Armstrong, and O.\ Hassager, \emph{Dynamics of
  polymeric liquids, vol 2\/$:$ kinetic theory}, John Wiley and Sons, New York,
  1987.

\bibitem{Bouchut}
F.~Bouchut, \emph{Existence and uniqueness of a global smooth solution for the
  {V}lasov--{P}oisson--{F}okker--{P}lanck system in three dimensions}, J.
  Funct. Anal. \textbf{111} (1993), no.~1, 239--258.

\bibitem{brezis2010functional}
H.~Brezis, \emph{Functional analysis, {S}obolev spaces and partial differential
  equations}, Universitext, Springer New York, 2010.

\bibitem{BMS}
M.~Bul{\'{\i}}{\v{c}}ek, J.~M{\'a}lek, and E.~S{\"u}li, \emph{Existence of
  global weak solutions to implicitly constituted kinetic models of
  incompressible homogeneous dilute polymers}, Comm. Partial Differential
  Equations \textbf{38} (2013), no.~5, 882--924.

\bibitem{CG}
J.~A. Carrillo and T.~Goudon, \emph{Stability and asymptotic analysis of a
  fluid-particle interaction model}, Comm. Partial Differential Equations
  \textbf{31} (2006), no.~7-9, 1349--1379.

\bibitem{ChenL13}
X.~Chen and J.-G. Liu, \emph{Analysis of polymeric flow models and related
  compactness theorems in weighted spaces}, SIAM J. Math. Anal. \textbf{45}
  (2013), no.~3, 1179--1215.

\bibitem{Chernoff}
H.~Chernoff, \emph{A note on an inequality involving the normal distribution},
  Ann. Probab. \textbf{9} (1981), no.~3, 533--535.

\bibitem{CFTZ}
P.~Constantin, C.~Fefferman, E.S. Titi, and A.~Zarnescu, \emph{Regularity of
  coupled two-dimensional nonlinear {F}okker-{P}lanck and {N}avier-{S}tokes
  systems}, Comm. Math. Phys. \textbf{270} (2007), no.~3, 789--811.

\bibitem{CBH}
C.~Curtiss, R.~Byron Bird, and O.~Hassager, \emph{Kinetic theory and rheology
  of macromolecular solutions}, Adv. Chem. Phys. \textbf{35} (1976), 31--117.

\bibitem{dacorogna}
B.~Dacorogna, \emph{Direct {M}ethods in the {C}alculus of {V}ariations},
  vol.~78, Springer Science \& Business Media, 2007.

\bibitem{DiPerna1989}
R.~J. DiPerna and P.~L. Lions, \emph{Ordinary differential equations, transport
  theory and {S}obolev spaces}, Inventiones mathematicae \textbf{98} (1989),
  no.~3, 511--547.

\bibitem{DL}
R.J. DiPerna and P.-L. Lions, \emph{On the {F}okker--{P}lanck--{B}oltzmann
  equation}, Comm. Math. Phys. \textbf{120} (1988), no.~1, 1--23.

\bibitem{FS}
L.E. Figueroa and E.~S{\"u}li, \emph{Greedy approximation of high-dimensional
  {O}rnstein--{U}hlenbeck operators}, Found. Comput. Math. \textbf{12} (2012),
  no.~5, 573--623.

\bibitem{Fonseca_Leoni}
I.~Fonseca and G.~Leoni, \emph{Modern {M}ethods in the {C}alculus of
  {V}ariations: {$L^p$} {S}paces}, Springer Monographs in Mathematics,
  Springer, New York, 2007.

\bibitem{GP2001}
T.~Goudon and F.~Poupaud, \emph{Approximation by homogenization and diffusion
  of kinetic equations}, Comm. Partial Differential Equations \textbf{26}
  (2001), no.~3-4, 537--569.

\bibitem{Gris}
P.~Grisvard, \emph{Elliptic {P}roblems in {N}onsmooth {D}omains}, Classics in
  Applied Mathematics, vol.~69, SIAM, Philadelphia, 2011.

\bibitem{Gross1975}
L.~Gross, \emph{Logarithmic {S}obolev inequalities}, Amer. J. Math. \textbf{97}
  (1975), no.~4, 1061--1083.

\bibitem{HUL}
J.-B. Hiriart-Urruty and C.~Lemar\'echal, \emph{Fundamentals of convex
  analysis}, Grundlehren Text Editions, Springer-Verlag, Berlin, 2001, Abridged
  version of {\it Convex analysis and minimization algorithms. I} [Springer,
  Berlin, 1993] and {\it II} [ibid.].

\bibitem{KS2001}
H.~Koch and V.A. Solonnikov, \emph{{$L_p$}-estimates for a solution to the
  nonstationary {S}tokes equations}, J. Math. Sci. (New York) \textbf{106}
  (2001), no.~3, 3042--3072, Function theory and phase transitions.

\bibitem{KJF}
A.~Kufner, O.~John, and S.~Fu{\v{c}}ik, \emph{Function spaces. {M}onographs and
  {T}extbooks on {M}echanics of {S}olids and {F}luids; {M}echanics:
  {A}nalysis}, Noordhoff International Publishing, Leyden, 1977.

\bibitem{Larson1}
R.G. Larson, \emph{Constitutive equations for polymer melts and solutions},
  Butterworths, 1988.

\bibitem{LM1972}
J.-L. Lions and E.~Magenes, \emph{Non-homogeneous boundary value problems and
  applications. {V}ol. {I}}, Springer-Verlag, New York-Heidelberg, 1972,
  Translated from the French by P. Kenneth, Die Grundlehren der mathematischen
  Wissenschaften, Band 181.

\bibitem{Lions-Magenes}
\bysame, \emph{Non-homogeneous {B}oundary {V}alue {P}roblems and
  {A}pplications}, vol.~1, Springer Science \& Business Media, 2012.

\bibitem{LM2}
P.-L.\ Lions and N.\ Masmoudi, \emph{Global existence of weak solutions to some
  micro-macro models}, C. R. Math. Acad. Sci. Paris \textbf{345} (2007),
  15--20.

\bibitem{LT1997}
P.-L. Lions and G.~Toscani, \emph{Diffusive limit for finite velocity
  {B}oltzmann kinetic models}, Rev. Mat. Iberoamericana \textbf{13} (1997),
  no.~3, 473--513.

\bibitem{MM1990}
P.~Marcati and A.~Milani, \emph{The one-dimensional {D}arcy's law as the limit
  of a compressible {E}uler flow}, J. Differential Equations \textbf{84}
  (1990), no.~1, 129--147.

\bibitem{Masmoudi}
N.\ Masmoudi, \emph{Global existence of weak solutions to macroscopic models of
  polymeric flows}, J.\ Math.\ Pures Appl. \textbf{96} (2011), 502--520.

\bibitem{M10}
\bysame, \emph{Global existence of weak solutions to the {FENE} dumbbell model
  of polymeric flows}, Inventiones mathematicae \textbf{191} (2013), no.~2,
  427--500.

\bibitem{Maxwell1879}
J.C. Maxwell, \emph{On stresses in rarefied gases arising from inequalities of
  temperature}, Phil. Trans. Roy. Soc. London \textbf{170} (1879, Appendix),
  231--256.

\bibitem{onthetraceMischler}
S.~Mischler, \emph{On the trace problem for solutions of the {V}lasov
  equation}, Communications in Partial Differential Equations \textbf{25}
  (1999), no.~7-8, 1415--1443.

\bibitem{Mischler2010}
S.~Mischler, \emph{Kinetic equations with {M}axwell boundary conditions}, Ann.
  Sci. \'Ec. Norm. Sup\'er. (4) \textbf{43} (2010), no.~5, 719--760.

\bibitem{Nash}
J.~Nash, \emph{Continuity of solutions of parabolic and elliptic equations},
  Amer. J. Math. \textbf{80} (1958), 931--954.

\bibitem{PS08}
G.A. Pavliotis and A.M. Stuart, \emph{Multiscale {M}ethods: {A}veraging and
  {H}omogenization}, Springer Science \& Business Media, 2008.

\bibitem{SO}
J.~Schieber and H.~C. \"{O}ttinger, \emph{The effects of bead inertia on the
  {R}ouse model}, J. Chem. Phys. \textbf{89} (1988), no.~11.

\bibitem{Simon}
J.~Simon, \emph{Compact sets in the space ${L}^p(0,{T}; {B})$}, Annali di
  Matematica Pura ed Applicata \textbf{146} (1986), no.~1, 65--96.

\bibitem{Strauss}
W.~Strauss, \emph{On continuity of functions with values in various {B}anach
  spaces}, Pacific Journal of Mathematics \textbf{19} (1966), no.~3, 543--551.

\bibitem{Tar}
L.~Tartar, \emph{Compensated compactness and applications to partial
  differential equations}, Nonlinear analysis and mechanics: {H}eriot-{W}att
  {S}ymposium, {V}ol. {IV}, Res. Notes in Math., vol.~39, Pitman, Boston,
  Mass.-London, 1979, pp.~136--212.

\bibitem{Temam}
R.~Temam, \emph{{N}avier--{S}tokes {E}quation: {T}heory and {N}umerical
  {A}nalysis}, vol. 343, American Mathematical Soc., 2001.

\end{thebibliography}

\end{document}